\let\old@tocline\@tocline
\let\section@tocline\@tocline
\newcommand{\subsection@dotsep}{4.5}
\newcommand{\subsubsection@dotsep}{4.5}
	\leaders\hbox{$\m@th
		\mkern \subsection@dotsep mu\hbox{.}\mkern \subsection@dotsep mu$}\hfill
\let\subsection@tocline\@tocline
\let\@tocline\old@tocline
	\leaders\hbox{$\m@th
		\mkern \subsubsection@dotsep mu\hbox{.}\mkern \subsubsection@dotsep mu$}\hfill
\let\subsubsection@tocline\@tocline
\let\@tocline\old@tocline
\let\old@l@subsection\l@subsection
\let\old@l@subsubsection\l@subsubsection
\def\@tocwriteb#1#2#3{%
	\begingroup
	\@xp\def\csname #2@tocline\endcsname##1##2##3##4##5##6{%
		\ifnum##1>\c@tocdepth
		\else \sbox\z@{##5\let\indentlabel\@tochangmeasure##6}\fi}%
	\csname l@#2\endcsname{#1{\csname#2name\endcsname}{\@secnumber}{}}%
	\endgroup
	\addcontentsline{toc}{#2}%
	{\protect#1{\csname#2name\endcsname}{\@secnumber}{#3}}}%
\newlength{\@tocsectionindent}
\newlength{\@tocsubsectionindent}
\newlength{\@tocsubsubsectionindent}
\newlength{\@tocsectionnumwidth}
\newlength{\@tocsubsectionnumwidth}
\newlength{\@tocsubsubsectionnumwidth}
\newcommand{\settocsectionnumwidth}[1]{\setlength{\@tocsectionnumwidth}{#1}}
\newcommand{\settocsubsectionnumwidth}[1]{\setlength{\@tocsubsectionnumwidth}{#1}}
\newcommand{\settocsubsubsectionnumwidth}[1]{\setlength{\@tocsubsubsectionnumwidth}{#1}}
\newcommand{\settocsectionindent}[1]{\setlength{\@tocsectionindent}{#1}}
\newcommand{\settocsubsectionindent}[1]{\setlength{\@tocsubsectionindent}{#1}}
\newcommand{\settocsubsubsectionindent}[1]{\setlength{\@tocsubsubsectionindent}{#1}}
\renewcommand{\l@section}{\section@tocline{1}{\@tocsectionvskip}{\@tocsectionindent}{}{\@tocsectionformat}}%
\renewcommand{\l@subsection}{\subsection@tocline{1}{\@tocsubsectionvskip}{\@tocsubsectionindent}{}{\@tocsubsectionformat}}%
\renewcommand{\l@subsubsection}{\subsubsection@tocline{1}{\@tocsubsubsectionvskip}{\@tocsubsubsectionindent}{}{\@tocsubsubsectionformat}}%
\newcommand{\@tocsectionformat}{}
\newcommand{\@tocsubsectionformat}{}
\newcommand{\@tocsubsubsectionformat}{}
\def\csname toc@1format\endcsname{\@tocsectionformat}
\def\csname toc@2format\endcsname{\@tocsubsectionformat}
\def\csname toc@3format\endcsname{\@tocsubsubsectionformat}
\newcommand{\settocsectionformat}[1]{\renewcommand{\@tocsectionformat}{#1}}
\newcommand{\settocsubsectionformat}[1]{\renewcommand{\@tocsubsectionformat}{#1}}
\newcommand{\settocsubsubsectionformat}[1]{\renewcommand{\@tocsubsubsectionformat}{#1}}
\newlength{\@tocsectionvskip}
\newcommand{\settocsectionvskip}[1]{\setlength{\@tocsectionvskip}{#1}}
\newlength{\@tocsubsectionvskip}
\newcommand{\settocsubsectionvskip}[1]{\setlength{\@tocsubsectionvskip}{#1}}
\newlength{\@tocsubsubsectionvskip}
\newcommand{\settocsubsubsectionvskip}[1]{\setlength{\@tocsubsubsectionvskip}{#1}}
\patchcmd{\tocsection}{\indentlabel}{\makebox[\@tocsectionnumwidth][l]}{}{}
\patchcmd{\tocsubsection}{\indentlabel}{\makebox[\@tocsubsectionnumwidth][l]}{}{}
\patchcmd{\tocsubsubsection}{\indentlabel}{\makebox[\@tocsubsubsectionnumwidth][l]}{}{}
\newcommand{\@sectypepnumformat}{}
\renewcommand{\contentsline}[1]{%
	\expandafter\let\expandafter\@sectypepnumformat\csname @toc#1pnumformat\endcsname%
	\csname l@#1\endcsname}
\newcommand{\@tocsectionpnumformat}{}
\newcommand{\@tocsubsectionpnumformat}{}
\newcommand{\@tocsubsubsectionpnumformat}{}
\newcommand{\setsectionpnumformat}[1]{\renewcommand{\@tocsectionpnumformat}{#1}}
\newcommand{\setsubsectionpnumformat}[1]{\renewcommand{\@tocsubsectionpnumformat}{#1}}
\newcommand{\setsubsubsectionpnumformat}[1]{\renewcommand{\@tocsubsubsectionpnumformat}{#1}}
\renewcommand{\@tocpagenum}[1]{%
	\hfill {\mdseries\@sectypepnumformat #1}}
\let\oldappendix\appendix
\renewcommand{\appendix}{%
	\leavevmode\oldappendix%
	\addtocontents{toc}{%
		\protect\settowidth{\protect\@tocsectionnumwidth}{\protect\@tocsectionformat\sectionname\space}%
		\protect\addtolength{\protect\@tocsectionnumwidth}{2em}}%
}
\let\oldtableofcontents\tableofcontents
\renewcommand{\tableofcontents}{%
	\vspace*{-\linespacing}
	\oldtableofcontents}
\newtheorem{theorem}{Theorem}[subsection]
\newtheorem{lemma}[theorem]{Lemma}
\newtheorem{proposition}[theorem]{Proposition}
\newtheorem{corollary}[theorem]{Corollary}
\newtheorem{theoremdef}[theorem]{Theorem/Definition}
\theoremstyle{definition}
\newtheorem{definition}[theorem]{Definition}
\newtheorem{convention}[theorem]{Convention}
\newtheorem{hypothesis}[theorem]{Hypothesis}
\theoremstyle{remark} 
\newtheorem{remarque}[theorem]{Remark}
\newtheorem{remark}[theorem]{Remark}
\newtheorem{caution}[theorem]{Caution}
\newtheorem{example}[theorem]{Example}
\newcommand{\A}{\mathbb{A}}
\newcommand{\C}{\mathbb{C}}
\newcommand{\D}{\mathbb{D}}
\newcommand{\G}{\mathbb{G}}
\renewcommand{\H}{\mathrm{H}}
\renewcommand{\L}{\mathbb{L}}
\renewcommand{\P}{\mathbb{P}}
\newcommand{\Q}{\mathbb{Q}}
\newcommand{\R}{\mathbb{R}}
\newcommand{\U}{\mathbb{U}}
\newcommand{\Z}{\mathbb{Z}}
\newcommand{\cF}{\mathcal{F}}
\newcommand{\cI}{\mathcal{I}}
\newcommand{\cJ}{\mathcal{J}}
\newcommand{\cK}{\mathcal{K}}
\newcommand{\cL}{\mathcal{L}}
\newcommand{\bE}{\mathbf{E}}
\newcommand{\bS}{\mathbf{S}}
\newcommand{\ed}{E(\Gamma)}
\newcommand{\ver}{V(\Gamma)}
\newcommand{\qh}{$(G, M)$}
\newcommand{\qhm}{\qh-space} 
\newcommand{\mom}{\A}
\newcommand{\uu}{\mathbb{U}_1}
\newcommand{\YBet}{Y}
\newcommand{\YDol}{X}
\newcommand{\starGG}{\Gm}
\newcommand{\starUU}{\U_1}
\newcommand{\Gm}{\C^*}
\newcommand{\Dol}{\mathfrak{D}}
\newcommand{\Bet}{\mathfrak{B}}
\DeclareMathOperator{\res}{res}
\newcommand{\GApoint}{\mathbf{0}}
\newcommand{\closedpart}{K}
\newcommand{\openpart}{Q}
\DeclareMathOperator{\im}{Im}
\newcommand{\CKS}{\mathfrak{C}}
\newcommand{\reg}{\operatorname{reg}}
\newcommand{\CKSmap}{\frak{c}}
\newcommand{\delfilt}{\mathrm{D}}
\newcommand{\redC}{\overline{C}}
\newcommand{\versD}{\frak{V}}
\newcommand{\gr}{\operatorname{gr}}
\newcommand{\boing}{\mbox{$ \bigcirc \!\! \bullet$}}
\newcommand{\homsep}{,}
\newcommand{\UUCstar}{\star_{\uutimesC}}
\newcommand{\basichodge}{\frak{F}}
\newcommand{\dmom}{\pi_\Dol}
\newcommand{\dmomD}{q}
\newcommand{\dmomabrevU}{\mu_{\Dol}}
\newcommand{\baseopen}{\frak{U}}
\newcommand{\comap}{\operatorname{co}}
\newcommand{\relmap}{\operatorname{rel}}
\newcommand{\momenthomomorphism}{\mathfrak{f}}
\newcommand{\uutimesD}{\D \times \uu}
\newcommand{\uutimesC}{\C \times \uu}
\newcommand{\hyperbasepoint}{0 \times \mathbf{1}}
\newcommand{\stratum}{S}
\newcommand{\subedges}{J}
\newcommand{\openset}{U}
\newcommand{\generalstratum}{S}
\newcommand{\generalsubedges}{J}
\author{Zsuzsanna Dancso, Michael McBreen, and Vivek Shende}
\title{Deletion-contraction triangles for Hausel-Proudfoot varieties}
\begin{document}
	
	\begin{abstract} To a graph, Hausel and Proudfoot associate two complex manifolds, $\Bet$ and $\Dol$, which behave, 
		respectively
		like moduli of local systems on a Riemann surface, and moduli of Higgs bundles.   For instance, $\Bet$ is a moduli space of 
		microlocal sheaves, which generalize local systems, and $\Dol$ carries the structure of a complex integrable system. 
		
		We show the Euler characteristics of these varieties count spanning subtrees of the graph, and the point-count over a finite field
		for $\Bet$ is a generating polynomial for spanning subgraphs.  This polynomial satisfies a deletion-contraction relation,
		which we lift to a deletion-contraction exact triangle for the cohomology of $\Bet$.   There is a corresponding
		triangle for $\Dol$.  
		
		Finally, we prove $\Bet$ and $\Dol$ are diffeomorphic, that the diffeomorphism carries the
		weight filtration on the cohomology of $\Bet$ to the perverse Leray filtration on the 
		cohomology of $\Dol$, and that all
		these structures are compatible with the deletion-contraction triangles. 
	\end{abstract}
	
	\maketitle
	
	\newpage
	
	\singlespacing
	\tableofcontents
	
	\newpage

	\section{Introduction}
	
	\vspace{2mm}
	
	\begin{quote} 
		$\Bet$, the locus $ \C^2 \setminus \{1 + xy = 0\}$, the first nontrivial multiplicative quiver variety, the moduli of 
		microlocal sheaves on a singular Lagrangian torus.  
	\end{quote} 
	
	\vspace{2mm}
	
	\begin{quote}
		$\Dol$, a neighborhood of the nodal elliptic curve in its versal deformation, the simplest degeneration in a 
		complex integrable system, a local model for 4-dimensional hyperk\"ahler geometry.  
	\end{quote}
	
	\vspace{2mm}

	Each of the above spaces is the progenitor of 
	a family, with one member 
	for each $\Gamma$ a connected multigraph with loops.  The initial examples
	are those associated to the graph $\boing$. 
	These families were introduced by Hausel and Proudfoot \cite{HP}, where they 
	observed that $\Bet$ and $\Dol$ are analogous to moduli of local systems and the moduli of Higgs bundles on an algebraic curve, respectively, and conjectured
	the existence of diffeomorphisms $\Bet(\Gamma) \cong \Dol(\Gamma)$, analogous to the nonabelian
	Hodge correspondence.

	That correspondence \cite{Sim, Sim-loc, Sim-loc2}
	relates three perspectives on nonabelian Lie-group valued 
	cohomology: locally constant sheaves (Betti), bundles with
	connection (de Rham), and Higgs bundles (Dolbeault).  We are most interested in the case where the underlying
	variety is an algebraic curve $C$, and in the (non-complex-analytic!) diffeomorphism between the 
	moduli $\mathcal{M}_{B}(C,n)$ of rank $n$ locally constant simple sheaves, 
	i.e. simple representations $\pi_1(C) \to GL_n(C)$, and the moduli  $\mathcal{M}_{\overline\partial}(C,n)$ of 
	stable rank $n$ Higgs bundles.  The Higgs bundle moduli carries Hitchin's integrable system,  
	$H \colon \mathcal{M}_{\overline\partial}(C,n) \to \A$, where $\A$ parameterizes $n$-multisections of $T^*C$ (`spectral curves')  \cite{Hit, Hit2}.  
	The fiber over the point corresponding to a smooth spectral curve $\Sigma$ is its Jacobian $J(\Sigma)$.  
	
	\vspace{2mm}
	
	We believe $\Bet(\Gamma)$ and $\Dol(\Gamma)$ are in some sense {\em microlocal} versions of the 
	nonabelian cohomology spaces, and give some ideas in this direction in Remark \ref{rem: microlocal}.  
	In any case, Hausel and Proudfoot conjectured the following relationship between them, which 
	we establish:

	\begin{theorem} \label{thm:introdolbet} (\ref{thm:basicnaht})
		For any graph $\Gamma$, there is a canonical homotopy equivalence induced by 
		a non-canonical open embedding $\Dol(\Gamma) \subset \Bet(\Gamma)$.
	\end{theorem}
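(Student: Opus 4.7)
My plan is to realize both $\Dol(\Gamma)$ and $\Bet(\Gamma)$ as quotients of a common ``flow space'' $\Flow(\Gamma)$ by a compact torus $T \cong \U(1)^{E(\Gamma)}$, differing only in which moment map equation is imposed. On the Dolbeault side, $\Dol(\Gamma)$ is the reduction by a linear moment map $\mu_\Dol \colon \Flow(\Gamma) \to \mathrm{Lie}(T)^{*}$; on the Betti side, $\Bet(\Gamma)$ is the reduction by a group-valued moment map $\mu_\Bet \colon \Flow(\Gamma) \to T$. The two moment maps are related by the exponential $\mathrm{Lie}(T) \to T$, and the strategy is to produce a $T$-equivariant diffeomorphism $\mu_\Dol^{-1}(0) \xrightarrow{\sim} \mu_\Bet^{-1}(1)$ that descends to the quotient.

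The main construction is a Moser-type argument: consider the one-parameter family $\mu_t \colonequals (1-t)\mu_\Dol + t \log \circ \mu_\Bet$ of moment maps, defined on the open locus where $\log \circ \mu_\Bet$ makes sense, and integrate the time-dependent vector field whose flow carries $\mu_t^{-1}(0)$ to $\mu_{t+dt}^{-1}(0)$ in a $T$-equivariant manner. The time-$1$ map is the desired diffeomorphism of level sets, and it descends to a diffeomorphism $\Dol(\Gamma) \cong \Bet(\Gamma)$. The vector field depends on auxiliary choices (e.g.\ a $T$-invariant Riemannian metric on $\Flow(\Gamma)$), making the diffeomorphism non-canonical; but the space of such choices is contractible, so the induced map on homotopy type is independent of the choice, which is the canonical homotopy equivalence asserted by the theorem.

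The main obstacle is global existence and smoothness of the interpolating flow. On strata where $T$ does not act freely -- corresponding to reducible objects, or, on the $\Dol$ side, nilpotent Higgs-like data -- the moment maps cease to be submersions and a naive Moser flow breaks down; one must either show that the reductions are globally smooth and the flow is everywhere defined, or proceed stratum-by-stratum with careful compatibility. A related subtlety is that $\log \circ \mu_\Bet$ is only defined away from the branch locus of $\log$, and one must verify this locus does not meet $\mu_\Bet^{-1}(1)$ -- the ``multiplicative resonance'' condition familiar from multiplicative quiver varieties. A cleaner alternative, exploiting the earlier content of the paper, is to induct on $|E(\Gamma)|$ via the deletion-contraction triangles: the base cases (a single vertex, and $\Gamma = \boing$) admit direct verification -- for instance by exhibiting an explicit diffeomorphism between $\C^{2} \setminus \{1+xy = 0\}$ and a neighborhood of the nodal cubic in its versal deformation -- and the inductive step uses the compatibility of the triangles for $\Gamma$, $\Gamma \setminus e$ and $\Gamma / e$, with canonicity of the induced homotopy equivalence following by a five-lemma-type argument applied to the associated long exact sequences.
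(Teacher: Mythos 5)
Your proposal has a genuine gap at its foundation: you posit a common ``flow space'' $\Flow(\Gamma)$ with two moment maps, of which $\Dol(\Gamma)$ and $\Bet(\Gamma)$ are the respective reductions. But this common prequotient does not exist. The graph spaces are defined (Definition \ref{definitionofgraphspace}) as reductions of $\Dol^{\ed}$ and $\Bet^{\ed}$, and the basic spaces $\Dol$ and $\Bet$ are genuinely different manifolds -- $\Bet = \C^2 \setminus \{1+xy=0\}$ is affine while $\Dol$ is a complex-analytic neighborhood of the Tate curve -- mapping to different targets ($\uu \times \C$ versus $\C^* \times \R$). The difficult content of the theorem is precisely to show that $\Dol$ and a retract $\Bet^< \subset \Bet$ are $\uu$-equivariantly diffeomorphic covering a group isomorphism $\kappa\colon \uu \times \C \cong \C^* \times \R$; your Moser interpolation between $\mu_\Dol$ and $\log\mu_\Bet$ presupposes this identification rather than proving it, since without it there is no single manifold on which both moment maps live.

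The paper's argument runs in the opposite order: it first constructs the basic equivariant embedding $\basichodge\colon \Dol \hookrightarrow \Bet$ (Lemma \ref{lem:tateNAHT}), using (i) the fact that both spaces are principal $\uu$-bundles over the complement of a single point in their base, classified by $c_1 \in \H^2 \cong \Z$, and (ii) the differentiable slice theorem at the unique $\uu$-fixed point to match the local models. Once this is done, the graph-space functor $\bZ \mapsto \bZ(\Gamma,\eta)$ carries $\basichodge$ to $\frak{F}_\Gamma\colon \Dol(\Gamma,\eta) \to \Bet^<(\Gamma,\eta)$ essentially for free (Theorem \ref{thm:basicnaht}), and $\Bet^<(\Gamma,\eta)$ is shown to be a retract of, and diffeomorphic to, $\Bet(\Gamma,\eta)$ (Proposition \ref{prop:betretract}, Corollary \ref{cor:diffeo}). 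Your secondary concern about non-free strata is not an issue because $\eta$ is chosen $\bZ$-generic (Proposition \ref{prop:smoothcond}), which forces the action to be free on the level set. Finally, your fallback route via deletion-contraction induction with a five-lemma argument can at best yield an isomorphism on cohomology, not a diffeomorphism or even a homotopy equivalence, so it cannot establish the statement as claimed.
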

	
	\begin{remark} \label{rem: lamentations}
		The results of the present article are cohomological in nature, so do not depend
		on the precise geometric details of the embedding above; in fact we construct
		a family of such embeddings depending on various parameters, and any of these 
		may be used.  For purely motivational purposes (to sharpen the analogy with 
		the nonabelian Hodge correspondence), we note the following possibilities. 
		$\Dol(\Gamma)$ carries a (not complete) hyperk\"ahler metric, and 
		in particular a twistor sphere of complex structures. 
		Using Theorem \ref{li embedding}, it is possible to choose the
		embedding $\Dol(\Gamma) \subset \Bet(\Gamma)$ such that the complex structure on 
		$\Bet(\Gamma)$ restricts to one of the complex structures on $\Dol(\Gamma)$, 
		different from the one in which $\Dol$ admits a
		holomorphic integrable system. 
		It is also possible 
		to deform the open embedding to a (not especially natural) diffeomorphism, by using Proposition \ref{prop:betretract}. 
	\end{remark} 
	
	Recall that the Grothendieck ring of varieties is formally generated by varieties, subject 
	to the relation $|X| = |X \setminus Y| + |Y|$ when $Y$ is a subvariety of $X$, and
	we write $|X|$, $|Y|$, etc. to denote the class in the Grothendieck ring.  The ring 
	structure descends from the cartesian product of varieties.

	\begin{theorem} \label{thm:motivicdc}
		The following identities hold in the Grothendieck ring of varieties: 
		\begin{equation} \label{eq: motive of boing} 
			|\Bet(\boing)| = \L^2 - \L + 1
		\end{equation} 
		
		\begin{equation} \label{eq:motivicdc} 
			|\Bet(\Gamma)| = \begin{cases}  |\Bet(\Gamma / e)| & \mbox{$e$ is a bridge} \\ 
				|\Bet(\Gamma \setminus e)| \cdot |\Bet(\boing)| & \mbox{$e$ is a loop} \\ 
				|\Bet(\Gamma \setminus e)| \cdot \L +  |\Bet(\Gamma / e)| & \mbox{otherwise}  \end{cases}
		\end{equation}
		
		\begin{equation} \label{eq:mbetmotive} |\Bet(\Gamma)| = \sum_{\Gamma' \in Span(\Gamma)} (\L - 1)^{2 b_1(\Gamma')} \L^{b_1(\Gamma) - b_1(\Gamma')}\end{equation} 
		In particular, the Euler characteristic of $\Bet(\Gamma)$ is the number of spanning subtrees of $\Gamma$. 
	\end{theorem} 
	
	Here, \eqref{eq: motive of boing} is elementary, and \eqref{eq:mbetmotive} follows from \eqref{eq: motive of boing}, \eqref{eq:motivicdc}, and 
	general facts about the Tutte polynomial \cite[Chap. 10, Thm. 2]{Bol}.  Theorem \ref{thm:motivicdc}  is proven in Section \ref{sec: motivic interlude}.

	\vspace{2mm}
	Let us recall the relationship between the Grothendieck ring of varieties and cohomology.  
	The cohomology of any algebraic variety $X$ carries two filtrations; a decreasing `Hodge' filtration
	and an increasing `weight' filtration \cite{D1, D2, D3}.  
	We are interested here in the latter; its $i$'th step on the $j$'th cohomology group 
	is denoted $W_i H^j(X)$, and the associated graded spaces are denoted by $gr^W_i H^j(X)$.  One records these dimensions 
	in the mixed Poincar\'e polynomial:
	$$P^X(q, t) = \sum_{i,j} q^i t^j \dim gr^W_i H^j(X)$$ 
	Under specializing $q \to 1$, one recovers the usual Poincar\'e polynomial.  There is an analogous 
	construction with compactly supported cohomology, $P^X_c(q, t)$.  When $X$ is smooth, one has
	the Poincar\'e duality $P^X_c(q, t) = (qt^2)^{\dim X} P^X(q^{-1}, t^{-1})$.  In any case, 
	$P^X_c(q, -1)$ factors through the Grothendieck ring. 
	
	In \cite{HRV}, the quantity $P^X_c(q, -1)$ was determined for the character varieties $\mathcal{M}_B(C)$, and also for
	twisted versions corresponding to Higgs 
	bundles of nonzero degree.\footnote{\cite{HRV} does not compute classes in the Grothendieck ring, 
		and in fact these remain unknown.  Instead, they determine $P^X_c(q, -1)$ by counting points over finite fields.} 
	These explicit formulas, together with the complete description of 
	the cohomology for $GL(2)$, led to a conjectural formula for the full mixed Hodge polynomial.  
	Inpsection of the conjectural formula suggested certain curious properties of the cohomology \cite{HRV}.  
	
	The remarkable ``$P = W$'' conjecture of \cite{dCHM} was proposed to explain these
	curiousities.  The setup is as follows.
	Given any 
	map of algebraic varieties $f\colon X \to A$, there is a filtration on $\H^{\bullet}(X) \cong \H^{\bullet}(A, Rf_* \C_X)$ arising from
	truncation of $Rf_* \C_X$ in the (middle) perverse $t$-structure on $A$; this is termed the perverse 
	Leray filtration.  The $P = W$ conjecture asserts that under Simpson's correspondence, the weight filtration on the character variety goes to the perverse Leray filtration
	associated to Hitchin's integrable system on the moduli of Higgs bundles.  
	This conjecture was established in \cite{dCHM} in the $GL(2)$ case, and very recently for any rank 
	on a genus 2 curve \cite{dCMS}.  One of its original motivations, the `curious Hard Lefschetz' conjecture, 
	is now established \cite{Mel}.  Some additional special cases have been
	verified \cite{SZ, Szi}, and some tests of structural predictions verified \cite{dCM}.  
	A certain limit of the conjecture appears to be related to a comparison of limiting behavior
	of the Hitchin fibration with the geometry of the boundary complex of the character variety \cite{Sim-geom}. 
	Relationships between
	perverse and weight filtrations have also been found in other settings of hyperk\"ahler geometry \cite{dCHM2, Harder, HLSY}.  
	In particular, the 4-real-dimensional examples of the spaces under investigation here were studied in \cite{Z}. 
	A similar sounding (but at present
	not directly related) statement has been found in homological mirror symmetry \cite{HKP}.  The original
	conjecture remains open in the general case. It is unclear what is the natural setting or generality for this conjecture.  
	
	In our setting, the space $\Dol(\Gamma)$ is the central fiber of a certain natural family; we may correspondingly
	equip its cohomology with a perverse filtration.  Meanwhile, $\Bet(\Gamma)$ carries a weight filtration, due to being
	an algebraic variety.  We will prove: 
	
	\begin{theorem} \label{thm:intropw} (\ref{thm:wequals2p})
		The homotopy equivalence $\Dol(\Gamma) \hookrightarrow \Bet(\Gamma)$ carries the
		the weight filtration on $\H^{\bullet}(\Bet(\Gamma))$ to (twice) the perverse Leray filtration
		on $\H^{\bullet}(\Dol(\Gamma))$. 
	\end{theorem}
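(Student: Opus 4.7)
The plan is to prove Theorem \ref{thm:intropw} by induction on the number of edges of $\Gamma$, using the deletion-contraction exact triangles developed earlier in the paper (the ones furnished by Theorem \ref{thm:introdolbet} on both sides) as the principal inductive tool. The goal is to show that (i) the homotopy equivalence $\Dol(\Gamma) \cong \Bet(\Gamma)$ identifies the deletion-contraction triangle for $\Dol$ with the one for $\Bet$, and (ii) these triangles are strictly compatible with the two filtrations, the Betti triangle being a triangle of mixed Hodge structures with appropriate Tate twists and the Dolbeault triangle being strict for the perverse Leray filtration with matching perverse shifts (once the Betti shifts are halved).

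For the base case, I would handle $\Gamma = \boing$ by hand. Here $\Bet(\boing) = \C^2 \setminus \{1+xy=0\}$ is the complement of a smooth affine conic, and the Gysin sequence shows $\H^1$ is pure of weight $2$, i.e.\ Tate. On the Dolbeault side, $\Dol(\boing)$ is a neighborhood of the nodal elliptic curve in its versal deformation; the integrable system is the projection onto the deformation parameter, with one singular fiber. The decomposition theorem applied to this map computes the perverse Leray filtration explicitly, and one checks that the jump between the two nonzero perverse degrees matches the weight jump between $0$ and $2$ on the Betti side. Inductive cases when $e$ is a loop or bridge reduce immediately to smaller graphs by the product/pullback form of the deletion-contraction formulas in (\ref{eq:motivicdc}), where compatibility is essentially formal from the K\"unneth formula (both filtrations behave well under products).

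For the essential inductive step, with $e$ neither a loop nor a bridge, the deletion-contraction triangle reduces the claim for $\Gamma$ to the claims for $\Gamma \setminus e$ and $\Gamma / e$ via a five-lemma argument on the associated graded long exact sequences. On the Betti side, the triangle arises from a geometric construction (a closed/open decomposition of $\Bet(\Gamma)$), so it is automatically a triangle of mixed Hodge structures with a computable Tate twist on the deletion term, giving strict compatibility with $W_\bullet$ for free. The work is on the Dolbeault side: one must verify that the corresponding triangle is strict for the perverse Leray filtration associated to the integrable system, and that the connecting morphism carries the correct perverse degree shift (half the Betti Tate twist). To handle this I would construct explicit local models near the discriminant locus in the base of $\Dol(\Gamma)$ corresponding to the edge $e$, apply the (relative) decomposition theorem to split $R\pi_* \C_{\Dol(\Gamma)}$ into perverse constituents, and then identify the deletion and contraction summands among these constituents.

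The main obstacle will be precisely this last step: verifying strictness of the Dolbeault triangle for the perverse Leray filtration and pinning down the perverse degree shift. Unlike the weight filtration, which is functorial under algebraic maps and automatically respected by triangles of geometric origin, the perverse Leray filtration is sensitive to the integrable system structure near the locus where the deletion/contraction decomposition occurs, and strictness is not formal. The technical heart of the argument will therefore be a careful local analysis of $\Dol(\Gamma)$ as an integrable system in a neighborhood of the stratum corresponding to $e$, showing that the contraction piece $\Dol(\Gamma/e)$ sits inside $\Dol(\Gamma)$ compatibly with the Hitchin-type projections and that the complement contributes a perverse shift of one, matching the weight $2$ Tate twist visible on the Betti side via the base case computation.
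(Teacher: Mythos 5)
Your proposed induction runs into precisely the two obstacles the paper spells out (and works to circumvent) in its introduction, and the second of these is fatal to the plan as stated.

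\textbf{The five-lemma step does not determine the middle filtration.} Even granting that both deletion-contraction sequences are strict with respect to their filtrations and that the Hodge map intertwines them exactly, the inductive step fails: an isomorphism of long exact sequences that matches filtrations on two out of three (graded) terms need \emph{not} match the filtration on the middle term. For a short exact sequence $0 \to A \to B \to C \to 0$ with fixed filtrations on $A$, $C$, there are generically many filtrations on $B$ that are strictly compatible with the maps (this is the content of the paper's Caution in the appendix on filtrations). The five-lemma applied to associated graded spaces only tells you that the associated graded \emph{dimensions} on the middle term agree on the two sides; it does not tell you that the specific filtration subspaces in $\H^{\bullet}(\Bet(\Gamma))$ and $\H^{\bullet}(\Dol(\Gamma))$ are identified by $\frak{F}_\Gamma^*$. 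To convert a dimension count into an equality of filtrations, you need one of the two inclusions ($W_{2k} \subset 2P_k$ or $2P_k \subset W_{2k}$) as an independent input, and there is no a priori reason for either: $\frak{F}_\Gamma$ is merely a $\cC^\infty$ map, so it does not automatically respect the weight filtration, nor is the weight filtration visibly tied to the integrable system structure.

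\textbf{Strictness of the Dolbeault sequence is not available a priori.} You correctly identify that the hard part is strictness of the Dolbeault deletion-contraction sequence for the perverse Leray filtration, and propose to get it from a local analysis near the discriminant plus the decomposition theorem. However, strictness of a nearby-vanishing long exact sequence with respect to a perverse Leray filtration is not a formal consequence of the decomposition theorem: the decomposition theorem splits $R\pi_*\Q$ as a direct sum of shifted perverse sheaves, but the connecting maps in a vanishing-cycles triangle need not respect such a splitting, and there is no general theorem asserting strictness here (the paper flags this as an open question in the remark following Proposition \ref{generalsequencepreservesperv}). In the paper, this strictness (Corollary \ref{cor:dolbdelconstricpreserves}) is \emph{deduced} from P=W together with the intertwining of the deletion-contraction sequences; it is an output, not an input. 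Attempting to prove it directly would require a significant new idea.

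The paper's actual route is different: it introduces a third, auxiliary object, the \emph{deletion filtration}, defined purely in terms of the deletion maps $a_e$ and therefore manifestly intertwined by the Hodge map. It then separately identifies this deletion filtration with $W$ on the Betti side (via an explicit complex of log differential forms matching the combinatorial complex $\CKS(\Gamma)$) and with $P$ on the Dolbeault side (via the decomposition theorem applied to the versal family, the intermediate-extension stalk computation of \cite{MSV}, and a weight-polynomial match). The bounded-below inclusions $D_k \subset W_{2k}$ and $D_k \subset P_k$ come cheaply from compatibility of the deletion maps with each filtration; the equality of associated graded dimensions then upgrades each inclusion to an equality, and P=W follows by transitivity. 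Your induction captures the flavor of the Betti-side equality $D = W$, but without the deletion filtration as an intermediary, the comparison between the two sides cannot be closed.
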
 
	
	To our knowledge, all other known instances of $P=W$ are proven by computing both sides; 
	e.g. by finding generators and relations of the cohomology ring, matching filtrations on generators,
	and proving multiplicativity of the perverse filtration \cite{dCHM, dCMS}.  
	By contrast, we do not know generators for the cohomology ring, much less relations.  
	While we know $P^{\Bet(\Gamma)}_c(q, -1)$, we
	do not have even a conjecture for $P^{\Bet(\Gamma)}_c(q, t)$, or of the analogous perverse 
	Poincar\'e polynomial of $\Dol(\Gamma)$.  
	
	Instead we proceed by 
	upgrading the deletion-contraction relations of Theorem \ref{thm:motivicdc} to deletion-contraction exact triangles.  
	By the end we will have shown: 
	
	\begin{theorem} \label{thm:introdc} (\ref{thm:weightdcs}, \ref{td:doldcs}, \ref{Hodgeintertwinesdelcon}, \ref{cor:dolbdelconstricpreserves}) 
		For any edge $e$ which is neither a loop nor a bridge, there are deletion-contraction long-exact sequences, intertwined by pullback along $\Dol(\Gamma) \hookrightarrow \Bet(\Gamma)$.  
		\begin{equation*} 
			\begin{tikzcd} 
				\ \arrow[r] &  \ \H^{\bullet-2}(\Bet(\Gamma \setminus e) \homsep \Q)(-1) \arrow[d]
				\arrow[r]  &  \H^{\bullet}(\Bet(\Gamma)\homsep \Q) \arrow[d] \arrow[r]   &  \H^{\bullet}( \Bet(\Gamma / e)\homsep \Q) \arrow[r]  \arrow[d]
				& \  \\
				\ \arrow[r]  &  \H^{\bullet-2}(\Dol(\Gamma \setminus e) \homsep \Q) \{-1\} \arrow[r] 
				& \H^{\bullet}(\Dol(\Gamma) \homsep \Q) \arrow[r] 
				& \H^{\bullet}(\Dol(\Gamma / e) \homsep \Q) \arrow[r]  & \ \\
			\end{tikzcd}
		\end{equation*}
		The sequences are strictly compatible with the weight and perverse Leray filtrations, respectively.  The $(-1)$ and $\{-1\}$ 
		indicate shifts of these filtrations. 
	\end{theorem}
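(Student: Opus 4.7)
The plan is to build, for each non-loop non-bridge edge $e$ of $\Gamma$, parallel open--closed stratifications of $\Bet(\Gamma)$ and $\Dol(\Gamma)$, guided by the motivic identity \eqref{eq:motivicdc}. On the Betti side, I would exhibit an open immersion $\Bet(\Gamma/e) \hookrightarrow \Bet(\Gamma)$ whose closed complement is a Zariski-locally trivial $\A^1$-bundle over $\Bet(\Gamma\setminus e)$; this realizes $[\Bet(\Gamma)] = \L [\Bet(\Gamma\setminus e)] + [\Bet(\Gamma/e)]$ stratum by stratum. The open ``pinched'' stratum should correspond to configurations where the edge-$e$ local coordinates mimic those of the contracted graph, while the closed ``scarred'' stratum parameterizes degenerate configurations with a residual affine line of freedom coming from the edge that has been collapsed. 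The Gysin long exact sequence for this smooth codimension-one closed embedding produces the Betti row of the diagram, with the Tate twist $(-1)$ on the leftmost term coming from purity. Strict compatibility with the weight filtration is then automatic: the Gysin, restriction, and boundary maps are all morphisms of mixed Hodge structures, and long exact sequences of mixed Hodge structures are strict.

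For the Dolbeault row, I would carry out the parallel construction: an open immersion $\Dol(\Gamma/e) \hookrightarrow \Dol(\Gamma)$ with closed complement a $\C$-bundle over $\Dol(\Gamma\setminus e)$, producing the Dolbeault long exact sequence from excision. To intertwine the two rows, I would show that the homotopy equivalence of Theorem~\ref{thm:introdolbet} is stratum-preserving: on the open stratum it restricts to the corresponding equivalence for $\Gamma/e$, and on the closed stratum to the equivalence for $\Gamma\setminus e$ extended along the bundle direction (with the Betti $\A^1$-bundle and Dolbeault $\C$-bundle identified). Here one uses the inductive character of the construction of the diffeomorphism. Naturality of the excision triangle with respect to stratum-preserving maps then yields the commutative ladder.

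The main obstacle is strict compatibility of the Dolbeault triangle with the perverse Leray filtration. Unlike the weight filtration, perverse Leray is defined via truncation of the derived pushforward along the integrable system in the perverse $t$-structure on its base, so strictness becomes a statement about the induced triangle of perverse sheaves on that base. One needs the integrable system on $\Dol(\Gamma)$ to restrict compatibly to integrable systems on both strata — the open piece inheriting the system of $\Dol(\Gamma/e)$, and the closed piece carrying a $\C$-fibered version of that of $\Dol(\Gamma\setminus e)$ — and then one must verify that the distinguished triangle obtained by pushforward to the base remains strict after taking perverse cohomology. The shift from $(-1)$ in the Betti row to $\{-1\}$ in the Dolbeault row is dictated by Theorem~\ref{thm:intropw}: a weight jump of $+2$ on $\Bet$ corresponds to a perverse-degree jump of $+1$ on $\Dol$, so the codimension-one Tate twist on the Betti side must appear as a shift by one of perverse degree on the Dolbeault side.
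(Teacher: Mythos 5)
Your Betti row matches the paper's construction: $\Bet(\Gamma) = \Bet(\Gamma/e) \star_{\G_m} \Bet$ decomposes into an open piece (isomorphic to $\Bet(\Gamma/e)$ via $\Bet\setminus\bS_\Bet\cong[\G_m\times\G_m]$) and a closed piece which is an $\A^1$-bundle over $\Bet(\Gamma\setminus e)$; the Gysin sequence and Deligne strictness for morphisms of mixed Hodge structures give everything. So far so good.

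Your Dolbeault row, however, does not exist in the form you propose. There is no holomorphic open immersion $\Dol(\Gamma/e)\hookrightarrow\Dol(\Gamma)$ with a holomorphic $\C$-bundle over $\Dol(\Gamma\setminus e)$ as closed complement: on the Dolbeault side the ``closed piece'' is $\Dol(\Gamma/e)\star\bS_\Dol$, where $\bS_\Dol$ is the Lefschetz thimble $\dmom^{-1}(\R^{\leq 0}\times 1)$ in $\Dol$, a real-codimension-two $\cC^\infty$ disc that is \emph{not} a complex subvariety of $\Dol$ in its given complex structure (it becomes holomorphic only after the hyperk\"ahler-rotation identification with $\Bet$, which is precisely what is being compared, not assumed). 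The paper instead builds the Dolbeault sequence from the nearby/vanishing triangle for the degenerating family $q_2\colon\Dol(\Gamma/e)\star\Dol\to\C$ (Sections \ref{subsec:ageneralsequence}--\ref{sec:holmom}), identifying $\Phi_{q_2}\Q$ with a shifted constant sheaf on $\Dol(\Gamma/e)\star\mathbf{n}\simeq\Dol(\Gamma\setminus e)$ and $\Psi_{q_2}\Q$ with a generic fibre $\simeq\Dol(\Gamma/e)$, then transporting this via retractions back to $\Dol(\Gamma)$.

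The intertwining is also more delicate than naturality of an excision triangle. The diagram (\ref{problemsquare}) does not commute at the level of spaces: one must construct an explicit homotopy (Proposition \ref{prop:isotopicembeddings}, Proposition \ref{prop:homotopicatlast}) between the two composites through the non-holomorphic Hodge embedding. Your appeal to stratum-preservation of the diffeomorphism papers over exactly this step.

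The most serious gap is the perverse strictness. You write that ``one must verify that the distinguished triangle obtained by pushforward to the base remains strict after taking perverse cohomology'' --- but the authors explicitly say they do not know a direct argument for this, precisely because a distinguished triangle of (non-split) perverse complexes need not give a strict sequence on hypercohomology. The paper resolves this by a detour: it introduces the purely deletion-generated filtration $\delfilt_\bullet$ (Definition \ref{def:delfiltration}), shows it is tautologically preserved by the deletion maps and matched across $\Bet$ and $\Dol$ by the intertwining, proves $\delfilt = W/2$ on the Betti side via the combinatorial $\CKS$ complex, proves $\delfilt\subset P$ on the Dolbeault side via a transversality argument for higher discriminants, and finally concludes $\delfilt = P$ by dimension count. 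Only \emph{after} establishing P=W and the full intertwining does strictness of the Dolbeault sequence follow (Corollary \ref{cor:dolbdelconstricpreserves}), by transport of the Betti strictness. Your plan inverts this order and would therefore stall at the perverse-strictness step.
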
 
	
	The existence of the intertwined long exact sequences is nontrivial, but in some sense it is proven by pure thought, 
	using the excision triangle 
	on the top,  the nearby-vanishing triangle at the bottom, and geometric arguments for commutativity of the diagram.  
	One would like to conclude compatibility
	with filtrations by induction on the size of the graph.  This does not immediately work, for two reasons. 
	The first: we do not know a pure thought argument that 
	the Dolbeault sequence is {\em strictly} compatible with the perverse Leray filtration; in fact, we will only learn
	this at the very end of the paper.  The second: even had we known this, there is the following
	difficulty: consider two short exact sequences of filtered vector spaces, maps strictly compatible with the filtration. 
	Suppose given an isomorphism of the underlying short exact sequences, which respects the filtration save on 
	the middle term.  Must it respect the filtrations on the middle term?  Alas, no. 
	
	To deal
	with these difficulties we introduce yet a third filtration, which is defined only in terms
	of the deletion maps.  
	
	\begin{definition} (Deletion filtration) \label{def:delfiltration}
		Let $\mathrm{Graph}^\circ$ be the category whose objects are connected oriented graphs and whose morphisms are inclusions
		whose complement contains no loop.  Let $grab$ be the category whose objects are graded abelian groups, and whose morphisms
		are arbitrary (not graded) abelian group morphisms.  Let
		$$A^\bullet: \mathrm{Graph}^{\circ} \to grab$$
		be a covariant functor such that 
		$A^\bullet(\Gamma' \to \Gamma)$ has degree 
		$2|\Gamma| - 2 |\Gamma'|$, i.e., 
		the corresponding map $A^{\bullet}(\Gamma')[2|\Gamma'|] \to A^{\bullet}(\Gamma)[2|\Gamma|]$ has degree zero.  
		
		If $\Gamma$ has only loops and bridges, then we define 
		$0 = \mathrm{D}_{i-1}(A^i(\Gamma)) \subset \mathrm{D}_{i} (A^i (\Gamma)) = A^i(\Gamma)$. 
		Otherwise, we set
		$$\mathrm{D}_{i-k} A^i(\Gamma) = \mathrm{Span}(\{\mathrm{image}(A^{i-2k}(\Gamma'))\, | \, |\Gamma \setminus \Gamma'| = k\})$$
		Here the span is over all maps $\Gamma' \to \Gamma$. 
	\end{definition}
	It is immediate from the definition that $\mathrm{D}_\bullet$ is (not necessarily strictly) preserved by all maps 
	$A(\Gamma \subset \Gamma'): A(\Gamma')\{|\Gamma'|\} \to A(\Gamma)\{|\Gamma|\}$
	where $\{ \cdot \}$ indicates a shift of the filtration.  It is also evident that it is the minimal
	such filtration.

	Once we have shown the deletion maps act identically on the cohomology of the $\Bet(\Gamma)$ and $\Dol(\Gamma)$,
	it follows that the corresponding deletion filtrations must agree on the $\Bet$ and $\Dol$ sides.  Thus it remains to show
	the deletion filtration agrees with the weight and perverse filtrations.  This proves to be rather involved; our argument 
	depends on introducing a combinatorial model in which the third filtration is manifest, and then arguing on
	each side that this combinatorial model can be realized by some (rather different on the two sides) geometric
	construction. 
	
	\subsection{Outline}  
	We begin in Section \ref{sec:CKS} by recalling from \cite{MSV} the combinatorial description of a certain complex $\CKS(\Gamma)$ associated to any graph.  This complex 
	will turn out to have geometric interpretations both as the cohomology of $\Bet(\Gamma)$, and of $\Dol(\Gamma)$.  
	Nevertheless in Section \ref{sec:CKS} we confine ourselves to a purely combinatorial discussion.  We construct explicitly
	the deletion-contraction filtration exact sequence, and note some of its properties.  In particular, we observe that
	the deletion-contraction sequences themselves induce a filtration on the cohomology.  A key point about $\CKS(\Gamma)$
	is that the resulting filtration is easy to describe.  
	
	In Section \ref{sec:moment} we adapt the formalism of moment maps and symplectic reduction to situations when
	no symplectic structure is present.  Symplectic reduction applies in the situation of a group $\G$ 
	acting on a symplectic manifold with moment map $\mu\colon X \to \frak{g}^*$ (which, together with the symplectic
	structure, encodes the group action).  Here we consider arbitrary spaces $X$ with an action of a group $\G$ preserving a 
	map $\mu\colon X \to \mom$ to an abelian group $\mom$; we call such things $(\G, \mom)$-spaces.  The map $\mu$ 
	in no way encodes the group action. 
	
	Nevertheless, given a $(\G, \mom)$-space $X$, we can define its reduction $X \sslash_{\eta \in \mom} \G \colonequals \mu^{-1}(\eta)/\G$. Given two $(\G, \mom)$-spaces $X,Y$, we can form a product $(\G, \mom)$-space $X \bullet Y$. Similarly, we can form the quotient $X \star Y = X \bullet Y \sslash \G$. This construction is `functorial', meaning that a map $Y' \to Y$ induces a map $X \star Y' \to X \star Y$. 
	
	In Section \ref{sec:constructmoduli} we build spaces from graphs.  From any $(\G, \mom)$-space $X$ 
	together with a graph $\Gamma$ and an element $\eta_v \in \mom$ for each vertex of $\Gamma$, we construct 
	a space $X(\Gamma\homsep \eta)$ in Section \ref{subsec:spacesfromgraphs}.  In particular, $X$ is recovered from the one-loop graph: $X = X(\boing)$.  
	
	Given an edge $e$ in $\Gamma$, we  form new graphs $\Gamma / e$ and $\Gamma \setminus e$ by contracting (resp.\ deleting) $e$.  Our main tools for studying $X(\Gamma)$ are the two relations of the form
	$X(\Gamma / e) \star X = X(\Gamma)$ and $X(\Gamma / e) \star \mathbf{point} = X(\Gamma \setminus e)$.

	In Section \ref{sec:bet} we turn our attention to the spaces $\Bet(\Gamma)$.  They are built from the basic space 
	$\Bet = \C^2 \setminus \{xy + 1 = 0\}$.  
	Using functoriality of the $\star$ product, we turn properties of $\Bet$ into properties of $\Bet(\Gamma)$. In Section \ref{sec:bettidelcon}, we use this to obtain the  {\em Betti deletion-contraction sequence}
	$$\to \H^{\bullet-2}(\Bet(\Gamma \setminus e), \Q(-1)) \to \H^{\bullet}(\Bet(\Gamma), \Q) \to \H^{\bullet}(\Bet(\Gamma / e), \Q) \to $$  The key geometric construction is an embedding of a line bundle over $\Bet(\Gamma \setminus e)$ into $\Bet(\Gamma)$, with complement $\Bet(\Gamma / e)$. The resulting long exact sequence of a pair is our deletion-contraction sequence.  The same geometry immediately implies Equation \eqref{eq:motivicdc} above.
	
	The deletion maps equip  the cohomology of $\Bet(\Gamma)$ with a deletion filtration.  
	The deletion maps are induced by maps of algebraic varieties, hence respect the weight filtration;  minimality
	of the deletion filtration implies it is bounded by the weight filtration.  In fact, they are 
	equal; to prove this we construct an explicit complex of differential forms, which on the one hand is sensitive to the
	weight filtration, and on the other, can be identified with the complex $\CKS(\Gamma)$, compatibly with
	deletion-contraction.  The deletion filtration is explicit on $\CKS(\Gamma)$, allowing us to conclude.

	In Section \ref{sec:dol}, we turn to the Dolbeault space $\Dol(\Gamma)$.  The special case $\Dol = \Dol(\boing)$ is
	the Tate curve, and the more general spaces are degenerating families of abelian varieties built as subquotients of powers
	of $\Dol(\boing)$.   We study these spaces in families; in particular, there is a family of spaces over the unit disk $\D \subset \C$ whose general fiber is $\Dol(\Gamma / e)$, whose special fiber is homotopic to $\Dol(\Gamma)$, and whose singular locus is homotopic to $\Dol(\Gamma \setminus e)$.  The nearby-vanishing triangle gives rise, ultimately, to the deletion-contraction
	sequence in this setting.  
	
	As with the moduli of Higgs bundles, the spaces $\Dol(\Gamma)$ have the structure of complex analytic integrable systems. 
	We explore this structure further in Section \ref{sec:system}, in particular describing the fibers and characterizing the monodromy. 
	We need these results to show  the deletion maps preserve the perverse filtration, hence that the deletion filtration is bounded
	by the perverse filtration.
	Additionally, borrowing a calculation of \cite{MSV}, we show that $\CKS(\Gamma)$ also computes the cohomology of the spaces 
	$\Dol(\Gamma)$, compatibly with the perverse filtrations. 
	
	Finally in Section \ref{sec:hodge}, we begin comparing $\Bet$ and $\Dol$.  First we construct a smooth embedding
	and homotopy equivalence 
	between the basic spaces, $\Dol \subset \Bet$.  Due to the similarity of the constructions of these spaces, 
	this induces a similar inclusion $\Dol(\Gamma) \subset \Bet(\Gamma)$, thus proving 
	Theorem \ref{thm:introdolbet} (\ref{thm:basicnaht}).  
	
	We show in Section \ref{sec:hodgedeletionintertwines} that the deletion maps are intertwined by $\Dol(\Gamma) \to \Bet(\Gamma)$.  
	The key geometric input is a relation between the subspace used in the long exact sequence of a pair (on the Betti side) and the vanishing thimble for the degenerating family (on the Dolbeault side).  It follows immediately that the Betti and Dolbeault deletion filtrations are identified. In particular, dimensions of the associated graded pieces of the Dolbeault deletion filtration equal those of the $\CKS$-filtration. Then since Dolbeault deletion filtration is bounded by the perverse Leray filtration, but both these
	have associated 
	graded dimensions matching that of the $\CKS$ filtration, we conclude that in fact these filtrations must be equal. 
	Having identified the deletion filtrations with the weight and perverse Leray filtrations on the respective sides, we 
	deduce Theorem \ref{thm:intropw} (\ref{thm:wequals2p}).  Some further geometric considerations give 
	the full intertwining of Theorem \ref{thm:introdc}.

	\subsection{Some additional remarks}
	
	\begin{remarque} \label{rem:moregeneral}
		More generally, the spaces $\Bet(\Gamma)$ and $\Dol(\Gamma)$ can be (and were originally \cite{HP}) 
		defined with an arbitrary integer matrix in place of the adjacency matrix of the graph; in this generality
		they have orbifold points (see Remark \ref{rem:matroid}.)  Deletion-contraction relations have a well known generalization 
		to this matroidal setting.  We expect that in fact
		all the results of the paper generalize as well, with proofs complicated only by increased bookkeeping. 
	\end{remarque}
	
	\begin{remarque} \label{rem: microlocal} 
		Here we will explain that $\Bet(\Gamma)$ and $\Dol(\Gamma)$ in some sense model
		the nonabelian cohomology spaces `near' a nodal spectral curve with dual graph $\Gamma$, 
		and that this, along with our results, hints at the existence of a `microlocal' version of nonabelian
		Hodge theory. 
		
		By microlocal, we mean as always 
		`locally in the cotangent bundle', i.e. locally around the spectral curve $\Sigma \subset T^*C$, and correspondingly 
		locally around the corresponding Hitchin fiber, itself a multisection of a cotangent bundle 
		$T^*Bun_{GL_n}(C)$.  
		
		Consider a smooth spectral curve.  
		A neighborhood of $J(\Sigma)$ inside $\mathcal{M}_{\overline\partial}(C,n)$ will be diffeomorphic (and in fact symplectomorphic) to a neighborhood of 
		the zero section of $T^* J(\Sigma) = \mathcal{M}_{\overline\partial}(\Sigma,1)$, which in turn is -- by the abelian case of the nonabelian Hodge correspondence --
		diffeomorphic to $\mathcal{M}_{B}(\Sigma,1)$.  That is, there is a diffeomorphism between a neighborhood of 
		the Dolbeault data for $\Sigma$ and the moduli space of Betti data on $\Sigma$.  
		
		We turn to singular spectral curves.  Assuming $\Sigma$ is a reduced, possibly reducible, curve, the Hitchin fiber is a compactification of its Jacobian; we denote it $\overline{J}(\Sigma)$.  
		We will be interested in the cohomology of this singular fiber. 
		Denoting by $\widetilde{\Sigma}$ the normalization of the curve, it is known that 
		$H^*(\overline{J}(\Sigma)) \cong H^*(J(\widetilde{\Sigma})) \otimes D(\Sigma)$ for some graded vector space $D(\Sigma)$ 
		depending only on the singularities of $\Sigma$.\footnote{When $\Sigma$ not irreducible, the compactification of 
			the Jacobian depends on the choice of a stability condition.  
			However, it follows from \cite{MSV} that $H^*(\overline{J}(\Sigma))$ is in fact independent of a generic such choice,
			and genericity is known to follow from smoothness of the total space of the Hitchin fibration.}
		
		Here we focus on the simplest case, when $\Sigma$ has only nodes.  Let $\Gamma_\Sigma$ be the dual 
		graph: it has vertices for the irreducible components of $\Sigma$, and edges for the nodes. 
		Let us explain how the $\Dol(\Gamma_\Sigma)$ is similar to a neighborhood in the Hitchin system of the fiber over $[\Sigma]$. 
		The space $\Dol(\Gamma_\Sigma)$ is smooth; $\Dol(\Gamma_\Sigma) \times J(\widetilde{\Sigma})$ has the same dimension as $\mathcal{M}_{\overline\partial}(C,n)$, 
		and it follows its construction that $\Dol(\Gamma_\Sigma)$  carries the structure of an integrable system.  
		The data defining $\Dol(\Gamma)$ did not depend on complex structure parameters, so it cannot 
		be expected that the central fiber 
		of $\Dol(\Gamma)$ is analytically related to the Hitchin fiber $\overline{J}(\Sigma)$.  In fact, even when $\Sigma$
		has rational components, the corresponding central fibers need not be homeomorphic, and even if they are, the corresponding integrable systems need not be fiberwise homeomorphic.  Nevertheless, we
		will see $H^*(\Dol(\Gamma_\Sigma)) \cong D(\Sigma)$, in fact compatibly with the perverse Leray filtration (see Remark \ref{rem:dolmeaning}).  In this sense, $\Dol(\Gamma_\Sigma)$ is a model (or replacement) for the local topology in
		$\mathcal{M}_{\overline\partial}(C,n)$ around $\overline{J}(\Sigma)$. 
		
		There is also a sense in which $\Bet(\Gamma_\Sigma)$ captures `Betti information near $\Sigma$'.  More precisely, one can view $\Sigma$ as a (singular)
		Lagrangian and study the moduli space $\mathcal{M}_B(\Sigma,1)$ of rank one microlocal sheaves on $\Sigma$.\footnote{Equivalently \cite[Sec. 6.2]{GPS3}, of the wrapped Fukaya category of a completion of a neigborhood of $\Sigma$.
			From this point of view, one sees an embedding $\mathcal{M}_B(\Sigma,1) \to \mathcal{M}_{B}(C,n)$ is induced by 
			pullback of pseudo-perfect modules under a non-exact Viterbo restriction, modulo convergence issues.  The case of smooth spectral curve is  \cite{GMN2}.  What is by no means clear is if or why $\overline{J}(\Sigma)$ lies in the image, 
			let alone why it should be a deformation retract thereof. We will not use or discuss this further here.}  
		Were $\Sigma$ smooth, 
		this would be the space of rank one local systems we encountered above.  In the nodal case, 
		moduli of microlocal sheaves is shown in \cite{BK} to match certain multiplicative Nakajima varieties \cite{CBS, Y}; 
		comparing the results there to the definitions here, it is immediate that
		there is an algebraic isomorphism 
		$$\mathcal{M}_B(\Sigma,1) \cong \mathcal{M}_B(\widetilde{\Sigma},1) \times \Bet(\Gamma_\Sigma)$$
		
		In fact the relationship between microlocal sheaves on a spectral cover and the neighborhood of 
		the corresponding Hitchin fiber should hold in some greater generality.   In particular, 
		at least for spectral curves the links of whose singularities are torus knots, a similar statement can be tortured
		out of  the identification in \cite{STWZ} of moduli of Stokes data as moduli of sheaves microsupported along 
		a Legendrian, plus the nonabelian Hodge correspondence in the presence of irregular singularities \cite{BB}.
		As explained in the introduction of \cite{STZ}, a comparison of the numerics of that article with those 
		of \cite{OS, GORS} reveals a faint shadow of a `$P=W$' phenomenon here as well. 
		
		On the other hand, 
		while we construct an embedding $\Dol(\Gamma) \subset \Bet(\Gamma)$, we do not know a 
		category of which the former space is a moduli space, much less a functor between categories
		inducing this map.  It would be preferable to have such a category and functor.  Relatedly, 
		we have described how the spaces $\Dol(\Gamma)$ and $\Bet(\Gamma)$ are cohomologically related to Hitchin fibers, 
		but not given maps of spaces, much less of categories. 
		
		We end with the question: is there a microlocal nonabelian Hodge theory? 
	\end{remarque}
	
	\begin{remarque}
		Recall from \cite{SW} and subsequent developments 
		that if one considers $N=2$ super Yang-Mills for $U(n)$ with $g$ adjoint matter fields, 
		then the vacua in $\R^4$ form the base of a Hitchin system corresponding to the moduli of Higgs bundles 
		over a base curve of genus $g$.  
		At low energy, in a vacuum corresponding to a spectral curve with (for convenience) rational components, the theory
		is described by an abelian gauge theory with gauge fields corresponding to the components of the spectral
		curve, and bifundamentals or adjoints corresponding to the nodes.  That is, it corresponds to the dual graph
		of the curve.  We expect there should be some physical account of why the cohomology of the corresponding
		multiplicative quiver variety is identical to the cohomology of the Hitchin fiber, and more optimistically, why 
		this should identify weight and perverse Leray filtrations (as we have mathematically proven is the case). 
		
		There is a string-theoretic account of why the perverse filtration on the cohomology of the Higgs moduli space should lead
		to the bigraded numbers guessed by \cite{HRV} for the weight filtration on the character variety (see \cite{CDP1, CDP2, CDDP, DDP, Dia, CDDNP}). 
		It is not immediately clear how this relates to the above notions, but it would be interesting to make such a
		connection.  In particular, unlike \cite{HRV}, 
		we have not been able to compute (or guess) the mixed Poincar\'e polynomials of the 
		multiplicative hypertoric varieties.  
	\end{remarque}

	\begin{remarque}
		The embedding $\Dol(\Gamma) \subset \Bet(\Gamma)$ has the flavor of a hyperk\"ahler rotation.  
		In particular it carries the central fiber of the integrable
		system $\Dol(\Gamma)$ to a non-holomorphic Lagrangian subvariety of $\Bet(\Gamma)$, which should be the 
		Lagrangian skeleton of an appropriate Weinstein structure.  This fact, which we do not prove here, suggests a way to calculate 
		the Fukaya category of $\Bet(\Gamma)$, using the approach of \cite{K, N, GPS1, GPS2, GPS3, GS}. 
		This idea is explored in \cite{GMcW}, building on calculations of \cite{McW}. 
	\end{remarque}

	\begin{remarque}
		A shadow of Theorem \ref{thm:intropw} can be seen by comparing Equation \eqref{eq:mbetmotive} above to Theorem 1.1 of \cite{MSV}, 
		after specializing $\L \to 1$ in the latter.  We do not know what parameter should be introduced in our formula to recover
		the $\L$ of \cite{MSV}; this corresponds to asking how to characterize the filtration on the {\em Betti} moduli space
		which corresponds to the weight filtration on the {\em Dolbeault} moduli space.  This question does not arise 
		in the setting of the original P=W conjecture, because in that situation, the cohomology of the Dolbeault space is 
		pure, i.e. the weight filtration arises from the cohomological grading.  In the present case, the (central fiber of the) 
		Dolbeault space does not have pure cohomology.  
	\end{remarque}

	\begin{remarque}
		The deletion-contraction relation enjoys various connections with the skein relation of knot theory; it may be expected
		that deletion-contraction exact sequences enjoy similar connections with the skein exact sequences in knot homology
		theories such as \cite{Kho}.  Indeed, this is true by construction in various extant categorifications of the Tutte polynomial 
		and its specializations \cite{ER, HR, HR2, Sto, Sto2}, though we do not know how these constructions relate to $\H^{\bullet}(\Bet(\Gamma))$.
		
		In this context we recall the relation between knot invariants and the perverse polynomial of Hitchin fibers \cite{ObS, Mau}; 
		and its conjectural lift to the cohomological level \cite{ORS, GORS}.  
	\end{remarque}
	
	\vspace{2mm}
	{\bf Acknowledgements.}
	We thank the following people for helpful conversations: 
	Valery Alexeev, Phil Engel, Michael Groechenig, Tam\'as Hausel, Raju Krishnamoorthy,  Yang Li, Conan Leung, Luca Migliorini, Nick Proudfoot, Michael Thaddeus.  
	
	Z.\ D.\ was supported by NSF grant no.~0932078~000 while in residence
	at the Mathematical Sciences Research Institute in Berkeley, California during Fall 2014, and by an Australian Research Council DECRA grant DE170101128 from 2017.
	M.\ M.\ was supported by the Advanced Grant “Arithmetic and Physics of Higgs moduli spaces” No. 320593 of the European Research Council. while at the {\'E}cole Polytechnique F{\'e}d{\'e}rale de Lausanne, and by the Natural Sciences and Engineering Research Council while at the University of Toronto. Part of this work was completed while M.\ M.\ was at the Massachusetts Institute of Technology and the Hausdorff Institute of Mathematics. 
	V.\ S.\ was supported by the NSF grants DMS-1406871 and CAREER DMS-1654545, 
	and a Sloan fellowship.

	\section{Graph conventions and some linear algebra} \label{sec: graph conventions}
	For us, a {\em graph} will always mean a finite simplicial set with only 0- and 1-simplices, 
	or in other words, what is sometimes called an `oriented multigraph with loops'.  
	That is, we have the data 
	of a finite set of edges $E(\Gamma)$, a finite set of vertices $V(\Gamma)$, and 
	two maps $h, t: E(\Gamma) \to V(\Gamma)$. 
	
	As $\Gamma$ is a simplicial set, given a (for us always abelian) group $A$, we may form the simplicial chains and cochains.  
	There are (by definition) canonical isomorphisms
	$$C_0(\Gamma\homsep A) \simeq A^{\ver} \simeq C^0(\Gamma\homsep A)$$ and $$C_1(\Gamma\homsep A) \simeq A^{\ed} \simeq C^1(\Gamma\homsep A).$$
	We write $\overline{A^{\ver}}$ or $\redC^0(\Gamma\homsep A)$ for the quotient by the subgroup of constant functions, 
	and $\redC_0(\Gamma\homsep A)$ for the subgroup of chains summing to zero via the group law of $A$. 
	
	We denote the differentials by  $d_{\Gamma} \colon C_1(\Gamma\homsep A) \to C_0(\Gamma\homsep A)$ 
	and $d^*_{\Gamma} \colon C^0(\Gamma\homsep A) \to C^1(\Gamma\homsep A)$, respectively.  
	The formula for $d_\Gamma$ is: 
	\begin{eqnarray*} 
		d_\Gamma\colon A^{\ed} & \to & A^{\ver} \\
		\, e & \mapsto & h(e) - t(e)
	\end{eqnarray*}
	We often use the inclusion $\H_1(\Gamma, R) = \mathrm{ker}(d_\Gamma) \subset A^{\ed}$. 
	
	We write: 
	\begin{equation} \label{double h} 
		\mathbb{H}(\Gamma, A) \colonequals \H_1(\Gamma, A) \oplus \H^1(\Gamma, A)
	\end{equation}

	Note that all chain and cochain groups, and all homology and cohomology groups, are canonically independent
	of the choice of orientations on edges of $\Gamma$. 
	
	\vspace{2mm}
	
	For $\eta \in C_0(\Gamma, A) = A^{\ver}$, 
	we will write
	\begin{equation} \label{h1 torsor} \H_1(\Gamma \homsep A)_{\eta} := d_{\Gamma}^{-1}(\eta) \end{equation}  
	When nonempty,  $\H_1(\Gamma \homsep A)_{\eta}$ is the translation of $\H_1(\Gamma \homsep A) \subset C_1(\Gamma, A)$ by any $d_{\Gamma}$-preimage 
	of $\eta$, and is thus a torsor for $\H_1(\Gamma \homsep A) = \H_1(\Gamma \homsep A)_{0}$.

	\begin{definition} \label{defsimplemomentvalue}
		For $a \in A^{\ed}$, we write $\mathrm{Supp}(a) \subset \Gamma$ for the subgraph consisting of all vertices of $\Gamma$, 
		and those edges
		whose corresponding coordinate in $a$ is nonzero.  We say $\eta \in \redC_0(\Gamma, A)$ is {\em generic} 
		if for all $a \in \H_1(\Gamma \homsep A)_{\eta} \subset C_1(\Gamma, A) = A^{\ed}$, the graph $\mathrm{Supp}(a)$ is connected.  
	\end{definition}
	
	\begin{remark} \label{rem: disconnected generic}
		Note that if $\Gamma$ is disconnected, then $\eta$ is generic iff $\H_1(\Gamma \homsep A)_{\eta} = \emptyset$. 
	\end{remark}
	
	The geometric significance of Definition \ref{defsimplemomentvalue} becomes apparent in Prop. \ref{prop:smoothcond} below. 
	
	\begin{lemma} \label{genericvaluesexist}
		Suppose $A$ is a vector space of positive dimension over an infinite field. 
		Then there exist generic $\eta \in \redC_0(\Gamma, A)$. 
		The same holds when $A$ is a (commutative) Lie group of positive dimension. 
	\end{lemma}
	\begin{proof}
		Consider the complement $\Delta$ of the generic locus of $\redC_0(\Gamma \homsep A)$. It is the union over all disconnected subgraphs $\Gamma' \subset \Gamma$ of the image of $d_\Gamma : C_1(\Gamma' \homsep A) \to \redC_0(\Gamma, A)$. 
		
		Since $\redC_0(\Gamma \homsep A) = \redC_0(\Gamma' \homsep A)$, we can identify the latter map with $d_{\Gamma'} : C_1(\Gamma' \homsep A) \to \redC_0(\Gamma' \homsep A)$. The cokernel has dimension one less than the number of connected components of $\Gamma'$, and in particular the image is a proper subspace. It follows that $\Delta$ is a proper subset of $\redC_0(\Gamma, A)$.
		
		For commutative Lie groups, the result follows from the same argument on tangent spaces. 
	\end{proof}
	
	\vspace{2mm}
	
	\begin{lemma} \label{genericvaluesexist2}
		Suppose $A = A_1 \times A_2$ is a product of positive dimensional commutative Lie groups. Then there exist generic $\eta \in \redC_0(\Gamma, 0 \times A_2)$.  
	\end{lemma}
	\begin{proof}
		Given $\Gamma' \subset \Gamma$, the map $d_{\Gamma} : C_1(\Gamma',  A_1 \times A_2) \to \redC_0(\Gamma,  A_1 \times A_2)$ may be identified with the product map $C_1(\Gamma',  A_1) \times C_1(\Gamma', A_2) \to \redC_0(\Gamma,  A_1) \times \redC_0(\Gamma, A_2).$ We can now argue as in Lemma \ref{genericvaluesexist}. 
	\end{proof}

	Let us now consider deletion and contractions. 
	
	\begin{definition}
		Given a graph $\Gamma$, the graph $\Gamma \setminus e$ is defined by 
		deleting the edge $e$.   If $e$ is not a loop, then the graph $\Gamma / e$ is defined by ``contracting'' $e$, i.e., by removing
		it and collapsing $h(e)$ and $t(e)$ to a single vertex $v(e)$.
	\end{definition}
	
	There are evident maps (of simplicial sets) $\Gamma \setminus e \to \Gamma \to \Gamma / e$, the second being
	defined only when $\gamma$ is not a loop.  These 
	induce in particular maps $C_0(\Gamma \setminus e) \cong C_0(\Gamma) \to C_0(\Gamma / e)$. 
	We will use the following notation: 
	
	\begin{definition} \label{def:delconteta}
		For $\eta \in C_0(\Gamma, A)$:
		\begin{enumerate}
			\item We write $\eta \setminus e \in C_0(\Gamma \setminus e, A)$ for the preimage of $\eta$ under  
			$C_0(\Gamma, A) \xleftarrow{\sim} C_0(\Gamma \setminus e, A)$. 
			\item If $e$ is not a loop, we write $\eta / e$ for the image of $\eta$ under the map $C_0(\Gamma) \to C_0(\Gamma / e)$.
		\end{enumerate}
	\end{definition}
	
	\begin{lemma} \label{lem:genericityofdelcon} If $\eta$ is generic, then so are $\eta \setminus e$ and $\eta / e$.
	\end{lemma}
	\begin{proof}
		We have the commutative diagram: 
		\begin{equation} \label{eq: tc1} 
			\begin{tikzcd}
				\H_1(\Gamma, A)_{\eta} \arrow[r] & C_1(\Gamma, A)  \arrow[r, "d_{\Gamma}"] & C_0(\Gamma, A) \\
				\H_1(\Gamma \setminus e , A)_{\eta / e} \arrow[r] \arrow[hookrightarrow, u] & C_1(\Gamma \setminus e, A) \arrow[hookrightarrow, u] \arrow[r, "d_{\Gamma \setminus e}"] & C_0(\Gamma \setminus e, A)  \arrow[u, equals]
			\end{tikzcd}
		\end{equation}
		In particular, the support of any $a \in \H_1(\Gamma \setminus e , A)_{\eta / e}$ is equal to the support of its image in $\H_1(\Gamma, A)_{\eta}$. 
		
		If $e$ is not a loop, then we have the commutative diagram: 
		\begin{equation} \label{eq: tc2} 
			\begin{tikzcd} 
				\H_1(\Gamma, A)_{\eta} \arrow[d, "\sim"] \arrow[r] & C_1(\Gamma, A) \arrow[d] \arrow[r, "d_{\Gamma}"] & C_0(\Gamma, A) \arrow[d] \\
				\H_1(\Gamma /e , A)_{\eta / e} \arrow[r] & C_1(\Gamma / e, A) \arrow[r, "d_{\Gamma / e}"] & C_0(\Gamma / e, A) 
			\end{tikzcd}
		\end{equation}
		Thus, the support of any $a \in \H_1(\Gamma /e , A)_{\eta / e}$ is the image of the corresponding support of its preimage in $\H_1(\Gamma, A)_{\eta}$. 
	\end{proof}
	
	\begin{remark} \label{rem:delconJ}
		Suppose instead of starting with a single edge $e \in \ed$, we are given a subset $\subedges \subset \ed$. Definition \ref{def:delconteta} can be iterated to define $\eta \setminus \subedges$ and $\eta / \subedges$. Iterating Lemma \ref{lem:genericityofdelcon} shows that these are also generic. 
	\end{remark}
	
	Note that combining the left vertical morphisms of \eqref{eq: tc1} and \eqref{eq: tc2}, we obtain:  
	\begin{equation} \label{torsor delcon composition}
		\H_1(\Gamma \setminus e , A)_{\eta / e} \hookrightarrow \H_1(\Gamma, A)_{\eta} \cong \H_1(\Gamma /e , A)_{\eta / e}
	\end{equation}
	
	\begin{corollary} \label{cor: deletingbridges}
		Let $\eta$ be generic, and let $e$ be a bridge, i.e. suppose $\Gamma \setminus e$ is disconnected. Then $\H_1(\Gamma \setminus e, A)_{\eta \setminus e}$ is empty. 
	\end{corollary}
	\begin{proof}
		Follows from Remark \ref{rem: disconnected generic} and Lemma \ref{lem:genericityofdelcon}.
	\end{proof}

	For later use we describe some structures associated to a contracted edge.  
	Any edge $e$ determines a map 
	$$A \to A^{\ed} = C^1(\Gamma, E) \to H^1(\Gamma, E)$$
	(which is injective so long as $e$ is not a bridge).  Assuming $e$ is not a loop, 
	we define the map $\alpha_e: A \to \H^1(\Gamma / e, A)$ by demanding commutativity of the following diagram: 
	\begin{equation} \label{diag:momentrelationships0}
		\begin{tikzcd}
			\H^1(\Gamma \setminus e, A) & \arrow[l]   \H^1(\Gamma, A) & \arrow[l] A \\
			\operatorname{cok(\alpha_e)} \arrow[u, "\sim"] & \arrow[l] \arrow[u, "\sim"] \H^1(\Gamma / e, A) & \arrow[l, "\alpha_e"] A \arrow[u, equals]
		\end{tikzcd}
	\end{equation}
	Similarly, an edge $e$ determines a map $\H_1(\Gamma, A)_\eta \to C^1(\Gamma, A) \to A$; so long as $e$ is not
	a loop, we define the map 
	$\beta_e : \H_1(\Gamma / e, A)_{\eta / e} \to A$ by demanding commutativity of the following diagram. 
	\begin{equation} \label{diag:momentrelationships}
		\begin{tikzcd}
			\H_1(\Gamma \setminus e, A)_{\eta \setminus e} \arrow[d, "\sim"] \rar & \H_1(\Gamma, A)_{\eta} \rar \arrow[d, "\sim"] & A \arrow[d, equals] \\
			\operatorname{ker(\beta_e)}  \rar & \H_1(\Gamma / e, A)_{\eta / e}  \arrow[r, "\beta_e"] & A
		\end{tikzcd}
	\end{equation}

	\section{Combinatorial model} \label{sec:CKS}

	In this section, we will give a purely combinatorial model for the cohomology of our $\Bet(\Gamma)$ or 
	$\Dol(\Gamma)$, equipped with the appropriate filtration.  
	The model was originally introduced in \cite{MSV} to describe the perverse filtration 
	on the cohomology of the compactified Jacobian of a nodal curve, which as we have mentioned above 
	is closely related to $\Dol(\Gamma)$.

	We write our complexes over an arbitrary commutative ring $R$,
	which in the remainder of this article will always be $\Z$, $\Q$ or $\C$.  
	
	\begin{definition}
		Let $J$ be a subset of edges of $\Gamma$. We write 
		\[ \CKS^{2k,l} ( \Gamma, R ) \colonequals \bigoplus_{|J|=k}   \bigwedge^l \mathbb{H}(\Gamma \setminus J, R), \ \   \CKS^{2k+1,l}(\Gamma, R) \colonequals 0. \]
	\end{definition}
	We will often suppress the choice of $R$. Let $\CKS^{\bullet} ( \Gamma ) \colonequals \bigoplus_{2k+l=\bullet} \CKS^{2k,l}( \Gamma )$. We now define a differential $\CKS^{\bullet}( \Gamma ) \to \CKS^{\bullet+1}(\Gamma)$. 
	
	Let $e$ be an edge, viewed as a class in $\H^1(\Gamma, R)$. We have a map $\langle e, - \rangle : H_1(\Gamma, \R) \to R$ given by the composition of $\H_1(\Gamma, \R) \to R^{E(\Gamma)}$ with the projection $R^{E(\Gamma)} \to R$ onto the $e \textsuperscript{th}$ coordinate. We extend this to a linear function $$\langle e, - \rangle : \mathbb{H}(\Gamma, R) \to R$$ by setting $\langle e, f \rangle = 0$ for $f \in \H^1(\Gamma, R)$. Given $e \in E(\Gamma)$, consider the map $e \langle e, - \rangle  \colon \mathbb{H}(\Gamma) \to \mathbb{H}(\Gamma) $ which takes $x$ to $e \langle e, x \rangle$. This map depends on the edge $e$ but not its orientation. Extend, via the Leibniz rule, to a linear map $e \langle e, - \rangle  \colon \bigwedge^l \mathbb{H}(\Gamma) \to \bigwedge^l \mathbb{H}(\Gamma)$.  
	
	\begin{lemma} \label{lem:imageofde}
		The image of $e \langle e, - \rangle$ is the subspace 
		\begin{equation} \label{eq:imagede} e \wedge \bigwedge^{l-1} \bigg( \H_1(\Gamma \setminus e) \oplus  \H^1(\Gamma) \bigg). \end{equation}
	\end{lemma}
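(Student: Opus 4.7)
The plan is to exploit that $d'_e$ is a degree zero derivation on $\bigwedge^\bullet \mathbb{H}(\Gamma)$ whose action on the generating subspace $\mathbb{H}(\Gamma)$ factors as $\theta \oplus \gamma \mapsto \langle \theta, e \rangle \cdot e$, landing in $R\cdot e \subset \H^1(\Gamma)$. The crucial observation is that the kernel of $d'_e\colon \mathbb{H}(\Gamma) \to \mathbb{H}(\Gamma)$ is precisely the subspace $\mathbb{H}'\colonequals \H^1(\Gamma) \oplus \H_1(\Gamma \setminus e)$, where we identify $\H_1(\Gamma \setminus e)$ with the subspace of $\H_1(\Gamma)$ consisting of $1$-cycles whose coefficient on $e$ is zero.

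Assume first that $e$ is not a bridge, so that the pairing $\langle \cdot , e \rangle \colon \H_1(\Gamma) \to R$ is surjective. Pick a $1$-cycle $\theta_0 \in \H_1(\Gamma)$ with $\langle \theta_0, e \rangle = 1$; this yields a splitting $\mathbb{H}(\Gamma) = \mathbb{H}' \oplus R\theta_0$. The induced decomposition
\[
\bigwedge^l \mathbb{H}(\Gamma) \;=\; \bigwedge^l \mathbb{H}' \;\oplus\; \theta_0 \wedge \bigwedge^{l-1} \mathbb{H}'
\]
interacts with $d'_e$ in the following way: since $d'_e$ vanishes on $\mathbb{H}'$, the Leibniz rule annihilates the first summand, while $d'_e(\theta_0 \wedge \omega) = e \wedge \omega$ for any $\omega \in \bigwedge^{l-1} \mathbb{H}'$. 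The image is therefore exactly $e \wedge \bigwedge^{l-1} \mathbb{H}'$, matching (\ref{eq:imagede}).

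The bridge case is degenerate and both sides of the equation vanish: every $1$-cycle in $\Gamma$ must avoid the bridge $e$, so $d'_e \equiv 0$ on $\bigwedge^\bullet \mathbb{H}(\Gamma)$; on the other hand, the characteristic cocycle $e$ equals the coboundary of the indicator function of either connected component of $\Gamma \setminus e$, so $e = 0$ in $\H^1(\Gamma)$ and the right-hand side of (\ref{eq:imagede}) is zero as well.

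The argument is almost entirely linear algebra once the kernel of $d'_e$ on $\mathbb{H}(\Gamma)$ is identified; the only step requiring any care is the bridge case, where one must use that the coboundary of a vertex indicator function coincides with the characteristic cochain of a bridge. No induction on $l$ is needed: the Koszul-type decomposition above handles all degrees simultaneously, and the same structure will be convenient later for describing how $d'_e$ assembles with the edge-deletion maps into the differential on $\CKS^\bullet(\Gamma)$.
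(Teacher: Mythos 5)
Your argument is essentially the paper's: both identify $\ker(d'_e) = \H^1(\Gamma) \oplus \H_1(\Gamma \setminus e)$, split $\mathbb{H}(\Gamma)$ into this kernel plus a rank-one complement (your explicit $\theta_0$ versus the paper's generic $\mathbb{F}$), and apply the Leibniz rule to the resulting decomposition of $\bigwedge^l \mathbb{H}(\Gamma)$. The one genuine improvement is that you dispose of the bridge case explicitly (both sides vanish since $[e]=0$ in $\H^1(\Gamma)$), whereas the paper's proof silently assumes $\ker(d'_e)$ has a rank-one complement, which fails when $e$ is a bridge.
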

	\begin{proof}
		Note that $ \H_1(\Gamma \setminus e) \oplus  \H^1(\Gamma) = \ker(e \langle e, - \rangle)$. Choose any splitting of $\mathbb{H}(\Gamma)$ into $\ker(e \langle e, - \rangle) \oplus \mathbb{F}$ where $\mathbb{F}$ is rank one; then $e \langle e, - \rangle$ restricts to an isomorphism $\mathbb{F} \cong R e$. We have $\bigwedge^{l} \mathbb{H}(\Gamma) = \bigg( \mathbb{F} \otimes \bigwedge^{l- 1} \ker(e \langle e, - \rangle) \bigg) \oplus \bigwedge^{l} \ker(e \langle e, - \rangle)$. The map $e \langle e, - \rangle$ takes the left-hand summand isomorphically onto \ref{eq:imagede} and kills the right-hand summand. 
	\end{proof}
	When $e$ is not a bridge, we have an identification $e \wedge \bigwedge^{l-1} \bigg( \H_1(\Gamma \setminus e) \oplus \H^1(\Gamma) \bigg) = \bigwedge^{l-1} \mathbb{H}(\Gamma \setminus e)$, and thus we obtain a map $$d_e  \colon \bigwedge^l \mathbb{H}(\Gamma) \to \bigwedge^{l-1} \mathbb{H}(\Gamma \setminus e).$$
	
	More explicitly, 
	\[ d_e (x_1 \wedge x_2 \wedge ... \wedge x_{l}) = \sum_{i=1}^l (-1)^{i-1} \langle e, x_i \rangle x_1 \wedge x_2 \wedge ... \wedge \widehat{x}_i \wedge ... \wedge x_l. \]
	\begin{definition}
		Let $d_{\CKS}  \colon \CKS^{2k,l} ( \Gamma ) \to \CKS^{2k+2,l-1} ( \Gamma )$ be the linear map whose restriction to $\bigwedge^{l} \mathbb{H}(\Gamma \setminus J)$ is the direct sum over all non-bridge edges $e$ in $\Gamma \setminus J$ of $d_e  \colon \bigwedge^{l} \mathbb{H}(\Gamma \setminus J) \to \bigwedge^{l-1} \mathbb{H}(\Gamma \setminus J \setminus e)$.  
	\end{definition}
	
	\begin{lemma}
		The map $d_{\CKS}$ makes $\CKS^{\bullet}(\Gamma)$ into a complex, i.e. $ d_{\CKS}^2 = 0$.
	\end{lemma}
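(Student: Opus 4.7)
The plan is to restrict $d_{\CKS}^2$ to each summand $\bigwedge^l \mathbb{H}(\Gamma \setminus J)$, expand it as a double sum indexed by ordered pairs of edges, and show that for each unordered pair $\{e,f\}$ of distinct non-bridge edges the two composites $d_f \circ d_e$ and $d_e \circ d_f$ sum to zero. Self-pairs $e=f$ do not occur, since $e$ is absent from $\Gamma \setminus J \setminus e$. The key assertion is therefore the anticommutation $d_f \circ d_e + d_e \circ d_f = 0$.

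To prove this, I would lift $d_e$ and $d_f$ to the degree-preserving derivations $d'_e$ and $d'_f$ on $\bigwedge^\bullet \mathbb{H}(\Gamma \setminus J)$ from which they were constructed. The commutator $[d'_e, d'_f]$ is again a degree-$0$ derivation, hence determined by its action on $\mathbb{H}$. Since the image of $d'_e$ is contained in the line spanned by (the class of) $e$, which lies in $\ker d'_f$ when $e \neq f$, both composites $d'_f d'_e$ and $d'_e d'_f$ vanish on $\mathbb{H}$; consequently $d'_e$ and $d'_f$ commute on all of $\bigwedge^\bullet \mathbb{H}$. On the other hand, the Leibniz rule gives
\[
d'_f \circ d'_e(\alpha) \;=\; e \wedge f \wedge \sigma_{e,f}(\alpha), \qquad d'_e \circ d'_f(\alpha) \;=\; f \wedge e \wedge \sigma_{f,e}(\alpha),
\]
where $\sigma_{e,f}(\alpha)$ represents $d_f \circ d_e(\alpha)$ under the iteration of Lemma~\ref{lem:imageofde}. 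Combining the commutation of the $d'$'s with the anticommutativity $f \wedge e = -\,e \wedge f$ forces $\sigma_{e,f} + \sigma_{f,e} = 0$, which descends to the claimed anticommutation after passing to the quotient $\bigwedge^{l-2} \mathbb{H}(\Gamma \setminus J \setminus \{e,f\})$.

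It remains to dispatch the degenerate pairs in which one edge becomes a bridge after the other is deleted. This condition is symmetric in $e$ and $f$ (it says $\{e,f\}$ forms a minimal cut of $\Gamma \setminus J$), so both $d_f \circ d_e$ and $d_e \circ d_f$ vanish by the defining convention for $d_{\CKS}$ and contribute nothing to $d_{\CKS}^2$. The main technical obstacle I anticipate is verifying coherence of the two canonical identifications $e \wedge \bigwedge^{l-1}(\H^1 \oplus \H_1(\Gamma \setminus e)) \cong \bigwedge^{l-1}\mathbb{H}(\Gamma \setminus e)$ under further deletion, so that the sign identity at the level of $\sigma$ genuinely descends to an identity between $d_f \circ d_e$ and $d_e \circ d_f$ after passing twice through the isomorphism of Lemma~\ref{lem:imageofde}. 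This compatibility is ultimately a naturality check for the restriction $\H^1(\Gamma) \twoheadrightarrow \H^1(\Gamma \setminus e)$ and the inclusion $\H_1(\Gamma \setminus e) \hookrightarrow \H_1(\Gamma)$ under iterated edge deletion.
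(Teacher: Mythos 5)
Your proof is correct and organizes the argument differently from the paper's. The published proof expands $d_{e_1}d_{e_2}(x_0 \wedge \cdots \wedge x_N)$ term by term and tracks the signs $(-1)^{i+j}$ (for $i<j$) and $(-1)^{i+j-1}$ (for $i>j$) that arise from pulling the wedge factor $e$ to the front, then observes that swapping $e_1 \leftrightarrow e_2$ flips these. You instead factor the same sign through two clean facts: the degree-zero derivations $d'_e, d'_f$ commute (both composites kill $\mathbb{H}$ since $d'_e(\mathbb{H}) \subset \H^1 \subset \ker d'_f$, and the commutator of degree-zero derivations is again a derivation, hence determined by its degree-one values), and the identification of Lemma~\ref{lem:imageofde} then contributes exactly one sign via $f \wedge e = -e\wedge f$. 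The coherence check you flag --- that the operators $d'_f$ on $\mathbb{H}(\Gamma\setminus J)$ and on $\mathbb{H}(\Gamma\setminus J\setminus e)$ agree through $\H^1(\Gamma\setminus J) \twoheadrightarrow \H^1(\Gamma\setminus J\setminus e)$ and $\H_1(\Gamma\setminus J\setminus e) \hookrightarrow \H_1(\Gamma\setminus J)$ --- is real but routine, precisely the naturality you name. Two small remarks: you do not need $e\neq f$ to kill the composites on $\mathbb{H}$, since $d'_f$ annihilates all of $\H^1$ regardless; and your explicit treatment of pairs $\{e,f\}$ whose joint removal disconnects $\Gamma\setminus J$ (so that both $d_f d_e$ and $d_e d_f$ are simply absent from the sum defining $d_{\CKS}^2$, rather than ``vanishing by convention'') fills in a case the paper's proof leaves implicit.
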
 
	\begin{proof}
		It is easy to see that $d_e^2=0$. We must check that additionally, $d_{e_1} d_{e_2} = - d_{e_2} d_{e_1}$. The sign arises when passing from $e \langle e, - \rangle$ to $d_e$, which involves reordering the factors of a wedge product so that the factor $e$ comes out in front. Indeed, we may write the image of $d_{e_2} (x_0 \wedge ... \wedge x_N)$ under $e_1 \langle e_1, - \rangle$ as a sum of terms $(-1)^j x_0 \wedge ... \wedge e_1 \langle e_1, x_i \rangle \wedge ... \wedge \widehat{x}_j \wedge ... \wedge x_N$ with $i < j$ and  $(-1)^j x_0 \wedge ... \wedge \widehat{x}_j  \wedge ... \wedge   e_1 \langle e_1, x_i \rangle \wedge ... \wedge x_N$ with $i > j$. Here the hat indicates that a factor has been omitted. Then $d_{e_1} d_{e_2}(x_0 \wedge ... \wedge x_N)$ is the sum of terms $(-1)^{i+j} x_0 \wedge ... \wedge \widehat{x}_i \wedge ... \wedge \widehat{x}_j \wedge ... \wedge x_N$ with $i < j$ and  $(-1)^{i+j-1} x_0 \wedge ... \wedge \widehat{x}_j  \wedge ... \wedge   \widehat{x}_i \wedge ... \wedge x_N$ with $i > j$. Exchanging $e_1$ and $e_2$ exchanges the signs. 
	\end{proof}
	
	\begin{lemma} \label{lem:orientationindependence}
		The complexes $(\CKS^{\bullet}(\Gamma), d_{\CKS})$ associated to different orientations of $\Gamma$ are canonically isomorphic. 
	\end{lemma}
	\begin{proof}
		Suppose the orientations differ at a single edge $e$. Let $\epsilon_e$ be the automorphism of $\CKS^{\bullet}(\Gamma)$ which multiplies $\bigwedge \mathbb{H}(\Gamma \setminus J)$ by $-1$ if $e \in J$, and is the identity on the other sumands. Then $\epsilon_e$ intertwines the differentials associated to the two orientations.
		If the  orientations differ at multiple edges, the differentials are intertwined by a product of such $\epsilon_e$. 
	\end{proof}
	
	\begin{figure}
		\begin{equation*}
			\begin{tikzcd} 
				\bigwedge^4 \mathbb{H}(\subedges)    \\
				\bigwedge^3 \mathbb{H}(\subedges)  \arrow[r, "d_\CKS"]  &  \bigoplus_{i=1}^3 \bigwedge^2 \mathbb{H}(\subedges \setminus e_i)  \\
				\bigwedge^2 \mathbb{H}(\subedges)  \arrow[r, "d_\CKS"]  &   \bigoplus_{i=1}^3 \bigwedge^1 \mathbb{H}(\subedges \setminus e_i)  \arrow[r, "d_\CKS"]  &   \bigoplus_{i,j=1}^3 \bigwedge^0 \mathbb{H}(\subedges \setminus e_i, e_j) &  \\
				\bigwedge^1 \mathbb{H}(\subedges)  \arrow[r, "d_\CKS"]  & \bigoplus_{i=1}^3 \bigwedge^0 \mathbb{H}(\subedges \setminus e_i) &    \\
				\bigwedge^0 \mathbb{H}(\subedges)  &  \\
				\ & 
			\end{tikzcd}
		\end{equation*}
		\caption{
			The complex $\CKS^{\bullet}( \subedges )$, where $\subedges$ is the 
			graph with two vertices joined by three edges $e_1, e_2, e_3$. We have only indicated the groups which are not automatically zero for degree reasons.
			The cohomological grading increases as one moves up or to the right. The cohomology is described in Figure \ref{fig:cohcalc}.} 
	\end{figure}
	
	\begin{figure} \label{fig:cohcalc}
		\begin{tabular}{llll} \toprule
			& 0 & 1 & 2  \\ \midrule
			4 & $R$  &  &  \\
			3 & $R^2$ & $R$  \\
			2 & $R^2$ & $0$ & $R$ \\
			1 & $R^2$ & $R$ &  \\
			0 & $R$ &  &     \ \\ \bottomrule
		\end{tabular}
		\caption{The cohomology of $\CKS^\bullet(\subedges)$. The $\CKS$-grading is indicated on the left, while the number $|J|$ of deleted edges is indicated on the top row. The cohomological grading is the sum of these numbers.} 
	\end{figure}

	By construction, the differential $d_{\CKS}$ takes $\CKS^{2k, l}( \Gamma )$ to $\CKS^{2k+2, l-1}( \Gamma )$, and thus preserves the subspace $\CKS_m( \Gamma ) \colonequals \oplus_{a+2b = m} \CKS^{a,b}( \Gamma )$. We thus have $\H^{i}(\CKS( \Gamma)) = \oplus_{m} \H^{i}(\CKS_m( \Gamma ))$. 
	\begin{definition}
		We call this extra grading on cohomology the $\CKS$-grading, so that $\H^{i}(\CKS_m( \Gamma ))$ has $\CKS$-degree $m$.
	\end{definition}
	
	Fix an edge $e \in \Gamma$ which is neither a loop nor a bridge. We can identify $\CKS^{\bullet - 2}( \Gamma \setminus e )$ with the subcomplex of $\CKS^{\bullet}( \Gamma )$ consisting of summands  $\bigwedge^l \mathbb{H}(\Gamma \setminus J)$ with $e \in J$. If we ignore the differential, the quotient complex is given by the summands $\bigwedge^l \mathbb{H}(\Gamma \setminus J)$ with $e \notin J$. The homotopy equivalence $\Gamma \setminus J \to (\Gamma \setminus J) / e = (\Gamma / e) \setminus J$ identifies each such summand with $\bigwedge^l \mathbb{H}((\Gamma / e) \setminus J)$; the quotient complex therefore has the same underlying graded vector space as $\CKS^{\bullet}(\Gamma / e)$. The differentials also match, and thus we have
	\begin{equation} \label{CKSshortexact}
		0 \to \CKS^{\bullet - 2}(\Gamma \setminus e) \to \CKS^{\bullet}(\Gamma) \to \CKS^{\bullet}(\Gamma / e) \to 0 
	\end{equation}
	\begin{definition} 
		The resulting long exact sequence 
		\begin{equation} \label{CKSsequence}
			\to \H^{\bullet-2}(\CKS( \Gamma \setminus e )) \xrightarrow{a^{\CKS}_e} \H^{\bullet}(\CKS( \Gamma ) )\xrightarrow{b^{\CKS}_e} \H^{\bullet}(\CKS( \Gamma / e ) ) \xrightarrow{c^{\CKS}_e} 
		\end{equation}
		is the $\CKS$ deletion-contraction sequence. 
		
	\end{definition}
	With a view to applying Definition \ref{def:delfiltration}, we consider the following more general situation. Suppose $\Gamma'$ is a connected subgraph of $\Gamma$ whose complement contains no self-edges. Then we likewise have a subcomplex $\CKS^{\bullet - 2|\Gamma \setminus \Gamma'|} \to \CKS^{\bullet}(\Gamma)$, given by all summands $\bigwedge^l \mathbb{H}(\Gamma \setminus J)$ with $\Gamma' \supset \Gamma \setminus  J$. The induced map on cohomology can be written as the composition, in any order, of the maps $a_e^{\CKS}$ for $e \in \Gamma \setminus \Gamma'$. In particular, the compositions in different orders are all equal.

	We can thus make the following special case of Definition \ref{def:delfiltration}.
	\begin{definition} \label{def:cksdeletion}
		The $\CKS$-deletion filtration is the filtration defined by Definition \ref{def:delfiltration}, where the covariant functor $A$ takes $\Gamma$ to $\H^{\bullet}(\CKS(\Gamma))$ and takes $\Gamma' \to \Gamma$ to the composition of $a_e^{\CKS}$ (in any order) for $e \in \Gamma \setminus \Gamma'$. 
	\end{definition}
	By construction, the maps $b_e^{\CKS}$ and $c_e^{\CKS}$ respect the $\CKS$-grading, whereas the map $a_e^{\CKS}$ increases the grading by one. Hence the $k$th step $\delfilt_k \H^i(\CKS ( \Gamma ))$ lies in the subspace of $\CKS$-degree $\geq i-k$. The reverse inclusion is clear, and thus $\delfilt_k \H^i(\CKS( \Gamma )) = \oplus_{m \leq 2i-k} \H^i(\CKS_m( \Gamma ))$. 
	
	\begin{corollary} \label{cor:cksfilfromcksgrad}
		The $\CKS$-deletion filtration is induced by the $\CKS$-grading on $\H^{\bullet}(\CKS ( \Gamma ))$.
	\end{corollary}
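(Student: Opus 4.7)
The corollary asserts that the $\CKS$-deletion filtration coincides with the filtration obtained by truncation in the $\Upsilon$-grading. The plan is to exhibit this coincidence by verifying the equality
\[
\delfilt_k \H^i(\CKS(\Gamma)) = \bigoplus_{m \leq 2i-k} \H^i(\CKS_m(\Gamma))
\]
stated in the preceding paragraph; once it holds, the right-hand side is tautologically the filtration induced by the $\Upsilon$-grading, and the corollary follows.

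\medskip

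\textbf{Forward inclusion.} I would read off how the three maps $a_e^{\CKS}, b_e^{\CKS}, c_e^{\CKS}$ of (\ref{CKSsequence}) interact with the $\Upsilon$-grading. The short exact sequence (\ref{CKSshortexact}) is bigraded: $\CKS^{\bullet-2}(\Gamma\setminus e)$ sits inside $\CKS^\bullet(\Gamma)$ as the summands with $e \in J$, and the identification $J' \leftrightarrow J' \cup \{e\}$ raises $|J|$ by one while preserving the wedge degree, hence raises the $\Upsilon$-degree $2|J|+2l$ by one step. The quotient map $b_e^{\CKS}$ and the connecting map $c_e^{\CKS}$ thus preserve $\Upsilon$-degree on cohomology. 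Iterating the shift along any chain of edge deletions, the image of $A(\Gamma')$ for a subgraph $\Gamma'$ with $k$ removed edges lands in $\Upsilon$-degrees bounded by $2i-k$, giving the forward inclusion.

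\medskip

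\textbf{Reverse inclusion.} Since $d_{\CKS}$ preserves the $\Upsilon$-grading, the complex decomposes as $\CKS^\bullet(\Gamma) = \bigoplus_m \CKS_m(\Gamma)$ and $\H^i(\CKS(\Gamma)) = \bigoplus_m \H^i(\CKS_m(\Gamma))$. Within $\CKS_m(\Gamma)$, cochains in cohomological degree $i$ are concentrated on summands with $|J|$ determined by $(i,m)$; for $m \leq 2i-k$ this value is at least $k$. Any cocycle in this piece lies in the subcomplex $\sum_{|J_0|=k} \operatorname{image}\bigl(\CKS^{\bullet-2k}(\Gamma\setminus J_0) \hookrightarrow \CKS^\bullet(\Gamma)\bigr)$, and exploiting that $d_{\CKS}$ strictly increases $|J|$ by one, such a cocycle can be rewritten as a sum of cocycles each coming from an individual $\CKS(\Gamma\setminus J_0)$; this realizes the class in $\delfilt_k$.

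\medskip

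The main technical hurdle is the rearrangement step in the reverse inclusion: summands in the chain complex can naively be distributed over various $J_0 \subseteq J$ of size $k$, but the resulting pieces are in general not cocycles unless carefully chosen. This is to be handled by an induction along the filtration $F^p \CKS^\bullet(\Gamma) = \{|J| \geq p\}$, which is strictly compatible with the differential (since $d_{\CKS}$ raises $|J|$ by exactly one). Once both inclusions are established, the stated formula is precisely the statement that $\delfilt_\bullet$ is induced by the $\Upsilon$-grading.
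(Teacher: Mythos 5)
Your forward inclusion follows the paper's argument: the maps $b_e^{\CKS}$ and $c_e^{\CKS}$ preserve the $\Upsilon$-grading while $a_e^{\CKS}$ shifts it, so iterated deletions bound the $\Upsilon$-degree of $\delfilt_k$, and this is exactly what the text before the corollary does. The problem is the reverse inclusion. The paper dismisses it as ``clear,'' and you correctly sense that it is not, but your attempted argument has precisely the gap you flag at the end. If $z = \sum_{|J|=p} z_J$ is a cocycle in $\CKS_m^i(\Gamma)$, then $d_{\CKS}(z)=0$ only constrains the \emph{sum} $\sum_{e\in J_+} d_e(z_{J_+\setminus e})$ to vanish; the individual $z_J$ need not be cocycles in the subcomplex $\CKS^{\bullet-2p}(\Gamma\setminus J)$, and it is false for a generic complex with this bigraded shape that a cocycle can be redistributed, modulo coboundaries, into per-summand cocycles. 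Your proposed ``induction along $F^p\CKS^\bullet(\Gamma)=\{|J|\geq p\}$'' is an intention, not an argument: you would need to show that $\bigoplus_{|J_0|=p}\H^{\bullet}(\CKS(\Gamma\setminus J_0))$ surjects onto $\H^{\bullet}(F^p\CKS_m)$, which is exactly the nontrivial content. As written, this is a genuine gap.

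Separately, you inherit an indexing error from the passage you are trying to reconstruct. With $i = 2|J| + l$ and $m = 2|J| + 2l$, one has $|J| = i - m/2$, so the condition $m \leq 2i - k$ gives $|J| \geq k/2$, not $|J| \geq k$ as you assert. The formula that is consistent with the normalization (for a graph with only loops and bridges one needs $\delfilt_{i-1}\H^i = 0$ and $\delfilt_i\H^i = \H^i$, and in that case all cohomology lives in $\CKS$-degree $2i$) is $\delfilt_k\H^i(\CKS(\Gamma)) = \bigoplus_{m\leq 2k}\H^i(\CKS_m(\Gamma))$, equivalently the subspace with $|J|\geq i-k$; the exponent $2i-k$ in the source is almost surely a misprint for $2k$. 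You should fix the indexing before attempting the reverse inclusion, since otherwise even the bookkeeping of which $|J_0|$'s to sum over is off by a factor of two.
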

	Viewed as a sequence of filtered vector spaces, the maps of a graded sequence strictly preserve the filtrations. 
	\begin{corollary} \label{cor:cksstrictness}
		The $\CKS$ deletion-contraction sequence strictly preserves the $\CKS$-filtration.
	\end{corollary}

	\section{Generalised moment maps} \label{sec:amodg} \label{sec:moment} 
	Let us review moment maps.  
	A symplectic form $\omega$ on a manifold $X$ determines a map $f \mapsto \omega^{\#} df$ 
	from functions to vector fields.  Fixing a Lie algebra $\mathfrak{g}$, 
	we get similarly a map from $\mathfrak{g}^*$-valued functions
	to $\mathfrak{g}^*$-valued vector fields.  When such a resulting vector field integrates to the action
	of a Lie group $G$, said action is termed Hamiltonian, the function is termed
	the moment map, and is $G$-equivariant  (with respect to the coadjoint action on $\mathfrak{g}^*$).  
	In case $f$ is submersive over $0$, and $G$ acts freely on $f^{-1}(0)$, 
	the symplectic reduction $X/\!/G := f^{-1}(0)/G$ inherits a symplectic structure.  More generally
	we may reduce along coadjoint orbits; $X/\!/_O G = f^{-1}(O)/G$. 
	If in addition $X$ carried a K\"ahler structure and $G$ acts by isometries, the reduction
	inherits a K\"ahler structure as well.  If X carries a hyperk\"ahler structure and a $G$ action which
	is independently Hamiltonian for the K\"ahler forms $\omega_I, \omega_J, \omega_K$ 
	with moment maps $f_I, f_J, f_K: X \to \mathfrak{g}^*$, then there is under analogous
	conditions a hyperk\"ahler reduction $X /\!/\!/\!/ G := (f_I^{-1}(0) \cap f_J^{-1}(0) \cap f_K^{-1}(0)) / G$. 
	
	In the present article we will only be concerned with $G$ commutative.  Then a 
	map $X \to \mathfrak{g}^*$ is $G$-equivariant iff it is constant on orbits;  
	the coadjoint orbits are just the points of $\mathfrak{g}^*$. 
	
	There is a related notion of group valued moment map \cite{AMM}; in general this is somewhat
	sophisticated; here we will need only the commutative case, which is much simpler.  
	For $G$ a connected commutative Lie group, note that its Lie algebra $\mathfrak{g}$ is also
	its universal cover.  In this case, given a $G$ action on $X$, we say $f: X \to G$ is a multiplicative moment map 
	if it is constant on fibers and the map $\tilde{f}: X \times_G \mathfrak{g} \to \mathfrak{g} \cong \mathfrak{g}^*$ is a moment
	map for the natural $G$ action on $X \times_G \mathfrak{g}$.  One defines reduction as
	for ordinary moment maps, and the usual proofs of compatibility of reduction
	with K\"ahler or hyperk\"ahler structure go through unchanged (e.g. \cite[Theorem 3.1]{HKLR}). 
	
	The various above notions play a prominent role in the literature on character varieties 
	and related spaces.  The spaces we will consider in this paper also have 
	such Hamiltonian structures. The constructions we perform with them will require and retain such structures, 
	but often at intermediate stages will not  
	be (quasi-)Hamiltonian or hyperk\"ahler, in particular due to the group action being too small or the target of the moment 
	map too large.  E.g., we may have a subgroup $H \subset G$ and be interested in $\mu^{-1}(O) / H$.
	If $G$  and $H$ are abelian, this retains a Hamiltonian action of $G/H$.
	
	What will always be present is the structure of the action of a (commutative) Lie group $G$ and 
	a map to another commutative Lie group $M$, constant on fibers.  
	Here we develop some basic manipulations of such structures.

	\subsection{\qh-spaces} \label{qhm}

	Recall that for a group $G$, we say a space $X$ is a $G$-space if it carries a $G$-action.  
	
	\begin{definition}
		Fix a group $G$ and a $G$-space $M$.  By a \qh-space, we mean a $G$-space $X$ and a 
		$G$-equivariant map $\mu_X: X \to M$.  A morphism of \qh-spaces is a G-equivariant map $f: X \to Y$ 
		such that $\mu_Y = \mu_{X} \circ f$.
	\end{definition}
	
	\begin{remark} \label{rem: appropriate}
		We write `space' above to mean element in some appropriate category which will be clear from context.  
		For us this will always be a category
		of smooth manifolds, possibly with extra structure, e.g. complex manifolds, K\"ahler manifolds, etc.  
		
		For instance, if we say that $X$ is a complex $(G, M)$-variety, we mean
		that $G$ is a complex algebraic group, $M$ is a complex variety, and the $G$-action 
		and map $\mu_{X}$ are algebraic. 
	\end{remark}
	
	\begin{remark}
		We soon (in Convention \ref{conv: commutative and trivial}) impose much stricter requirements on the allowable $G, M$: 
		we will require $G$ commutative and the $G$-action on $M$ trivial. 
		We begin in the present generality only for the sake of making clear when these hypotheses become relevant, 
		namely in Lemma \ref{lem:qhmstronconv}. 
	\end{remark}

	\begin{example}
		If $M$ has a distinguished point $0 \in M$, then we write 
		$\mathbf{0} := \mathbf{0}_{(G, M)}$ for the $(G, M)$-space given by a
		point carrying the trivial $G$ action and whose
		image under the map $\mu$ has image $0 \in M$. 
	\end{example}
	
	\begin{example} 
		If $X$ is a space with a $G$-action, and $O$ is a space with a map to $M$,
		we write $[X \times O]$ for the \qh-space whose underlying space is 
		$X \times O$, on which $G$ acts by multiplication on the first factor and trivially on the second, 
		equipped with the map $\mu\colon X \times O \to O$ via the second projection. 
		
		In particular, we will often consider $[G \times M]$ where $G$ acts by (left) translation on the first factor.
	\end{example}
	
	\begin{example}
		Any $G$-stable subset of a $(G, M)$-space inherits a $(G, M)$-structure. In particular, given a $(G, M)$-space $X$ and any subset $O \subset M$, 
		the space $\mu_X^{-1}(O) \subset X$ carries a natural $(G, M)$-structure.
	\end{example}
	
	\begin{remark}
		Recall that a symplectic manifold carries a canonical Poisson structure, and that there is a 
		standard Poisson structure defined on $\mathfrak{g}^*$. 
		A $(G, \mathfrak{g}^*)$-structure on a manifold $X$ arises from a Hamiltonian $G$-action 
		for the symplectic form $\omega$
		iff the $G$-equivariant $\mu: X \to \mathfrak{g}^*$ is Poisson.  
	\end{remark}

	\begin{definition} \label{def:basechange}
		Fix a group $G$ and a $G$-space $M$.  
		Suppose given a group homomorphism $\rho: H \to G$, an $H$-space $N$, and a morphism of $H$-spaces 
		$\tau: M \to N$.   Then composition with $(\rho, \tau)$ determines a functor from $(G, M)$-spaces to 
		$(H, N)$-spaces. 
	\end{definition}
	
	\begin{example}
		If $H$ is a Lie subgroup of $G$, then the natural maps $H \to G$ and $\mathfrak{g}^* \to \mathfrak{h}^*$ 
		determine as above a functor from $(G, \mathfrak{g}^*)$-spaces to $(H, \mathfrak{h}^*)$-spaces.  
		This functor evidently takes Hamiltonian $G$-structures to Hamiltonian $H$-structures.

		However, we also have a functor from $(G, \mathfrak{g}^*)$-spaces to $(H, \mathfrak{g}^*)$-spaces, just by restriction
		of the group action.  
	\end{example}
	
	\subsection{Reduction}
	
	\begin{definition}
		Let $X$ be a \qhm.  For a $G$-invariant point $m \in M$, we define 
		\[ X \sslash_{m} G \colonequals \mu_X^{-1}(m) / G. \]
		If we wish to emphasize $M$, we write 
		$X \sslash_{m \in M} G$. 
	\end{definition}
	\begin{remark}
		Whenever we write $\mu_X^{-1}(m) / G$, we are implicitly asserting that the quotient 
		makes sense.  More precisely, since we are always working with smooth manifolds, 
		we require that $m$ is a regular value of $\mu$ and that the $G$ action on $\mu_X^{-1}(m)$ 
		is free.  
	\end{remark} 
	\begin{remark}
		In our applications, the $G$ action on $M$ is trivial, i.e. every point $m \in M$ is invariant.  
		(When the $G$ action is not trivial, it is natural to also allow invariant subsets in place of $m$.) 
	\end{remark}
	\begin{remark} When $X$ is a Hamiltonian $G$-space and $\mu_X: X \to \mathfrak{g}^*$ is the moment map, 
		this is the classical symplectic reduction. 
	\end{remark}

	Let us note the following elementary compatibilities:

	\begin{lemma} \label{lem:basechange2}
		An injection $H \hookrightarrow G$ determines a surjection 
		$X \sslash_{m} H  \to  X \sslash_{m} G$. 
	\end{lemma}

	\begin{lemma} \label{lem:basechange1}
		Let $\tau: M \to N$ be a morphism of $G$-spaces.  
		Let $m \in M$.  For a $(G, M)$-space $X$, there is a Cartesian square
		\begin{equation} 
			\begin{tikzcd}
				X \sslash_{m} G \arrow[hookrightarrow]{r} \arrow[d] & X \sslash_{\tau(m)} G \arrow[d] \\
				m \arrow[hookrightarrow]{r} & \tau^{-1}\tau(m) 
			\end{tikzcd}
		\end{equation}
	\end{lemma}
	
	\begin{remark}
		It will later be relevant that if $\tau: M \to N$ is a group homomorphism, then 
		$\tau^{-1}\tau(m)$ is a torsor for the kernel. 
	\end{remark}

	\subsection{Convolution of \qhm s} \label{sec:convolution}

	\begin{definition} \label{def: convolution}
		Let $M, N$ be $G$-spaces with a $G$-equivariant morphism $c: M \times N \to N$.  
		Then from a $(G, M)$-space $X$ and a $(G, N)$-space $Y$, we construct a $(G, N)$-space 
		$X \bullet Y$ as follows.  The underlying space is $X \times Y$, and: 
		$$g(x, y) = (gx, gy) \qquad \qquad \mu_{X \bullet Y}(x, y) = c(\mu_X(x), \mu_Y(y))$$
		For $n \in N^G$, we also write
		$$X \star_{G, N, n} Y \colonequals  (X \bullet Y) \sslash_n G = \mu_{X \bullet Y}^{-1}(n) / G$$
	\end{definition}
	
	If the choice of $G$, $N$, or $n$ is clear from context, we may omit them. 
	In particular, we will uniformly employ the abbreviations $\star_{\starGG} \colonequals \star_{\Gm, \Gm}$, $\star_{\starUU} \colonequals \star_{\U_1, \U_1}$ and $\UUCstar\colonequals \star_{\U_1, \uutimesC}$. Here $\uu$ denotes the unit circle.
	
	\begin{remarque}
		Through this text we are working with smooth manifolds.  In this context, 
		the notation $X \star_n Y$ implicitly asserts that $n$ is a regular value for the map 
		$c(\mu_X, \mu_Y) \colon X \times Y \to N$ and that $G$ acts freely on $c(\mu_X, \mu_Y)^{-1}(n)$. 
	\end{remarque}

	\begin{remark}
		As mentioned above, in the remaining chapters we always have $G$ commutative and acting trivially on $M$ and $N$.
		The additional structures needed in the above definition will always come from $M$ being a commutative group,
		and $N$ being an $M$-torsor (and usually $N = M$). 
	\end{remark}

	\begin{lemma} \label{lem:qhmstronconv}
		In the setting of Def. \ref{def: convolution}, 
		assume in addition that $G$ is commutative and acts trivially on $M$.  
		Then we may equip $X \star Y$ with a $(G, M)$ structure by defining
		$g (x, y) = (gx, y)$ and $\mu_{X \star Y}(x, y) = \mu_X(x)$. 
	\end{lemma}
	\begin{proof}
		The main point is to check that the written formulas, which are well defined on $X \bullet Y$,
		descend to the quotient.  We should check that $(gx, y)$ and $(ghx, hy)$ are in the same $G$-orbit
		in $X \bullet Y$; this follows from commutativity of $G$.  We should check that 
		$\mu_X(hx) = \mu_X(x)$; this follows from equivariance of $\mu$ and triviality of the $G$-action on $M$. 
	\end{proof}
	
	\begin{remarque}
		There is an $(G, N)$-space isomorphism $X \bullet Y \cong Y \bullet X$.
		If, in addition to the hypotheses of Lemma \ref{lem:qhmstronconv}, 
		we have $N = M$ and the map $c: M \times M \to M$ is commutative, then 
		there is an isomorphism of spaces $\phi: X \star_m  Y \cong Y \star_m X$ for any $m$.  If the binary operation defined by $c$ has inverses, then the $(G, M)$-structures on $X \star_m Y$ and $Y \star_m X$ are related
		by $\phi(g\cdot (x,y)) = g^{-1} \cdot \phi(x, y)$ and 
		$\mu_{X \star Y} = m - \mu_{Y \star X}$. 
	\end{remarque}
	
	\begin{convention}\label{conv: commutative and trivial} 
		For the remainder of the article, when we discuss $(G, M)$ spaces, $G$ will always be a commutative group,
		and the $G$-action on $M$ will be trivial.  When we discuss convolution, we always additionally require $M = N$
		and ask that the map $c: M \times M \to M$ define the structure of a commutative group. 
	\end{convention}

	\begin{example} \label{starwithapoint}
		Recall that $\mathbf{0} = \mathbf{0}_{(G, M)}$ denotes the point with trivial $G$ action and $\mu = 0 \in M$.  For
		any $(G,M)$-space $X$, and any $m \in M$, 
		we have 
		\[ X \star_{m} \mathbf{0} = X \sslash_{m} G =  \mathbf{0}  \star_{m} X\]
		Lemma \ref{lem:qhmstronconv} asserts that the resulting space should acquire a $(G, M)$-structure. 
		Note the resulting $G$ action is trivial, and the map $\mu$ is the constant map with value $m$ on the left, 
		or $0$ on the right. 
	\end{example}

	\subsection{Some convolution lemmas}
	
	\begin{lemma} \label{lem:functoriality}
		Given a $(G, M)$-space $X$ and a \qh-map $f \colon Y \to Y''$, the formula $x \times y \mapsto x \times f(y)$ restricts
		and descends to give a  \qh-map
		\[ f_X \colon X \star Y \to X \star Y' \]
	\end{lemma}
	\begin{proof}
		If $x \times y \in X \times Y$ lies in $\mu_{X \bullet Y}^{-1}(\zeta)$, then $x \times f(y)$ lies in $\mu_{X \bullet Y'}^{-1}(\zeta)$ since $f$ preserves the moment map. The resulting map $id \times f  \colon \mu_{X \bullet Y}^{-1}(\zeta) \to \mu_{X \bullet Y'}^{-1}(\zeta)$ is $G$-equivariant, and thus descends to the quotients.
	\end{proof}
	\begin{remarque} \label{rem:compactexhaustion}
		Suppose $Y$ and $Y'$ and $X$ have exhaustions by compact $G$-equivariant subsets, compatible with the map $f$. Then $f_X$ is compatible with the 
		exhaustions of $X \star Y$ and $X \star Y'$ obtained from taking cartesian products of compact sets. 
	\end{remarque}
	
	\begin{lemma} \label{lem:injsur}
		If $f \colon Y \to Y'$ is injective (resp surjective), then $f_X \colon X \star Y \to X \star Y'$ is injective (resp surjective).
	\end{lemma}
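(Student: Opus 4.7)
The plan is to unwind the definitions and work directly at the level of the preimage $\mu^{-1}(\zeta) \subset \bX \bullet \bY$ before passing to the $\G$-quotient. Recall that by definition, points of $\bX \star \bY$ are $\G$-orbits of pairs $(x,y) \in \bX \times \bY$ with $\mu_\bX(x) + \mu_\bY(y) = \zeta$, and $f_\bX$ is the map induced by $\mathrm{id} \times f$. The fact that $f$ is a $(\G, \mom)$-map will be used twice: once as $\mu_{\bY'} \circ f = \mu_\bY$ (so $(\mathrm{id} \times f)$ sends the level set into the level set), and once as $g \cdot f(y) = f(g \cdot y)$ for $g \in \G$.

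For surjectivity, I would start with a class $[x, y'] \in \bX \star \bY'$ represented by $(x, y') \in \mu_{\bX \bullet \bY'}^{-1}(\zeta)$. Pick any preimage $y \in f^{-1}(y')$, which exists by assumption. Because $f$ preserves the moment map, $\mu_\bY(y) = \mu_{\bY'}(y')$, so $\mu_\bX(x) + \mu_\bY(y) = \zeta$; that is, $(x, y)$ lies in the relevant level set and represents a class in $\bX \star \bY$ mapping to $[x, y']$.

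For injectivity, take two classes $[x_1, y_1], [x_2, y_2] \in \bX \star \bY$ with $[x_1, f(y_1)] = [x_2, f(y_2)]$ in $\bX \star \bY'$. Then there exists $g \in \G$ with $g \cdot x_1 = x_2$ and $g \cdot f(y_1) = f(y_2)$. By $\G$-equivariance of $f$ the latter equation reads $f(g \cdot y_1) = f(y_2)$, and injectivity of $f$ gives $g \cdot y_1 = y_2$. Hence $(x_1, y_1)$ and $(x_2, y_2)$ lie in the same $\G$-orbit, so they represent the same class in $\bX \star \bY$.

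There is no real obstacle here; the statement is a formal consequence of the definitions. The only thing to be careful about is that both properties ($\G$-equivariance and moment-map compatibility) of a $(\G, \mom)$-map are invoked, and that the argument does not require $f$ to be a bijection simultaneously (injectivity and surjectivity are treated separately and independently).
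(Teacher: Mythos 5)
Your proof is correct and follows the same element-chasing strategy as the paper: check that $\mathrm{id}\times f$ preserves the level set of the moment map (using $\mu_\bY = \mu_{\bY'}\circ f$), then verify that injectivity and surjectivity survive the passage to $\G$-orbits. One small improvement over the paper: its final step invokes freeness of the $\G$-action on both level sets, but your injectivity argument via $\G$-equivariance of $f$ shows that freeness is not actually needed for either direction.
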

	\begin{proof}
		If $f$ is injective (resp surjective), then so is $f \times id  \colon X \times Y \to X \times Y'$. Injectivity is clearly preserved by 
		restriction to $\mu_{X \bullet Y}^{-1}(\eta)$ and $\mu_{X \bullet Y'}^{-1}(\eta)$. To see that surjectivity is also preserved, note 
		that if $\mu(x, y') = \eta$, then for any preimage $y$ of $y'$, $\mu(x,y) = \eta$.  Finally, passing to $G$-quotients
		preserves injectivity and surjectivity of $G$-equivariant maps. 
	\end{proof}
	
	\begin{lemma} \label{lem:clopen}
		If $f : Y \to Y'$ is an open (resp closed) embedding, then $f_X \colon X \star Y \to X \star Y$ is an open (resp closed) embedding.
	\end{lemma}
	\begin{proof}
		By Lemma \ref{lem:injsur}, it is enough to check that if $Y \subset Y'$ is open, then $X \star Y \subset X \star Y'$ is open. The opens of $X \star Y'$ are given by intersecting $G$-invariant opens of $X \times Y'$ with $\mu^{-1}_{X \bullet Y'}(\eta)$. In particular $X \star Y = X \times Y \cap \mu^{-1}_{X \bullet Y'}(\zeta) / G$ is open. 
	\end{proof}
	
	\begin{lemma} \label{lem:freequotienttrivialized} \label{lem:freequotienttrivialized2}
		Let $X$ be a $(G, M)$ space.  
		Let $G$ act freely on $T$, and let $O \subset M$.  Then $$X \star_\zeta [T \times O] = \mu_X^{-1}(\zeta - O) \times_G T$$ and is naturally a $\mu_X^{-1}(\zeta-O)$-bundle over $T/G$ with structure group $G$. 
	\end{lemma}
	\begin{proof}
		We have $\mu_{X \bullet [T \times O]}^{-1}(\zeta) = \mu_X^{-1}(\zeta - O) \times T$. Taking the quotient by $G$ on both sides gives the desired identification.
	\end{proof} 
	
	We often use the following very special case: 
	
	\begin{corollary} \label{cor:bulletslice} \label{cor:bullettriv} \label{cor:bulletunit}
		Let $X$ be a $(G, M)$-space.  For any $O \subset M$, there is a canonical isomorphism
		of $(G, M)$-spaces: 

		\[ X \star_{\zeta} [G \times O] \cong \mu_X^{-1}(\zeta - O). \]
		In particular, for any $\zeta \in M$, there is a canonical isomorphism $X \star_{\zeta} [G \times M] = X$. 
	\end{corollary}

	\begin{lemma} \label{lem:abstractfactor2}  
		Let $X_1$ be a $(G_1 \times G_2, M_1 \times M_2)$-space, and let $X_2$ be a $(G_2, M_2)$-space.  View $X_1 \times X_2$ as a 
		$(G_1 \times G_2, M_1 \times M_2)$-space, where 
		$$(g_1, g_2) \cdot (x_1, x_2) := ( (g_1, g_2) \cdot x_1, g_2 \cdot x_2)$$
		$$\mu_{X_1 \times X_2}(x_1, x_2) := \mu_{X_1}(x_1) + (0, \mu_{X_2}(x_2))$$
		
		Then for $\zeta_1 \in M_1$ and $\zeta_2 \in M_2$, 
		\begin{equation} \label{eq:abstractfactor2} (X_1 \times X_2) \sslash_{( \zeta_1, \zeta_2)}  G_1 \times G_2 = 
			(X_1 \sslash_{\zeta_1} G_1) \star_{G_2, M_2, \zeta_2} X_2. \end{equation}
	\end{lemma}

	\section{Spaces from graphs} \label{sec:constructmoduli} \label{sec:graphspace}
	
	Fix commutative connected Lie groups $G, M$.  We regard $G$ as acting trivially on $M$. 
	Let $Z$ be a $(G, M)$-space. 
	Let $\Gamma$ be a connected graph, and $\eta$ an assignment of an element of $M$ to each vertex.  
	From this data, we will produce a new space $Z(\Gamma , \eta)$.

	\subsection{Construction} \label{subsec:spacesfromgraphs}
	Recall from Section \ref{sec: graph conventions} above our conventions and notation for graphs. 
	
	Given a $(G, M)$-space $Z$, there is an action of $C^1(\Gamma, G) = G^{\ed}$ on $Z^{\ed}$, together with a map $\mu_{Z}^{E(\Gamma)} \colon Z^{\ed} \to M^{\ed} = C_1(\Gamma, M)$. Thus $Z^{\ed}$ is a $(C^1(\Gamma, G), C_1(\Gamma, M))$-space. 
	
	We view $Z^{\ed}$ as a $(\redC^0(\Gamma\homsep G), \redC_0(\Gamma\homsep M))$-space by composition with $(d_\Gamma^*, d_\Gamma)$ as in Definition \ref{def:basechange}. We write $\mu_\Gamma : Z^{\ed} \to \redC_0(\Gamma, M)$ for the associated map. 
	By definition, we have $\mu_\Gamma = d_\Gamma \circ \mu_{Z}^{E(\Gamma)}$.

	\begin{definition} \label{definitionofgraphspace}
		Let $\Gamma$ be a graph, let $\eta \in \redC_0(\Gamma, M) \subset  C_0(\Gamma, M) = M^{\ver}$, 
		and let $Z$ be a $(G, M)$-space.  We define: 
		\[ Z(\Gamma , \eta) \colonequals Z^{\ed} \sslash_{\eta} \redC^0(\Gamma\homsep G).\]
	\end{definition} 
	
	When the choice of $\eta$ is clear from context or irrelevant,  we simply write $Z(\Gamma)$. 
	On the other hand, if we wish to emphasize the dependence on $(G, M)$, we write $Z^{G,M}(\Gamma, \eta)$. 
	In Section \ref{sec: smoothness}, we discuss criteria which ensure that $\eta$ is a regular point of $\mu_\Gamma$ and 
	that the $\redC^0(\Gamma\homsep G)$-action is free, hence that $ Z(\Gamma , \eta) $ is a smooth manifold.

	The following diagram summarizes the situation, and defines the map $\mu_{res}$:
	\begin{equation}
		\begin{tikzcd}
			Z(\Gamma , \eta) \arrow[r, equal] \arrow[d, "\mu_{res}"] & 
			\mu_{\Gamma}^{-1}(\eta) / \redC^0(\Gamma, G)   \arrow[d] & 
			\mu_{\Gamma}^{-1}(\eta)  \arrow[r, hook] \arrow[d] \arrow[l] & 
			Z^{\ed} \arrow[d, "\mu_{Z}^{E(\Gamma)}"] \\
			\H_1(\Gamma \homsep M)_{\eta} \arrow[r, equal] & 
			d_{\Gamma}^{-1}(\eta) \arrow[r, equal] &
			d_{\Gamma}^{-1}(\eta)  \arrow[r, hook] &
			C_1(\Gamma\homsep M)
		\end{tikzcd}
	\end{equation}

	\begin{example} Since $d_{\boing}=0$, we have
		$Z = Z(\boing, 0)$.
	\end{example}
	
	\begin{lemma} \label{lem:loopsandotherproducts}
		Suppose $\Gamma$ contains a loop $e$. Then $Z(\Gamma) = Z(\Gamma \setminus e) \times Z(\boing)$.
	\end{lemma}
	\begin{proof}
		This follows  from the fact that $\redC^0(\Gamma, G)$ acts trivially on the factor of $Z$ associated to $e$.
	\end{proof}

	\begin{proposition} \label{prop: h1h1 structure} 
		The action of $C^1(\Gamma\homsep G)$ on $Z^{\ed}$ descends to an 
		action of 

		$\H^1(\Gamma \homsep G)$ on $Z(\Gamma, \eta)$.  Combined with the map $\mu_{res}\colon Z(\Gamma , \eta) \to \H_1(\Gamma \homsep M) _{\eta}$
		above, this defines a $(\H^1(\Gamma \homsep G), \H_1(\Gamma \homsep M)_\eta)$-space structure on $Z(\Gamma , \eta)$. 
		Finally, if $\mu_Z$ is proper, then so is $\mu_{\operatorname{res}}$.
	\end{proposition}
	\begin{proof}
		Regarding properness, note that it is 
		preserved both by restriction to the closed set $\mu_{\Gamma}^{-1}(\eta)$ and by descent to the quotient by $\overline{C}^0(\Gamma, G)$. 
	\end{proof}
	
	\begin{lemma} \label{lem:disconnectedgraphs}
		For $\Gamma$ disconnected and $\eta \in \redC_0(\Gamma,M)$ generic in the sense of Definition \ref{defsimplemomentvalue}, we have $Z(\Gamma, \eta) = \emptyset$.
	\end{lemma} 
	\begin{proof}
		The locus $\H_1(\Gamma, A)_\eta$ is empty by Remark \ref{rem: disconnected generic}. Since $Z(\Gamma, \eta)$ is a quotient of a preimage of this locus in $Z^{\ed}$, it is also empty. 
	\end{proof}
	\begin{remark}
		Since we will always assume that $\eta$ is generic, we will always have $Z(\Gamma, \eta) = \emptyset$ for $\Gamma$ disconnected. Such `empty' graph spaces arise naturally if we start with a connected graph $\Gamma$ with generic $\eta$, and produce a disconnected graph by deleting a bridge, with parameter $\eta \setminus e$ as in Definition \ref{def:delconteta}.
	\end{remark}

	\begin{lemma} \label{ex:cohomologicalquotient} 
		Let $Z = [G \times M]$.  Then $Z(\Gamma, \eta) \cong [\H^1(\Gamma\homsep G) \times  \H_1(\Gamma\homsep M)_\eta]$
	\end{lemma}
	\begin{proof}
		We identify $[G \times M]^{\ed} = [C^1(\Gamma, G) \times C_1(\Gamma, M)]$. Then $\mu_\Gamma^{-1}(\eta) = [G^{\ed} \times H_1(\Gamma, M)_\eta]$, and $\mu^{-1}(\eta)/\redC^0(\Gamma,G) = [H^1(\Gamma, G) \times H_1(\Gamma, M)_\eta]$.

	\end{proof}
	
	\subsection{Independence of orientation} \label{sec:indeporient}
	Pick a subset $J$ of the edges of $\Gamma$, and let $\Gamma'$ be the oriented graph obtained from $\Gamma$ by switching the orientation of each edge in $J$. 
	\begin{proposition} \label{prop:indeporient}
		Let $f: Z \to Z$ be an automorphism of topological spaces which intertwines the $\G$-action with the inverse $\G$ action and the $\A$-map with the inverse $\A$-map. Then $f$ determines an isomorphism of topological spaces
		\[ Z(\Gamma) \xrightarrow{\sim} Z(\Gamma'). \]
		If $f$ is a map of smooth manifolds or algebraic varieties, then so is the induced map.
	\end{proposition}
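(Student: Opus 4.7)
The plan is to build the isomorphism $\bZ(\Gamma) \to \bZ(\Gamma')$ by applying $f$ coordinatewise on the factors indexed by $J$. Write $\tilde{f} \colon \bZ^{\ed} \to \bZ^{\ed}$ for the map which is $f$ on each $e \in J$ factor and the identity on each $e \notin J$ factor. Since $f$ is an automorphism of topological spaces (resp.\ manifolds, resp.\ varieties), so is $\tilde f$, and it is its own inverse up to replacing $f$ by $f^{-1}$, which satisfies the same hypotheses.

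The key computation is that flipping the orientation of an edge $e$ negates its contribution to both the differential and codifferential: we have $d_{\Gamma'}(e) = -d_\Gamma(e)$ for $e \in J$, and correspondingly $d_{\Gamma'}^*(g)(e) = -d_\Gamma^*(g)(e)$. Since $f$ was assumed to intertwine the $\G$-action with its inverse and $\mu_\bZ$ with $-\mu_\bZ$, these two sign flips are exactly compensated on the $J$-coordinates when one passes through $\tilde f$. First I would check this for the moment map: at a point $z = (z_e)_e \in \bZ^{\ed}$,
\begin{align*}
 d_{\Gamma'} \circ (\mu_\bZ)^{\ed}(\tilde f (z))
 &= \sum_{e \notin J} d_{\Gamma'}(e)\, \mu_\bZ(z_e) + \sum_{e \in J} d_{\Gamma'}(e)\, \mu_\bZ(f(z_e)) \\
 &= \sum_{e \notin J} d_\Gamma(e)\, \mu_\bZ(z_e) + \sum_{e \in J} (-d_\Gamma(e))(-\mu_\bZ(z_e)) \\
 &= d_\Gamma \circ (\mu_\bZ)^{\ed}(z).
\end{align*}
Thus $\tilde f$ restricts to a bijection $\mu_\Gamma^{-1}(\eta) \to \mu_{\Gamma'}^{-1}(\eta)$ for each $\eta \in C_0(\Gamma, \A) = C_0(\Gamma', \A)$. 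The analogous check for the group action is entirely parallel: on a $J$-coordinate,
\[ \tilde f\bigl(d_\Gamma^*(g)(e)\cdot z_e\bigr) = d_\Gamma^*(g)(e)^{-1}\cdot f(z_e) = d_{\Gamma'}^*(g)(e)\cdot \tilde f(z)_e, \]
while on a non-$J$ coordinate it is tautological. Hence $\tilde f$ intertwines the $(\redC^0(\Gamma,\G), C_0(\Gamma,\A))$-structure on the source with the $(\redC^0(\Gamma',\G), C_0(\Gamma',\A))$-structure on the target (these group and torsor targets being literally the same, as they depend only on the vertex set).

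Once these equivariance statements are in place, $\tilde f$ descends along $\mu_\Gamma^{-1}(\eta) \twoheadrightarrow \bZ(\Gamma, \eta)$ to a map $\bZ(\Gamma, \eta) \to \bZ(\Gamma', \eta)$, which is an isomorphism of topological spaces with inverse induced by the analogous map built from $f^{-1}$. The smooth or algebraic structure is preserved throughout, as we have only used coordinatewise application of $f$, pulled back through closed embeddings and passed to quotients by free actions. There is no real obstacle here: the entire content of the proposition is the one-line observation that reversing orientations changes the sign of the incidence map, and the hypotheses on $f$ are tailored to absorb exactly that sign.
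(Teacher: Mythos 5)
Your proof is correct and follows exactly the paper's own approach: the paper likewise defines the map on $\bZ^{\ed}$ to be $f$ on $J$-factors and the identity elsewhere, observes it intertwines the group actions and moment maps, and descends. You have simply spelled out the sign computations that the paper leaves implicit.
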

	\begin{proof}
		The map $Z^{\ed} \to Z^{E(\Gamma')}$ given by $f$ on the factors in $J$ and by the identity everywhere else intertwines the group actions and moment maps for $\Gamma$ and $\Gamma'$, and thus descends to the requisite isomorphism.
	\end{proof}
	
	\subsection{Smoothness} \label{sec: smoothness} 
	As always, we work some category of smooth manifolds, possibly with extra structure.  Thus  $Z$ is a smooth
	manifold, and $\mu_{Z}: Z \to M$ is a $C^\infty$ map.  We impose the following additional hypothesis for the remainder of the section: 
	
	\begin{hypothesis} \label{hyp:bZsmoothness}
		The map $\mu_Z: Z \to M$ restricts to a submersion over $M \setminus 0$, on which $G$ acts freely.  
	\end{hypothesis}
	
	\begin{proposition} \label{prop:smoothcond}
		Suppose that $\eta$ is generic (Def. \ref{defsimplemomentvalue}). Then $\eta$ is a regular value of $\mu_\Gamma$, 
		and the action of $G^{\overline{\ver}}$ on $\mu_\Gamma^{-1}(\eta)$ is free.  In particular, 
		$Z(\Gamma , \eta)$ is smooth, and 
		$$\dim Z(\Gamma, \eta) = |E(\Gamma)|  \dim Z  + (1 - |V(\Gamma)|) (\dim G + \dim M)$$
	\end{proposition}
	\begin{proof}
		We begin by studying the group action. By Hypothesis \ref{hyp:bZsmoothness},  
		the action of $C^1(\Gamma \homsep G)$ on $\big( \mu_Z^{\ed} \big)^{-1}(a)$ restricts to a free action of $C^1(\mathrm{Supp}(a) \homsep G)$. Thus for $\redC^0(\Gamma\homsep G)$ to act freely on $\big( \mu_Z^{\ed} \big)^{-1}(a)$, it is sufficient that the composition 
		\begin{equation} \label{eq:injectiveseq} \redC^0(\Gamma\homsep G) \xrightarrow{d^*_{\Gamma}}  C^1(\Gamma\homsep G) \to C^1(\mathrm{Supp}(a) \homsep G)\end{equation} 
		be injective. Here the second map is the natural projection. The composition is the differential $d^*_{\mathrm{Supp}(a)}$. 
		Thus, the kernel is trivial exactly when $\mathrm{Supp}(a)$ is connected.
		
		We use a similar argument to show that the map $\mu_\Gamma \colon Z^{\ed} \to \redC_0(\Gamma\homsep M)$ is a submersion. 
		We can factor the differential
		$d \mu_\Gamma$ as $d (d_\Gamma) \circ d (\mu^{\ed})$. For any $z \in (\mu_Z^{\ed})^{-1}(a)$, the image of the differential $d \mu^{\ed}$ 
		contains the tangent space of $M^{E(\mathrm{Supp}(a))} = C_1(\mathrm{Supp}(a) \homsep M)$.  
		Dually to \ref{eq:injectiveseq}, we have a surjective composition
		$$C_1(\mathrm{Supp}(a)\homsep M) \to C_1(\Gamma\homsep M)  \to \redC_0(\Gamma\homsep M).$$ Thus $d (d_\Gamma)$ 
		is a surjection even when restricted to the tangent space of $M^{E(\mathrm{Supp}(a))}$. Thus $\mu_\Gamma$ is a submersion, 
		and $\mu_\Gamma^{-1}(\eta)$ is smooth.
	\end{proof}

	\begin{remark}
		If $\dim G + \dim M = \dim Z$, the dimension formula 
		simplifies to: $\dim Z(\Gamma, \eta) =  \dim Z \cdot \dim \H^1(\Gamma) $. 
		In fact below we will always have $\dim Z = 4$. 
	\end{remark}

	\begin{remarque} \label{rem:matroid}
		Definition \ref{definitionofgraphspace} makes sense with $d_{\Gamma}$ replaced by a general integer matrix $D: \Z^n \to \Z^k$.
		The definition of generic $\eta$ can be extended to this case, but in general such $Z(D , \eta)$ will have orbifold singularities. 
	\end{remarque}
	
	\subsection{Stabilisers}
	We consider the action of $\H^1(\Gamma, G)$ on $Z(\Gamma, \eta)$. We are interested in the stabiliser of a point $z \in Z(\Gamma, \eta)$. 
	\begin{lemma}  
		Under Hypothesis \ref{hyp:bZsmoothness}, $\H^1(\mathrm{Supp}(a) \homsep G)$ acts freely on $\mu_{\res}^{-1}(a)$ for $a \in \H_1(\Gamma \homsep \mom)_\eta$. 
	\end{lemma}
	\begin{proof}
		By definition, $\mu_{\res}^{-1}(a)$ is a $\redC^0(\Gamma, G)$-quotient of a subspace of $(M \setminus 0)^{\mathrm{Supp}(a)} \times M^{\ed \setminus \mathrm{Supp}(a)}$. By the hypothesis, $C^1(\mathrm{Supp}(a), G)$ acts freely on this locus. Hence $\H^1(\mathrm{Supp}(a), G)$ acts freely on the quotient. 
	\end{proof}
	\subsection{An open cover}  \label{sec: cover}
	
	\begin{definition} \label{def: open cover}
		Let $D \subset \mu_Z^{-1}(0)$ be a closed subset.   Then for any subgraph $\Gamma' \subset \Gamma$ we define
		\[ \hat{\openset}_{\Gamma'} := (Z \setminus D)^{E(\Gamma')} \times Z^{E(\Gamma \setminus \Gamma')} \subset Z^{\ed} \]
		We write $\openset_{\Gamma'}$ for the image of $\hat \openset_{\Gamma'} \cap \mu_Z^{-1}(0)$ in $Z(\Gamma, \eta)$.  
		Note $\openset_{\Gamma'}$ is an open subset.  
	\end{definition}

	\begin{lemma} \label{lem:pretreecover}
		Let $\mathcal{T}(\Gamma)$ be the set of spanning trees of $\Gamma$.  
		For $\eta$ generic, we have 
		$$\bigcup_{T \in \mathcal{T}(\Gamma)} \openset_T = Z(\Gamma, \eta).$$
	\end{lemma}
	\begin{proof}
		Let $x \in Z(\Gamma, \eta)$ and fix a lift $\hat{x} \in \mu_\Gamma^{-1}(\eta)$. Let $a = \mu_Z^{\ed}(\hat{x})$. By genericity of $\eta$, $\operatorname{Supp}(a)$ contains a spanning tree $T \subset \Gamma$. Thus $\hat{x} \in \hat{\openset}_{T}$ and $x \in \openset_{T}$.
	\end{proof}
	\begin{remark}
		Lemma \ref{lem:pretreecover} gives another proof that $Z(\Gamma, \eta)$ is smooth for generic $\eta$, though not
		that it is Hausdorff. 
	\end{remark}

	We will later be interested in $D$ satisfying $Z \setminus D \cong [G \times M]$.  These have the
	following additional properties: 
	
	\begin{lemma} \label{lem:treecover}
		An isomorphism $Z \setminus D \cong [G \times M]$ determines an isomorphism 
		\begin{equation} \label{eq:chartcoordinates} \openset_{T} \cong Z(\Gamma / T) = Z^{E(\Gamma / T)} \end{equation} 
	\end{lemma}
	\begin{remark}
		Note there is a canonical identification $E(\Gamma / T) = E(\Gamma \setminus T)$, and these sets are canonically identified
		with a basis of $\H^1(\Gamma)$. 
	\end{remark}
	\begin{proof}
		We have 
		\[ \openset_{\Gamma'} = [G \times M]^{E(T)} \times Z^{E(\Gamma / T)} \sslash_{\eta} \redC^0(\Gamma, G) = [G^{E(T)} \times M^{E(T)}] \star Z^{E(\Gamma / T)}. \]
		Applying Corollary \ref{cor:bulletunit} with $G' =  \redC^0(\Gamma, G) = G^{E(T)}$ and $M' = \redC_0(\Gamma, M) = M^{E(T)}$, we get a natural isomorphism
		$\openset_{T} \cong Z^{E(\Gamma / T)}$. 
	\end{proof}

	The isomorphism \eqref{eq:chartcoordinates} is easiest to describe on a slightly smaller open. We have 
	\[\bigcap_{T \in \mathcal{T}(\Gamma)} \openset_T= (Z \setminus D)^{\ed} \sslash_{\eta} \redC^0(\Gamma, G) \cong \H^1(\Gamma\homsep G) \times  \H_1(\Gamma\homsep M). \]
	
	\begin{lemma} \label{lem:coordinates} 
		For any given spanning tree $T \subset \Gamma$, there is a natural isomorphism
		\begin{equation} \label{eq:torustrivialisationingeneral}  \H^1(\Gamma\homsep G) \times  \H_1(\Gamma\homsep M) \cong (G \times M)^{E(\Gamma / T)} \end{equation}
		induced by the contraction $\Gamma \to \Gamma / T = \boing^{E(\Gamma / T)}$. If $(\theta, \gamma)$ is an element of the left-hand side, with $\theta = \sum_{e \notin T} \theta_e e$, this isomorphism takes $(\theta, \gamma)$ to $(\theta_e, \langle e, \gamma \rangle)_{e \notin T}$.
	\end{lemma}
	
	\begin{lemma} \label{lem:coordinatesoncharts}
		The isomorphism \eqref{eq:chartcoordinates} fits into the commutative diagram
		\begin{equation}
			\begin{tikzcd}
				\openset_{T} \arrow[r, "\sim"] & Z^{E(\Gamma / T)} \\
				\H^1(\Gamma\homsep G) \times  \H_1(\Gamma\homsep M) \arrow[u] \arrow[r, "\sim"]  & (G \times M)^{E(\Gamma / T)} \arrow[u] \\
			\end{tikzcd}
		\end{equation}
		where the bottom arrow is \eqref{eq:torustrivialisationingeneral}.
	\end{lemma}

	\subsection{Deletion, contraction, and convolution} \label{sec:deletioncontraction} \label{sec:gammadelcon}
	In this section we will explain how $Z(\Gamma , \eta)$ behaves under contraction (Lemma \ref{lem:uncontract}) and 
	deletion (Lemma \ref{lem:deletionfromcontraction}).  
	
	A key ingredient is a $(G, M)$-structure on $Z(\Gamma / e, \eta / e)$ associated to the newly contracted edge $e$.  
	Recall that by 
	Proposition \ref{prop: h1h1 structure}, $Z(\Gamma / e, \eta / e)$ is a $(\H^1(\Gamma / e, G), \H_1(\Gamma / e, M)_{\eta / e})$-space.  
	Recall from diagrams \ref{diag:momentrelationships0} and \ref{diag:momentrelationships} we have maps:
	$$\alpha_e \colon G \to \H^1(\Gamma / e, G) \qquad \qquad \beta_e \colon \H_1(\Gamma / e, M)_{\eta / e} \to M$$
	
	\begin{definition} \label{def:ghostaction}
		The $(G, M)$-structure on $Z(\Gamma  / e, \eta / e) $ defined by composition (Def. \ref{def:basechange}) with the maps $\alpha_e, \beta_e$ 
		will be called the deletion-contraction $(G, M)$-structure.  We denote the map to $M$ by $\mu_{(e)}: Z(\Gamma  / e, \eta / e) \to M$. 
	\end{definition}
	
	The deletion-contraction structure is related to the splitting 
	\begin{equation} \label{factone} Z^{\ed} = Z^{E(\Gamma / e)} \times Z^e. \end{equation} 
	
	Indeed, let us write $\pi_e: \Gamma \to \Gamma / e$ for the contraction.  Any vertex $v \in \Gamma$ determines 
	natural maps $v_!: G \to \redC^0(\Gamma, G)$ and $v^!: C_0(\Gamma, G) \to G$. We have an isomorphism 
	\begin{equation} \label{facttwo} 
		\pi_e^* \times t(e)_! : \redC^0(\Gamma / e, G) \times G \cong \redC^0(\Gamma, G)
	\end{equation}

	Using \ref{factone} and \ref{facttwo} we transport the $\redC^0(\Gamma, G)$-structure on 
	$Z^{\ed}$ to a $\redC^0(\Gamma / e, G) \times G$-structure on $Z^{E(\Gamma / e)} \times Z^e$.  
	The resulting $\redC^0(\Gamma / e, G)$-structure on $Z^{E(\Gamma / e)}$ and  
	$G$-structure on $Z^e$ are the standard ones.  The resulting 
	$\redC^0(\Gamma / e, G)$-structure on $Z^e$ is trivial, and the resulting $G$-structure on 
	$Z^{E(\Gamma / e)}$ is the deletion-contraction structure. 
	
	We have an isomorphism
	\begin{equation} \label{factthree} 
		(\pi_e)_* \times t(e)^! : C_0(\Gamma, M) \cong C_0(\Gamma / e, M) \times M 
	\end{equation}
	This map carries $\eta \mapsto (\eta / e, \eta_{t(e)})$ where $\eta_{t(e)}$ is the coefficient of $\eta$ at $t(e)$.

	Consider the composition
	$$ \nu :  Z^{E(\Gamma / e)} \times Z^e \cong Z^{\ed}  \xrightarrow{\mu_{\Gamma}} C_0(\Gamma, M) \cong C_0(\Gamma / e, M) \times M$$
	One checks that  
	
	\begin{equation} \label{factfour}
		\nu(w, z) = (\mu_{\Gamma / e}(w), \mu_{(e)}(w))  + (0, \mu(z))
	\end{equation}

	\begin{lemma}   \label{lem:uncontract}
		We have a cartesian diagram of spaces
		\begin{equation} \label{eq:reductiondiagram}
			\begin{tikzcd}
				Z(\Gamma , \eta) \arrow[r] \arrow[d] & (Z(\Gamma  / e, \eta / e) \times  Z)/G \arrow[d] \\
				\H_1(\Gamma, M)_{\eta} \arrow[hook, r] & \H_1(\Gamma / e, M)_{\eta / e} \times M
			\end{tikzcd}
		\end{equation}
		where the right-hand map descends from the product map $Z(\Gamma  / e, \eta / e)\times Z \to \H_1(\Gamma / e, M)_{\eta / e} \times M$ and the bottom map is given by pushforward along $\Gamma \to \Gamma / e$ on the first factor and taking the coefficient of $e$ on the second factor.
		The convolution on the upper right is taken with respect to the deletion-contraction $(G, M)$-structure. 
		
		The image of $Z(\Gamma, \eta)$ in $(Z(\Gamma  / e, \eta / e) \times  Z)/G$ is naturally identified as $Z(\Gamma  / e, \eta / e) \star_{G, M, \eta_{t(e)}}  Z$. 
	\end{lemma}

	\begin{proof} 
		We claim that  
		\begin{eqnarray*} 
			Z(\Gamma , \eta) & := & Z^{E(\Gamma)} \sslash_\eta \redC^0(\Gamma, G) \\
			&  = &
			(Z^{E(\Gamma/e)} \times Z) \sslash_{(\eta/e, \eta_{t(e)})} (\redC^0(\Gamma / e, G) \times G) \\
			& = &  Z(\Gamma  / e, \eta / e) \star_{G, M, \eta_{t(e)}}  Z
		\end{eqnarray*}
		In passing from the second to the third line, we have used Lemma \ref{lem:abstractfactor2}.  
		More precisely, to apply this lemma, we 
		view $Z^{E(\Gamma/e)}$ as a $(\redC^0(\Gamma / e, G) \times G, \redC_0(\Gamma / e, M) \times M)$-space 
		using the product of its
		standard structure and its deletion-contraction structure.  
		The remaining hypothesis of the Lemma is verified by \eqref{factfour}, and we translate the conclusion 
		of the lemma through the definition  $Z(\Gamma / e , \eta / e)  :=  Z^{E(\Gamma / e)} \sslash_{\eta / e} \redC^0(\Gamma / e, G)$. 
		This gives the top isomorphism of \eqref{eq:reductiondiagram}.

		The map $Z^{E(\Gamma)} \to C_1(\Gamma, M)$ descends to the left-hand vertical map, whereas $Z^{E(\Gamma / e)} \times Z \to C_1(\Gamma / e, M) \times M$ descends to the right-hand vertical map. These maps are intertwined by the identifications $Z^{E(\Gamma)} \to Z^{E(\Gamma / e)} \times Z$ and $C_1(\Gamma, M) \to C_1(\Gamma / e,M) \times M$, and this latter identification defines the bottom-map. 
	\end{proof}
	
	\begin{remark}
		While $(Z(\Gamma  / e, \eta / e) \times  Z)/G$ may not be a free quotient, it is free along the locus where we take the fiber product, 
		so this will cause no difficulty. 
	\end{remark}
	
	Let $\mathbf{0}$ be the point with the trivial $(G, M)$-structure.
	\begin{lemma} \label{lem:deletionfromcontraction}  
		We have the commutative diagram 
		\begin{equation} 
			\begin{tikzcd}
				Z(\Gamma  \setminus e, \eta \setminus e) \arrow[r] \arrow[d] &  (Z(\Gamma  / e, \eta / e)  \times \mathbf{0}) / G \arrow[d] \\
				\H_1(\Gamma \setminus e, M)_{\eta \setminus e} \arrow[hook, r] & \H_1(\Gamma / e, M)_{\eta / e} \times M
			\end{tikzcd}
		\end{equation}
		where the bottom row is given by Eq. \ref{torsor delcon composition} on the first factor, and the zero map on the second factor. 
		The image of $Z(\Gamma \setminus e, \eta \setminus e)$ in $(Z(\Gamma  / e, \eta / e) \times  \mathbf{0})/G$ is naturally identified as $Z(\Gamma  / e, \eta / e) \star_{G, M, \eta_{t(e)}}  \mathbf{0}$.
		
	\end{lemma}
	\begin{proof}
		Observe the canonical identifications $E(\Gamma \setminus e) = E(\Gamma / e)$ and 
		and $C_0(\Gamma \setminus e) = C_0(\Gamma)$ and $\redC^0(\Gamma \setminus e) = \redC^0(\Gamma)$. 
		The top isomorphism now follows
		from  $Z(\Gamma / e , \eta / e) = Z^{E(\Gamma / e)} \sslash_{\eta / e} \redC^0(\Gamma / e, G)$ and
		
		$$Z(\Gamma \setminus e , \eta \setminus e) = Z^{E(\Gamma \setminus e)} \sslash_{\eta \setminus e} \redC^0(\Gamma \setminus e, G) = 
		(Z^{E(\Gamma/e)} \times \mathbf{0}) \sslash_{(\eta/e, \eta_{t(e)})} (\redC^0(\Gamma / e, G) \times G)
		$$ 
		The last equality is obtained by applying Lemma \ref{lem:abstractfactor2} to $Z^{E(\Gamma/e)}$ and $\mathbf{0}$. To do so, we view $Z^{E(\Gamma/e)}$ as a $(\redC^0(\Gamma / e, G) \times G, \redC_0(\Gamma / e, M) \times M)$-space using the product of its standard structure and its deletion-contraction structure.  
		
		The compatibility of the bottom map is a direct calculation. 
	\end{proof}
	
	\begin{corollary} \label{cor:deletingbridgeskills}
		Suppose $\Gamma \setminus e$ is disconnected and $\eta$ is generic. Then 
		\[  Z(\Gamma  / e, \eta / e)  \star_{\eta_{t(e)}} \GApoint = \emptyset. \]
		The same holds when $\GApoint$ is replaced by any $(G, M)$-space $\bS$ with $\mu(\bS)=1$.
	\end{corollary}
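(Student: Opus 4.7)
The plan is to reduce the claim to Lemma \ref{lem:deletingbridges} by reapplying to our setting the factorisation strategy from the proof of Lemma \ref{lem:deletionfromcontraction}, which (despite being stated for connected $\Gamma \setminus e$) makes no essential use of that hypothesis.

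First I will dispose of the general $\bS$ case by reducing it to the $\GApoint$ case. Since $\mu_\bS \equiv 1$, the moment map on $\bZ(\Gamma/e,\eta/e)\bullet\bS$ is just the pullback of $\mu_{\bZ(\Gamma/e,\eta/e)}$, so its fibre over $\eta_e$ is the product $\mu_{\bZ(\Gamma/e,\eta/e)}^{-1}(\eta_e)\times\bS$. This is empty iff $\mu_{\bZ(\Gamma/e,\eta/e)}^{-1}(\eta_e)$ is, so the two claims in the corollary are equivalent, and I may focus on the $\GApoint$ case.

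Next I will reapply the factorisation used in the proof of Lemma \ref{lem:deletionfromcontraction}, with the edge-$e$ factor $\bZ^e$ replaced throughout by $\GApoint$. Its only ingredients are the splittings $\bZ^{E(\Gamma)}=\bZ^{E(\Gamma/e)}\times\bZ^e$, $\redC^0(\Gamma,\G)=\redC^0(\Gamma/e,\G)\times\G$ (using the vertex $t(e)$ cocharacter), and $C_0(\Gamma,\A)=C_0(\Gamma/e,\A)\times\A$ (under which $\eta\leftrightarrow(\eta/e,\eta_e)$), together with Lemma \ref{lem:abstractfactor3}; none of these ingredients uses connectedness of $\Gamma\setminus e$. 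Making the substitution $\bZ^e\rightsquigarrow\GApoint$ yields an identification
\[
\bZ(\Gamma/e,\eta/e)\star_{\eta_e}\GApoint \;\cong\; \bZ^{E(\Gamma/e)}\sslash_{\eta}\redC^0(\Gamma,\G),
\]
where the moment map on the right is $d_\Gamma\circ\mu^{E(\Gamma/e)}$ with the $e$-coordinate set to zero. Finally, since extending a chain by zero at the edge $e$ turns $d_\Gamma$ into $d_{\Gamma\setminus e}$ (using the identification $E(\Gamma/e)=E(\Gamma\setminus e)$), the moment fibre above is exactly $\bigl(\mu^{E(\Gamma\setminus e)}\bigr)^{-1}\bigl(\H_1(\Gamma\setminus e,\A)_{\eta\setminus e}\bigr)$. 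Lemma \ref{lem:deletingbridges} asserts that this target is empty under our hypotheses, so the fibre is empty, and therefore so is the quotient.

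The main obstacle I anticipate is purely bookkeeping: verifying carefully that the factorisation of the proof of Lemma \ref{lem:deletionfromcontraction} really does go through without the connectedness of $\Gamma\setminus e$. This amounts to checking that each of the three splittings depends only on the vertex and edge data plus the mod-global-constants convention for $\redC^0$, none of which is sensitive to whether $\Gamma\setminus e$ is connected; all the genuine content then sits in the single input from Lemma \ref{lem:deletingbridges}.
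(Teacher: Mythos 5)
Your proof is correct and takes essentially the same route as the paper: both reduce the claim to the fact (Lemma~\ref{lem:deletingbridges}) that $\H_1(\Gamma \setminus e, \A)_{\eta \setminus e}$ is empty, after observing that the star product is a quotient of the preimage of that set under the residual moment map. The paper states this in one line by invoking the ghost $(\G,\A)$-structure of Definition~\ref{def:ghostaction} directly, whereas you unpack the factorisation from the proof of Lemma~\ref{lem:deletionfromcontraction} and verify it does not require connectedness of $\Gamma \setminus e$; this is a reasonable way to make the terse ``by construction'' precise, and your preliminary reduction from general $\bS$ to $\GApoint$ is a clean addition.
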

	
	\begin{proof}
		Apply Lemma \ref{lem:disconnectedgraphs} and Lemma \ref{lem:genericityofdelcon} to $Z(\Gamma \setminus e, \eta \setminus e)$.
	\end{proof}

	We often abbreviate $Z(\Gamma) := Z(\Gamma, \eta)$ and likewise $Z(\Gamma / e) \colonequals Z(\Gamma / e, \eta / e)$ and $Z(\Gamma \setminus e) := Z(\Gamma \setminus e, \eta \setminus e)$. 
	
	\begin{lemma} \label{lem:deletioninclusionfromGMpoint}
		An inclusion of $(G,M)$-spaces $\mathbf{0} \to Z$ defines inclusions
		\begin{equation}
			Z(\Gamma \setminus e, \eta \setminus e) \to Z(\Gamma, \eta). 
		\end{equation}
		for any edge $e \in \Gamma$.
	\end{lemma}
	\begin{proof}
		Consider the embedding
		\[ Z^{E(\Gamma \setminus e)} \times \mathbf{0} \to Z^{E(\Gamma \setminus e)} \times Z = Z^{E(\Gamma)} \]
		This map induces a map of $\redC^0(\Gamma, G)$-reductions. The codomain reduces to $Z(\Gamma, \eta)$. The domain reduces to
		\[Z^{E(\Gamma \setminus e)} \times \mathbf{0} \sslash_{\eta} \redC^{0}(\Gamma, G) = Z^{E(\Gamma \setminus e)} \sslash_{\eta \setminus e} \redC^0(\Gamma \setminus e, G) = Z(\Gamma \setminus e, \eta \setminus e). \]
	\end{proof}

	\section{$\Bet(\Gamma)$} \label{sec:bet} 
	
	\subsection{Construction} \label{subsec:basicbettidef} \label{subsec:graphbettispace} \label{sec:buildblocksbetti}
	
	\begin{definition} \label{def:bettispace}
		We define the space
		$$ \Bet \colonequals \C^2 \setminus \{ 1 + xy  = 0\}. $$
		and the maps
		\begin{eqnarray*}
			\mu_{\Bet}^{\C^*}\colon \Bet & \to & \C^* \\
			(x, y) & \mapsto & 1 + xy
			\\
			\mu_{\Bet}^{\R}\colon \Bet & \to & \R \\
			(x, y) & \mapsto & |x|^2 - |y|^2
		\end{eqnarray*}
	\end{definition}

	\begin{figure}[h]
		\large
		\centering{
			\resizebox{70mm}{!}{\includegraphics{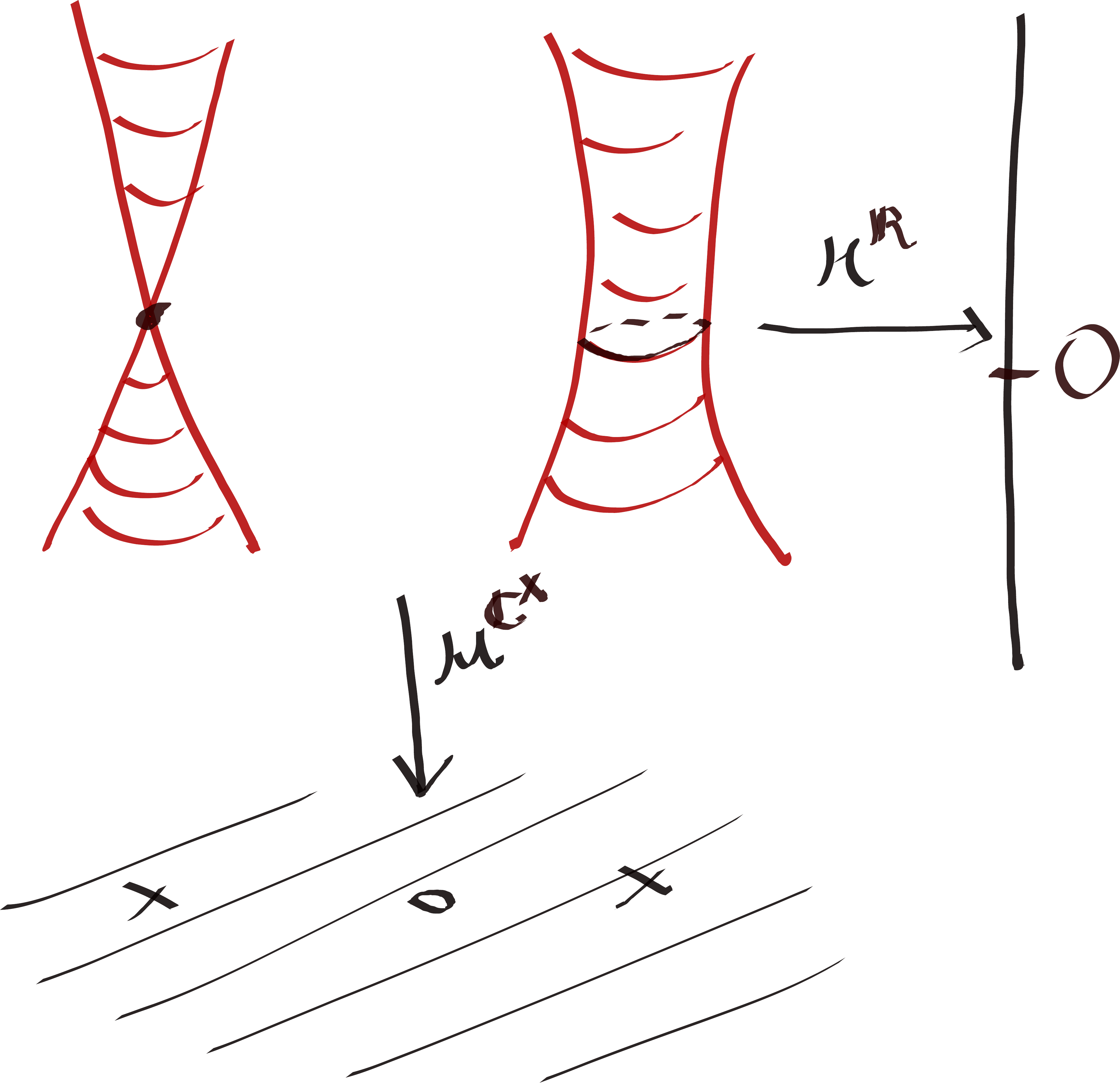}}
			\caption{A schematic picture of $\Bet$ and its various moment maps and their targets. Two fibers of $\mu^{\C^*}$ are shown, and the intersection of each fiber with $(\mu^\R)^{-1}(0)$ is indicated in black.}
		}
	\end{figure}

	\begin{lemma} \label{lem:bettiqhm} \label{lem:bettiqhkhm} \label{lem:betprincipbundle}
		The following properties are easily checked.
		\begin{enumerate} 
			\item The map $\mu_{\Bet}^{\C^*}$ is invariant under the $\C^*$ action on $\Bet$ given by 
			$\tau \cdot (x,y) = (\tau x, \tau^{-1} y)$. 
			\item The map $\mu_{\Bet}^{\R}$ is invariant under the $\uu \subset \C^*$ action. 
			\item The fibers of $\mu_{\Bet}^{\R} \times \mu^{\C^*}_{\Bet}$ over the complement of $0 \times 1$ are free $\U_1$-orbits, thus defining a principle $\U_1$-bundle $\mathcal{P}_{\Bet}$ over $\R \times \C^* \setminus 0 \times 1$. 
			\item Let $\omega = Im(dx \wedge dy)$.  
			The action of $\uu \subset \C^*$ preserves $\omega$.  
		\end{enumerate}
	\end{lemma}
	
	\begin{proposition} \label{prop:betproperties}  \label{rem: bet mu R}
		The space $\Bet = \C^2 \setminus \{ xy+1 =0 \}$ has the following properties: 
		\begin{enumerate}
			\item
			$\Bet$ is a smooth algebraic $(\C^*, \C^*)$-variety, with action $(x,y) \to (\tau x, \tau^{-1} y)$ and map $\mu_{\Bet}^{\C^*}(x,y) = 1+xy$.
			\item It has a single $\C^*$-fixed point at $(0,0)$. Every other point has trivial stabilizer.  
			\item The inclusion  $\GApoint_{(\C^*, \C^*)} \to (0,0) \in \Bet$ is a morphism of $(\C^*, \C^*)$ varieties. 
			\item The attracting cell at this fixed point is $\bS_{\Bet} \colonequals \{ x = 0 \} \cong \C$. 
			\item Note that $\bS_{\Bet} \cong \A^1$ with its natural $\C^*$-action.   
			\item The map 
			\begin{eqnarray*}
				\Bet \setminus \bS_\Bet & \to & [\C^* \times \C^*] \\
				(x, y) & \mapsto & (x, xy+1) 
			\end{eqnarray*}
			is an isomorphism of $(\C^*, \C^*)$-spaces. 
			\item  As a $(\C^*, \C^*)$-space, $\Bet$ satisfies Hypothesis \ref{hyp:bZsmoothness}.
			\item \label{other structure} $\mu_{\Bet}^{\C^*} \times \mu_{\Bet}^\R$ endows $\Bet$ with the structure of a $(\uu, \C^* \times \R)$-manifold.  
		\end{enumerate}
	\end{proposition}

	\begin{definition} Given a graph $\Gamma$ and generic $\mathbf{\eta}$, we abbreviate
		\[ \Bet(\Gamma) \colonequals \Bet^{(\C^*, \C^*)}(\Gamma, \eta). \]
	\end{definition}
	
	We will suppress the dependence on $\eta$ for most of this paper. In coordinates, it is described as follows. Recall that $\Bet^{\ed}$ has coordinates $x_e, y_e$ for $e \in \ed$. The subset $\mu_{\Gamma}^{-1}(\eta) \subset \Bet^{\ed}$ is defined by 
	\[ \prod_{\text{edges exiting } v} (1 + x_e y_e) \prod_{\text{ edges entering } v} (1 + x_e y_e)^{-1} = \eta_v \]
	for each $v \in \ver$. Then 
	\[ \Bet(\Gamma)= \mu_{\Gamma}^{-1}(\eta) / \redC^0(\Gamma\homsep\C^*) \]
	where the factor of $\C^*$ attached to $v$ acts by $\tau x_e, \tau^{-1} y_e$ on incoming edges, and $\tau^{-1} x_e, \tau y_e$ on outgoing edges. Hence $\Bet(\Gamma, \eta)$ is a smooth complex affine variety. 
	
	\subsection{Independence of orientation} \label{sec:Betorientation}
	For this paper, we will work with a fixed orientation of $\Gamma$. However, the dependence on the chosen orientation is quite mild, as shown by the following.
	\begin{proposition}
		If $\Gamma, \Gamma'$ differ only by the choice of orientation, then there is a canonical isomorphism
		\[ \Bet(\Gamma) \xrightarrow{\sim} \Bet(\Gamma)' \]
	\end{proposition}
	\begin{proof}
		By Proposition \ref{prop:indeporient}, it is enough to find an automorphism $\Bet \to \Bet$ intertwining the $\C^*$-action and the $\C^*$-moment map with their inverses. This is given by $(x,y) \to (-y, (1+xy)^{-1}x)$.  \end{proof}
	\begin{remark}
		This is an especially simple case of the proof of independence-of-orientation for multiplicative quiver varieties in \cite{CBS}.
		Under the translation to microlocal sheaves \cite{BK}, the formula $(x,y) \to (-y, (1+xy)^{-1}x)$ describes the behavior under
		Fourier transform of the `canonical' and
		`variation' maps.  
	\end{remark}
	
	\subsection{Deletion-contraction sequence} \label{sec:bettidelcon}
	
	From Proposition \ref{prop:betproperties}, 
	there are natural inclusions and projections of $(\Gm, \Gm)$-spaces  as follows: 
	\begin{equation} \label{bettiinclproj}
		\GApoint \xleftarrow{\pi} \bS_\Bet \xrightarrow{I} \Bet \xleftarrow{J} [ \Gm \times \Gm ]
	\end{equation}
	
	As $I, J$ give a decomposition of $\Bet$ into closed and open subsets, we have the following exact triangle in the derived category $D^b(\Bet)$ of constructible sheaves on $\Bet$: 
	
	\begin{equation} \label{basicadjunttri} I_!I^! \Q \to \Q \to J_*J^* \Q \xrightarrow{[1]} \end{equation}
	
	(In this paper, standard operations on sheaves, such as $I_!, I^!, J_*$ and $J^*$, are always understood to be derived.)
	
	Because $I$ is the complex codimension one closed inclusion of one smooth variety in another, 
	$$I_! I^! \Q = \Q_{\bS_\Bet}[-2](-1).$$

	\begin{proposition} \label{prop:cohomzbet}
		$\H^\bullet(\Bet\homsep \Q) \cong \Q \oplus \Q[-1](-1) \oplus \Q[-2](-2)$. 
	\end{proposition}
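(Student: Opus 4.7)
The plan is to apply $R\Gamma(\Bet,-)$ to the adjunction triangle (\ref{basicadjunttri}) and read off the cohomology from the resulting long exact sequence. Since $I$ is a smooth closed embedding of complex codimension one, the identification $I_!I^!\Q = \Q_{\bS_\Bet}[-2](-1)$ noted just before the proposition, combined with the contractibility of $\bS_\Bet \cong \A^1$, yields
$$R\Gamma(\Bet, I_!I^!\Q) \;=\; R\Gamma(\bS_\Bet,\Q)[-2](-1) \;=\; \Q(-1)[-2].$$
On the other side, $J$ is an open immersion, so $R\Gamma(\Bet, J_*J^*\Q) = R\Gamma(\G_m\times\G_m,\Q)$, which by Künneth is $\Q \oplus \Q(-1)^{\oplus 2}[-1] \oplus \Q(-2)[-2]$.

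The long exact sequence immediately forces $\H^0(\Bet)=\Q$ and $\H^n(\Bet)=0$ for $n\geq 3$, and reduces the remaining content to the six-term sequence
$$0 \to \H^1(\Bet) \to \Q(-1)^{\oplus 2} \xrightarrow{r} \Q(-1) \to \H^2(\Bet) \to \Q(-2) \to 0.$$
Thus the claim is equivalent to showing that the connecting map $r$ is surjective; granting this, $\H^1(\Bet)=\Q(-1)$ and $\H^2(\Bet)=\Q(-2)$, with the Hodge-theoretic decomposition automatic since the pieces live in distinct weights.

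The main (and mild) obstacle is identifying $r$. The plan is to use that on the open stratum $\G_m\times\G_m\subset\Bet$ we have algebraic coordinates $(x,t)$ with $t=1+xy$, in which $\H^1(\G_m\times\G_m)$ is generated by $d\log x$ and $d\log t$. The map $r$ is the residue along the divisor $\bS_\Bet=\{x=0\}$. Since $t\equiv 1$ on $\bS_\Bet$, the form $d\log t$ is regular there with zero residue, while $d\log x$ has residue $1$. Hence $r$ is surjective. As a sanity check, the same answer follows directly from the Gysin sequence for the smooth codimension-one pair $\{xy+1=0\}\hookrightarrow \C^2$, whose complement curve is isomorphic to $\C^*$, with essentially the same residue input.
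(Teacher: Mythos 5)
Your proof is correct and follows the same overall strategy as the paper's (take sections of the adjunction triangle \ref{basicadjunttri} and analyze the resulting long exact sequence); the outer terms are computed identically. The only genuine difference lies in how the connecting map is shown to be nonzero. The paper invokes the fact that $\Bet$ is homotopy equivalent to $\Dol$, which retracts onto a nodal genus one curve — a forward reference to Theorem \ref{thm:basicnaht}, established only much later. Your argument instead computes the residue explicitly in the coordinates $(x,t)$ with $t=1+xy$ from Proposition \ref{prop:betproperties}: since $t\equiv 1$ along $\bS_\Bet=\{x=0\}$, the class $d\log t$ has vanishing residue while $d\log x$ has residue $1$, so the connecting map $\Q(-1)^{\oplus 2}\to\Q(-1)$ is surjective. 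This is more elementary and self-contained, and does not rely on anything proved later in the paper.

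One small remark on your ``sanity check.'' The Gysin sequence for $\{xy+1=0\}\cong\C^*\hookrightarrow\C^2$ with open complement $\Bet$ requires no residue computation at all: because $\H^{\bullet}(\C^2)$ is concentrated in degree $0$, the long exact sequence immediately forces $\H^k(\Bet)\cong\H^{k-1}(\C^*)(-1)$ for $k\geq 1$ and $\H^0(\Bet)\cong\Q$, with no connecting map to analyze. So this alternative is even cleaner than your main argument, and your phrase ``with essentially the same residue input'' slightly undersells it. (Also, ``whose complement curve is isomorphic to $\C^*$'' is a bit misleading — it is the divisor $\{xy+1=0\}$, not the complement $\Bet$, that is a curve isomorphic to $\C^*$.)
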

	\begin{proof}
		Taking sections of the triangle \eqref{basicadjunttri} returns the excision sequence in cohomology. Its terms are as follows: 
		$$\H^\bullet(\Bet; I_! I^! \Q) = \H^\bullet(\Bet\homsep \Q_{\bS_{\Bet}}[-2](-1)) = \H^\bullet(\bS_{\Bet}\homsep \Q)[-2](-1) = \Q[-2](-1)$$
		$$\H^\bullet(\Bet; J_*J^* \Q) = \H^\bullet(\Gm \times \Gm\homsep \Q) = (\Q \oplus \Q[-1](-1))^{\otimes 2} = 
		\Q \oplus \Q^{\oplus 2} [-1](-1) \oplus \Q [-2](-2)$$
		The only potentially nonvanishing map in the long exact sequence is 
		$\H^1(\Bet; J_*J^* \Q) \to \H^2(\Bet; I_! I^! \Q)$.  In fact this map must be an isomorphism, since
		$\Bet$ has nonzero Betti numbers $b^0 = b^1 = b^2 = 1$.
	\end{proof}

	\begin{hypothesis}
		$Y$ is a smooth irreducible $(\Gm, \Gm)$-variety, $\mu_Y: Y \to \Gm$ is nonconstant, and $\zeta \in \Gm$ is such that 
		$\Gm$ acts freely on $\mu_{Y \bullet \Bet}^{-1}(\zeta) \subset Y \bullet \Bet$.
	\end{hypothesis}
	
	Taking $\star_{\Gm} \colonequals \star_{\Gm, \Gm, \zeta}$ with Equation \eqref{bettiinclproj} induces morphisms:

	$$Y \star_{\starGG} \GApoint \xleftarrow{\pi_Y} Y \star_{\starGG} \bS_{\Bet} \xrightarrow{I_Y} Y \star_{\starGG} \Bet \xleftarrow{J_Y}  Y \star_{\starGG} [\Gm \times \Gm]$$ 
	
	\begin{lemma}
		$I_Y$ (resp $J_Y$) is the inclusion of a smooth divisor (resp. its complement).
	\end{lemma}
	\begin{proof}
		By Lemma \ref{lem:injsur}, $J_Y$ is the complement of $I_Y$.  Let us see that $I_Y$ is a smooth divisor. 
		Consider the divisor $Y \times \bS_{\Bet} \subset Y \times \Bet$. Since the function $\mu_{Y}$ is nonconstant on $Y$, 
		the function $\mu_{Y \bullet \Bet}$ is nonconstant on $Y \times \bS_{\Bet}$ and $Y \times \Bet$.  
		Thus $\mu_{Y \bullet \Bet}^{-1}(\zeta) \cap (Y \times \bS_\Bet) \subset \mu_{Y \bullet \Bet}^{-1}(\zeta)$ is a divisor.  
		Passing to the quotient by the (free) $\G$ action, $Y \star_{\starGG} \bS_\Bet$ is a divisor in $Y \star_{\starGG} \Bet$. 
	\end{proof}
	
	We thus obtain, as in eq. \eqref{basicadjunttri}, the triangle 
	
	\begin{equation} \label{generaladjunctriang} \Q_{Y \star \bS_{\Bet}}[-2](-1) \to \Q \to (J_Y)_*J_Y^* \Q \xrightarrow{[1]} \end{equation}
	
	\begin{lemma} \label{lem:Aone}
		The map $\pi_Y$ has fiber $\A^1$ and a section induced from the inclusion $\GApoint \to \bS_{\Bet}$. 
	\end{lemma}
	\begin{proof}
		We have $\mu_\Bet(\bS_{\Bet}) = 1$. Hence $\YBet \star_{\starGG} \bS_{\Bet} = \big( \mu_Y^{-1}(\zeta) \times \bS_{\Bet} \big) / \Gm$. By assumption, $\Gm$ acts freely on $\mu_Y^{-1}(\zeta)$. Hence the quotient is a bundle over $\mu_Y^{-1}(\zeta) / \Gm$ with fiber isomorphic to $\bS_{\Bet} \cong \A^1$. \end{proof}
	
	In particular, either push forward along $\pi_Y$ or pullback along the section induces: 
	\begin{equation} \label{scarisoms} \H^{\bullet}(\YBet \star_{\starGG} \bS_\Bet \homsep \Q) \cong \H^{\bullet}(\YBet \star_{\starGG} \GApoint\homsep \Q) \cong \H^{\bullet}(\YBet \sslash_{\zeta} \G\homsep \Q). \end{equation}

	We have $\YBet \star_{\starGG} [\Gm \times \Gm] = \YBet$  by Corollary \ref{cor:bulletunit} and 
	and $\YBet \star_{\starGG} \GApoint = \YBet \sslash_{\zeta} \Gm$ by Lemma \ref{starwithapoint}.

	Taking cohomology of the triangle \eqref{generaladjunctriang} and combining with the above isomorphisms, we obtain the diagram:  
	
	\hspace{-12mm}
	\begin{tikzcd} \label{affinepart}
		& \H^{\bullet-2}(\YBet \sslash_{\zeta} \Gm\homsep \Q)(-1) \arrow[dr, dashed]  & &  \H^{\bullet}( \YBet\homsep \Q) \arrow[dr, dashed] & \\
		\cdots \arrow[ur, dashed] \rar & \H^{\bullet-2}(\YBet \star_{\starGG} \bS_\Bet \homsep \Q)(-1)  \arrow[u, "(\bf{0} \to \bS)^*"] \arrow[r, "\text{}"] 
		& \H^{\bullet}(\YBet \star_{\starGG} \Bet\homsep \Q)  \arrow[ru, dashed]  \arrow[r, "\text{}"]  
		&  \H^{\bullet}( \YBet \star_{\starGG} [\Gm \times \Gm]\homsep \Q)   \arrow[u, "="] \rar & \cdots \\
	\end{tikzcd}
	
	We call the dashed long exact sequence the $\Bet$ deletion-contraction sequence of $Y$.  The terminology is motivated by the 
	following special case:

	\begin{theorem} \label{thm:weightdcs} 
		Let $\Gamma$ be a graph, $e$ a non-loop edge, and $\eta$ chosen such that $\Bet(\Gamma)$ is smooth.  
		Then there is a long exact sequence 
		\begin{equation} \label{eq:graphbettiLES} 
			\to \H^{\bullet-2}(\Bet(\Gamma \setminus e)\homsep \Q)(-1) \xrightarrow{a^{\Bet}} \H^{\bullet}(\Bet(\Gamma)\homsep \Q) \xrightarrow{b^{\Bet}} \H^{\bullet}(\Bet(\Gamma / e)\homsep \Q)
			\xrightarrow{c^{\Bet} [+1]} 
		\end{equation} 
		Moreover, the maps strictly preserve the weight filtration on each space (taking into account the Tate twist on the left-hand term). 
	\end{theorem} 
	\begin{remark} When we wish to highlight which edge of $\Gamma$ is in play, we may write $a^{\Bet}_e$ and $b^{\Bet}_e$.
	\end{remark}
	\begin{proof}
		Apply the $\Bet$ deletion-contraction sequence to $Y = \Bet(\Gamma / e)$ 
		and $\zeta = \eta_{t(e)}$ as specified in Section \ref{sec:gammadelcon}.  Lemmas \ref{lem:uncontract} and \ref{lem:deletionfromcontraction} 
		give the desired identifications of the convolutions in the sequence with the stated spaces.  The desired statement
		regarding weight filtrations follows from the fact that the long exact sequence of a pair respects
		mixed Hodge structures (see \cite[Prop. 8.3.9]{D3}, or more explicitly \cite[Prop. 5.4.6, 5.5.4]{PS}).  
	\end{proof}
	
	When $e$ is a bridge, the space $\Bet(\Gamma \setminus e)$ is empty by Lemma \ref{lem:disconnectedgraphs}, and the map $b^{\Bet}$ is an isomorphism. The following lemma shows that in this case we moreover have an isomorphism of spaces.
	\begin{lemma} \label{lem:contractingbridges}
		Let $e$ be a bridge of $\Gamma$. Then we have a canonical isomorphism $\Bet(\Gamma) = \Bet(\Gamma / e)$.
	\end{lemma}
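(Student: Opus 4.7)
I would apply Lemma~\ref{lem:uncontract} to rewrite $\Bet(\Gamma) \cong \Bet(\Gamma/e) \star_{\eta_e} \Bet$, where $\Bet(\Gamma/e)$ carries the $(\C^*, \C^*)$-structure of Definition~\ref{def:ghostaction}. The key observation is that when $e$ is a bridge, both the moment map $\beta_e$ and the cocharacter $\alpha_e$ involved in this structure degenerate.

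For the moment map: $\beta_e(a)$ is the coefficient at $e$ of the lift of $a \in \H_1(\Gamma/e, \A)_{\eta/e}$ to $\H_1(\Gamma, \A)_{\eta}$. Taking the product of $d_\Gamma a = \eta$ over the vertices of the component $\Gamma_1$ of $\Gamma \setminus e$ containing $t(e)$, every $\Gamma_1$-internal edge cancels and the only surviving term is the coefficient $a_e$. One reads off $a_e = c := \prod_{v \in V(\Gamma_1)} \eta_v$, so $\beta_e$ is the constant map with value $c$. Dually, $\alpha_e$ is trivial because $[e] = 0 \in \H^1(\Gamma, \Z) = \H^1(\Gamma/e, \Z)$: the indicator cochain of $V(\Gamma_2)$ on $V(\Gamma)$ has coboundary $[e]$.

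With these degenerations the fibre over $\eta_e$ defining the star product is simply $\Bet(\Gamma/e) \times \mu_\Bet^{-1}(\eta_e/c)$, and the diagonal $\C^*$-action is concentrated on the $\Bet$-factor. When $\eta_e \neq c$, this fibre is contained in $\Bet \setminus \bS_\Bet \cong [\C^* \times \C^*]$ (Proposition~\ref{prop:betproperties}), and Lemma~\ref{lem:bulletunit} yields the desired isomorphism $\Bet(\Gamma) \cong \Bet(\Gamma/e) \star_{\eta_e} [\C^* \times \C^*] \cong \Bet(\Gamma/e)$.

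The main obstacle is the boundary case $\eta_e = c$, which occurs exactly when $V(\Gamma_1) = \{t(e)\}$, i.e.\ when $t(e)$ has only loops besides the bridge $e$. Here $\mu_\Bet^{-1}(1)$ meets the $\C^*$-fixed locus $\bS_\Bet$ and the star product is singular on the nose. I would resolve this by a direct gauge-fixing argument: genericity of $\eta$ applied to the cut $\{t(e)\}$ forces $\eta_{t(e)} = c \neq 1$, so on $\mu_\Gamma^{-1}(\eta)$ we have $1 + x_e y_e = c \neq 1$ and hence $(x_e, y_e) \neq (0,0)$. The factor of $\C^*$ at $t(e)$ inside $\redC^0(\Gamma, \C^*)$ acts trivially on loops at $t(e)$ (since exit and entry contributions cancel) and freely on $(x_e, y_e)$; after gauge-fixing $x_e$, the resulting slice identifies directly with $\mu_{\Gamma/e}^{-1}(\eta/e)$, with residual gauge $\redC^0(\Gamma/e, \C^*)$.
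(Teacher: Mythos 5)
Your first step---invoking Lemma~\ref{lem:uncontract} to write $\Bet(\Gamma)\cong\Bet(\Gamma/e)\star_{\eta_e}\Bet$---is exactly the paper's starting point, and your observation that $\alpha_e$ is trivial when $e$ is a bridge (because $[e]=0$ in $\H^1(\Gamma,\Z)$) is correct and matches the structure in Definition~\ref{def:ghostaction}. But from there the two proofs diverge, and yours has a gap.

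The paper avoids any explicit evaluation of $\beta_e$. It instead decomposes $\Bet=\bS_\Bet\sqcup[\C^*\times\C^*]$ and notes that the $\star$-product distributes over this disjoint union (Lemma~\ref{lem:injsur}). The crucial step is then Corollary~\ref{cor:deletingbridgeskills}: since $\mu(\bS_\Bet)=\{1\}$ and $e$ is a bridge, $\Bet(\Gamma/e)\star_{\eta_e}\bS_\Bet$ is a quotient of a preimage of $\H_1(\Gamma\setminus e,\mom)_{\eta\setminus e}$, which Lemma~\ref{lem:deletingbridges} shows is \emph{empty} whenever $\eta$ is generic. What remains is $\Bet(\Gamma/e)\star_{\eta_e}[\C^*\times\C^*]=\Bet(\Gamma/e)$ by Lemma~\ref{lem:bulletunit}. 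No case split occurs, because genericity has already eliminated the problematic fibre once and for all.

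Your route instead computes the constant value $c$ of $\beta_e$ and splits on whether $\eta_e=c$. The claim that this boundary case ``occurs exactly when $V(\Gamma_1)=\{t(e)\}$'' is not justified: the condition $\eta_e=c$ unwinds to the vanishing (or triviality) of $\prod_{v\in V(\Gamma_1)\setminus\{t(e)\}}\eta_v$, and genericity of $\eta$ does \emph{not} forbid this product from being trivial when $V(\Gamma_1)\setminus\{t(e)\}$ is nonempty but disconnected (e.g.\ if $\Gamma_1$ is a star at $t(e)$ with two leaves $u_1,u_2$ and $\eta_{u_1}\eta_{u_2}=1$, which is generic). So the dichotomy as stated is false, and the gauge-fixing argument you supply---which presumes only loops at $t(e)$---does not cover the actual set of parameters where your computation of the moment fibre touches $\bS_\Bet$. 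In fact, Corollary~\ref{cor:deletingbridgeskills} (applied with $\bS=\bS_\Bet$) shows the fibre \emph{never} meets $\bS_\Bet$ for generic $\eta$, regardless of the shape of $\Gamma_1$; one should discard the case split and cite that corollary directly, or equivalently track more carefully the torsor identifications in Lemma~\ref{lem:uncontract}, since the numerical value of $c$ you extract does not agree with the constant value of the descended $\A_2$-moment map appearing in the $\star$-product (they differ by a torsor shift). Either fix collapses your proof back onto the paper's.
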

	\begin{proof}
		By Lemma \ref{lem:uncontract}, we can write $\Bet(\Gamma) = \Bet(\Gamma / e) \star_{\C^*} \Bet$. On the other hand, $\Bet$ is the disjoint union of $\bS_\Bet$ and $[\C^* \times \C^*]$ as explained above.
		
		Thus $ \Bet(\Gamma / e) \star_{\C^*} \Bet$ is the disjoint union of $\Bet(\Gamma / e) \star_{\C^*} \bS_\Bet$ and $\Bet(\Gamma / e) \star_{\C^*} [\C^* \times \C^*]$ by Lemma \ref{lem:injsur}. The former is the empty set by Corollary \ref{cor:deletingbridgeskills}. The latter equals $\Bet(\Gamma / e)$ by Corollary \ref{cor:bulletunit}.
	\end{proof}

	\subsection{Class in the Grothendieck ring of varieties} \label{sec: motivic interlude}
	
	The results above also allow us to calculate the class of $\Bet(\Gamma)$ in the Grothendieck group of varieties, and prove 
	Theorem \ref{thm:motivicdc}.  We compute: 
	$$|\Bet| = |\Bet \setminus \mathbf{S}_{\Bet}| + |\mathbf{S}_{\Bet}| = | \C^* \times \C^* | + |\C| = 
	(\L - 1)^2 + \L =  \L^2 - \L + 1$$
	
	Now Lemma \ref{lem:contractingbridges} asserts 
	that $\Bet(\Gamma) = \Bet(\Gamma / e)$ if $e$ is a bridge. Moreover, 
	we know
	that $\Bet(\Gamma) = \Bet(\Gamma \setminus e) \times \Bet(\boing)$ if $e$ is a loop, by Lemma \ref{lem:loopsandotherproducts}.
	
	The key remaining point is: 
	
	\begin{corollary} \label{cor:motivicdc} 
		In the Grothendieck ring of varieties, $|Y \star_{\starGG} \Bet| = |Y| + \L \cdot |Y \sslash_{\zeta} \Gm|$.  \end{corollary}
	\begin{proof}
		By Lemma \ref{lem:Aone}, $Y \star \bS_{\Bet}$ is an $\A^1$-bundle with section over $Y \sslash_{\zeta} \Gm$. In other words, it is a 
		line bundle, and thus Zariski locally trivial over $Y \sslash_{\zeta} \Gm$. Its class in the Grothendieck ring therefore factors 
		as $\L \cdot |Y \sslash_{\zeta} \Gm|$, and the result follows.
	\end{proof}
	
	\begin{proof}[Proof of Theorem \ref{thm:motivicdc}]
		Indeed, by taking $Y = \Bet(\Gamma / e)$ in Corollary \ref{cor:motivicdc}, and substituting
		via Lemmas \ref{lem:uncontract} and \ref{lem:deletionfromcontraction}, we find
		$|\Bet(\Gamma)| = |\Bet(\Gamma / e)| +  \L |\Bet(\Gamma \setminus e)|$.  
		This gives a recursive formula for 
		$|\Bet(\Gamma)|$, with initial term $|\Bet(\bullet)| = 1$.
		The universal solution to such recursions is a sum over spanning trees $\Gamma' \in Span(\Gamma)$ described, for instance, in \cite[Chap. 10, Thm. 2]{Bol}. 
		In our case it gives the desired formula:
		\begin{equation}  |\Bet(\Gamma)| = \sum_{\Gamma' \in Span(\Gamma)} (\L - 1)^{2 b_1(\Gamma')} \L^{b_1(\Gamma) - b_1(\Gamma')}\end{equation}
	\end{proof} 
	
	We will give a second proof of Theorem \ref{thm:motivicdc}
	in Remark \ref{second motivic calculation} below, by directly exhibiting a stratification
	by spaces which account for the terms in the above sum.

	\subsection{Charts and strata} \label{betti strata}
	
	We apply the construction of Definition \ref{def: open cover} with $(D, Z) = (S_\Bet, \Bet)$ to obtain open sets $\openset_{\Gamma'}$ for $\Gamma' \subset \Gamma$.  
	Note the identification $\Bet \setminus \bS_\Bet \cong [\C^\times \times \C^{\times}]$. 
	We write: 
	$$D_e := \Bet(\Gamma) \setminus \openset_{\Gamma \setminus e} \qquad \qquad e \in \ed$$ 
	Recall we obtain $\Bet(\Gamma)$ from symplectic reduction of $\Bet^{\ed} \subset (\C^2)^{\ed}$.  
	Explicitly, $D_e$ is the symplectic reduction of the 
	locus $\{x_e = 0\} \subset \Bet(\Gamma)$.  
	(The coordinate $x_e$ transforms under a nontrivial weight of the torus we quotiented by to form $\Bet(\Gamma)$, but
	descends to a section of a line bundle, whose zero locus is still meaningful.) 
	From the definition we see
	$$\openset_{\Gamma'} = Z(\Gamma, \eta) \setminus \bigcup_{e \in E(\Gamma')} D_e$$

	\begin{lemma} \label{lem:opencover}
		The opens $\openset_{\Gamma'}$ indexed by spanning trees $\Gamma' \subset \Gamma$ define an open cover of $Z(\Gamma, \eta)$.  These open sets carry
		isomorphisms $\openset_{\Gamma'} \cong \Bet^{E(\Gamma / \Gamma')}$.
	\end{lemma}
	\begin{proof}
		The opens $\openset_{\Gamma'}$ are exactly those obtained from Lemma \ref{lem:pretreecover}, starting from the closed set $\bS_\Bet \subset \Bet$. Lemma \ref{lem:treecover} and the identification $\Bet \setminus \bS_\Bet \cong [\C^\times \times \C^{\times}]$ determine isomorphisms $\openset_{\Gamma'} \cong \Bet^{E(\Gamma / \Gamma')}$. 
	\end{proof}
	\begin{lemma} \label{lem: snc} 
		The divisor $D(\Gamma) := \bigcup D_e \subset \Bet(\Gamma)$ has simple normal crossings. 
	\end{lemma} 
	\begin{proof}
		The intersection of $D(\Gamma)$ with the opens $\openset_{\Gamma'}$ of Lemma \ref{lem:opencover} is identified with the normal crossings divisor $\prod_{e \in \Gamma \setminus \Gamma'} D_e$ under the isomorphism  $\openset_{\Gamma'} \cong \Bet^{E(\Gamma / \Gamma')}$.
	\end{proof} 
	
	For $J \subset \ed$, we write $\stratum_\subedges := \bigcap_{e \in J} D_e$.  By Lemma \ref{lem: snc} that $\stratum_\subedges$ is a real codimension $2 |J|$ submanifold.   We have the relation:

	\begin{lemma} \label{lem:vectorbundleprep}
		There is a map $\pi_J  \colon \stratum_\subedges \to \Bet(\Gamma \setminus J, \eta \setminus \subedges)$ expressing $\stratum_\subedges$ as a rank $|J|$ vector bundle over $\Bet(\Gamma \setminus J, \eta \setminus \subedges)$ and $\eta \setminus \subedges$ is defined as in Remark \ref{rem:delconJ}.
	\end{lemma} 
	\begin{proof}
		
		Consider the basic space $\Bet$. We have a diagram
		$$
		\begin{tikzcd}
			\Bet & \arrow[l] \{ x = 0 \} \arrow[d, "\pi"] \\
			\ & \bf{n} = \{ x = y = 0 \}
		\end{tikzcd}
		$$
		The map $\pi$ makes $\{x = 0\}$ a $\C^*$-equivariant rank one vector bundle over $\bf{n}$, trivialized by the function $y$. Note also that the moment map $(1 + xy)$ has constant value $1$ on this locus. 
		
		$\stratum_\subedges$ is by construction a quotient of $\bigg( \prod_{e \in J} \{ x_e = 0 \} \times \prod_{e \notin J} \Bet \bigg) \bigcap \mu^{-1}(\eta)$ by $\redC^0(\Gamma\homsep \C^*)$. The maps $\{ x_e = 0 \} \to \mathbf{n}$ for $e \in J$ combine to give a map
		$$ \bigg( \prod_{e \in J} \{ x_e = 0 \} \times \prod_{e \notin J} \Bet \bigg) \bigcap \mu^{-1}(\eta) \to \bigg( \prod_{e \in J}  \mathbf{n} \times \prod_{e \notin J} \Bet \bigg) \bigcap \mu^{-1}(\eta).$$
		
		This is a $C^1(\Gamma\homsep \C^*)$-equivariant vector bundle (if we ignore the equivariant structure, it is a trivial vector bundle) over the target with fiber $\C^J$. Taking the quotient by $\redC^0(\Gamma\homsep\C^*)$ defines a $\H^1(\Gamma \setminus J\homsep \C^*)$-equivariant vector bundle $\pi  \colon \stratum_\subedges \to \Bet(\Gamma \setminus J)$.
	\end{proof}
	
	Note that as $\stratum_\subedges$ is a real codimension $2 |J|$ submanifold, 
	there is a Gysin map 
	\begin{equation} \label{eq: DJgysin}
		\H^{\bullet - 2|J|}(\stratum_\subedges, \Q) \to \H^{\bullet}(\Bet(\Gamma), \Q)
	\end{equation} 
	
	The following lemma establishes the required commutativity we need to later define 
	a deletion filtration on the cohomology of the $\Bet(\Gamma)$. 
	
	\begin{lemma} \label{lem:gysincommutes}
		For $\Gamma' \subset \Gamma$, the map 
		$$\H^{\bullet - 2|\Gamma \setminus \Gamma'|}(\Bet(\Gamma'), \Q) \cong \H^{\bullet - 2|\Gamma \setminus \Gamma'|}(D_{\Gamma \setminus \Gamma'}, \Q) \to \H^{\bullet}(\Bet(\Gamma), \Q)$$
		is equal to the composition (in any order) of $a_e^{\Bet}$ for $e \in \Gamma \setminus \Gamma'$. 
	\end{lemma}
	\begin{proof}
		This holds by definition when $|\Gamma \setminus \Gamma'| = 1$.  In general, it 
		follows from the fact that for any ordering $\{e_1, ..., e_n\}$ of $\Gamma \setminus \Gamma'$, each inclusion in the corresponding 
		flag of subspaces $D_{\Gamma \setminus \Gamma'} \subset ... \subset D_{e_{n-1}, e_n} \subset D_{e_n} \subset D_{\emptyset} = \Bet(\Gamma)$ 
		is the inclusion of a real codimension 2 submanifold. 
	\end{proof}

	We now turn to the construction of a stratification. 
	The space $\Bet$ has a decomposition $\Bet = (\C^* \times \C^*) \sqcup \A^1$
	(see e.g. Proposition \ref{prop:betproperties} (6)).  Here we construct similarly a stratification
	of $\Bet(\Gamma)$ by vector bundles over algebraic tori.  
	
	For $\subedges \subset \Gamma$, we recall the already defined $\stratum_{\subedges}$ 
	and introduce two related spaces:  
	
	\begin{equation} \label{strata} 
		\stratum_{\subedges} := \bigcap_{e \in \subedges} D_e \qquad \qquad \closedpart_\subedges \colonequals \bigcup_{e \notin \subedges} \stratum_{\subedges \cup e} \qquad \qquad \openpart_\subedges := \stratum_{\subedges} \setminus \closedpart_\subedges
	\end{equation}

	\begin{example} \label{ex:boing}
		Let $\Gamma = \boing$ and let $e$ be the only edge. Then $S_\emptyset = \Bet(\boing), K_\emptyset = \bS_\Bet = \{ x = 0 \}$ and $Q_\emptyset = \Bet(\boing) \setminus \bS_\Bet = \{ x \neq 0 \}$. On the other hand $S_e = \bS_{\Bet} = Q_e, K_e = \emptyset$. 
	\end{example}  
	\begin{lemma} \label{lem: empty}
		$\stratum_{\subedges} = \emptyset$ if $\Gamma \setminus \subedges$ is disconnected.
	\end{lemma}
	\begin{proof}
		Combine Lemma \ref{lem:vectorbundleprep}, Lemma \ref{lem:disconnectedgraphs} and Remark \ref{rem:delconJ}.
	\end{proof}

	\begin{proposition} \label{prop:vectorbundleovertorus}
		Suppose $\Gamma \setminus \subedges$ is connected.  
		The restriction of $\pi_\subedges$ to  $\openpart_\subedges$ defines a $\H^1(\Gamma \homsep 	\C^*)$-equivariant vector bundle over $\H^1(\Gamma \setminus \subedges\homsep \C^*) \times \H_1(\Gamma \setminus \subedges\homsep \C^*)_{\eta \setminus J} $. 
	\end{proposition}
	\begin{proof}
		From Lemma \ref{ex:cohomologicalquotient}, we have: 
		$\Bet(\Gamma, \eta) \setminus D(\Gamma) \cong [\C^* \times \C^*](\Gamma, \eta) \cong \H^1(\Gamma, \C^*) \times \H_1(\Gamma, \C^*)_\eta$,
		where the middle term is the graph space associated to $[\C^* \times \C^*]$, in the notation of 
		Definition \ref{definitionofgraphspace}. 
		The stated result now follows from Lemma \ref{lem:vectorbundleprep}, after 
		substituting $\Gamma \setminus \subedges$ for $\Gamma$ in the previous line.
	\end{proof}

	\begin{proposition} \label{stratification}
		$\Bet(\Gamma, \eta) = \coprod \openpart_{\subedges}$. 
	\end{proposition} 
	\begin{proof}
		By construction, $Q_J$ is the locus of points in $Z(\Gamma)$ contained in $D_e, e \in J$ and not in $D_e, e \notin J$.  
	\end{proof}

	\begin{remark} \label{second motivic calculation} Combining
		Lemma \ref{lem: empty}, Proposition \ref{prop:vectorbundleovertorus}, 
		and Proposition \ref{stratification}, we get another (and more explicit)
		proof of Theorem \ref{thm:motivicdc} \eqref{eq:mbetmotive}.  
	\end{remark}

	\begin{remarque}
		Going back at least to the work of Deodhar \cite{Deo},  stratifications by $(\C^*)^a \times \C^b$ have been found frequently in representation theoretic contexts.  At least since
		\cite[Prop. 6.31]{STZ}, we have known that such stratifications often admit 
		modular interpretations: often the spaces
		are moduli of objects in the Fukaya category of a symplectic 4-manifold; and the strata each parameterize objects coming from
		a given immersed Lagrangian.  
		
		The present case is presumably another example.  From \cite{BK} we learn
		that $\Bet(\Gamma)$ is a moduli space
		of microlocal sheaves on a singular real surface $L = \bigcup L_i$, where the $L_i$ are the smooth irreducible components.  In this context it is most natural to view $L$ 
		as the Lagrangian skeleton of the symplectic plumbing $W$ of the $T^*L_i$.  
		By e.g. \cite[Cor. 6.3]{GPS3} we may trade microlocal sheaves on $L$ for the wrapped Fukaya category of $W$. 
		Now a  spanning
		subgraph $\Gamma' \subset \Gamma$ determines an immersed Lagrangian:  smooth the singularities of the 
		skeleton corresponding to the edges $\Gamma'$, and leave the nodes in $\Gamma \setminus \Gamma'$.  A rank one local
		system on this Lagrangian, together with some extra data at the nodes, determines an object in the Fukaya category.  The space
		of such choices is $(\L - 1)^{2 b_1(\Gamma')} \L^{|\Gamma| - |\Gamma'|}$.  It is also possible to 
		give a similar description directly in terms
		of the microlocal picture of \cite{BK}. 
	\end{remarque}

	\subsection{ $\CKS(\Gamma)$ via differential forms on $\Bet(\Gamma)$}

	The purpose of this subsection is to show that the complex $\CKS(\Gamma)$
	computes the cohomology of $\Bet(\Gamma)$.  
	This  will be done using differential forms and the residue sequence (see Section \ref{app:residuetriang}) for a certain stratification of $\Bet(\Gamma)$. 
	As the space  $\Bet(\Gamma)$ is affine, as are all strata we encounter, we 
	will everywhere take termwise global sections in complexes of sheaves of differential forms, 
	and discuss the resulting complexes of sections
	rather than complexes of sheaves.   We preserve the notations of Section \ref{betti strata}, 
	especially for the spaces \eqref{strata}.

	As $Q_J$ has the homotopy type of a torus (Prop. \ref{prop:vectorbundleovertorus}), 
	there is a quasi-isomorphism
	$\Lambda^* \H^1(\openpart_{\subedges}, \C) = \H^*(\openpart_{\subedges}, \C)
	\xrightarrow{\sim} (\Omega^* \openpart_{\subedges}, d_{dR})$; here the LHS 
	has the zero differential.  Let us recall
	how to exhibit such  explicitly.  
	Let $V$ be a complex vector space with lattice $V_\Z$ and dual $V^*$. Then any $w \in V^*$ defines a 1-form $dw \in \Omega^1(V)$, which descends to a closed one form (which we denote by the same symbol $dw$) on the torus $V/V_\Z$. This defines a linear map 
	$ \H^1(V/V_\Z, \C) \cong V^* \to \Omega^1(V/V_\Z)$; taking exterior powers, we obtain
	the quasi-isomorphism $\Lambda^* V^* \xrightarrow{\sim} (\Omega^*(V/V_\Z), d_{dR})$.

	\begin{definition} \label{def:CKSmap}
		Consider $V = \H_1(\Gamma \setminus J\homsep \C) \oplus \H^1(\Gamma \setminus J\homsep \C)$;
		it is is self dual, with lattice $V_\Z \colonequals \H_1(\Gamma \setminus J, \Z) \oplus \H^1(\Gamma \setminus J, \Z)$ and quotient $V/V_\Z = \H_1(\Gamma \setminus J\homsep \C^*) \times \H^1(\Gamma \setminus J\homsep \C^*)$.  
		Thus we obtain a map  
		\begin{equation} \label{eq:CKSmap} \mathbb{H}(\Gamma \setminus J, \C) \colonequals \H_1(\Gamma \setminus J\homsep \C) \oplus \H^1(\Gamma \setminus J\homsep \C) \to \Omega^1(\H_1(\Gamma \setminus J\homsep \C^*) \times \H^1(\Gamma \setminus J\homsep \C^*)). \end{equation} 
		
		Composing with the pullback along the vector bundle $\pi_J  \colon \openpart_J \to \H_1(\Gamma \setminus J\homsep \C^*) \times \H^1(\Gamma \setminus J\homsep \C^*)$, we obtain a map
		$$ \CKSmap_J  \colon \mathbb{H}(\Gamma \setminus J, \C)  \to \Omega^1_{\openpart_\subedges}.$$
		We use the same notation for the induced quasi-isomorphism
		$\CKSmap_J : \Lambda^* \mathbb{H}(\Gamma \setminus J, \C)  \xrightarrow{\sim} (\Omega^1_{\openpart_\subedges}, d_{dR})$.
	\end{definition}

	\begin{example}
		We continue Example \ref{ex:boing}. We have $\H(\boing, \C) = \C \gamma \oplus \C e$. Then $\CKSmap(\gamma) = \frac{dx}{x}$ and $\CKSmap(e) = \frac{d(xy+1)}{xy+1}$. 
	\end{example}

	\begin{remark}
		Using the stratification of $\Bet(\Gamma)$ by the $Q_J$ (Prop. \ref{stratification}) 
		and the above identification of 
		the cohomologies of the $Q_J$, it follows formally from excision sequences
		that $\H^*(\Bet(\Gamma), \C)$ can be computed by a complex with underlying
		graded vector space $\bigoplus_J \Lambda^* \mathbb{H}(\Gamma \setminus J, \C)$,
		whose differential respects the filtration by $|J|$.  Identifying the differential with that
		of $\CKS(\Gamma)$, however, necessarily involves understanding how the closure
		of one strata meets others.  It is this which we accomplish using log forms and residues, below.
	\end{remark}

	We recall the notation $D^{J} = \bigcup_{e \in J} D_e$ from Appendix \ref{app:residuetriang}. 
	\begin{lemma}
		$\closedpart_{\generalsubedges} = D^{\generalsubedges^c} \cap \generalstratum_\generalsubedges$.
	\end{lemma}
	\begin{proof}
		A point lies in $S_J$ if it is contained in $D_e$ for all $e \in J$. A point lies in $K_J$ if it is contained in $D_e$ for all $e \in J$ and at least one $e \notin J$.  
	\end{proof}

	By Lemma \ref{lem: snc}, $\closedpart_\subedges$ is a simple normal crossings divisor in $\stratum_{\subedges}$. 
	The key point that allows us to work with the log de Rham complex is: 
	
	\begin{proposition} \label{cor:CKSlogpoles}
		The image of $\CKSmap_J: \mathbb{H}(\Gamma \setminus J, \C)  \to \Omega^1_{\openpart_\subedges}$ lies in $\Omega^1_{\stratum_\subedges} \langle \closedpart_J \rangle$. 
	\end{proposition}
	\begin{proof}
		The proposition asserts that the forms in the image of $\CKSmap_J$, a priori defined on 
		$\openpart_\subedges$, in fact extend to meromorphic forms on $\stratum_\subedges$ 
		with logarithmic 
		poles along divisors of the form $\openpart_{\subedges \cup e}$. 
		
		We check this using certain open charts which contain both $\openpart_\subedges$ and $\openpart_{\subedges \cup e}$. For each spanning tree $\Gamma' \subset \Gamma$, Lemma \ref{lem:opencover} gives an open $\openset_{\Gamma'}$ and an isomorphism $ \openset_{\Gamma'} \cong \Bet^{E(\Gamma \setminus \Gamma')}.$ The intersection $\bigcap_{\Gamma'} \openset_{\Gamma'}$ is precisely $\Bet(\Gamma) \setminus D^{\Gamma} $, and the composition
		\[ \H^1(\Gamma, \C^*) \times \H_1(\Gamma, \C^*) \cong \Bet(\Gamma) \setminus D^{\Gamma} \cong (\Bet \setminus \bS_\Bet)^{E(\Gamma \setminus \Gamma')} \cong (\C^{\times} \times \C^{\times})^{E(\Gamma \setminus \Gamma')} \]
		is described by Lemma \ref{lem:coordinatesoncharts}. Now suppose $\Gamma' \subset \Gamma \setminus \subedges$. There is a corresponding contraction $\Gamma \setminus \subedges \to (\Gamma \setminus \subedges) / \Gamma' = \boing^{E(\Gamma \setminus \Gamma' \cup \subedges)}$, inducing
		\begin{equation} \label{eq:toruscoordinatesagain} 
			\H_1(\Gamma \setminus \subedges\homsep \C^*) \times \H^1(\Gamma \setminus \subedges\homsep \C^*) \cong (\C^{\times} \times \C^{\times})^{E(\Gamma \setminus \Gamma' \cup \subedges)}.
		\end{equation}
		We have an inclusion of opens
		\begin{equation} \label{eq:openincludes} \openpart_{\subedges} \subset \stratum_\subedges \setminus \bigcup_{e \in \Gamma'} \stratum_{\subedges \cup e}.  \end{equation}
		The right-hand side equals 
		\begin{equation} \label{eq:trivialisethestrata}
			\openset_{\Gamma'} \cap \stratum_{\subedges} \cong \Bet^{\Gamma \setminus (\Gamma' \cup \subedges)} \times \bS_\Bet^{\subedges}.
		\end{equation}
		Under the isomorphism \eqref{eq:trivialisethestrata}, a dense open subset of the divisor $\stratum_{\subedges \cup e} \subset \stratum_{\subedges}$ is identified with $\{ x_ e = 0 \} \subset   \Bet^{\Gamma \setminus (\Gamma' \cup \subedges)} \times \bS_\Bet^{\subedges}$. Moreover, the composition of \eqref{eq:openincludes} and \eqref{eq:trivialisethestrata} fits into a commutative diagram 
		\begin{equation} \label{eq:isomoftorifromspanning} 
			\hspace{-12mm}
			\begin{tikzcd} 
				& \openpart_{\subedges} \arrow[r, "\sim"] \arrow[d, "\pi_\subedges"]  & (\Bet \setminus \bS_\Bet)^{\Gamma \setminus (\Gamma' \cup \subedges)} \times  \bS_\Bet^{\subedges} \arrow[r] \arrow[d]  & \Bet^{\Gamma \setminus (\Gamma' \cup \subedges)} \times  \bS_\Bet^{\subedges} \arrow[d] \\
				& \H_1(\Gamma \setminus \subedges\homsep \C^*) \times \H^1(\Gamma \setminus \subedges\homsep \C^*)   \arrow[r, "\sim"] & (\C^{\times} \times \C^{\times})^{\Gamma \setminus (\Gamma' \cup \subedges)}  \arrow[r] & \Bet^{\Gamma \setminus (\Gamma' \cup \subedges)} \\
			\end{tikzcd}
		\end{equation}
	
	where the bottom left map is \eqref{eq:toruscoordinatesagain} and the middle vertical and bottom right maps are induced by $\Bet \setminus \bS_\Bet \cong \C^\times \times \C^\times$.
	
	Pulling back by the bottom left isomorphism of Diagram \eqref{eq:isomoftorifromspanning}, the map \eqref{eq:CKSmap} is identified with the map
	\[ \mathbb{H}(\Gamma \setminus \subedges, \C) \to (\Omega^1(\C^{\times} \times \C^{\times}))^{\Gamma \setminus (\Gamma' \cup \subedges)} \]
	given by 
	\begin{equation} \label{eq:logarithmicpoleformula} (\gamma, \theta) \to \frac{1}{2\pi i} \left(\langle e, \gamma \rangle \frac{d x_e}{x_e }  + \theta_e \frac{d(x_e y_e + 1)}{x_ey_e + 1} \right). \end{equation}
	Here $e$ ranges over $\Gamma \setminus (\Gamma' \cup J)$ and $\theta_e$ is defined by $\theta = \sum_{e \in \Gamma \setminus (\Gamma' \cup J)} \theta_e e$, as in Lemma \ref{lem:coordinates}. Via the lower right-hand map in Diagram \eqref{eq:isomoftorifromspanning}, we can view the map \eqref{eq:logarithmicpoleformula} as a meromorphic form on $\Bet^{\Gamma \setminus (\Gamma' \cup J)}$.
	
	By Equation \eqref{eq:logarithmicpoleformula}, the image of $\CKSmap_\subedges$ has logarithmic singularities along $\stratum_{\subedges \cup e} = \{ x_e = 0 \}$ for any $e \in \Gamma \setminus (\Gamma' \cup J)$. Since we can pick $\Gamma'$ to avoid any $e \notin \subedges$ for which the divisor $\stratum_{\subedges \cup e}$ is nonempty, the image has logarithmic singularities along $\closedpart_\subedges \subset \stratum_\subedges$.
\end{proof} 

\begin{corollary} \label{log forms on torus} 
	The induced map 
	$\CKSmap_J  \colon \bigwedge^{\bullet}\mathbb{H}(\Gamma \setminus J, \C)  \to (\Omega^{\bullet}_{\stratum_{\subedges}}\langle \closedpart_J \rangle, d_{dR})$ is a quasi-isomorphism.  
\end{corollary} 
\begin{proof}
	This map, composed with the quasi-isomorphism from the log de Rham complex to the 
	ordinary de Rham complex on $\openpart_J$, is a quasi-isomorphism. 
\end{proof}

\begin{proposition} \label{prop:CKSmapstoforms}
	The following diagram commutes.
	\begin{equation}
		\begin{tikzcd}
			\bigwedge^{\bullet} \mathbb{H}(\Gamma \setminus J, \C) \arrow[d, "d_{e}"] \arrow[r, "\CKSmap_J"] & \Omega^{\bullet}(\stratum_\subedges \langle \closedpart_J \rangle ) \arrow[d, "\operatorname{res}_{J \to J \cup e}"] \\
			\bigwedge^{\bullet-1} \mathbb{H}(\Gamma \setminus J \cup e, \C) \arrow[r, "\CKSmap_{J \cup e}"] & \Omega^{\bullet - 1} (\stratum_{\subedges \cup e} \langle \closedpart_{J \cup e} \rangle)  
		\end{tikzcd}
	\end{equation}
\end{proposition}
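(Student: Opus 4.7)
I would carry out a local computation in the chart $\openpart_J'$, which together with $\openpart_J$ covers $D_J$ up to codimension two (Diagram~\ref{diag:toriccharts}); since $D_{J'}\cap\openpart_J'$ is open and dense in $D_{J'}$, the identity may be verified there. By multilinearity and the fact that $\CKSmap_J$ preserves wedge products by construction, it suffices to check commutativity on a decomposable element $v_1\wedge\cdots\wedge v_l$ with $v_i = \gamma_i\oplus e_i$.

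Using Equation~\ref{eq: betaimageinychart} and Lemma~\ref{lem:xdivisorinychart}, each $\CKSmap_J(v_i)$ decomposes in this chart as $\frac{1}{2\pi i}(\alpha_i+\lambda_i\beta)$, where $\lambda_i = \langle e',\gamma_i\rangle$, $\beta = \frac{d(z'_{e'}-1)}{z'_{e'}-1}$ carries the unique log pole along $D_{J'}$, and $\alpha_i = \frac{dx'_{\gamma_i}}{x'_{\gamma_i}}+\frac{dz'_{e_i}}{z'_{e_i}}$ is regular along $D_{J'}$. Expanding $\bigwedge_i(\alpha_i+\lambda_i\beta)$ and using $\beta\wedge\beta=0$, only terms containing exactly one factor of $\beta$ survive the Poincar\'e residue; moving that factor from position $i$ to the front produces the sign $(-1)^{i-1}$, yielding
\begin{equation*}
\operatorname{res}_{J\to J'}\Bigl(\bigwedge_i\CKSmap_J(v_i)\Bigr) = \frac{1}{(2\pi i)^l}\sum_i (-1)^{i-1}\lambda_i\,\bigl(\alpha_1\wedge\cdots\wedge\widehat{\alpha_i}\wedge\cdots\wedge\alpha_l\bigr)\big|_{D_{J'}}.
\end{equation*}

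On the combinatorial side, the even derivation $d'_{e'}$ with $d'_{e'}(v_i)=\lambda_i e'$ gives $d'_{e'}(v_1\wedge\cdots\wedge v_l)=e'\wedge\omega$ for $\omega = \sum_i(-1)^{i-1}\lambda_i v_1\wedge\cdots\widehat{v_i}\cdots\wedge v_l$; a direct antisymmetry check shows $d'_{e'}(\omega)=0$, so $\omega \in \bigwedge^{l-1}\ker d'_{e'}$ and Lemma~\ref{lem:imageofde} identifies $d_{e'}(v_1\wedge\cdots\wedge v_l)$ with the image $\bar\omega$ of $\omega$ in $\bigwedge^{l-1}\mathbb{H}(\Gamma\setminus J',\C)$ under the projection $H^1(\Gamma\setminus J)\twoheadrightarrow H^1(\Gamma\setminus J')$. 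I would then use Lemma~\ref{lem:coordinaterestriciton} together with Equation~\ref{eq:changeofcharts} to match the restrictions $\alpha_j|_{D_{J'}}$ with $\CKSmap_{J'}$ applied to the appropriate projections: cycles in $H_1(\Gamma\setminus J')$ have $x_\gamma$ regular on $D_{J'}$; cochains $e \neq e'$ have $z_e$ regular; and $z_{e'}$ restricts to the constant $1$ on $D_{J'}$, corresponding to the quotient by $Re'$ in $H^1$. Assembling these identifications shows the residue formula above equals $\CKSmap_{J'}(\bar\omega)$, up to an overall factor of $2\pi i$ absorbed into the convention for $\operatorname{res}$.

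The hard part is this last identification: the individual $v_i$ generally fail to lie in $\mathbb{H}(\Gamma\setminus J',\C)$ (since $\langle e',\gamma_i\rangle$ may be nonzero), but the antisymmetric combination $\omega$ does, and one must verify that the restrictions of the toric coordinates $x'_{\gamma_i}$ and $z'_{e_i}$ to $D_{J'}$ assemble correctly after the combinatorial projection, with all signs in the Leibniz expansion tracked consistently.
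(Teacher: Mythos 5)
Your approach is essentially the same computation as the paper's, organized a bit differently. The paper fixes a splitting $\mathbb{H}(\Gamma\setminus J,\C) = \mathbb{F}\oplus\mathbb{K}$ with $\mathbb{K}=\ker d'_{e'}$ as in Lemma~\ref{lem:imageofde}, verifies the degree-one case from Equation~\ref{eq: betaimageinychart}, and then in higher degree writes $\bigwedge^\bullet\mathbb{H} = (\mathbb{F}\wedge\bigwedge^{\bullet-1}\mathbb{K})\oplus\bigwedge^\bullet\mathbb{K}$ so that $\operatorname{res}_{J\to J'}\circ\CKSmap_J$ visibly kills the second summand and acts on the first by $\sigma\wedge\tau\mapsto\langle e',\sigma\rangle\CKSmap_J(\tau)|_{D_{J'}}$, with Lemma~\ref{lem:coordinaterestriciton} providing the identification $\CKSmap_J(\tau)|_{D_{J'}}=\CKSmap_{J'}(\tau)$. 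You instead expand an arbitrary decomposable $v_1\wedge\cdots\wedge v_l$ with each $\CKSmap_J(v_i)=\frac{1}{2\pi i}(\alpha_i+\lambda_i\beta)$ and antisymmetrize; this reaches the same formula on both sides, though the final assembly is messier because the individual $\alpha_i|_{D_{J'}}$ are not a priori in the image of $\CKSmap_{J'}$.

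One small but genuine gap: the inference ``$d'_{e'}(\omega)=0$, so $\omega\in\bigwedge^{l-1}\ker d'_{e'}$'' is not valid as stated. The kernel of the extended derivation $d'_{e'}$ on $\bigwedge^{l-1}\mathbb{H}$ is strictly larger than $\bigwedge^{l-1}\mathbb{K}$: on the summand $\mathbb{F}\wedge\bigwedge^{l-2}\mathbb{K}$ the map sends $f\wedge\tau$ to $e'\wedge\tau$ (up to a scalar), and since $e'\in\mathbb{K}$ this has nontrivial kernel $\mathbb{F}\wedge e'\wedge\bigwedge^{l-3}\mathbb{K}$. The conclusion that $\omega$ itself lies in $\bigwedge^{l-1}\mathbb{K}$ is still correct, but needs a direct argument: assuming some $\lambda_1\neq 0$, set $u_i:=v_i-(\lambda_i/\lambda_1)v_1\in\mathbb{K}$ for $i\geq 2$ and check that the alternating sum telescopes to $\omega=\lambda_1\, u_2\wedge\cdots\wedge u_l$. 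This also fixes your final assembly step painlessly: the corresponding combinations $\tilde\alpha_i:=\alpha_i-(\lambda_i/\lambda_1)\alpha_1 = 2\pi i\,\CKSmap_J(u_i)$ have no $\beta$ component, so the residue equals $\lambda_1\,\tilde\alpha_2\wedge\cdots\wedge\tilde\alpha_l|_{D_{J'}}$ up to the normalization, and Lemma~\ref{lem:coordinaterestriciton} applies termwise. Doing this is, in effect, re-deriving the paper's $\mathbb{F}\oplus\mathbb{K}$ decomposition, which is why the paper's phrasing is cleaner.
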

\begin{proof}
	We will prove the case $J = \emptyset$; the other cases are identical. As in the proof of Lemma \ref{lem:imageofde}, we can write $\mathbb{H}(\Gamma, \C) = \mathbb{F} \oplus \mathbb{K}$ where $\mathbb{K} = \H_1(\Gamma \setminus e\homsep \C) \oplus \H^1(\Gamma \homsep \C)$ and $\mathbb{F}$ is any complementary rank-one subspace. We also fix a spanning tree $\Gamma' \subset \Gamma \setminus (\subedges \cup e)$. 
	
	Let us first assume $\bullet = 1$. By Equation \eqref{eq:logarithmicpoleformula}, we see that $\operatorname{res}_{J \to J \cup e} \circ \CKSmap_J$ factors through the projection to $\mathbb{F}$, and is given by the pairing $\langle e, \gamma \rangle$. This proves the result when $\bullet=1$. 
	
	For $\bullet > 1$, we can write $\bigwedge^{\bullet} \mathbb{H}(\Gamma, \C ) = \mathbb{F} \wedge \bigwedge^{\bullet - 1} \mathbb{K}  \oplus \bigwedge^{\bullet} \mathbb{K}$. Then $\operatorname{res}_{J \to J \cup e} \circ \CKSmap_J$ kills the second summand, and acts on the first by $\operatorname{res}_{J \to J \cup e} \circ \CKSmap(\sigma \wedge \tau) = \langle e, \sigma \rangle \CKSmap(\tau)_{\stratum_{J \cup e}}$ where $\CKSmap(\tau)_{\stratum_{J \cup e}}$ is the restriction of $\CKSmap(\tau)$ to $\stratum_{J \cup e}$. To compute this restriction, we use Equation \eqref{eq:logarithmicpoleformula}, which immediately implies $\CKSmap_J(\tau)_{\stratum_{J \cup e}} = \CKSmap_{J \cup e}(\tau)$. 
	
	Comparing with the formula for $d_e$ yields the result.
\end{proof}

We now use the residue  exact triangles from Appendix \ref{app:residuetriang}.  

\begin{theorem} \label{thm:quasiisom} 
	Define $\CKSmap$ by taking the direct sum of the maps $\CKSmap_J$ for all $J$. Then $\CKSmap$ induces an inclusion of complexes 
	$\CKS^{\bullet}( \Gamma, \C ) \to {}^D \Omega^{\bullet}_{\Bet(\Gamma)}$ (see Def. \ref{omega x d}), which
	is in fact a quasi-isomorphism. 
	In particular, this induces a canonical isomorphism $\H^{\bullet}(\CKS^{\bullet}( \Gamma, \C)) \xrightarrow{\sim} \H^{\bullet}(\Bet(\Gamma)\homsep \C)$.
\end{theorem}
\begin{proof}
	First, let us observe it is an inclusion of bigraded (by degree of wedge and size of $J$) vector spaces.  
	
	The de Rham differential vanishes on the image of $\CKSmap$, since by construction it is composed of wedge products of closed forms. Proposition \ref{prop:CKSmapstoforms} shows that $d_{\operatorname{res}}$ restricts to $d_{\CKS}$.
	
	The fact that the map is a quasi-isomorphism can be seen as follows. Filter both complexes by the size of $J$, and consider the associated map of spectral sequences. It induces an isomorphism in cohomology on the first page by Corollary \ref{log forms on torus}, and thus on all subsequent pages.  
	
	For the assertion regarding cohomology, note that 
	since $\Bet(\Gamma)$ is affine, 
	Proposition \ref{prop:cousincomputescohomology} yields a quasi-isomorphism $\Omega^{\bullet}_{\Bet(\Gamma)} \to {}^D\Omega^{\bullet}_{\Bet(\Gamma)}$.  This combines with the quasi-isomorphism $\CKSmap \colon \CKS^{\bullet}( \Gamma, \C ) \to {}^D \Omega^{\bullet}_{\Bet(\Gamma)}$
	to give the stated result. 
\end{proof}

\begin{remarque}
	In fact, the above argument shows that $\CKS^{\bullet}( \Gamma, \C ) \to {}^D \Omega^{\bullet}_{\Bet(\Gamma)}$ is
	an isomorphism in the filtered derived category, where both sides are filtered by the size of $J$. 
\end{remarque}

\begin{definition} \label{def:bettideletionfiltration}
	The {\em Betti deletion filtration} is the increasing filtration obtained from Definition \ref{def:delfiltration}, where the covariant functor $A$ is defined on objects by 
	$\Gamma \mapsto \H^{\bullet}(\Bet(\Gamma), \Q)$ and on morphisms by taking the inclusion $\Gamma' \to \Gamma$ to the composition of deletion maps 
	$a_e^{\Bet}$ (in any order) for $e \in \Gamma \setminus \Gamma'$.  
	(Independence of ordering follows from Lemma \ref{lem:gysincommutes}.) 
\end{definition}

\begin{proposition} \label{cksdelequalsbettidel}
	The map	$\CKSmap$ identifies the $\CKS$ deletion-contraction sequence with the Betti deletion contraction sequence, and the $\CKS$-deletion filtration with the Betti deletion filtration.  
\end{proposition} 
\begin{proof} 
	The quasi-isomorphism $\CKSmap$ of Theorem \ref{thm:quasiisom}, and its analogues for $\Gamma \setminus e$ and $\Gamma / e$, 
	defines a map of short exact sequence from Sequence \eqref{CKSshortexact} to the exact sequence  
	$$0 \to {}^{\closedpart_e}  \Omega^{\bullet - 2}_{D_e} \to {}^D \Omega^{\bullet}_{X} \to  {}^D \Omega_{X \langle D_e \rangle} \to 0$$
	of Proposition \ref{LESdeletion}.  
	(The commutativity of the $\CKSmap$ with the morphisms of in sequences holds because both exact sequences are the exact sequences
	associated to a cone, and the cone is over morphisms which we have already seen to be compatible in Proposition \ref{prop:CKSmapstoforms}.)
	Applying the comparison from Corollary \ref{cor:lesofapairdesc} recovers the deletion-contraction sequence. 
\end{proof}

\begin{proposition} \label{delconstrictlycompdelfilt}
	The Betti deletion-contraction sequence is strictly compatible with the Betti deletion filtration.
\end{proposition}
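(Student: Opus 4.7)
The plan is to deduce strict compatibility from the corresponding statement for the combinatorial model $\CKS(\Gamma)$, where it follows almost from the definitions, and then transport it across the quasi-isomorphism $\CKSmap$. By Corollary \ref{cor:cksfilfromcksgrad}, the $\CKS$-deletion filtration on $\H^i(\CKS(\Gamma))$ is induced by the $\CKS$-grading: $\delfilt_k \H^i(\CKS(\Gamma)) = \bigoplus_{m \le 2i-k} \H^i(\CKS_m(\Gamma))$. Since $b^{\CKS}_e$ and $c^{\CKS}_e$ preserve the $\CKS$-grading while $a^{\CKS}_e$ shifts it by one, the $\CKS$ deletion-contraction sequence is a long exact sequence of graded maps (up to a fixed shift). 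Any such sequence is automatically strictly compatible with the filtration induced by the grading: given an element of the target lying in $\delfilt_k$ and in the image, one produces a lift in $\delfilt_k$ by lifting each homogeneous component separately. This observation was already recorded at the end of Section \ref{sec:CKS}.

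Next, I would invoke the discussion preceding Corollary \ref{cor:cksdelequalsbettidel}, which uses $\CKSmap$ together with Corollary \ref{cor:lesofapairdesc} to construct an isomorphism of long exact sequences from the $\CKS$ deletion-contraction sequence \eqref{CKSsequence} to the Betti deletion-contraction sequence \eqref{eq:graphbettiLES} with $\C$-coefficients, intertwining $a^{\CKS}_e, b^{\CKS}_e, c^{\CKS}_e$ with $a^{\Bet}_e, b^{\Bet}_e, c^{\Bet}_e$ respectively. Combined with Corollary \ref{cor:cksdelequalsbettidel}, which identifies the two filtrations term-by-term under $\CKSmap$, this transports the strict compatibility of the $\CKS$ sequence to strict compatibility of the Betti sequence over $\C$. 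Strict compatibility of a $\Q$-linear map of finite-dimensional filtered $\Q$-vector spaces is equivalent to a dimension equality for the associated graded, and therefore descends from $\C$ back to $\Q$.

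The main bookkeeping point, which I expect to be the only real obstacle, is verifying that all the squares tying together the three terms of the $\CKS$ sequence, the three terms of the Betti sequence, and the filtrations on each, truly cohere into a single commutative diagram of filtered long exact sequences. This should reduce to three observations already assembled in Section \ref{subsec:CKSBetti}: that $\CKSmap$ is built as a direct sum of the homogeneous pieces $\CKSmap_J$; that each $\CKSmap_J$ is constructed functorially with respect to the subgraph inclusions $\Gamma \setminus J \hookrightarrow \Gamma$ underlying the deletion filtration; and that Proposition \ref{prop:CKSmapstoforms} matches the combinatorial differential $d_{\CKS}$ with the residue differential used in \eqref{divisorcomplexsequence}. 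Once these compatibilities are spelled out, the proposition follows immediately.
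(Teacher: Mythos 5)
Your proposal is correct and takes essentially the same approach as the paper: identify the Betti deletion-contraction sequence with the $\CKS$ deletion-contraction sequence via $\CKSmap$ (using Corollaries \ref{cor:betcks} and \ref{cor:cksdelequalsbettidel}), note that the $\CKS$ sequence strictly preserves the $\CKS$-filtration because that filtration is split by the $\CKS$-grading, and transport strictness back. Your additional remarks about descent from $\C$ to $\Q$ and about assembling the commutative diagram are careful bookkeeping that the paper treats as implicit in Corollary \ref{cor:cksdelequalsbettidel}, but do not constitute a different argument.
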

\begin{proof}
	Follows from Proposition \ref{cksdelequalsbettidel} and the corresponding fact for 
	the $\CKS$ deletion-contraction sequence (Cor. \ref{cor:cksstrictness}). 
\end{proof}

\begin{remark}
	The dependence of the remainder of this article on the present subsection
	factors through the statement of 
	Proposition 
	\ref{delconstrictlycompdelfilt}, which does 
	not involve $\CKS(\Gamma)$.  One could imagine this statement has a proof
	which does not require the comparison with $\CKS(\Gamma)$, but we do not 
	know one. 
\end{remark}

\subsection{Deletion versus weight filtrations}

\begin{theorem} \label{deletionisweight} 
	The weight filtration is given by doubling the Betti deletion filtration: 
	$$W_{2k} \H^{\bullet}(\Bet(\Gamma),\Q) = W_{2k+1} \H^{\bullet}(\Bet(\Gamma),\Q) = \mathrm{D}_k \H^{\bullet}(\Bet(\Gamma), \Q).$$
\end{theorem}

\begin{proof}
	First let us check the result when every edge of $\Gamma$ is a bridge or a loop. 
	By Definition \ref{def:delfiltration}, in this case the first nonvanishing
	step of the deletion filtration is $\delfilt_{i} \H^i(\Bet(\Gamma), \Q) = \H^i(\Bet(\Gamma), \Q)$.  
	We compute the weights.  Recall from Lemma \ref{lem:contractingbridges} that contracting bridges does not 
	change $\Bet(\Gamma)$; correspondingly we may as well assume $\Gamma$ is a vertex
	with $n$ loops.  For this space
	$$\H^\bullet(\Bet(\Gamma), \Q) = \H^\bullet(\Bet(\boing)^n, \Q) = \H^\bullet(\Bet, \Q)^{\otimes n}$$
	It follows from Proposition \ref{prop:cohomzbet} that the degree $i$ cohomology of the RHS 
	has weight $2i$, as desired.  
	
	Next, let us show that for any $\Gamma$, we have 
	\begin{equation}
		\label{eq: del in weight} 
		\delfilt_k \H^{\bullet}(\Bet(\Gamma), \Q) \subset W_{2k}\H^{\bullet}(\Bet(\Gamma), \Q)
	\end{equation}
	We proceed by induction on the number $N$ of edges that are neither bridges nor loops.  We have already treated
	the case $N=0$.  Suppose $N>0$. Note that $\delfilt_k \H^{\bullet}(\Bet(\Gamma), \Q)$ is spanned by the images under $a_e^{\Bet}$ for various $e \in \ed$ of $\delfilt_{k-1}\H^{\bullet-2}(\Bet(\Gamma \setminus e), \Q)$. Consider the sequence 
	$$\to \H^{i - 2}(\Bet(\Gamma \setminus e)\homsep \Q) \otimes \Q(-1) \xrightarrow{a_e^{\Bet} } \H^{i}(\Bet(\Gamma)\homsep \Q) \xrightarrow{b^{\Bet}_e} \H^{i}(\Bet(\Gamma / e)\homsep \Q) \xrightarrow{c_e^{\Bet}} $$
	By induction, we have that 
	$\delfilt_{k-1} \H^{i - 2}(\Bet(\Gamma \setminus e)\homsep \Q) \subset W_{2k - 2}\H^{i - 2}(\Bet(\Gamma \setminus e)\homsep \Q)$. 
	Taking into account the Tate twist, and noting that $a_e^{\Bet}$ preserves the weight filtration, 
	it follows that the image of $\delfilt_{k-1} \H^{i - 2}(\Bet(\Gamma \setminus e)\homsep \Q)$ lies 
	in $W_{2k}\H^{i}(\Bet(\Gamma)\homsep \Q)$, as desired.	 This completes the proof of 
	\eqref{eq: del in weight}.  
	
	Finally we are interested in upgrading the inclusion of \eqref{eq: del in weight} to an equality.  
	By \eqref{eq: del in weight}, the identity on $\H^{\bullet-2}(\Bet(\Gamma \setminus e), \Q)$
	induces a map of associated graded spaces; it will suffice to show that this map is an isomorphism.  
	Again we proceed by induction on $N$, having already established the case $N=0$. 
	
	Pick an edge $e \in \Gamma$ which is neither a bridge nor a loop. 
	By Theorem \ref{thm:weightdcs}, the deletion-contraction sequence for $e$ is strictly compatible with the weight-filtration.
	We write $\operatorname{gr}^{W}(DCS_e)$ for the sequence obtained from the deletion-contraction sequence by taking 
	the associated graded spaces for the weight filtration.  Strict compatibility implies that  $\operatorname{gr}^{W}(DCS_e)$
	is exact (see Lemma \ref{lem:strictmapsexactassocg}).   Similarly we write $\operatorname{gr}^{2\delfilt}(DCS_e)$ for the sequence 
	obtained from the deletion-contraction sequence by doubling indices and taking the associated graded spaces for the deletion filtration.
	
	Now, by strict compatibility of the deletion-contraction sequence with the Betti deletion filtration (Proposition \ref{delconstrictlycompdelfilt}), we may conclude the sequence $\operatorname{gr}^{2\delfilt}(DCS_e)$  is also exact.  
	
	Consider the map $\pi  \colon \operatorname{gr}^{2\delfilt}(DCS_e) \to \operatorname{gr}^{W}(DCS_e)$ 
	arising from the inclusion \eqref{eq: del in weight}.  By induction, $\pi$ is an isomorphism for the terms 
	associated to $\Gamma \setminus e$ and $\Gamma / e$.  Thus by the five lemma, $\pi$ is an isomorphism
	for the $\Gamma$ terms, as well.  This completes the proof.  
\end{proof}

\section{$\Dol$}

\subsection{Complex analytic structure} \label{subsec:basicdolbdef}
The space $\Dol$ will be a neighborhood of the nodal rational curve with dual graph $\boing$ 
inside a family of genus one curves.  We consider the universal cover of the universal deformation of $\P^1/\{0 = \infty\}$, 
as described in e.g. \cite[Sec. VII]{DR}, \cite[p. 135]{Mum-curves}.  

Let $\D_1 \subset \C$ be the interior of the unit disk, and $\D_1^*$ the punctured disk.  Take $q$ the coordinate on $\D_1$. 
One can form over $\D_1^*$ the family of genus one curves with fiber $\C^*/q^\Z$; it is by definition a quotient
$(\D_1^* \times \C^*)/\Z$.  

The monodromy of this family is such that it is natural to fill the special fiber by 
the rational curve with dual graph $\boing$, and one wants to extend the quotient description accordingly.  The picture 
is that one takes $\D_1 \times \P^1$, iteratively blows up the points at the intersection of the fiber over zero
and the strict transforms of the sections $\D_1 \times 0$ and $\D_1 \times \infty$, and then finally deletes
these sections.  The result has central fiber an infinite chain of $\P^1$.  

It is now possible to extend the $\Z$ action, as can be verified most easily in the following coordinate description. We consider the slightly larger space $\C \times \P^1$. It is a toric variety under the natural action of $\C^* \times \C^*$, and the iterated blow-ups (at torus-fixed points) inherit this toric structure. Each blow-up admits a compatible set of toric charts $\C^2_n \cong \C^2$, with coordinates $x_n, y_n$, glued by identifying 
$$\C^2_n \setminus \{x_n = 0\} \leftrightarrow \C^2_{n+1} \setminus \{y_{n+1} = 0\}$$
by the relations 
$$x_n y_n = x_{n+1} y_{n+1} $$
$$x_n = y_{n+1}^{-1}$$
If we glue all such charts $\C^2_n$ for $n \in \Z$, we obtain a complex manifold $\frak{W}$.

\begin{lemma} \label{lem: W structures}
	$\frak{W}$ carries the following structures: 
	\begin{enumerate}
		\item A holomorphic symplectic form $\Omega = dx_n \wedge dy_n$.
		\item A holomorphic function $q: \frak{W}  \to  \C$ given by 
		$q(x_n, y_n) = x_n y_n$.
		\item A $\C^*$ action  $(x_n, y_n) \to (\tau x_n, \tau^{-1} y_n)$, preserving $\Omega$ and the fibers of $q$.
	\end{enumerate}
\end{lemma}
\begin{proof}
	One checks that the formulas given descend along the above specified gluing. 
\end{proof}

\begin{proposition}
	The $\Z$ action on $\frak{W}$ defined by $k + (x_n, y_n) = (x_{n+k}, y_{n+k})$ is free
	and discontinuous over $q^{-1}(\D)$.  
\end{proposition} 
\begin{proof}
	We check
	the freeness and discontinuousness separately for $z \in \tilde{q}^{-1}(\D_1^*) \cong \D^* \times \C^*$, and $z \in \tilde{q}^{-1}(0)$. 
	In the former case, $n \in \Z$ acts by multiplication by $q^{n}$ on the $\C^*$-factor, which is free and discontinuous if 
	$|q| \neq 1$. In the latter case, $n \in \Z$ acts by translating the infinite chain $\tilde{q}^{-1}(0)$ by $n$ steps.
\end{proof}

\begin{definition}
	We write $\widetilde{\Dol} = {q}^{-1}(\D_1)$ and $\Dol =     {q}^{-1}(\D_1)/\Z$.   We write $\mathbf{n} \in \Dol$ for the common image
	of the points $(x_n, y_n) = (0,0)$.  
\end{definition}

The structures from Lemma \ref{lem: W structures} restrict to $\widetilde{\Dol}$ and descend to $\Dol$; we keep the 
same notation for the resulting structures. Note the $\C^*$ action on $\Dol$ factors through the quotient 
$\C^* / q^\Z$ away from $q^{-1}(0)$.  

\begin{proposition}
	We have the following: 
	\begin{enumerate}
		\item The map $q: \Dol \to \D_1$ is proper and holomorphic, with unique critical point $\mathbf{n}$.
		\item The fiber $q^{-1}(0)$ is a nodal rational curve, with node $\mathbf{n}$. 
		\item $q$ admits a section with image disjoint from $n$ and given in coordinates by $x_{2n} = y_{2n+1}=1$. 
		\item The $\U_1 \subset \C^*$ action on $\Dol$ is free away from its unique fixed point $\mathbf{n}$.
		\item There exist coordinates around $\mathbf{n}$ in which the $\U_1$ action is $\tau \cdot (x, y) = (\tau x, \tau^{-1} y)$.  
	\end{enumerate}
\end{proposition} 
\begin{proof}
	These can be checked in the coordinates given in Lemma \ref{lem: W structures}. 
\end{proof}

Our construction of $\Dol$ defines a manifold with a $\uu$-action, a $\uu$-invariant integrable complex structure $I_{\Dol}$, and a $\uu$-stable $(I_{\Dol}$-holomorphic) elliptic fibration to the open disk $\D_1$ with special fiber a nodal elliptic curve. 

The $\uu$-action can be recovered from the holomorphic map $\Dol \to \D_1$: 

\begin{lemma} \label{lem:uniqueaction} 
	Let $X$ be a smooth complex analytic surface and $q: X \to \D_1$ an elliptic fibration over a disk, with a single singular fiber at the origin
	of Kodaira type $I_1$ (reduced irreducible rational nodal elliptic curve).   
	
	Then there is a canonical action of $\U_1$ on $X$.  Said action is free away from the singularity of the central fiber, and preserves the fibers of $q$.  The quotient  
	$X / \U_1 \to \D_1$ is topologically a circle bundle.   
\end{lemma}
\begin{proof}
	The smooth locus of the fibers is identified with $\mathrm{Pic}^1(X/B)$, hence carries a $\mathrm{Pic}^0(X/B)$ action.  The 
	universal cover $\tilde{X}$ is topologically a $\C^\times$-bundle over the punctured disk, 
	with an infinite chain of rational curves over the origin; the $\mathrm{Pic}^0(X/B)$ action lifts to the action of a $\C^\times$ acting fiberwise. 
	We restrict this to $\U_1 \subset \C^\times$.   (Said restriction is canonical, as $\U_1$ is characterized as the maximal compact in $\C^\times$.) 
	The final assertion regarding $X / \U_1 \to \D_1$ is a local calculation at the nodal curve.  
\end{proof}

\subsection{Hyperk\"ahler structure}

In this section we recall how the Gibbons-Hawking ansatz can be used to construct hyperk\"ahler metrics on such elliptic fibrations \cite{AKL, Go, OV,  GrWi, Gr, GMN}. We learned these results from two letters of Michael Thaddeus \cite{Th} to Hausel and Proudfoot.

Fix the following data :
\begin{enumerate}
	\item A discrete subset $S \subset \R^3$.
	\item A positive harmonic function $V : \R^3 \setminus S \to \R$.
	\item A smooth $\U_1$-bundle $\pi : X_0 \to \R^3 \setminus S$.  
	\item A connection one-form $\theta \in \Omega^1(X_0)$ with curvature $d \theta = 2 \pi i \pi^*( \star dV)$, where $\star$ denotes the Hodge star operator with respect to the standard metric on $\R^3$. 
\end{enumerate}

The connection one-form is uniquely determined by $V$ up to adding a closed $\U_1$-invariant form,
which, as $\R^3 \setminus S$ is simply connected, must take the form $\theta' = \theta + \pi^*df$.

To this data, Gibbons and Hawking \cite{GH} associate a metric $g$ on $X_0$ defined by
\begin{equation} \label{gibbons hawking metric} 
g := V^{-1} \theta \cdot \theta + V \pi^* ds^2 
\end{equation}
where $ds^2$ is the Euclidean metric on $\R^3$.  

\begin{theorem} \cite{GH}  
The metric $g$ of \eqref{gibbons hawking metric} is hyperk\"ahler.  
\end{theorem} 

More precisely, in \cite{GH} the metric was shown to be Ricci flat, which in four dimensions is equivalent to being hyperk\"ahler. 

The two-sphere of compatible complex structures and K\"ahler forms can be described explicitly \cite{AKL, GrWi}. Choose an orthonormal frame $e_1, e_2, e_3$ of $\R^3$, and write $(u_1, u_2, u_3)$ for the associated coordinate system. Lift these to horizontal vector fields $\hat{e}_1, \hat{e}_2, \hat{e}_3$ on $X_0$. Let $\hat{e}_0$ generate the $\U_1$-action on $X_0$. Then 
\[ V^{1/2} \hat{e}_0, V^{-1/2} \hat{e}_1, V^{-1/2} \hat{e}_2, V^{-1/2} \hat{e}_3 \] 
are an orthonormal frame for $g$. A compatible compatible complex structure sends $V^{1/2} \hat{e}_0$ to a unit vector $V^{-1/2} (a_1 \hat{e}_1 + a_2 \hat{e}_2 + a_3 \hat{e}_3)$ in the orthogonal complement. We can index the complex structure by the unit vector $s = a_1 e_1 + a_2 e_2 + a_3 e_3 \in \R^3$. For example, the complex structure $I_{e_1}$ is given by
\begin{equation} \label{eq:complexstructures} \begin{bmatrix} 0 & -1 & 0 & 0 \\ 1 & 0 & 0 & 0 \\  0 & 0 & 0 & -1 \\ 0 & 0 & 1 & 0 \end{bmatrix}. \end{equation}

The k\"ahler forms associated to $I_{e_1}, I_{e_2}, I_{e_3}$ are
\begin{align*}
\omega_1 = du_1 \wedge \theta / 2 \pi i +  V du_2 \wedge du_3, \\
\omega_2 = du_2 \wedge \theta / 2 \pi i +  V du_3 \wedge du_1, \\
\omega_3 = du_3 \wedge \theta / 2 \pi i +  V du_1 \wedge du_2. 
\end{align*}

\begin{lemma}
The action of $\U_1$ on $X_0$ is hyperhamiltonian,with hyperk\"ahler moment map given by the projection $X_0 \to \R^3$. 
\end{lemma} 
The choice of $s$ determines a decomposition $\R^3 = (\R s)^{\perp} \oplus \R s = \C \times \R$, with coordinates $(z_s, u_s)$. The component $z_s : (X_0, I_s) \to \C$ of the hyperk\"ahler moment map is holomorphic.

Now suppose that we have a smooth four-manifold $X$ with a $\U_1$-action, and an open embedding $X/\U_1 \subset \R^3$. Let $S \subset \R^3$ denote the image of the $\U_1$-fixed locus, and fix data as above.
\begin{hypothesis}  \label{gh singularities} 
Near each $s \in S$,  $V \pm \frac{1}{4\pi |u - s|}$ extends smoothly over $s$, where $\pm 1$ is the local Chern class
of the $\U_1$-bundle defined by taking the preimage in $X$ of a small sphere around $s$.  
\end{hypothesis}
\begin{remark} 
By the mean value property for harmonic functions, if $V \pm \frac{1}{4\pi |u - s|}$ is  bounded, 
it is harmonic, and in particular smooth. 
\end{remark}

\begin{proposition} \cite[Section 2]{AKL} 
Under hypothesis \ref{gh singularities},  the metric $g$ extends smoothly to $X$. 
The extended metric $g$ is hyperk\"ahler, and 
the hyperk\"ahler moment map and the sphere of complex structures can be extended smoothly to $X$. 
\end{proposition}
\begin{proof} 
The existence of smooth extension is checked explicitly in coordinates.
Such a smooth extension is automatically hyperk\"ahler, since the Ricci tensor is a continuous function of the metric. 
The complex structures are then defined by parallel transport from any point away from the singularity. 
Likewise, the circle action will preserve the extended metric, since the Lie derivative is continuous. 
\end{proof}

When $S \subset \R^3$ is a finite set of points, we can produce a hyperk\"ahler metric on $X$ starting from the everywhere positive harmonic function 
\begin{equation} \label{eq:sumofpotentials} V_0 = \sum_{s \in S} \frac{1}{4 \pi |u - s|}. \end{equation}
If the number of points is infinite, but the space between these points grows sufficiently fast, the corresponding sum \eqref{eq:sumofpotentials} will converge to a harmonic function with the right singularities, and one again obtains a hyperk\"ahler metric. 

Now consider the case where our four-manifold $\tilde{X}$ carries a free action of $\Z$ covering a translation on $\R^3$, and the critical locus $S$ is a single $\Z$-orbit. In this case \eqref{eq:sumofpotentials} will diverge everywhere. By adding a suitable constant to each term, however, Ooguri and Vafa 
\cite{OV} obtained a series which converges everywhere on $\R^3$ and is positive on a neighborhood of $S$ of the form $\D_r \times \R$, where $\D_r$ a disk of some radius $r > 0$. One thereby obtains a $\Z$-invariant hyperk\"ahler metric on $X$, which descends to a hyperk\"ahler metric on $X = \tilde{X} / \Z$. 

We now return to the setting of Lemma \ref{lem:uniqueaction}. $X$ already carries a complex structure, and we want to define a hyperk\"ahler metric compatible with this structure. Gross and Wilson \cite{GrWi} show how to accomplish this by adding to $V_0$ a suitable harmonic correction term, defined in terms of the periods of the elliptic fibration. The resulting function $V$ will again be positive on a neighborhood of $S$ of the form $\D_r \times \R$.  In fact, they define a family of such metrics depending on $\epsilon > 0$, which give the elliptic fibers volume $\epsilon$. The radius $r$ of positivity of $V$ will depend on both $\epsilon$ and the periods of the fibration.

Let us now give the precise statement. Given $0 < r < 1$, let $\D_r \subset \D$ be the disk of radius $r$. Let $X_r$ be the preimage of $\D_r$.

\begin{proposition} \label{prop: gross-wilson} \label{pinchdeg}
\cite[Sec. 3]{GrWi}, \cite{OV}: 
For each $\epsilon > 0$ there exists a radius $0 < r < 1$, and a (not complete) hyperk\"ahler structure on $X_r$,  extending the 
given complex structure $I$ on $X_r$ (in which $q: X_r \to \D_r$ is holomorphic).   
This metric gives the elliptic fibers volume $\epsilon$, and $\U_1$ acts by isometries and admits
a hyper-hamiltonian moment map of the form 
\begin{equation}
	\label{eq: pi D} 
	\pi_X = q \times \mu_X^{\U_1}  \colon X_r \to \uutimesD
\end{equation}

Moreover, let $\mathbf{n} \in X$ be the node of $q^{-1}(0)$.  Then: 
\begin{enumerate}
	\item The only critical point of $\pi_X$ is $\mathfrak{n}$, and $\pi_X(\mathfrak{n}) = 0 \times 1$. 
	\item  \label{principal bundle} Away from $\mathbf{n}$, the map $\pi_X$ is a principal $\U_1$-bundle, 
	with Chern class $-1 \in \Z =  \H^2( \uutimesD \setminus \hyperbasepoint\homsep \Z)$.  
	\end{enumerate}
\end{proposition}
\begin{proof}
We recall the general outline of the argument, for details see \cite[Sec. 3]{GrWi}. 

Consider the universal cover $\tilde{X} \to X$; it has deck group $\Z$.  The task is then to construct
a $\Z$ and $\U_1$ invariant hyperk\"ahler metric on $\tilde{X}_r$, for which the $\U_1$ action is moreover
hyperhamiltonian with $\Z$-equivariant moment map $q \times \tilde \mu_X^\R: X \to \C \times \R$. 
We do this using the Gibbons-Hawking ansatz described above.

In the situation at hand, we fix an identification $\tilde{X} / \U_1 \cong \D \times \R$, and write $u_2 + i u_3$ (resp. $u_1$) for the coordinates on $\D$ (resp $\R$). We take $S$ to be the $\Z$-invariant subset  $\{ (0, n) | n \in \Z \} \subset \C \times \R$. Let $A \in \H_1(X_b)$ be the vanishing cycle in a nearby fiber, and extend it to a basis $A,B$ such that monodromy around zero acts by $A \to A, B \to A + B$. We may normalize the periods of the elliptic fibers to be $1$ along $A$. Then the period along $B$ will be 
\begin{equation} \label{eq:periodsofellipticfibers} \frac{1}{2\pi i}\log(z) + ih(z) \end{equation}
for some holomorphic function $h(z)$.

Fix $\epsilon > 0$, and consider the following function
\begin{equation} \label{eq:Vzero} V_0 = \frac{1}{4\pi} \sum_{- \infty}^{\infty} \left( \frac{1}{\sqrt{(u + \epsilon n)^2 + z \bar{z}}} - a_n\right), \end{equation}
where
\[ a_n = \frac{1}{\epsilon n}, n \neq 0 \ \ \ a_0 = 2 ( -\gamma + \log(2\epsilon) )/\epsilon. \]
Here, $\gamma$ is Euler's constant. The choice of $a_n$ is made so that the sum \eqref{eq:Vzero} converges uniformly over compact subsets of $|z| < 1$. Since the individual terms are harmonic, Harnack's convergence theorem implies that the limit is also harmonic. It is also manifestly $\Z$-invariant. Each term of the sum has a $\frac{1}{4\pi |u - s|}$-singularity at exactly one $s \in S$, and this property is inherited by the sum.

Gross and Wilson show that there exists a function $0 < r(\epsilon) < 1$ such that on $\R \times \D_{r(\epsilon)}$, 
one has 

\[ V := V_0 + \mathrm{Re}(h(z)) > 0. \]  

We fix $r = r(\epsilon)$ and apply the Gibbons-Hawking construction to the fibration $\tilde{X}_r \to \R \times \D_r$ and the function $V$ and a suitable choice of connection one-form $\theta$. The result is a $\Z$-invariant hyperk\"ahler metric $g$ on $\tilde{X}_r$, and a compatible sphere $(aI + bJ + cK)$ of complex structures. Let $I$ be the complex structure for which the map $X \to \D$ is holomorphic, and therefore defines an elliptic fibration. One can directly compute the periods of an elliptic fiber as an integral of the function $V$, and verify that these match \eqref{eq:periodsofellipticfibers}. It follows that the elliptic fibration with complex structure $I$ is fiberwise isomorphic to the original elliptic fibration.

By $\Z$-invariance, the metric descends to $X_r$. A direct calculation shows that the volume of each fiber 
equals $\epsilon$.
\end{proof}

We will often abbreviate $\mu^{\U_1}_X$ to $\mu_X$. 
The $\U_1$ action and map $\mu_X$ give $X$ the structure of a $(\U_1, \U_1 \times \C)$-manifold.  
We will however often be interested
in the restricted structure of a $(\uu, \uu)$-manifold given by just using $\mu_X$. 
Note from Proposition \ref{prop: gross-wilson} \eqref{principal bundle}, it follows that 
for $\epsilon \ne 0$, there is an isomorphism of  $(\uu, \uu)$ manifolds $q^{-1}(\epsilon) \cong [\uu \times \uu]$.

\begin{lemma}
Proposition \ref{prop: gross-wilson} applies to the fibration $\Dol \to \D_1$, defining a hyperk\"ahler metric on $\Dol_r$. We can take $r$ arbitrarily close to $1$ by making $\epsilon$ sufficiently small.
\end{lemma} 
\begin{proof}
For the elliptic fibration $\Dol \to \D$ with coordinate $z=q$ on $\D$, the correction term $\Re(h(z))$ vanishes. The calculation of $r(\epsilon)$ in \cite[Sec. 3]{GrWi} shows that $r$ can be taken near $1$ for $\epsilon$ sufficiently small.
\end{proof}

\begin{remark}
For Gross and Wilson, the important case is $\epsilon \ll 1$, whereupon these metrics approximate a global metric on a K3 surface. On the other hand, we are content to fix some $\epsilon > 0$. For the purposes of this paper, the specific choice of $r > 0$ is irrelevant, and we will suppress it by writing $\D = \D_r$ in what follows. 
\end{remark}

\begin{remark} 
We may rescale the metric on $\Dol$ so that the volume of any fiber of $\dmomD$ equals one.  
Because the fibers are one complex dimensional, this corresponding K\"ahler form $\omega_I$ is integral.  
This will be helpful for later arguments regarding projectivity.  
\end{remark}

\begin{remark}
The holomorphic functions $x_0, q$ define an isomorphism $\tilde{\Dol} \setminus q^{-1}(0) \cong \C^* \times \D_1$. In fact there is a family of such isomorphisms, obtained by
\begin{equation} \label{eq:reparametrize} (x_0,q) \to (f(q)x_0, q) \end{equation}
for any map $f : \D_1 \to \C^*$. The function $\mu^{\R}_{\Dol} : \tilde{\Dol} \to \R$ is $\U_1$-invariant and therefore depends only on $|x_0|, q$. 

We do not know an explicit formula for $\mu^{\R}_{\Dol}$.  However, we can give the following characterisation. Set $t = \log |x_0|$. Then $\mu^{\R}_{\Dol}$ solves the non-linear ODE
\begin{equation} \label{eq:ODEformoment} \frac{du}{dt} = \frac{1}{V(u, q)}. \end{equation}
To see this, recall the vector field $\hat{e}_0$ generating the $\U_1$ action. Then $I \hat{e}_0$ generates the action of $\R^* \subset \C^*$. We have 
\[ \mathcal{L}_{I \hat{e}_0} (\mu_{\Dol}) = \iota_{I \hat{e}_0}(d \mu_{\Dol}) = g(I \hat{e}_0, I \hat{e}_0) = g(\hat{e}_0, \hat{e}_0) = V^{-1}. \] 
Equation \eqref{eq:ODEformoment} has a unique solution up to shifts $t \to t + c(q)$ of the argument. These shifts correspond exactly to the modifications \eqref{eq:reparametrize}.
\end{remark}

\begin{lemma} \label{lem: holsympissame}
The form $\Omega = dx_n \wedge dy_n = d \log(x_n) \wedge dq$ from Lemma \ref{lem: W structures} 
agrees with the holomorphic symplectic form $\Omega^{GW} := \omega_J + \sqrt{-1} \omega_K$
associated to the
hyperk\"ahler metric of Proposition \ref{prop: gross-wilson}
\end{lemma}
\begin{proof}  
Fix a contractible open $U \subset \D \setminus 0$, and let $\widetilde{X}_U \to U$ be the restriction of $\widetilde{X} \to \D$. Gross and Wilson \cite[Construction 2.6]{GrWi} show that 
on any such open, $\Omega^{GW} = d \log(u) \wedge dq$, where $u : \widetilde{X}_U \to \C$ is a certain holomorphic function scaled with weight one by $\C^{\times}$ and defined up to multiplication by an invertible function of $q$.\footnote{To compare with \cite{GrWi}: our $q$ is their $y$, and our $u$ is their $\exp(2 \pi i x)$.} 
Since $d \log(u) - d \log(x_n) = d \log(u/x_n) = d f(q)$ for some function $f : U \to \C$, we have
$ d \log(u) \wedge dq = d \log(x_n) \wedge dq$. \end{proof}

\begin{lemma} \label{cor:dolretract} 
There is a $\U_1$-equivariant deformation retraction of $\Dol$ onto $\dmomD^{-1}(\D_{\epsilon})$, 
where $\D_{\epsilon}$ is the open disk of radius $0 < \epsilon < 1$.   There is also 
a $\U_1$-equivariant deformation retraction of $\Dol$ to $q^{-1}(0)$. 
\end{lemma}
\begin{proof}
Pick any $\uu$-connection on the bundle described in Proposition \ref{pinchdeg}  (we do not require it to be flat). 
Then the linear retraction $\uutimesD \to \D^{\epsilon} \times \uu$ or $\uutimesD \to 0 \times \uu$ induces the desired retraction of $\Dol$ via parallel transport. One must treat the case of the line through $0 \times 1$ separately, as the fiber over $0 \times 1$ collapses to a point. The preimage of this line is homeomorphic to a disk, and parallel transport extends to a retraction to the origin.
\end{proof}

\begin{figure}[h]
\large
\centering{
	\resizebox{70mm}{!}{\includegraphics{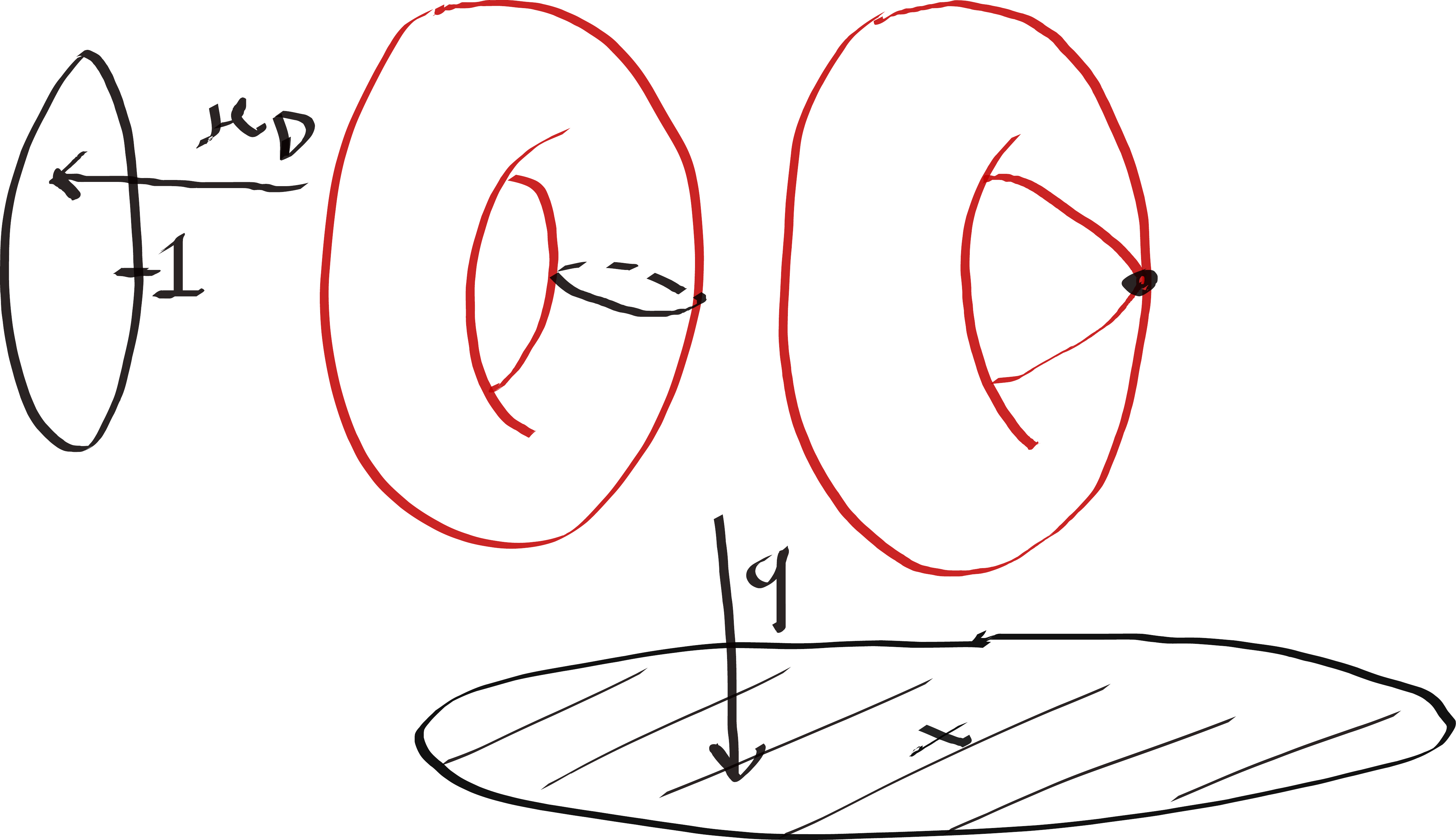}}
	\caption{A schematic picture of $\Dol$ and its various moment maps and their targets. Two fibers of $q$ are shown, and the intersection of each fiber with $\mu_\Dol^{-1}(1)$ is indicated in black.}
}
\end{figure}

\subsection{Vanishing cycles for $q$} \label{sec: qvan}
Given any space $X$ and map $q \colon X \to \C$, denote the inclusion of the zero fibre by  $i \colon q^{-1}(0) \to X$, and 
consider the inclusions

$$q^{-1} ( \{\operatorname{Re}(z)\leq 0\} ) \xrightarrow{\mathcal{I}} X \xleftarrow{\mathcal{J}} q^{-1}( \{\operatorname{Re}(z)> 0\} )$$

The corresponding excision triangle $\cI_!\cI^!\Q \to \Q \to \cJ_*\cJ^*\Q \xrightarrow{[1]}$ restricts to the 
nearby-vanishing triangle (in some accounts this is the definition of nearby and vanishing cycles).  That is,   
\[  \Psi_q \Q = i^*\cJ_*\cJ^*\Q. \]
\[  \Phi_q \Q = i^*\cI_!\cI^!\Q. \]

We now return to the case at hand.  We define 

$$\bS_\Dol \colonequals \dmom^{-1}(\R^{\leq 0} \times 1)$$
and write for the inclusions $\bS_\Dol \xrightarrow{I} \Dol \xleftarrow{J} \Dol \setminus \bS_\Dol$. 

\begin{lemma} 
$\bS_\Dol$ is a codimension 2 submanifold of $\Dol$, diffeomorphic to an open disk. 
\end{lemma}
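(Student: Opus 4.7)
The plan is to analyse $\bS_\Dol$ separately near the critical point $\mathbf{n}$ and on its complement, then glue the two descriptions into a smooth open disk.

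First, near $\mathbf{n}$: Proposition \ref{pinchdeg}(6) provides local holomorphic coordinates $(x,y)$ on $\Dol$ in which $\U_1$ acts by $\tau \cdot (x,y) = (\tau x, \tau^{-1} y)$ and, from the construction of $\Dol$, $q(x,y) = xy$. Up to a positive normalization constant, the real lift of $\dmomabrevU$ to a neighborhood of $\mathbf{n}$ is the standard $\U_1$-moment map $|x|^2 - |y|^2$, which vanishes at $\mathbf{n}$. So the equations defining $\bS_\Dol$ become $|x| = |y|$ together with $xy \in \R^{\leq 0}$; writing $x = r e^{i\alpha}$ and $y = r e^{i\beta}$ these combine to $y = -\bar x$. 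Locally $\bS_\Dol = \{(x,-\bar x) : x \in \C\}$, which is a smooth real $2$-dimensional submanifold on which $q$ restricts to $x \mapsto -|x|^2$. This already gives codimension $2$ and a smooth disk chart at $\mathbf{n}$.

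Next, away from $\mathbf{n}$: since $\mathbf{n}$ is the only critical point of $\dmom$, Proposition \ref{pinchdeg}(1) says $\dmom$ is a submersion (in fact a principal $\U_1$-bundle) on the complement of $\mathbf{n}$. The arc $\{(t,1) : -1 < t \leq 0\} \subset \D^1\times \U_1$ meets the smooth locus in the one-dimensional submanifold $\{(t,1) : -1 < t < 0\}$, so its preimage $\bS_\Dol \setminus \{\mathbf{n}\}$ is a smooth codimension $2$ submanifold, realised as a principal $\U_1$-bundle over the interval $(-1, 0)$. As the base is contractible, this bundle is trivial, and $\bS_\Dol \setminus \{\mathbf{n}\} \cong (-1,0) \times \U_1$.

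To assemble the global picture, I would choose the trivialization of the above $\U_1$-bundle so that, on an overlap $(-\epsilon, 0)$ near $q=0$, it matches the polar parametrization $x = r e^{i\alpha}$, $r = \sqrt{-t}$, coming from the local disk chart at $\mathbf{n}$. Gluing produces a diffeomorphism of $\bS_\Dol$ onto the open unit disk $\{z \in \C : |z|^2 < 1\}$, sending $\mathbf{n}$ to $0$ and intertwining $q$ with $z \mapsto -|z|^2$. The only thing that needs checking is smoothness of the global chart at $\mathbf{n}$, and this is automatic from the construction, since the two descriptions agree smoothly on the overlap $(-\epsilon, 0) \times \U_1$ by choice of trivialization.
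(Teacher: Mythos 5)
Your overall strategy -- splitting $\bS_\Dol$ into the part near $\mathbf{n}$ and the cylinder away from $\mathbf{n}$, and gluing -- is exactly the paper's approach, and your conclusion is correct. The one place where you assert slightly more than the paper justifies is the local model: Proposition \ref{pinchdeg}(6) only states that the $\U_1$ action is linearized in the coordinates $(x,y)$ near $\mathbf{n}$; it does not say that the hyperk\"ahler metric is the flat one in those coordinates, so the claim that the real lift of $\dmomabrevU$ is literally $|x|^2-|y|^2$ up to a positive constant (and that $q$ is literally $xy$) needs an extra input such as an equivariant Morse lemma for the moment map. The paper avoids this by asserting only that the fibration $\dmom$ near $\mathbf{n}$ is \emph{equivariantly diffeomorphic} to the Hopf fibration $\R^4 \to \R^3$ -- which follows from local linearization of the action plus the fact that any diffeomorphism of the Hopf base fixing the origin lifts equivariantly -- and then notes that the preimage of a smooth ray under the Hopf map is a disk. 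That formulation is what you actually need: it tells you $\bS_\Dol$ near $\mathbf{n}$ is diffeomorphic to your set $\{(x,-\bar x)\}$ without having to identify $\dmomabrevU$ and $q$ on the nose. With that adjustment, your cylinder-plus-disk gluing argument goes through exactly as you wrote it, and in fact gives a cleaner explicit parametrization than the paper records.
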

\begin{proof}
Since $\dmom$ is a circle fibration away from $0 \times 1$, the preimage $\dmom^{-1}(\R^{< 0} \times 1)$ is evidently
a cylinder.  It suffices to investigate the geometry near $0 \times 1$.  This point is the image of the fixed point $\mathbf{n}$, where
in local coordinates the circle action is $\tau \cdot (x, y) = (\tau x, \tau^{-1} y)$.  It follows that the fibration is equivariantly 
diffeomorphic to the standard Hopf fibration $\R^4 \to \R^3$, where it can be checked in coordinates that the preimage of 
any smooth ray leaving the origin is a disk.  
\end{proof}

\begin{remark} \label{rem:SDolandSBet}
In Lemma \ref{lem:tateNAHT} below, we 
use a more elaborate version of this argument to construct an embedding $\Dol \subset \Bet$, with
respect to which $\bS_\Dol = \bS_\Bet \cap \Dol$. Under this embedding, we shall see that $I,J$ are intertwined with the same-named maps from Equation \eqref{bettiinclproj}. 
\end{remark}

\begin{lemma} \label{lem:gutteddolb}
Let $\epsilon > 0$ be sufficiently small. We have a retraction of $(\U_1, \U_1)$-spaces $\Dol \setminus \bS_\Dol \to q^{-1}(\epsilon)$.
\end{lemma}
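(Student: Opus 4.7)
By definition $\bS_\Dol = \dmom^{-1}(\R^{\leq 0} \times 1)$, where $\dmom = q \times \dmomabrevU\colon \Dol \to \D \times \U_1$; in particular $\mathbf{n} \in \bS_\Dol$. By Proposition \ref{pinchdeg}, $\dmom$ is a principal $\U_1$-bundle away from $(0,1)$, so its restriction to $\Dol \setminus \bS_\Dol$ is a principal $\U_1$-bundle over the open base $B \colonequals (\D \times \U_1) \setminus (\R^{\leq 0} \times 1)$; moreover $q^{-1}(\epsilon) = \dmom^{-1}(\{\epsilon\}\times \U_1)$ sits inside $\Dol \setminus \bS_\Dol$ as a $\U_1$-sub-bundle whenever $\epsilon \in (0,1)$. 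The plan is to build a deformation retraction of $B$ onto $\{\epsilon\} \times \U_1$ that preserves the $\U_1$-factor, and then lift it via $\U_1$-equivariant parallel transport to the desired $r\colon \Dol \setminus \bS_\Dol \to q^{-1}(\epsilon)$.

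The deformation retraction of the base will be the straight-line homotopy $H_t(z,u) = ((1-t)z + t\epsilon,\, u)$. Convexity of $\D$ keeps $H_t$ inside $\D \times \U_1$, so it suffices to check that $H_t$ avoids the slit $\R^{\leq 0} \times 1$ when $u=1$. Writing $w_t = (1-t)z + t\epsilon$ with $z \in \D \setminus \R^{\leq 0}$, one has $\mathrm{Im}(w_t) = (1-t)\,\mathrm{Im}(z)$, which vanishes only when $\mathrm{Im}(z)=0$ (forcing $z > 0$ by hypothesis, so $w_t > 0$) or $t=1$ (giving $w_t = \epsilon > 0$). In every case $w_t \notin \R^{\leq 0}$, so $H_t$ is a well-defined deformation retraction of $B$ onto $\{\epsilon\} \times \U_1$ that preserves the $\U_1$-coordinate and satisfies $H_t|_{\{\epsilon\}\times \U_1} = \mathrm{id}$.

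To lift, fix any $\U_1$-invariant connection on the principal bundle $\dmom|_{\Dol \setminus \bS_\Dol} \to B$ (for instance, the one induced by the hyperk\"ahler metric of Lemma \ref{lem:dolqhm}). Parallel transport along the paths $t \mapsto H_t(z,u)$ produces a $\U_1$-equivariant isotopy $\widetilde H_t$ with $\widetilde H_0 = \mathrm{id}$; set $r \colonequals \widetilde H_1$. Then $r$ lands in $q^{-1}(\epsilon)$ by construction, is $\U_1$-equivariant because the connection is, and commutes with $\dmomabrevU$ because $H_t$ preserves the $\U_1$-factor on the base. Finally, $r|_{q^{-1}(\epsilon)} = \mathrm{id}$ because over $\{\epsilon\} \times \U_1$ the paths $H_t$ are constant, and parallel transport along a constant path is trivial. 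The only nontrivial step in the argument is the convexity check for $H_t$, and it goes through for every $\epsilon \in (0,1)$; thus the ``$\epsilon$ sufficiently small'' hypothesis is only a convenience for later uses of the retraction in the paper.
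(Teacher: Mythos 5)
Your proof is correct and takes essentially the same approach as the paper's: both view $\Dol\setminus\bS_\Dol$ as a $\U_1$-bundle over a slit $\D\times\U_1$, retract the base onto $\{\epsilon\}\times\U_1$, and lift that retraction via the bundle structure (the paper by trivializing, you by $\U_1$-equivariant parallel transport, which amounts to the same thing). Your explicit straight-line check that the base homotopy avoids the slit, and your remark that any $\epsilon\in(0,1)$ works, are slightly more detailed than the paper's one-line proof but not a different argument.
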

\begin{proof}
$\Dol \setminus \bS_{\Dol}$ is a trivial $\U_1$-bundle over $\uutimesD \setminus [0,1) \times 1$. A retraction of $\D \setminus [0,1)$ to $\epsilon$ can be lifted to an $(\U_1, \U_1)$-retraction of the total space of the bundle.
\end{proof}

Observe that we have a closed inclusion

$$\bS_\Dol = \dmom^{-1}(\R^{\leq 0} \times 1) \subset  \dmom^{-1}(\{ \operatorname{Re}(z)\leq 0 \} \times \U_1) = q^{-1} ( \{\operatorname{Re}(z)\leq 0\} )$$

Thus the excision triangle for $\bS_\Dol$ maps to the excision triangle for $q^{-1} ( \{\operatorname{Re}(z)\leq 0\} )$.  

\begin{equation}
\begin{tikzcd} \label{basicdiagram}
	I_! I^! \Q   \arrow[r] \arrow[d] & \Q \arrow[r] \arrow[d] &   J_* J^*\Q  \arrow[r, "{[1]}"] \arrow[d]  & \;  \\ 
	\cI_!\cI^! \Q   \arrow[r] &  \Q \arrow[r] &   \cJ_*\cJ^*\Q  \arrow[r, "{[1]}"]  & \;  \\ 
\end{tikzcd}
\end{equation} 
\begin{proposition} \label{basicvanishingpairisom} 
The restriction of the above diagram to the nodal rational curve at $q^{-1}(0)$ is
an isomorphism of triangles.   
\end{proposition}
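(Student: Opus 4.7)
The plan is to reduce to a stalk computation at the unique critical point $\mathbf{n}$ of $q$. The first observation is that $\bS_\Dol \cap q^{-1}(0) = \{\mathbf{n}\}$: since $\bS_\Dol = \dmom^{-1}(\R^{\leq 0} \times 1)$ and $q^{-1}(0) = \dmom^{-1}(\{0\} \times \uu)$, their intersection equals $\dmom^{-1}(0 \times 1)$, which by Proposition \ref{pinchdeg} is precisely $\mathbf{n}$.

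I would then argue that away from $\mathbf{n}$, the entire diagram restricts to the tautological comparison $[0 \to \Q \to \Q]$. The top row becomes trivial because $I_!I^!\Q$ vanishes on the complement of $\bS_\Dol$, so that the restriction $J^*\Q \to \Q$ is an isomorphism there. For the bottom row, $q$ is a submersion on $q^{-1}(0) \setminus \{\mathbf{n}\}$ by Proposition \ref{pinchdeg}, hence locally trivial near any such point, and the vanishing-cycle term $\cI_!\cI^!\Q$ vanishes there. Thus both triangles agree on $q^{-1}(0) \setminus \{\mathbf{n}\}$ and the comparison between them is tautologically an isomorphism there.

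The heart of the argument is at $\mathbf{n}$, where I would identify $\bS_\Dol$ with a vanishing thimble for the $A_1$-singularity $q = xy$. In the local coordinates provided by Proposition \ref{pinchdeg}, the zero locus of the additive moment map for the circle action $\tau \cdot (x,y) = (\tau x, \tau^{-1} y)$ is $\{|x| = |y|\}$, and intersecting with the phase condition $xy \in \R^{\leq 0}$ exhibits $\bS_\Dol$ as the real $2$-disk $\{(x, -\bar x) : x \in \C\}$. The restriction $q|_{\bS_\Dol}(x, -\bar x) = -|x|^2$ foliates $\bS_\Dol$ over $\R^{\leq 0}$ with fibers the circles $\{|x|^2 = -t\} \subset q^{-1}(t)$, which are precisely the vanishing cycles in the nearby smooth fiber $q^{-1}(t) \cong \C^*$. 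Hence $\bS_\Dol$ is a vanishing thimble, and the comparison $I_!I^!\Q|_\mathbf{n} \to \cI_!\cI^!\Q|_\mathbf{n} = \Phi_q\Q|_\mathbf{n}$ is the standard identification given by the thimble: both groups are canonically a copy of $\Q[-2]$, and the map is nonzero because the thimble's boundary circle generates $H_1$ of the nearby fiber.

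The main obstacle is verifying this last comparison rigorously, without hand-waving invocations of Picard--Lefschetz theory. A concrete route uses Lemma \ref{lem:gutteddolb}, which exhibits a retraction $\Dol \setminus \bS_\Dol \to q^{-1}(\epsilon)$; the nearby fiber $q^{-1}(\epsilon)$ is also the natural model computing $\cJ_*\cJ^*\Q|_\mathbf{n}$. The inclusion $q^{-1}(\{\operatorname{Re}(z) > 0\}) \hookrightarrow \Dol \setminus \bS_\Dol$ respects these retractions (both land inside the same smooth fiber), so the induced map $J_*J^*\Q|_\mathbf{n} \to \cJ_*\cJ^*\Q|_\mathbf{n}$ is an isomorphism. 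Since the middle-term map in the diagram is the identity of $\Q|_{q^{-1}(0)}$, the five lemma then forces the left-hand map to be an isomorphism as well, completing the proof.
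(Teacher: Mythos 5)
Your proposal is correct and takes essentially the same route as the paper: reduce to a stalk computation at the node $\mathbf{n}$, observe that away from $\mathbf{n}$ both triangles are $0 \to \Q \to \Q$, show the right-hand vertical map $J_*J^*\Q \to \cJ_*\cJ^*\Q$ is an isomorphism at $\mathbf{n}$, and conclude the left-hand map is too since the middle is the identity. The coordinate computation exhibiting $\bS_\Dol = \{(x,-\bar x)\}$ as a thimble with $q|_{\bS_\Dol} = -|x|^2$ is a nice explicit supplement the paper omits.

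One small imprecision in your ``rigorous route'': Lemma~\ref{lem:gutteddolb} gives a \emph{global} deformation retraction $\Dol \setminus \bS_\Dol \to q^{-1}(\epsilon)$, but the stalk $(J_*J^*\Q)_{\mathbf{n}}$ is a direct limit of $\H^\bullet(V \setminus \bS_\Dol)$ over \emph{small} balls $V \ni \mathbf{n}$, and the global retraction does not restrict to these balls. The paper sidesteps this by arguing locally: in a small ball $V$, both $V \setminus \bS_\Dol$ and the Milnor fiber $V \cap q^{-1}(\epsilon)$ have $\H^1$ generated by a $\U_1$-orbit, so the restriction is an isomorphism. Your coordinate model already supplies the same local information (both small punctured neighborhoods retract onto the vanishing circle $\{|x| = \sqrt{\epsilon}\}$), so the patch is immediate -- you should just run the retraction argument inside $V$ using the explicit $A_1$ picture rather than invoking the global Lemma~\ref{lem:gutteddolb}.
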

\begin{proof}
The only point of intersection between $\bS_\Dol$ and the central fiber is the node: 
$\bS_\Dol \cap q^{-1}(0) = \mathbf{n}$. Away from the node, the map $q$ is smooth. Thus along $q^{-1}(0) \setminus \mathbf{n}$, the diagram  restricts to 
\begin{center} 
	\begin{tikzcd} \label{basicdiagramrest}
		0   \arrow[r] \arrow[d] & \Q \arrow[r] \arrow[d] &  \Q  \arrow[r, "{[1]}"] \arrow[d]  & \;  \\ 
		0  \arrow[r] &  \Q \arrow[r] &   \Q  \arrow[r, "{[1]}"]  & \;  \\ 
	\end{tikzcd}
\end{center}
with all maps given by the identity or the zero map. On the other hand, the Milnor fiber of $q$ at $\mathbf{n}$ has cohomology supported in degrees $0$ and $1$, and the degree one homology is generated by any orbit of $\U_1$. Similarly, the degree one homology of $V \setminus V \cap \bS_\Dol$ (for $V$ a small ball containing $\mathbf{n}$) is generated by any orbit of $\U_1$, since $\bS_\Dol$ is a smooth $\U_1$-stable codimension-two submanifold. Thus the restriction map from $V \setminus V \cap \bS_\Dol$ to the Milnor fiber induces an isomorphism on cohomology. It follows that the right-hand vertical map is an isomorphism in a neighborhood of $\mathbf{n}$. Since it is also an isomorphism away from $\mathbf{n}$, it is a global isomorphism. Since the central vertical map of the diagram is simply the identity, the left-hand map must also be an isomorphism.
\end{proof}

\begin{corollary} \label{cor:vanishing}
$\Psi_q \Q = \Q_{\mathbf{n}}[-2]$.
\end{corollary}
\begin{proof}
We have
$$\Psi_q \Q = i^*\cI_!\cI^! \Q = i^* I_! I^! \Q = i^* \Q_{\bS_\Dol} [-2] = \Q_{\mathbf{n}}[-2].$$
Here, the first equality is essentially the definition of  of vanishing cycles.  The second equality holds because
of the isomorphism of exact triangles.  The third equality holds because $\bS_\Dol$ is a real codimension 2 submanifold.
The final equality holds because $\bS_\Dol \cap q^{-1}(0) = \mathbf{n}$. 
\end{proof}

\begin{remarque}
The locus $\bS_\Dol$ is a Lefschetz thimble for the vanishing cycle. 
\end{remarque}

\begin{remark}
In the nearby-vanishing exact triangle of sheaves on $q^{-1}(0)$,
$$\Psi_q \Q \to \Q|_{q^{-1}(0)} \to \Phi_q \Q \xrightarrow{[1]}$$
we substitute $\Psi_q \Q = \Q_{\mathbf{n}}[-2]$, and pass to global cohomology: 
$$ \Q[-2] \to \Q \oplus \Q[-1] \oplus \Q[-2] \to \Q \oplus \Q^{\oplus 2}[-1] \oplus \Q[-2] \xrightarrow{[1]} $$
Note the similarity to the sequence appearing in the calculation of $\H^\bullet(\Bet\homsep \Q)$ in Proposition \ref{prop:cohomzbet}, save 
that the weight grading no longer appears.  This similarity will ultimately develop into the
comparison result of Theorem \ref{Hodgeintertwinesdelcon}. 
\end{remark}

\section{$\Dol(\Gamma)$}   \label{sec:dol}  

\subsection{Construction}
\label{subsec:graphdolbspace} 

Per Proposition \ref{prop: gross-wilson}, the space $\Dol$ carries 
a $(\uu, \uutimesC)$-structure. 

\begin{definition}
Given a graph $\Gamma$ and  $\eta \in C_0(\Gamma, \uutimesC)$, we set
$$\Dol(\Gamma) \colonequals \Dol^{(\uu, \uutimesC)} (\Gamma, \eta). $$
\end{definition}

Writing $\eta = p \times \nu$, the space $\Dol(\Gamma, \eta)$ is  
$\mu_{\Gamma}^{-1}(p \times \nu) / \redC^0(\Gamma, \uu)$.  The space $\mu_{\Gamma}^{-1}(p \times \nu) \subset \Dol^{\ed}$ is
the subset satisfying 
\begin{align}
\sum_{\text{edges exiting } v} q_e - \sum_{\text{ edges entering } v} q_e &  = p_e \\
\prod_{\text{edges exiting } v} \mu^{\uu}_e \prod_{\text{ edges entering } v} \big( \mu^{\uu}_e \big)^{-1} &  = \nu_v. 
\end{align}

\begin{proposition} \label{prop:dolbHK}  For generic $\eta$, the space 
$\Dol(\Gamma, \eta)$ is a (non-complete) hyperk\"ahler manifold. 
\end{proposition}
\begin{proof}
It follows from Proposition \ref{pinchdeg}  that 
$\Dol$ satisfies Hypothesis \ref{hyp:bZsmoothness} as a $(\U_1, \uutimesC)$-space. 
Thus since we chose generic $\eta$, Proposition \ref{prop:smoothcond} implies $\Dol(\Gamma, \eta)$ is smooth. 

As $\mu_{\Dol}^{\U_1} \times \dmomD$ is a multiplicative moment map for a hyperk\"ahler action of $\U_1$ 
by Proposition \ref{prop: gross-wilson}, $\Dol(\Gamma, \eta)$ is the hyperk\"ahler reduction of a hyperk\"ahler manifold. 
\end{proof} 
\begin{proposition} \label{prop:dolbintegdef}
$\Dol(\Gamma, \eta)$ is equipped with a complex analytic action of $\H^1(\Gamma\homsep\C^*)$ and a proper holomorphic $\H^1(\Gamma\homsep\C^*)$-invariant map $q_{\res} \colon \Dol(\Gamma, \eta) \to \C^{\ed}$ whose image is the intersection of the unit polydisk with $\H_1(\Gamma\homsep\C)_p$.  
\end{proposition}
\begin{proof}
Proposition \ref{prop: h1h1 structure} yields a proper map $\Dol(\Gamma, \eta) \to \H_1(\Gamma, \uutimesC)_{\eta}$. Composing with the projection $\H_1(\Gamma, \uutimesC)_{\eta} \to \H_1(\Gamma, \C)_p$ preserves properness. Concretely, the projection is induced by restricting $q^{\ed}$ to the zero fiber of the moment map and descending to the quotient. The result is holomorphic and $\H^1(\Gamma, \C^*)$-invariant since $q$ is holomorphic and $\C^*$-invariant.
\end{proof}

Note it is possible to arrange $\eta = (p, v)$ generic while requiring $p = 0$, as a special case of Lemma \ref{genericvaluesexist2}.   We will henceforth restrict attention to this case, 
whereupon $\H_1(\Gamma\homsep\C)_p = \H_1(\Gamma \homsep \C)$. 

\begin{remark}
Symplectic reduction of the holomorphic symplectic form $\Omega^{\oplus \ed}$ on $\Dol^\ed$ gives a 
holomorphic symplectic form $\Omega_\Gamma$ on $\Dol(\Gamma)$.  
The map $q_{\res}: \Dol(\Gamma) \to \H_1(\Gamma\homsep\C)$ determines an integrable
system, as can be seen e.g. by symplectic reduction from the corresponding fact about $q: \Dol \to \D$.  
We consider this structure the counterpart of Hitchin's integrable system on the moduli of Higgs bundles \cite{Hit2}.  
\end{remark} 

\begin{proposition} \label{prop: action is symplectic} 
The action of $\H^1(\Gamma\homsep \C^*)$ preserves $\Omega_{\Gamma}$.   
\end{proposition} 
\begin{proof}
This follows from the fact that the $\C^*$ action preserves the holomorphic symplectic form on $\Dol$.
\end{proof}

\begin{remarque}
As with $\Bet(\Gamma)$, we can show that  the dependence on the chosen orientation is quite mild. More precisely, if $\Gamma, \Gamma'$ differ only by the choice of orientation, then there is a canonical isomorphism of smooth manifolds
\[ \Dol(\Gamma) \to \Dol(\Gamma)' \]
By Proposition \ref{prop:indeporient}, it is enough to find an isomorphism $\Dol \to \Dol$ of smooth manifolds, intertwining the $\U_1$-action and the $\uutimesC$-moment map with their inverses. One can construct such a map by arguments similar to the proof of Lemma \ref{lem:tateNAHT}. We will not need this result elsewhere in the paper. 
\end{remarque}

\subsection{Vanishing cycles and convolution} \label{subsec:ageneralsequence}
\label{subsec:retractionstocore}
We wish to show that the calculations of Section \ref{sec: qvan} ``commute with convolution'' with an auxilliary  
$(\uu, \uu)$-manifold $X$, under certain hypotheses on $X$ to be described later.  

Recall that by definition, 
$$X \star_{\uu, \uu, \zeta} \Dol := (\mu_X  + \mu_{\Dol})^{-1}(\zeta) / \uu$$
As always we implicitly require (and must check for any particular $X$ of interest) 
that $\zeta$ is a regular value of the moment map, and that the $\uu$ action
on the fiber over $\zeta$ is free. 
For the remainder of this subsection we abbreviate $\star := \star_{\U_1, \U_1, \zeta}$ product, for some fixed $\zeta \in \U_1$. 

Recall the map $\dmomD \colon \Dol \to \C$. Composing with projection onto the second factor, 
we get a map $\tilde{q}_{2} \colon X \times \Dol \to \C$, which descends to 
\[ q_2  \colon X \star \Dol \to \C. \]
\begin{remark}
In applications, often $X= \Dol(\Gamma)$, hence has its own $q_{\res}$ map.  The subscript in the notation
$q_2$ reminds that the map is built only from the $q$ of the second factor $\Dol$, and has nothing to do with
this $q_{\res}$. 
\end{remark}

It is immediate from the definitions that: 
\begin{eqnarray}
X \star q^{-1}(t) & = &  q_2^{-1}(t) \\
\label{starsing} 
X \star \mathbf{n} & = &  q_2^{-1}(0)  \cap (X \star \bS_\Dol) 
\end{eqnarray}
In addition, note that for $\epsilon \ne 0$, we have  $\dmomD^{-1}(\epsilon) \cong [\U_1 \times \U_1]$ as a $(\U_1, \U_1)$-space, thus 
\begin{equation}
\label{q2nonzeropreimage}  q_2^{-1}(\epsilon) =  X \star q^{-1}(t)  \cong X \qquad \qquad \epsilon \ne 0
\end{equation}

\begin{lemma}  \label{lem: retractions} 
The following inclusions are deformation retracts: 
\begin{eqnarray}
	i:  q_2^{-1}(0) & \hookrightarrow & X \star \Dol \\
	i_{\mathbf{n}} :  X \star \mathbf{n} & \hookrightarrow & X \star \bS_\Dol \\
	i_\epsilon  \colon q_2^{-1}(\epsilon) & \hookrightarrow &  X \star (\Dol \setminus \bS_\Dol) \qquad \qquad \epsilon \in \D \setminus 0
\end{eqnarray} 
\end{lemma} 
\begin{proof}
In each case, the claim follows from the existence of a retraction on the basic space: 
\begin{enumerate}
	\item The retraction $\Dol \to \dmomD^{-1}(0)$ constructed in Corollary \ref{cor:dolretract} is a retraction of $(\U_1, \U_1)$-spaces, and thus induces a retraction $X \star \Dol \to X \star \dmomD^{-1}(0) = q_2^{-1}(0)$. 
	\item Recall that $\bS_\Dol$ is the preimage of $0 \times \R^{\leq 0}$ under $\dmom$. The retraction of $\R^{\leq 0}$ to $0$ induces a retraction of $(\U_1, \U_1)$-spaces $\bS_\Dol \to \mathbf{n}$, which in turn induces a retraction $X \star \bS_\Dol \to X \star \mathbf{n}$.
	\item We have a retraction of $(\U_1,\U_1)$-spaces $\Dol \setminus \bS_{\Dol} \to q^{-1}(\epsilon)$ (Lemma \ref{lem:gutteddolb}), which then induces a retraction $X \star \Dol \setminus \bS_\Dol \to X \star q^{-1}(\epsilon)$.
\end{enumerate}
\end{proof}

\begin{proposition} \label{dol conv main diagram} 
Consider the maps
\[X \star \bS_\Dol   \xrightarrow{I} X \star \Dol \xleftarrow{J} X \star (\Dol \setminus \bS_\Dol)  \] 
There is the following commutative diagram, with each row an exact triangle: 
\begin{equation}
	\label{global dol key diagram}
	\begin{tikzcd} 
		& & \H^\bullet(X, \Q) & \; \\
		\H^\bullet(X \star \mathbf{S}_\Dol,  I^! \Q )   \arrow[r] \arrow{d}[swap]{i_{\mathbf{n}}^*} & \H^\bullet(X \star \Dol, \Q) \arrow[r] \arrow{d}{\sim}[swap]{i^*} 
		& \H^\bullet(X \star (\Dol \setminus \mathbf{S}_\Dol),  \Q)  \arrow[r, "{[1]}"] \arrow[d] \arrow{u}{i_\epsilon^*}[swap]{\sim} & \;  \\ 
		\H^\bullet(X \star \mathbf{n},  i_{\mathbf{n}}^* I^! \Q)   \arrow[r] \arrow[d] & \H^\bullet(q_2^{-1}(0), \Q)  \arrow[r] \arrow[d, equals] & 
		\H^\bullet(q_2^{-1}(0),  i^* J_* J^*\Q)  \arrow[r, "{[1]}"] \arrow[d]  & \;  \\ 
		\H^\bullet(q_2^{-1}(0), \Phi_{q_2} \Q)   \arrow[r] & \H^\bullet(q_2^{-1}(0), \Q) \arrow[r] &  \H^\bullet(q_2^{-1}(0),  \Psi_{q_2} \Q)  \arrow[r, "{[1]}"]  & \;  \\ 
	\end{tikzcd}
\end{equation}
Here, $\epsilon$ is any nonzero element of $\D$. 
\end{proposition}
\begin{proof} 
We will use the additional triangle 
$$q_2^{-1} ( \{\operatorname{Re}(z)\leq 0\} ) \xrightarrow{\mathcal{I}} X \star \Dol \xleftarrow{\mathcal{J}} q_2^{-1}( \{\operatorname{Re}(z)> 0\} ).$$
We have the comparison between excision triangles: 
\begin{equation}
	\label{keydiagram}
	\begin{tikzcd} 
		I_! I^! \Q   \arrow[r] \arrow[d] & \Q \arrow[r] \arrow[d, equals] &   J_* J^*\Q  \arrow[r, "{[1]}"] \arrow[d]  & \;  \\ 
		\cI_!\cI^! \Q   \arrow[r] &  \Q \arrow[r] &   \cJ_*\cJ^*\Q  \arrow[r, "{[1]}"]  & \;  \\ 
	\end{tikzcd}
\end{equation}
and its restriction along $i  \colon  q_2^{-1}(0) \subset X \star \Dol$: 
\begin{equation}
	\label{morekeydiagram}
	\begin{tikzcd} 
		i^* I_! I^! \Q   \arrow[r] \arrow[d] & \Q \arrow[r] \arrow[d, equals] &   i^* J_* J^*\Q  \arrow[r, "{[1]}"] \arrow[d]  & \;  \\ 
		\Phi_{q_2} \Q   \arrow[r] &  \Q \arrow[r] &  \Psi_{q_2} \Q  \arrow[r, "{[1]}"]  & \;  \\ 
	\end{tikzcd}
\end{equation}
which we may push forward again by $i_*$ to find
\begin{equation}
	\label{evenmorekeydiagram}
	\begin{tikzcd} 
		I_! I^! \Q   \arrow[r] \arrow[d] & \Q \arrow[r] \arrow[d] &   J_* J^*\Q  \arrow[r, "{[1]}"] \arrow[d]  & \;  \\ 
		i_*i^* I_! I^! \Q   \arrow[r] \arrow[d] & i_*\Q \arrow[r] \arrow[d, equals] &   i_* i^* J_* J^*\Q  \arrow[r, "{[1]}"] \arrow[d]  & \;  \\ 
		i_* \Phi_{q_2} \Q   \arrow[r] & i_* \Q \arrow[r] &  i_* \Psi_{q_2} \Q  \arrow[r, "{[1]}"]  & \;  \\ 
	\end{tikzcd}
\end{equation}
The main 3x3 square of Diagram \eqref{global dol key diagram} is found by taking global sections of Diagram \eqref{evenmorekeydiagram}, 
and using Equation \eqref{starsing}.   The indicated maps are isomorphisms by Lemma \ref{lem: retractions}. 
\end{proof}

\begin{lemma} \label{singlocus} 
The singular locus of $q_2$ is contained in $X \star \mathbf{n}$. 
\end{lemma}
\begin{proof} 
Let $z \in \Dol \setminus \mathbf{n}$. Choose a small open neighborhood of polydisk $V_1 \times V_2$ containing $\dmom(z) \in \uutimesD$, not containing the point $(0,1)$, on which the $\U_1$ bundle $\mathscr{P}_{\Dol}$ can be trivialized. By Corollary \ref{cor:bullettriv}, we have a diffeomorphism
\[ \dmom^{-1}(V_1 \times V_2) \star X \cong V_1 \times \mu_X^{-1}(V_2) \]
and under this isomorphism $q_2$ becomes projection onto $V_1$. This concludes the proof.
\end{proof}

\begin{hypothesis} \label{hyp} 
There is an open contractible neighborhood $\baseopen \subset \U_1$ of $\zeta \in \U_1$, such that: 
\begin{enumerate}
	\item \label{Xassumpone}
	The map $\mu_X  \colon X \to \U_1$ is locally constant near $\zeta$, in the sense that for some interval 
	$\zeta \in \baseopen \subset \U_1$, the space $\mu_X^{-1}(\baseopen) \subset X$ is isomorphic as a $(\U_1, \U_1)$-space to $[\baseopen \times \mu_X^{-1}(\zeta)]$. 
	\item \label{Xassumptwo}
	The action of $\U_1$ on $\mu_X^{-1}(\zeta)$ is free. 
\end{enumerate}
\end{hypothesis}

\begin{proposition} \label{vanishpairisom}
When Hypothesis \ref{hyp} holds, $X \star \bS_\Dol$ is a submanifold of $X \star \Dol$ with $\R$-codimension $2$, we have
$\Phi_{q_2} \Q = \Q_{X \star \mathbf{n}}[-2]$, and all vertical arrows in Diagram \eqref{global dol key diagram} are isomorphisms. 
\end{proposition} 
\begin{proof} 
The key points are to show
\begin{enumerate}[label=(\alph*)]
	\item \label{dolbdivisor} $X \star \bS_\Dol$ is a real-codimension two submanifold of $X \star \Dol$
	\item \label{wantedisom} The left vertical map $i^* I_! I^! \Q \to \Phi_{q_2} \Q$ of \eqref{morekeydiagram} is an isomorphism. 
\end{enumerate}
Indeed \ref{dolbdivisor} implies $I^! \Q = \Q[-2]$; thus because  $i_{\mathbf{n}}$ is a retraction, it would follow
that $i_{\mathbf{n}}^*$ is an isomorphism in
Diagram \eqref{global dol key diagram}, hence that all vertical arrows between the top rows are isomorphisms.    
Then \ref{wantedisom} implies that Diagram \ref{morekeydiagram} is an isomorphism of exact triangles.  
From this, together with \ref{dolbdivisor}, we may deduce $\Phi_{q_2} \Q = \Q_{X \star \mathbf{n}}[-2]$ as in the proof of  Corollary \ref{cor:vanishing}. 
Also, by taking global sections, we find 
that the vertical maps between the bottom two rows of \eqref{global dol key diagram} are isomorphisms. 

We turn to establishing \ref{dolbdivisor} and \ref{wantedisom}.  Both are local statements.  First
we study them away from $X \star \mathbf{n}$, i.e. in $X \star (\Dol \setminus \mathbf{n})$.  
Since $\Dol \setminus \mathbf{n}$ is a principal $\uu$-bundle (compatibly with $\bS_\Dol$), it is 
easy to see that \ref{dolbdivisor} holds here.  As for \ref{wantedisom}, we will show
both terms vanish.  For $i^* I_! I^! \Q$, this is simply because  $i^* I_!$ of anything is supported 
on $q_2^{-1}(0) \cap \bS_\Dol$, which by \eqref{starsing} is $X \star \mathbf{n}$.  And it follows immediately
from Lemma \ref{singlocus} that $ \Phi_{q_2} \Q$ is also supported on $X \star \mathbf{n}$.     

We now study a neighborhood of $X \star \mathbf{n}$.  Recall $\baseopen$ is some 
neighborhood of $\zeta \in \uu$.  
Let 
$$B \colonequals \dmomabrevU^{-1}(\zeta - \baseopen)$$ 
(Since $\uu$ is playing the role of moment image, in keeping with our general conventions
we use additive notion for its group law.)

As $B$ is an open neighborhood of $\mathbf{n} \in \Dol$, 
the space $q_2^{-1}(0) \cap (X \star B)$ is a neighborhood of $q_2^{-1}(0) \cap (X \star \mathbf{n})$.   
Observe
$$X \star B = (\mu_{X} + \dmomabrevU)^{-1}(\zeta) / \U_1 = \mu_X^{-1}(\baseopen) \star B.$$

By Hypothesis \ref{hyp} \eqref{Xassumpone}, we have an isomorphism of $(\U_1, \U_1)$-spaces
\begin{equation} \label{connection} \mu_X^{-1}(\baseopen) \cong [\mu_X^{-1}(\zeta) \times \baseopen] \end{equation}
where the moment map on the RHS is the projection $ \mu_X^{-1}(\zeta) \times \baseopen \to \baseopen$.
By Hypothesis \ref{hyp} \eqref{Xassumptwo}, the action of $\U_1$ on $\mu_X^{-1}(\zeta)$ is free, hence defines a principal 
$\U_1$-bundle $\rho  \colon \mu_X^{-1}(\zeta) \to \mu_X^{-1}(\zeta) / \U_1$.  

Working locally, we may assume the bundle  $\rho  \colon \mu_X^{-1}(\zeta) \to \mu_X^{-1}(\zeta) / \U_1$ is trivial.  That is, we have reduced
ourselves to establishing the stated result of the proposition with $X$ replaced by a $(\uu, \uu)$-space of the form
$Y \times [\uu \times \baseopen]$, where the $\uu$ action is translation on the $\uu$ factor, and the moment map is just
projection to $\baseopen \subset \uu$.   As we have (functorially) 
$(Y \times [\uu \times \baseopen]) \star K = Y \times K$ for any $(\uu, \uu)$-subspace $K \subset B$, 
\ref{dolbdivisor} is obvious and \ref{wantedisom} follows immediately
from Proposition \ref{basicvanishingpairisom}. 
\end{proof}

\begin{remark}
Note in particular that Proposition \ref{vanishpairisom} implies
$\H^\bullet(q_2^{-1}(0),  \Psi_{q_2} \Q) \cong \H^\bullet(q_2^{-1}(\epsilon), \Q) $ even though
we do not require $q_2$ proper. 
\end{remark}

\subsection{A larger family}  \label{sec:versalfamily} 
Here we introduce a larger family  $q_{\res} \colon \versD(\Gamma, \eta) \to \C^{\ed}$,
from which $q_{\res}: \Dol(\Gamma) \to \H_1(\Gamma, \C)$ may be recovered via base-change
along the inclusion $\H_1(\Gamma, \C) \subset \C^{\ed}$.  Later,
we prove results $\Dol(\Gamma)$ by first establishing their analogues for $\versD(\Gamma, \eta)$ and
then studying the restriction. 

Recall that $\Dol$ had the structure of a $(\U_1, \U_1 \times \C)$-manifold.  
We write $\versD$ for the space $\Dol$ viewed as a $(\U_1, \U_1)$-manifold, i.e. we forget the projection to $\D$.  

\begin{definition} \label{defversalspace0} For $\nu \in C_0(\Gamma, \uu)$, we write  $\versD(\Gamma, \nu) \colonequals \versD^{\U_1, \U_1}(\Gamma, \nu)$. \end{definition}

More explicitly, we have $\versD(\Gamma, \nu) = \mu_{\Gamma}^{-1}(\nu) / \redC^0(\Gamma\homsep \uu)$, where:
\begin{align}
\versD^{\ed} \supset \mu_{\Gamma}^{-1}(\nu)  := \bigg\{ \prod_{\text{edges exiting } v} \mu^{\uu}_e \prod_{\text{ edges entering } v} \big( \mu^{\uu}_e \big)^{-1} &  = \nu_v \bigg\} 
\end{align}

\begin{proposition} For generic $\nu$, the space 
$\versD(\Gamma, \nu)$ is a (non-complete) K\"ahler manifold, equipped with a complex analytic action of $\H^1(\Gamma\homsep\C^*)$ and a proper holomorphic $\H^1(\Gamma\homsep\C^*)$-invariant map $q_{\res} \colon \versD(\Gamma, \nu) \to \C^{\ed}$ whose image is the unit polydisk.\end{proposition} 
\begin{proof}
The proof is similar to that of Proposition \ref{prop:dolbintegdef}. 
$\Dol$ satisfies Hypothesis \ref{hyp:bZsmoothness} as a $(\U_1, \U_1)$-space,
and $\nu$ was chosen to be generic, so $\versD(\Gamma, \nu)$ is smooth by Proposition \ref{prop:smoothcond}. 
Proposition \ref{prop: gross-wilson}
shows $\mu_{\Dol}^{\U_1}$ is a multiplicative moment map for a K\"ahler action of $\U_1$, 
so $\versD(\Gamma, \nu)$ is K\"ahler. The complex analytic action of $\C^*$ on $\Dol$ descends to an action of 
$\H^1(\Gamma\homsep\C^*)$ on $\versD(\Gamma, \nu)$, preserving the fibers of $q_{\res}$. 
\end{proof}

Note that if $\nu \in C_0(\Gamma, \uu)$ is generic, then so is $(\nu \times 0) \in C_0(\Gamma, \uutimesC)$.
We have the following fiber product diagram: 
\begin{equation} \label{eq:qres} 
\begin{tikzcd}
	\Dol(\Gamma, \nu) \arrow[r] \arrow[d, "q_{\res}"] & \versD(\Gamma, \nu \times 0) \arrow[d, "q_{\res}"] \\
	\H_1(\Gamma\homsep\C) \arrow[r] & \C^{\ed}.
\end{tikzcd}
\end{equation} 
Note that neither of the vertical maps are surjective. 
We suppress the dependence on $\nu$ for much of the remainder of the article.

\begin{remarque}  \label{rem:comparemsv} 
Let $C$ be a nodal curve with rational components and dual graph $\Gamma$. Let $B$ be the base of a locally versal family of deformations of $C$; it has dimension (\# of nodes of $C$) = $|\ed|$. Given an auxiliary choice of stability parameter, there is a family of versal compactified Jacobians $\overline{\mathcal{J}} \to B$.    

The family $q_{\res} \colon \versD(\Gamma, \eta) \to \C^{\ed}$ is very similar to this family, although neither one is the base-change of the other. In particular, the notation $\versD$ is meant to suggest ``versal''.  Meanwhile $q_{\res}: \Dol(\Gamma) \to \H_1(\Gamma, \C)$ is similar to the relative compactified 
Jacobian of a subfamily of deformations of $C$  within a fixed ambient holomorphic symplectic surface $S$. 
\end{remarque}

\subsection{Some retractions} 

Many of the spaces we consider here are equipped with maps $\pi : X \to V$ to a vector space and a retraction to the central fiber $\pi^{-1}(0)$.  Here we record properties and examples of such structure.

We say a closed subset $\Lambda \subset V$ of a vector space is semiconical if it is stable with respect to scalar multiplication by $\{ r \in [0,1) \}$.

Let $r_t  \colon \Dol \to \Dol$, $t \in [0,1]$ be the retraction onto $q^{-1}(0)$ constructed in Corollary \ref{cor:dolretract}, and let $r^{\ed}  \colon \Dol^{\ed} \to q^{-1}(0)^{\ed}$ be the product retraction.

\begin{lemma} \label{lem:dolbretract}
Let $\Lambda  \subset \C^{\ed}$ be semiconical. Let $T \subset \U_1^{\ver}$ be a subtorus and $\zeta \in \operatorname{Lie}(T)^*$. Then $r^{\ed}$ descends to a retraction $(q^{\ed})^{-1}(\Lambda) \sslash_{\zeta} T$ onto $(q^{\ed})^{-1}(0) \sslash_{\zeta} T$. 
\end{lemma}

\begin{proof}
We first claim that $r^{\ed}$ restricts to a retraction of $(q^{\ed})^{-1}(\Lambda)$ onto $(q^{\ed})^{-1}(0)$. Indeed, recall that $\dmomD \circ r_t$ covers a linear retraction $\D \to 0$, and $\mu_{\Dol}^{\uu} \circ r_t = \mu_{\Dol}^{\uu}$. It follows that $r_t^{\ed}$ preserves $\Lambda$, establising the first claim. Since $\mu_{\Dol}^{\uu} \circ r_t = \mu_{\Dol}^{\uu}$, it follows that $r_t^{\ed}$ preserves the $T$-moment map. It is also $\uu^{\ver}$-invariant, hence it descends to a retraction on the quotient. 
\end{proof}

\begin{corollary} \label{basechangeing}
Let $\Lambda \subset \Lambda' \subset \C^{\ed}$ be semiconical. Fix a torus $T \subset \U_1^{\ver}$. The inclusion $(q^{\ed})^{-1}(\Lambda) \sslash T \to (q^{\ed})^{-1}(\Lambda') \sslash T$ induces an isomorphism in cohomology.
\end{corollary}

\begin{corollary} \label{centralandversalrestrictionisoms}
The inclusions $q_{\Gamma}^{-1}(0) \to \Dol(\Gamma)$ and $\Dol(\Gamma) \to \versD(\Gamma, \eta)$ 
induce isomorphisms in cohomology.
\end{corollary}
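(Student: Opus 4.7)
The plan is to deduce both isomorphisms as direct applications of Corollary \ref{cor:basechangeing}, which has already done the essential work: Lemma \ref{lem:dolbretract} provides a deformation retraction of $(q^{\ed})^{-1}(\Lambda)\sslash_{\eta}\overline{\U_1^{\ver}}$ onto $(q^{\ed})^{-1}(0)\sslash_{\eta}\overline{\U_1^{\ver}}$ for any star-convex $\Lambda\subset \C^{\ed}$, and these retractions are compatible with any inclusion $\Lambda\subset \Lambda'$.

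The first step is to rewrite all three spaces in the form $(q^{\ed})^{-1}(\Lambda)\sslash_{\eta}\overline{\U_1^{\ver}}$ for suitable star-convex $\Lambda\subset \C^{\ed}$. By Definition \ref{defversalspace0} we have $\versD(\Gamma,\eta)=\Dol^{\ed}\sslash_{\eta}\overline{\U_1^{\ver}}=(q^{\ed})^{-1}(\C^{\ed})\sslash_{\eta}\overline{\U_1^{\ver}}$. By Diagram \ref{eq:qres} (which exhibits $\Dol(\Gamma)$ as the fiber product of $\versD(\Gamma,\eta)$ and $\H_1(\Gamma,\C)$ over $\C^{\ed}$), we have $\Dol(\Gamma)=(q^{\ed})^{-1}(\H_1(\Gamma,\C))\sslash_{\eta}\overline{\U_1^{\ver}}$. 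Finally, since $\H_1(\Gamma,\C)\cap (q^{\ed})^{-1}(0)=(q^{\ed})^{-1}(0)$, the central fiber reads $q_{\Gamma}^{-1}(0)=(q^{\ed})^{-1}(0)\sslash_{\eta}\overline{\U_1^{\ver}}$.

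The second step is to verify the star-convexity hypothesis for each $\Lambda$ that appears. This is immediate: $\{0\}\subset \H_1(\Gamma,\C)\subset \C^{\ed}$ is a flag of linear subspaces of $\C^{\ed}$, and any linear subspace is star-convex about the origin. One then applies Corollary \ref{cor:basechangeing} to the pair $(\{0\},\H_1(\Gamma,\C))$ with $\T=\overline{\U_1^{\ver}}$ to conclude that $q_{\Gamma}^{-1}(0)\hookrightarrow \Dol(\Gamma)$ induces an isomorphism in cohomology, and to the pair $(\H_1(\Gamma,\C),\C^{\ed})$ with the same $\T$ to conclude the same for $\Dol(\Gamma)\hookrightarrow \versD(\Gamma,\eta)$.

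There is essentially no obstacle: the entire content of the statement has been packaged into Corollary \ref{cor:basechangeing}, itself a consequence of the fact (Lemma \ref{lem:zdolretract}) that $\Dol$ admits a $\U_1$-equivariant retraction onto $q^{-1}(0)$ covering a linear retraction of $\D$ to $0$ and preserving $\mu^{\U_1}_{\Dol}$. The only point requiring a moment's thought is checking that $\overline{\U_1^{\ver}}$ is acting as (the image of) a subtorus of $\U_1^{\ed}$ on $\Dol^{\ed}$, which is part of the setup of Section \ref{subsec:spacesfromgraphs}.
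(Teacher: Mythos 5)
Your proof is correct and follows the paper's own argument essentially verbatim: write each of the three spaces as $(q^{\ed})^{-1}(\Lambda)\sslash_{\eta}\T$ for $\Lambda\in\{\{0\},\,\H_1(\Gamma;\C),\,\C^{\ed}\}$ and invoke Corollary \ref{cor:basechangeing} twice. You are a bit more explicit than the paper — you spell out the $\Lambda=\{0\}$ identification of $q_\Gamma^{-1}(0)$ and note that linear subspaces are star-convex — but there is no substantive difference in route.
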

\begin{proof}
Both are special cases of Corollary \ref{basechangeing}.  Indeed, 
We have $\Dol(\Gamma) = (q^{\ed})^{-1}(\Lambda) / T$ where $\Lambda = \H_1(\Gamma ;\C)$ and $T = \U_1^{\ver}$. The same holds for $\versD(\Gamma, \eta)$ with $\Lambda = \C^{\ed}$ and $T = \U_1^{\ver}$. 
\end{proof}

\subsection{The deletion-contraction sequence} \label{sec:holmom}

We now consider $X = \Dol(\Gamma / e)$, with $(\U_1, \U_1)$ structure as in Definition \ref{def:ghostaction} and $\zeta = \eta_{t(e)}$. Let us now verify Hypothesis \ref{hyp} for this space.  

\begin{lemma} \label{lem:cond1}
Condition 
\eqref{Xassumptwo} of Hypothesis \ref{hyp} holds for generic choice of $\eta$.  
\end{lemma}
\begin{proof} 
Consider the embedding $\Dol(\Gamma /e, \eta / e) \subset \versD(\Gamma / e, \eta / e)$. By construction, both the action of $\H^1(\Gamma, \U_1)$ and the map $\mu_{X}$ extend to the 
larger space. 

By Lemma \ref{lem:deletionfromcontraction}, we have $\versD(\Gamma \setminus e, \eta \setminus e) = \versD(\Gamma / e, \eta / e) \star_{\eta_{t(e)}} \mathbf{0}$. The right-hand space is by definition the $\U_1$ quotient of $\mu_{X}^{-1}(\eta_{t(e)}) \subset \versD(\Gamma / e, \eta / e)$. By genericity of $\eta$, this is a free $\U_1$-quotient. Thus the same is true after restriction to the closed subset $\Dol(\Gamma / e ) \cap \mu_{X}^{-1}(\eta_{t(e)})$.  
\end{proof}

We turn to checking the local constancy asked in Condition \eqref{Xassumpone}.  Note that in our setting, the map $\mu_X: X \to \U_1$ is not proper.  

\begin{lemma} \label{lem:Dolbassumpone}
Let $\mu^{\U_1}_{\res} \colon \Dol(\Gamma) \to \H_1(\Gamma, \U_1)_\eta$ be the residual moment map. Let $\alpha \colon  \H_1(\Gamma, \U_1)_\eta \to \U_1$ be the restriction of a character of $C_1(\Gamma, \U_1)$. For all but a finite set of $\zeta \in \U_1$, Condition (\ref{Xassumpone}) of Hypothesis \ref{hyp} holds, i.e. 
there exists an open neighborhood $\baseopen$ of $\zeta$ and an isomorphism of $(\U_1, \U_1)$-spaces $(\alpha \circ \mu^{\U_1}_{\res} )^{-1}(\baseopen) \cong [(\alpha \circ \mu^{\U_1}_{\res})^{-1}(\zeta) \times \baseopen]$.
\end{lemma}

\begin{proof}
In general, for a map $E \to B$, a (nonlinear) connection is an assignment, for each path in the base $B$ with endpoints
$x, y$, of a diffeomorphism $E_x \cong E_y$, compatible with composition of paths.  Given a stratification of $B$, 
by a stratified connection on $E \to B$, we mean the data of a connection on each stratum.  In the presence of a group action, we 
can discuss equivariant connections (those which commute with the group action). 

Recall from Proposition \ref{prop: gross-wilson} that $\Dol$ maps to $\uutimesC$ and is a principal $\U_1$-bundle over $\D \times \uu \setminus 0 \times 1$.
Fix a $\U_1$-equivariant connection $\nabla^*$ for this bundle (i.e. a principal bundle connection in the usual sense).  
We stratify $\C \times \uu$ by $0 \times 1$ and its complement; this also defines a stratification of the subspace $\D \times \uu$. Then $\Dol \to \D \times \uu$ carries a stratified connection $\nabla$, where $\nabla(\gamma)$ is defined by parallel transport 
using $\nabla^*$ over the open stratum. Since the closed stratum $0 \times 1$ is a point, it requires no extra data. 

We now turn to $\Dol(\Gamma)$, and set $n = |E(\Gamma)|$. Consider the residual moment map $\mu_{\res}^{\C} \times \mu^{\uu}_{\res} \colon \Dol(\Gamma) \to \H_1(\Gamma, \uutimesC)_\eta$. Its image $B := \H_1(\Gamma, \uutimesC)_\eta \cap (\D \times \uu)^n$ inherits a stratification from $(\D \times \uu)^n$. Taking products defines a stratified connection $\nabla^n$ on $\Dol^n \to (\D \times \uu)^n$, which descends to a stratified connection $\nabla_{\Gamma}$ on $\Dol(\Gamma) \to B$.

Let $\nu \in \U_1$. The spaces $\H_1(\Gamma, \C) \times \alpha^{-1}(\nu)$ inherit a stratification from the product stratification on $(\C \times \uu)^n$. This family of stratified spaces can be compactified to a proper family $\P(\H_1(\Gamma, \C) \oplus \C) \times \alpha^{-1}(\nu)$ of stratified spaces over $\uu$. The result is a stratified submersion away from a finite set of points in $\uu$. 

Over the complement of these points, Thom's first isotopy lemma tells us that the family must be locally constant. In other words, for $\zeta \in \uu$ avoiding a finite set of bad points, there exists an open neighborhood $\zeta \in \baseopen$ and a stratification-preserving map $f: \H_1(\Gamma, \C) \times \alpha^{-1}(\baseopen) \to \H_1(\Gamma, \C) \times \alpha^{-1}(\zeta) \times \baseopen$, covering the projection to $\H_1(\Gamma, \C) \times \baseopen$ and restricting to the identity on $\H_1(\Gamma, \C) \times \alpha^{-1}(\zeta)$.

Parallel transport for the stratified connection $\nabla_\Gamma$ lifts $f$ to the desired isomorphism of $(\U_1, \U_1)$-spaces.
\end{proof}

We may apply Proposition \ref{vanishpairisom} to conclude the vertical arrows in 
Diagram \eqref{global dol key diagram} are isomorphisms.  
Then we may extract from Diagram \eqref{global dol key diagram} the sequence: 

\begin{equation}
\begin{tikzcd}
	\rar & \H^{\bullet-2}(\Dol(\Gamma / e) \star \mathbf{n}\homsep \Q)   \arrow[r, "\text{}"] & \H^{\bullet}(\Dol(\Gamma / e) \star \Dol\homsep \Q)  \arrow[r, "\text{}"]  & \H^{\bullet}(\Dol(\Gamma / e) \star (\Dol \setminus \bS_{\Dol})\homsep \Q) \rar & \  \\
\end{tikzcd}
\end{equation}
Recall that the convolution $\star$ products above are $\star_{\U_1}$.  We want to replace these by their submanifolds
given by the corresponding the $\UUCstar$ products.   
Assume $e$ is a nonloop, nonbridge edge of $\Gamma$.  
Using Lemmas \ref{lem:uncontract}, \ref{lem:deletionfromcontraction}, we have: 

\begin{equation}  \label{eq:restrictionsfordolb}
\begin{tikzcd}
	\H^{\bullet-2}(\Dol(\Gamma / e) \star_{\uu} \mathbf{n}\homsep \Q)  \arrow[r, "\kappa_{\mathbf{n}}^* "]
	& 
	\H^{\bullet-2}(\Dol(\Gamma / e) \UUCstar \mathbf{n}\homsep \Q) = 
	\H^{\bullet-2}(\Dol(\Gamma \setminus e) \homsep \Q)  \\ 
	\H^{\bullet}(\Dol(\Gamma / e)  \star_{\uu} \Dol\homsep \Q)  \arrow[r, "\kappa^* "]  
	&
	\H^{\bullet-2}(\Dol(\Gamma / e) \UUCstar  \Dol \homsep \Q) = 
	\H^{\bullet}(\Dol(\Gamma)\homsep \Q)  \\ 
	\H^{\bullet}(\Dol(\Gamma / e)  \star_{\uu} (\Dol \setminus \bS_{\Dol})\homsep \Q) \arrow[r, "i_\epsilon^*"]
	&
	\H^{\bullet}(\Dol(\Gamma / e) \UUCstar (\Dol \setminus \bS_{\Dol})\homsep \Q)  =
	\H^{\bullet} (\Dol(\Gamma / e) \homsep \Q) 
\end{tikzcd}
\end{equation}

\begin{theoremdef} \label{td:doldcs}
Each of the restriction maps is an isomorphism.  We may therefore define the lower row in the following diagram
by requiring that the diagram commute.  

\begin{equation} \label{pinchingparttwo}
	\begin{tikzcd}
		\rar & \H^{\bullet-2}(\Dol(\Gamma / e) \star \mathbf{n}\homsep \Q) \arrow[d, "\kappa_{\mathbf{n}}^*"]  \arrow[r, "\text{}"]  
		& \H^{\bullet}(\Dol(\Gamma / e) \star \Dol\homsep \Q)  \arrow[d, "\kappa^*"]  \arrow[r, "\text{}"]  
		& \H^{\bullet}(\Dol(\Gamma / e) \star (\Dol \setminus \bS_{\Dol}) \homsep \Q) \arrow[d, "i_\epsilon^*"] \rar & \  \\
		\arrow[r, dashed, "c^{\Dol}_e"] &  \H^{\bullet-2}(\Dol(\Gamma \setminus e)\homsep \Q) \arrow[r, dashed, "a^{\Dol}_e"]
		& \H^{\bullet}(\Dol(\Gamma)\homsep \Q) \arrow[r, dashed, "b^{\Dol}_e"] 
		& \H^{\bullet}(\Dol(\Gamma / e)\homsep \Q) \arrow[r, dashed, "c^{\Dol}_e"] & \ \\
	\end{tikzcd}
\end{equation}

We term this lower row the Dolbeault deletion-contraction sequence ($\Dol$-DCS). 
\end{theoremdef}
\begin{proof}
To see that the map $\kappa_{\mathbf{n}}^*$ is an isomorphism, note that we are in the setting of Corollary \ref{basechangeing}, with $\Lambda = \H_1(\Gamma / e\homsep \C)$, $\Lambda' = \H_1(\Gamma \setminus e\homsep \C)$ and $T = \U_1^{V(\Gamma)}$.

Similarly, the fact that $\kappa^*$ is an isomorphism follows from Corollary \ref{basechangeing} with $\Lambda' = \H_1(\Gamma / e\homsep \C))$, $\Lambda = \H_1(\Gamma ;\C)$ and $T = \U_1^{V(\Gamma)}$.

Finally, $i_\epsilon^*$ is an isomorphism by Lemma \ref{lem: retractions}; we have also used the identification \eqref{q2nonzeropreimage}. 
\end{proof}

\begin{lemma} \label{lem:smalldiagcommutessodolbcommutes}
The maps $a^{\Dol}_e$ for different edges $e$ commute when their composition is defined. 
\end{lemma}
\begin{proof}
Using the isomorphism  $\kappa_{\mathbf{n}}^* : \H^{\bullet}(\Dol(\Gamma / e) \UUCstar \bS_{\Dol}, \Q) \cong \H^{\bullet}(\Dol(\Gamma \setminus e), \Q)$, the map $a^{\Dol}_e$ may be identified with the Gysin map for the codimension two embedding of manifolds $\Dol(\Gamma / e) \UUCstar \bS_{\Dol} \subset \Dol(\Gamma)$. The proof then proceeds along the same lines as the corresponding argument for $a^{\Bet}_e$ in Lemma \ref{lem:gysincommutes}. 

We will give a second, independent proof of this Lemma \ref{lem:smalldiagcommutessodolbcommutes} 
in Remark \ref{rem:smalldiagcommutessodolbcommutes2}, by deducing the commutation of $a_e^{\Dol}$ from that of $a_e^{\Bet}$. 
\end{proof}
The commutativity allows us to make the following special case of Definition \ref{def:delfiltration}.
\begin{definition} \label{def:dolbeaultdeletionfiltration}
The {\em Dolbeault deletion filtration} is the increasing filtration $\delfilt_r \H^{n}(\Dol(\Gamma),\Q)$ obtained from Definition  \ref{def:delfiltration}, where the functor $A$ takes $\Gamma$ to $\H^{\bullet}(\Dol(\Gamma), \Q)$ and takes $\Gamma' \to \Gamma$ to the composition, in any order, of $a_e^{\Dol}$ for $e \in \Gamma \setminus \Gamma'$.
\end{definition}

\subsection{Another sequence} 
The top row of \eqref{pinchingparttwo} is identified with the long exact sequence
of a pair upon replacing $\mathbf{n}$ with $\bS_{\Dol}$ in the upper left corner 
($\mathbf{n} \to \bS_{\Dol}$ is the inclusion of a point in a line, hence induces isomorphisms
in cohomology).  Said long exact sequence has a natural map to its $\UUCstar$ version: 

\begin{equation} \label{pinchingpartthree}
\hspace{-10mm}
\begin{tikzcd}
	\H^{\bullet-2}(\Dol(\Gamma / e) \star \bS_\Dol \homsep \Q) \arrow[d, "\kappa_{\bS}^*"]  \arrow[r, "\text{}"]  
	& \H^{\bullet}(\Dol(\Gamma / e) \star \Dol\homsep \Q)  \arrow[d, "\kappa^*"]  \arrow[r, "\text{}"]  
	& \H^{\bullet}(\Dol(\Gamma / e) \star (\Dol \setminus \bS_{\Dol})\homsep \Q) \arrow[d, "\kappa_\circ^*"] \rar & \  \\
	\H^{\bullet-2}(\Dol(\Gamma / e) \UUCstar \bS_\Dol \homsep \Q) \arrow[r, "A^{\Dol}_e"]
	& \H^{\bullet}(\Dol(\Gamma)\homsep \Q) \arrow[r, "B^{\Dol}_e"] 
	& \H^{\bullet}(\Dol(\Gamma / e) \UUCstar (\Dol \setminus \bS_\Dol) \homsep \Q) \arrow[r, "C^{\Dol}_e"] & \ \\
\end{tikzcd}
\end{equation}

Composing Diagrams \eqref{pinchingparttwo} and \eqref{pinchingpartthree} gives:  

\begin{equation} \label{pairstodolb}
\begin{tikzcd}
	\H^{\bullet-2}(\Dol(\Gamma / e) \UUCstar \bS_\Dol \homsep \Q) \arrow[r, "A^{\Dol}_e"] \arrow[d, equals] 
	& \H^{\bullet}(\Dol(\Gamma)\homsep \Q) \arrow[r, "B^{\Dol}_e"] 
	& \H^{\bullet}(\Dol(\Gamma / e) \UUCstar (\Dol \setminus \bS_\Dol) \homsep \Q) \arrow[r, "C^{\Dol}_e"]  \arrow[d, " i_\epsilon^* (\kappa_\circ^*)^{-1}"] & \ \\
	\H^{\bullet-2}(\Dol(\Gamma \setminus e),\Q) \arrow[r, "a^{\Dol}_e"] 
	& \H^{\bullet}(\Dol(\Gamma),\Q) \arrow[u, equals] \arrow[r, "b^{\Dol}_e"] 
	&   \H^{\bullet}(\Dol(\Gamma / e), \Q) \arrow[r, "c^\Dol_e"]  & \ \\
\end{tikzcd}
\end{equation}
The left vertical	$=$ is induced by the inclusion $\bf{n} \to \bS_{\Dol}$.

For later use we record the following structure: 

\begin{equation} \label{diag:compinclusionrest}
\begin{tikzcd}
	\Dol(\Gamma / e) \UUCstar (\Dol \setminus \bS_\Dol) \arrow[d] \arrow[r, "\kappa_\circ "] & \arrow[d, "q_{\operatorname{res}} \times q"]  \Dol(\Gamma / e) \star (\Dol \setminus \bS_\Dol) \\
	\H_1(\Gamma\homsep \C) \arrow[r] & \H_1(\Gamma / e\homsep \C) \times \C
\end{tikzcd}
\end{equation}

\addtocontents{toc}{\protect\newpage}

\section{Fibers and monodromy} \label{sec:system}

\subsection{Structure of the generic fiber}

We write $\D^{\ed}_{\reg}$ for the complement of the coordinate hyperplanes. 
Our first step is to give a natural presentation of the fundamental group of a fiber over 
$\D^{\ed}_{\reg}$. Let $b \in \D^{\ed}_{\reg}$. We write $\versD(\Gamma)_b \colonequals q_{\res}^{-1}(b)$.
\begin{lemma} \label{lem:cyclesequence}
There is a natural short exact sequence of groups
\[ \H^1(\Gamma\homsep\Z) \to \pi_1(\versD(\Gamma)_b) \to \H_1(\Gamma\homsep\Z). \]
\end{lemma}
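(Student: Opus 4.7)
The plan is to compute $\pi_1(\versD(\Gamma)_b)$ directly from the explicit description of $\versD(\Gamma)_b$ as a symplectic-style quotient, and then identify the kernel and cokernel as the claimed (co)homology groups of $\Gamma$. Recall from Section~\ref{sec:dol} that $\versD(\Gamma, \eta) = \mu_\Gamma^{-1}(\eta)/\redC^0(\Gamma, \uu)$ with $\mu_\Gamma^{-1}(\eta) \subset \Dol^{\ed}$. Since $b \in \D^{\ed}_{\reg}$, we may restrict the whole picture to the regular locus, where $q^{-1}(b_e)$ is an elliptic curve $E_{b_e}$. Thus $\versD(\Gamma)_b$ is obtained from $\prod_e E_{b_e}$ by first restricting to $\mu_\Gamma^{-1}(\eta)$ and then quotienting by the free $\redC^0(\Gamma, \uu)$-action.

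The first main step is to compute $\pi_1$ of the intermediate space $\mu_\Gamma^{-1}(\eta)$. By Proposition~\ref{pinchdeg}(4), each $E_{b_e}$ is a torus; concretely, $\mu^{\uu}\colon E_{b_e} \to \uu$ is a principal $\uu$-bundle, so $\pi_1(E_{b_e}) = \Z \oplus \Z$, one summand coming from the $\uu$-fiber and the other from the $\uu$-base. Collecting over all edges, $\pi_1(\prod_e E_{b_e}) = C_1(\Gamma,\Z) \oplus C_1(\Gamma,\Z)$, with the ``base'' copy mapping to $\redC_0(\Gamma,\Z) = \pi_1(\uu^{\overline{V}})$ via the incidence differential $d_\Gamma$ and the ``fiber'' copy mapping to $0$. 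Since $\Gamma$ is connected, $d_\Gamma$ is surjective, so $\mu_\Gamma^{-1}(\eta)$ is a connected subtorus. The long exact sequence of the fibration $\prod_e E_{b_e} \to \uu^{\overline{V}}$ (using $\pi_2 = 0$ for tori) then gives
\[ \pi_1(\mu_\Gamma^{-1}(\eta)) = C_1(\Gamma,\Z) \oplus \ker(d_\Gamma) = C_1(\Gamma,\Z) \oplus \H_1(\Gamma,\Z), \]
where the first summand is in the fiber direction of $\mu^\uu$.

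The second main step is to pass to the quotient by $\redC^0(\Gamma,\uu)$. By Proposition~\ref{prop:smoothcond} the action is free, so $\mu_\Gamma^{-1}(\eta) \to \versD(\Gamma)_b$ is a principal $\redC^0(\Gamma,\uu)$-bundle. Its homotopy long exact sequence contributes a map $\pi_1(\redC^0(\Gamma,\uu)) = \redC^0(\Gamma,\Z) \to \pi_1(\mu_\Gamma^{-1}(\eta))$ given by the orbit map. Since $\redC^0(\Gamma,\uu)$ acts through $d^*_\Gamma\colon \redC^0(\Gamma,\uu) \to C^1(\Gamma,\uu)$ on the $\uu$-fibers, this map factors through the fiber summand $C_1(\Gamma,\Z) = C^1(\Gamma,\Z)$ as the coboundary $d^*_\Gamma$, which is injective. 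Taking the cokernel, the fiber summand contributes $C^1(\Gamma,\Z)/\im(d^*_\Gamma) = \H^1(\Gamma,\Z)$, while the other summand $\H_1(\Gamma,\Z)$ is untouched.

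Combining, we obtain the desired short exact sequence
\[ 0 \to \H^1(\Gamma,\Z) \to \pi_1(\versD(\Gamma)_b) \to \H_1(\Gamma,\Z) \to 0, \]
where the left-hand term records the residual holonomy of the $\uu$-fibers of $\mu^\uu$ modulo the quotient, and the right-hand term records the monodromy coming from the residual $\H^1(\Gamma,\C^*)$-action on $\versD(\Gamma)_b$ (cf.\ Proposition~\ref{prop:residualaction}). The one mildly delicate step, as a sanity check, is to confirm the identification of the orbit map of $\redC^0(\Gamma,\uu)$ with $d^*_\Gamma$ under our chosen generators of $\pi_1(\prod_e E_{b_e})$; this is a direct unravelling of the $(\uu,\uu)$-structure on $\Dol$ provided by Lemma~\ref{lem:dolqhm}, rather than a genuine obstacle.
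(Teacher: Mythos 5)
Your proof is correct and follows essentially the same route as the paper: both decompose $\pi_1(q^{-1}(b_e))$ using the $\uu$-fibration/$\Z$-cover structure of the Tate curve, take the product over edges, restrict to the moment fiber, and then quotient by $\redC^0(\Gamma,\uu)$. The only difference is presentational — you make the underlying long exact sequences of fibrations explicit where the paper merely asserts the "embedded" and "quotient" short exact sequences — and your extra bookkeeping (identifying the orbit map with $d^*_\Gamma$ into the fiber summand, checking its injectivity via connectedness of $\Gamma$) is accurate.
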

\begin{proof}
The basic space $\Dol$ is defined as a $\Z$-quotient. Let $b \in \D^*$, and let $\C^* \to \C^*/b^{\Z} \cong q^{-1}(b)$ be the restriction of this quotient to the fiber. It induces an inclusion of fundamental groups, defining a short exact sequence 
$$\pi_1(\C^*) \to \pi_1(q^{-1}(b)) \to \Z,$$ where the image is identified via the inclusion to $\Dol$ with $\pi_1(\Dol)$. 

Now let $b \in \D^{\ed}_{\reg}$. The point $b$ determines a product of elliptic curves $\bE_b \colonequals \prod_{e \in \ed} q_e^{-1}(b_e) $ in $\Dol^{\ed}$, and $\versD(\Gamma)_b$ is the K\"ahler reduction of $\bE_b$ by $\overline{ \U_1^{V(\Gamma)}}$. More precisely, there is a moment map $\mu_{\Gamma}^{\U_1} \colon \bE_b \to \U_1^{V(\Gamma)}$ for the action of $\overline{ \U_1^{V(\Gamma)}}$ on $\bE_b$, and $\versD(\Gamma)_b$ is the quotient of the fiber $\bE_b(\eta)$ over $\eta$. Taking cartesian products of the basic sequence of fundamental groups, we obtain a sequence which we may write as
\[ C^1(\Gamma\homsep\Z) \to \pi_1(\bE_b) \to C_1(\Gamma\homsep \Z). \]
The inclusion $\bE_b(\eta) \to \bE_b$ gives the embedded short exact sequence
\[ C^1(\Gamma\homsep\Z) \to \pi_1(\bE_b(\eta)) \to \H_1(\Gamma\homsep \Z). \]
The quotient $\bE_b(\eta)/ \U_1^{V(\Gamma)}$ defines the quotient short exact sequence
\[ \H^1(\Gamma\homsep\Z) \to \pi_1(\versD(\Gamma)_b) \to \H_1(\Gamma\homsep\Z). \]
\end{proof}

We now give a description of $\versD(\Gamma)_b$ as a group quotient. 

Recall that $b_e$ for $e \in \ed$ be the coordinates of $b$ in $\C^{\ed}$. To alleviate notation, we will number the edges of $\Gamma$ $e_1$ through $e_n$ and write $b_{i}$ for $b_{e_i}$. Given $\beta \in \H_1(\Gamma\homsep\Z)$, consider $b^{\beta} \colonequals (b_{1}^{\beta_{1}}, ...., b_{n}^{\beta_{n}}) \in \C^n$ where $\beta_i$ are the coordinates of the image of $\beta$ under the pullback $\H_1(\Gamma\homsep\Z) \to C_1(\Gamma \homsep \Z)$. Since by assumption, all of the $b_i$ are nonzero, $b^{\beta}$ defines an element of $C^1(\Gamma\homsep \C^*)$, and we write $b^{\beta}$ for its image in $\H^1(\Gamma\homsep\C^*)$. This defines a map 	$\tau_b\colon \H_1(\Gamma\homsep\Z) \to \H^1(\Gamma\homsep \C^*)$. We write $b^{\H_1(\Gamma\homsep\Z)}$ for the image of $\tau_b$.
\begin{proposition} \label{prop:latticequotient}
$b^{\H_1(\Gamma\homsep\Z)}$ is a discrete lattice in $\H^1(\Gamma\homsep\C^*)$. The fiber $q_{\res}^{-1}(b)$ is naturally isomorphic to the quotient $\H^1(\Gamma\homsep\C^*) / b^{\H_1(\Gamma\homsep\Z)}$.
\end{proposition}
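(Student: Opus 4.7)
I would prove the proposition by lifting everything to the universal cover of $\bE_b = \prod_e q_e^{-1}(b_e)$, interchanging the order of quotients, and then using a standard comparison between the multiplicative K\"ahler reduction and the holomorphic quotient by the complexified group. Concretely, $\bE_b$ has universal cover $C^1(\Gamma\homsep\C^*) = \prod_e \C^*_e$, with deck transformations the lattice $C_1(\Gamma\homsep\Z)$ acting on the $e$-th factor by $z_e \mapsto b_e^{\beta_e} z_e$. The holomorphic complexification of the $\overline{\U_1^{V(\Gamma)}}$-action on $\bE_b$ is the action of $\overline{(\C^*)^{V(\Gamma)}} = \overline{C^0(\Gamma\homsep\C^*)}$; lifted to $C^1(\Gamma\homsep\C^*)$, it is the action of $d^* C^0(\Gamma\homsep\C^*)$. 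So the universal-cover quotient by this complexified group is precisely $\H^1(\Gamma\homsep\C^*)$, and the residual action of the deck group $C_1(\Gamma\homsep\Z)$ on $\H^1(\Gamma\homsep\C^*)$ factors through $\H_1(\Gamma\homsep\Z)$ and coincides with translation by $\tau_b$.

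For the first claim (discreteness), I would compose $\tau_b$ with the logarithm of the modulus $\H^1(\Gamma\homsep\C^*)\to \H^1(\Gamma\homsep\R)$ to obtain a real-linear map $L_b\colon \H_1(\Gamma\homsep\R)\to \H^1(\Gamma\homsep\R)$, given by the symmetric bilinear form
\[ \langle \alpha,\beta\rangle_b \;=\; -\sum_{e\in \ed}(\log|b_e|)\,\alpha_e\,\beta_e \]
on $\H_1(\Gamma\homsep\R)\subset C_1(\Gamma\homsep\R)$. Since every $|b_e|<1$, the weights $-\log|b_e|$ are strictly positive, so this form is positive-definite, hence $L_b$ is an $\R$-linear isomorphism and $L_b(\H_1(\Gamma\homsep\Z))$ is a full-rank lattice in $\H^1(\Gamma\homsep\R)$. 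Combined with the lattice $2\pi i\,\H^1(\Gamma\homsep\Z)$ (the kernel of the exponential $\H^1(\Gamma\homsep\C)\to\H^1(\Gamma\homsep\C^*)$), this identifies the preimage of $\tau_b(\H_1(\Gamma\homsep\Z))$ in $\H^1(\Gamma\homsep\C)$ with a rank $2b_1(\Gamma)$ cocompact lattice. In particular $\tau_b$ is injective with discrete image.

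For the second claim, the inclusion $\bE_b(\eta) \hookrightarrow \bE_b$ descends to a holomorphic map
\[ \versD(\Gamma)_b \;=\; \bE_b(\eta)/\overline{\U_1^{V(\Gamma)}} \;\longrightarrow\; \bE_b/\overline{(\C^*)^{V(\Gamma)}}, \]
and by the argument above the right-hand side is identified with $\H^1(\Gamma\homsep\C^*)/\tau_b(\H_1(\Gamma\homsep\Z))$. Both sides are compact complex manifolds of complex dimension $b_1(\Gamma)$ (the left by Proposition~\ref{prop:dolbHK}/the dimension count, the right by Step~1), and the map between them is holomorphic, equivariant, and bijective on fundamental groups by Lemma~\ref{lem:cyclesequence}, so it is an isomorphism.

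\textbf{Main obstacle.} The one delicate point is the identification of the K\"ahler (multiplicative symplectic) reduction $\bE_b(\eta)/\overline{\U_1^{V(\Gamma)}}$ with the holomorphic quotient $\bE_b/\overline{(\C^*)^{V(\Gamma)}}$. For free, compact actions on K\"ahler manifolds this is the standard Kempf--Ness/Hitchin--Karlhede--Lindstr\"om--Ro\v{c}ek statement, but here the moment map is multiplicative and the target is a torus rather than a vector space, so I would verify it by lifting to the universal cover (where the moment map becomes ordinary and valued in the Lie algebra) and applying the standard statement fiberwise over each $\overline{\U_1^{V(\Gamma)}}$-orbit; genericity of $\eta$ (Proposition~\ref{prop:smoothcond}) ensures the action is free on the moment fiber, which makes the comparison map bijective on complex orbits.
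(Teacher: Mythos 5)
Your overall strategy — lift to the cover $(\C^*)^{\ed}\to\bE_b$, apply Kempf--Ness, read off the lattice — is exactly the paper's, and your discreteness argument via the positive-definite form $-\sum_e\log|b_e|\,\alpha_e\beta_e$ is the same as the paper's. But your Step 1 contains a genuine error: the residual action of the full deck group $C_1(\Gamma\homsep\Z)$ on $\H^1(\Gamma\homsep\C^*)$ does \emph{not} factor through $\H_1(\Gamma\homsep\Z)$. The element $n\in C_1(\Gamma\homsep\Z)$ acts by translation by the class $[b^n]\in\H^1(\Gamma\homsep\C^*)$, and this is a nontrivial translation even when $n$ is not a cycle. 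Concretely, take $\Gamma$ to be two vertices joined by two parallel edges, so $C_1=\Z^2$, $\H_1=\Z\cdot(1,-1)$, and $\H^1(\Gamma\homsep\C^*)=(\C^*)^2/\C^*_{\mathrm{diag}}\cong\C^*$ via $(z_1,z_2)\mapsto z_1/z_2$. Then $n=(n_1,n_2)$ acts by multiplication by $b_1^{n_1}/b_2^{n_2}$, and for generic $b$ this map $\Z^2\to\C^*$ is \emph{injective with dense image}; its restriction to $\H_1$ gives $\tau_b$, but the full $C_1$-action is much bigger. Correspondingly, the naive quotient $\bE_b/\overline{(\C^*)^{V(\Gamma)}}$ that appears in your second paragraph is not Hausdorff, so the "holomorphic bijection between compact complex manifolds" argument does not get off the ground as written.

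The fix — and this is what the paper does — is to pick a lift $\tilde\eta\in\R^{V(\Gamma)}$ of $\eta$ and restrict to the single level set $(\mu_\Gamma^{\R})^{-1}(\tilde\eta)\subset(\C^*)^{\ed}$ \emph{before} quotienting. The deck transformation by $n\in C_1(\Gamma\homsep\Z)$ shifts $\mu_\Gamma^{\R}$ by $\pm dn$, so it preserves that level set if and only if $n\in\ker d=\H_1(\Gamma\homsep\Z)$; this is precisely how the deck group gets cut down to $\H_1$, matching the fundamental-group computation of Lemma~\ref{lem:cyclesequence}. Kempf--Ness (applied on $(\C^*)^{\ed}$, where the moment map is now honestly $\R^{V(\Gamma)}$-valued) then identifies $(\mu_\Gamma^{\R})^{-1}(\tilde\eta)/\overline{\U_1^{V(\Gamma)}}$ with $(\C^*)^{\ed}/\overline{(\C^*)^{V(\Gamma)}}=\H^1(\Gamma\homsep\C^*)$, and the residual $\H_1(\Gamma\homsep\Z)$-action is translation by $\tau_b$. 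Your "main obstacle" paragraph shows you were aware of the Kempf--Ness subtlety, but the correct order of operations is essential: restrict to a chosen lift of the moment level set first, and only then apply Kempf--Ness; otherwise the deck group is too large and the target space degenerates.
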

\begin{proof}

Consider the cover $(\C^*)^{\ed} \to \bE_b$, obtained by taking the Cartesian product of the maps $\C^* \to \C^*/b_e^{\Z} \cong q^{-1}(b_e)$. The torus-valued moment map $\mu_{\Gamma}^{\U_1}$ lifts to a real-valued moment map $\mu_{\Gamma}^{\R} \colon (\C^*)^{\ed} \to \R^{V(\Gamma)}$. Pick any lift $\widetilde{\eta}$ of $\eta$; the quotient $(\mu_{\Gamma}^{\R})^{-1}(\widetilde{\eta}) / \overline{ \U_1^{V(\Gamma)}}$ is the Galois cover of $\versD(\Gamma)_b$ corresponding to the subgroup $\H^1(\Gamma\homsep\Z) \subset \pi_1(\versD(\Gamma)_b)$. By the Kempf-Ness theorem, we can identify it with $(\C^*)^{\ed}/(\C^*)^{V(\Gamma)} = \H^1(\Gamma\homsep\C^*)$. We can compute the action of an element $\gamma \in \pi_1(\versD(\Gamma)_b)/\H^1(\Gamma\homsep\Z) = \H_1(\Gamma\homsep\Z)$ on the cover by choosing a lift to $\pi_1(\bE_b)$; we find it is given by multiplication by $\tau_b(\gamma)$. This proves the second claim. 

Discreteness of the image of $\tau_b$ can be deduced from the fact that the quotient is a manifold. Here we give a direct proof.

The torus $C^1(\Gamma\homsep\C^*)$ splits into a real and a compact factor : $C^1(\Gamma\homsep\C^*) = C^1(\Gamma\homsep \U_1) \times C^1(\Gamma\homsep\R^{>0})$. Likewise, we have $\H^1(\Gamma\homsep\C^*) = \H^1(\Gamma\homsep \U_1) \times \H^1(\Gamma\homsep\R^{>0})$. The exponential map defines isomorphisms $C^1(\Gamma\homsep \R) \cong  C^1(\Gamma\homsep\R^{>0})$ and $\H^1(\Gamma\homsep\R) \cong \H^1(\Gamma\homsep\R^{>0})$. Postcomposing $\tau_b$ with the projection $\H^1(\Gamma\homsep\C^*) \to \H^1(\Gamma\homsep\R^{>0}) \cong \H^1(\Gamma\homsep\R) $ defines a map $\H_1(\Gamma\homsep\R)\to \H^1(\Gamma\homsep\R)$. Tensoring the left-hand side with $\R$, we obtain a map of vector spaces
\[ \overline{\tau_b} \colon \H^1(\Gamma\homsep\R) \to \H^1(\Gamma\homsep\R). \]
It is enough to show that this map is an isomorphism. Let $c_i = \log|b_i| < 0$, and let $[e] \in \H^1(\Gamma\homsep\Z)$ be the element of cohomology corresponding to the oriented edge $e$. Then $\overline{\tau_b}(\beta) = \sum_{i = 1}^n c_i \beta(v_i) [e]$.

Define an inner product on $C_1(\Gamma\homsep\R)$ by $\langle \mathbf{x}, \mathbf{x}' \rangle_{c} \colonequals \sum_{i=1}^n -c_i \mathbf{x}_e \mathbf{x}'_e$. Since it is manifestly positive definite, so is its pullback along the injection $\H_1(\Gamma\homsep\R) \to C_1(\Gamma\homsep\R)$. $\overline{\tau_b}$ is the map $\H_1(\Gamma\homsep\R) \to (\H_1(\Gamma\homsep\R))^{\vee} = \H^1(\Gamma\homsep\R)$ given by $\beta \to \langle \beta, - \rangle_c$. It follows that it is an isomorphism, as was to be shown.
\end{proof}

\begin{corollary}
The restriction of $q_{\res} \colon \versD(\Gamma) \to \D^{\ed}$ has a section. 
\end{corollary}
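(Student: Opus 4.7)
The plan. By Proposition \ref{prop:latticequotient}, for each $b \in \D^{\ed}_{\reg}$ the fiber $\versD(\Gamma)_b$ is canonically identified with the abelian variety $\H^1(\Gamma, \C^*)/b^{\H_1(\Gamma, \Z)}$, and therefore carries a distinguished identity element, namely the image of $1 \in \H^1(\Gamma, \C^*)$. I would first check that this assignment defines a holomorphic section $s_{\reg} : \D^{\ed}_{\reg} \to \versD(\Gamma)$ of $q_{\res}$, which is immediate from the holomorphic nature of the identification in Proposition \ref{prop:latticequotient}.

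Next I would extend $s_{\reg}$ across the discriminant by realizing it in coordinates. The basic space $\Dol \to \D^1$ already has a section $\sigma : \D \to \Dol$ given (in the $n = 0$ chart) by $b \mapsto (x_0, y_0) = (1, b)$. Taking the $\ed$-fold product yields a holomorphic section $\sigma^{\ed} : \D^{\ed} \to \Dol^{\ed}$ of $q^{\ed}$, defined on all of $\D^{\ed}$. Each component $\sigma(b_e)$ avoids the unique fixed point $\mathbf{n}$ of the $\C^*$-action on $\Dol$, so $\sigma^{\ed}(b)$ lies in the locus on which the complexified group $\overline{C^0(\Gamma, \C^*)}$ acts freely.

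I would then descend $\sigma^{\ed}$ to $\versD(\Gamma, \eta) = \mu_\Gamma^{-1}(\eta)/\overline{C^0(\Gamma, \U_1)}$ via the Kempf--Ness correspondence: each free orbit of the complexified group $\overline{C^0(\Gamma, \C^*)}$ on the stable locus of $\Dol^{\ed}$ meets $\mu_\Gamma^{-1}(\eta)$ in a single $\overline{C^0(\Gamma, \U_1)}$-orbit, and this picks out a well-defined point $s(b) \in \versD(\Gamma, \eta)$. Checking that $s$ is holomorphic and satisfies $q_{\res} \circ s = \mathrm{id}$ is direct from the construction, and one verifies that $s$ agrees with $s_{\reg}$ over the regular locus by matching with the identity of the abelian variety.

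The main obstacle is to carry out the K\"ahler descent step uniformly over $\D^{\ed}$, including along the discriminant where the fibers of $q^{\ed}$ degenerate into chains of rational curves; here one must confirm that $\sigma^{\ed}(b)$ remains in the semistable locus for the relevant moment map level, which can be done by a local inspection of the $\C^*$-action in the Raynaud coordinate charts on $\Dol$. Alternatively, one can avoid the Kempf--Ness machinery entirely by using the transitive residual action of $\H^1(\Gamma, \C^*)$ on smooth fibers (Propositions \ref{prop:residualaction} and \ref{prop:dolbintegdef}) to propagate an arbitrarily chosen point in a single fiber to a section and then match it with the identity section of Proposition \ref{prop:latticequotient} over $\D^{\ed}_{\reg}$.
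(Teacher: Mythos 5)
Your first paragraph is exactly the paper's proof: for $b \in \D^{\ed}_{\reg}$, Proposition \ref{prop:latticequotient} canonically identifies $\versD(\Gamma)_b$ with $\H^1(\Gamma\homsep\C^*)/b^{\H_1(\Gamma\homsep\Z)}$, and the section is $b \mapsto$ the image of $1 \in (\C^*)^{\ed}$, i.e. the identity of the abelian variety. The word ``restriction'' in the corollary refers to the restriction of $q_{\res}$ to the regular locus $\D^{\ed}_{\reg}$, over which Proposition \ref{prop:latticequotient} holds, so this one step is a complete proof of the statement as intended; you should stop there.

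The rest of your proposal attempts to extend the section across the discriminant, which the corollary does not ask for, and that extension as written has genuine gaps. First, avoiding the fixed point $\mathbf{n}$ in each factor does \emph{not} imply that $\overline{C^0(\Gamma\homsep\C^*)}$ acts freely on $\sigma^{\ed}(b)$: over any $q \in \D^*$ the $\C^*$-action on the fiber $\C^*/q^{\Z}$ of $\Dol$ has stabilizer $q^{\Z}$, so on $\Dol^{\ed}$ the complexified group has large point stabilizers everywhere away from the central fiber, and freeness of $\overline{C^0(\Gamma\homsep\C^*)}$ requires a separate argument involving $\eta$-genericity. Second, $\Dol$ is only a complex manifold, not an algebraic variety, so ``the stable locus'' is not an off-the-shelf notion here, and the Kempf--Ness statement you invoke (each stable complex orbit meets $\mu_\Gamma^{-1}(\eta)$ in a single $\U_1$-orbit) would itself need to be established in this K\"ahler, non-algebraic, non-free setting; the paper applies Kempf--Ness only component-by-component on the central fiber, where the components are genuine projective toric varieties. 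Finally, your suggested alternative via the transitive residual action of $\H^1(\Gamma\homsep\C^*)$ again only applies over $\D^{\ed}_{\reg}$, so it does not help across the discriminant.
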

\begin{proof}
We can define such a section by taking the image of the unit section of the trivial fibration $(\C^*)^{\ed}$ under the quotient map $(\C^*)^{\ed} \to \H^1(\Gamma\homsep\C^*) \to \versD(\Gamma)_b$ from Proposition \ref{prop:latticequotient}. 
\end{proof}

\begin{proposition} \label{prop:abelianvariety}
For $b \in \D^{\ed}$ in the complement of the coordinate hyperplanes, $q_{\res}^{-1}(b)$ is an abelian variety. \end{proposition}
\begin{proof}
It suffices to show that $q_{\res}^{-1}(b)$ is a compact complex group admitting a projective embedding. We have shown that $\versD(\Gamma)_b$ is a compact complex group. We will now show that $\versD(\Gamma)_b$ carries a K\"ahler form $\omega_b$ with integral pairings $\omega_b(\beta)$ for $\beta \in \H_2(\versD(\Gamma)_b \homsep \Z)$. The Kodaira embedding theorem then tells us that $\versD(\Gamma)_b$ admits a projective embedding. 
Let $\widetilde{\omega}_b$ be the K\"ahler form on $\bE_b$; recall that we have chosen it to be integral. We can represent any curve class $\beta \in \H_2(\versD(\Gamma)_b,\Z)$ as the image under the quotient map of a curve $\widetilde{\beta}$ in $\bE(\eta)$. The K\"ahler form on $\versD(\Gamma)_b$ is obtained by reduction of that on $\bE_b$, and thus $\omega_b(\beta) = \widetilde{\omega}_b(\widetilde{\beta}) \in \Z$. 

\end{proof}

\subsection{Monodromy}
For a torus $A$, we have a natural isomorphism $\H^{\bullet}(A\homsep\C) = \bigwedge^{\bullet} \H^1(A;\C)$. Thus $R^{\bullet} q_{\res *} \Q_{\Dol(\Gamma)}$ is a graded local system on a hyperplane in $\D^{\ed}_{\reg}$, with fiber at $b$ given by $\bigwedge^{\bullet} \H^1(\versD(\Gamma)_b,\Z)$. 

The monodromy of this local system is determined by the monodromy in degree one. This is described as follows. Consider the short exact sequence in cohomology
\begin{equation} \label{eq:SESHone} \H^1(\Gamma, \Z) \to \H^1(\versD(\Gamma)_b,\Z) \to \H_1(\Gamma, \Z) \end{equation}
dual to that of Lemma \ref{lem:cyclesequence}. 
\begin{proposition} \label{prop:monodromycomp}
Fix an edge $e$ of $\Gamma$, and consider the corresponding hyperplane in $C_1(\Gamma\homsep\C)$. The logarithm of the monodromy of $R^{1}q_{\res *} \Q_{\Dol(\Gamma)}$ around this hyperplane is given by the composition $\H^1(\versD(\Gamma)_b,\Z) \to \H_1(\Gamma ,\Z) \xrightarrow{ \langle e, - \rangle [e]} \H^1(\Gamma ,\Z) \to \H^1(\versD(\Gamma)_b,\Z)$.
\end{proposition}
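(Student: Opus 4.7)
The plan is to read off the monodromy directly from the explicit lattice presentation of the fibre given by Proposition~\ref{prop:latticequotient}. That proposition exhibits
\[
\versD(\Gamma)_b \;\cong\; \H^1(\Gamma,\C)\big/\Lambda_b, \qquad \Lambda_b \;=\; 2\pi i\,\H^1(\Gamma,\Z) \;+\; \widetilde{\tau}_b\bigl(\H_1(\Gamma,\Z)\bigr),
\]
where $\widetilde{\tau}_b(\beta) = \sum_f \beta_f \log(b_f)\,[f] \in \H^1(\Gamma,\C)$ is a logarithmic lift of $\tau_b$. Under the identification $\H_1(\versD(\Gamma)_b,\Z) = \Lambda_b$, the short exact sequence of Lemma~\ref{lem:cyclesequence} becomes
\[
0 \to 2\pi i\,\H^1(\Gamma,\Z) \to \Lambda_b \to \H_1(\Gamma,\Z) \to 0,
\]
and its $\Z$-dual is precisely the short exact sequence appearing in the statement, namely $0 \to \H^1(\Gamma,\Z) \to \H^1(\versD(\Gamma)_b,\Z) \to \H_1(\Gamma,\Z) \to 0$.

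Next I would run $b_e$ around the hyperplane once: as $b_e \mapsto e^{2\pi i} b_e$ with the other $b_f$ fixed, the chosen branch of $\log(b_e)$ shifts by $2\pi i$ while the other logarithms are unchanged. Hence the lift $\widetilde{\tau}_b(\beta)$ gets shifted by $2\pi i\,\beta_e[e] = 2\pi i\,\langle e,\beta\rangle\,[e]$, an element of the fixed subgroup $2\pi i\,\H^1(\Gamma,\Z) \subset \Lambda_b$. This says exactly that the geometric monodromy $T$ on $\H_1(\versD(\Gamma)_b,\Z)$ satisfies $(T-1)^2 = 0$, so $\log T = T-1$, and it is given by the composition
\[
\H_1(\versD(\Gamma)_b,\Z) \twoheadrightarrow \H_1(\Gamma,\Z) \xrightarrow{\beta \mapsto \langle e,\beta\rangle[e]} \H^1(\Gamma,\Z) \hookrightarrow \H_1(\versD(\Gamma)_b,\Z),
\]
with the outer arrows coming from the SES above (after absorbing the factor $2\pi i$ into the canonical isomorphism $2\pi i\,\H^1(\Gamma,\Z) \cong \H^1(\Gamma,\Z)$).

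Finally, I would transfer this to cohomology. The middle arrow $\beta \mapsto \langle e,\beta\rangle[e]$ is the rank-one map $[e]\otimes[e]$, hence tautologically self-dual under the perfect pairing $\H_1(\Gamma) \otimes \H^1(\Gamma) \to \Z$. Dualizing the SES for $\Lambda_b$ identifies the outer maps of the statement's composition with $p^\vee$ and $\iota^\vee$, so the log-monodromy on $R^1 q_{\res\,*}\Q_{\Dol(\Gamma)}$, which is the (transpose of the) geometric log-monodromy on $\H_1$, is exactly the composition displayed in the proposition.

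The main obstacle will be almost entirely bookkeeping: keeping track of the $2\pi i$ factor when passing between the lattice $2\pi i\,\H^1(\Gamma,\Z)$ and $\H^1(\Gamma,\Z)$ itself, and checking the sign arising from dualisation (the monodromy on $R^1 q_{\res *}$ is the inverse transpose of that on $R_1$, and one must verify that the self-duality of $[e]\otimes[e]$ absorbs this sign once the orientation conventions on $e$ and $[e]$ are fixed). Everything else is direct from Proposition~\ref{prop:latticequotient}.
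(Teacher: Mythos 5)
Your proof is correct and takes essentially the same approach as the paper's: both read the monodromy off the lattice presentation of the fibre from Proposition~\ref{prop:latticequotient}, by tracking the shift of a chosen branch of $\log b_e$ by $2\pi i$ as $b_e$ circles the hyperplane. Your ``abstract lattice'' phrasing (working with $\Lambda_b = 2\pi i\,\H^1(\Gamma,\Z) + \widetilde{\tau}_b(\H_1(\Gamma,\Z))$ directly and reading off $T-1$ in coordinates) is a cleaner restatement of the paper's ``explicit cycles'' $\tilde e_i, \tilde\gamma_i$, which are just these lattice elements; the paper also supplements the computation with a short structural remark (the invariance of the globally-defined $\gamma_i$, $i\neq 1$, and the Poincar\'e-dual basis) showing the log-monodromy must factor through $\langle e, -\rangle$ and land in the span of $e$ without further calculation, which you omit but do not need.

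One caution on the last step: the monodromy on $R^1 q_{\res*}\Q$ is the inverse transpose, not merely the transpose, of the monodromy $T$ on $\H_1$; since $T$ is unipotent the log-monodromy on $\H^1$ is $-(T-1)^*$, so the self-duality of the rank-one map $\gamma\mapsto\langle e,\gamma\rangle[e]$ alone does not absorb all signs -- the remaining sign is fixed by the choice of orientation of the small loop. You do flag this as a bookkeeping point, which is appropriate; it is also the only place where your write-up is not yet airtight, and it matches what the paper leaves to ``following the explicit cycle.''
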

\begin{proof}
Fix a basepoint $b$ near the hyperplane $b_e = 0$.  Fix a basis $e_1 = e, e_2, .., e_g$ of $\H^1(\Gamma\homsep \Z)$ and a basis $\gamma_1, ..., \gamma_g$ of $\H_1(\Gamma\homsep\Z)$ such that $\langle \gamma_i, e \rangle =0$ for $i \neq 1$. Thus $\langle e, \gamma \rangle$ picks out the coefficient of $\gamma_1$ in $\gamma = \sum_{i} c_i \gamma_i$.

Recall the equality $\versD(\Gamma)_b = \H^1(\Gamma \homsep \C^*) / b^{\H_1(\Gamma\homsep \Z)}$.  Choose branches of the logarithms $\log(b_{e_i})$, and define $\tilde{\gamma_i} \in \H_1(\versD(\Gamma)_b)$ as the cycle 
\begin{equation} \label{eq:explicitcycle} r \in [0,1] \to \{ \exp \big( r \log(b_{e_i}) \langle \gamma_i, e \rangle \big) \}_{e \in \ed}. \end{equation}
Let $\tilde{e_i} \in \H_1(\H^1(\Gamma\homsep\C^*),\Z)$ be the tautological cycles; we abusively use the same notation for their projections to $\H_1(\versD(\Gamma)_b \homsep \Z)$. Then $\tilde{e_i}$ and $\tilde{\gamma_i}$ form a basis of $\H_1(\versD(\Gamma)_b \homsep \Z)$. By following the explicit cycle \ref{eq:explicitcycle} as $b_0 \to \exp(2 \pi i \theta) b_0$, one can verify the proposition. 

Note, however, that the general form of the answer follows without any further calculations. Consider a small loop around the hyperplane $b_e = 0$, starting and ending at $b$. By construction, all the basis elements but $\gamma_1$ are globally defined along this loop; it follows that the log monodromy factors through $\H_1(\versD(\Gamma)_b,\Z) \to \H_1(\Gamma\homsep\Z) \xrightarrow{\langle e, - \rangle} \Z$. Applying the same reasoning to the Poincar\'e dual basis in $\H_{2g-1}(\versD(\Gamma)_b\homsep \Z) = \bigwedge^{2g-1} \H_1(\versD(\Gamma)_b \homsep \Z)$, we see that the image of log monodromy must lie in the span of $e$. 
\end{proof}

\subsection{Structure of the special fiber}
By construction, $q_{\res}^{-1}(0)$ is the symplectic reduction of $q^{-1}(0)^{\ed}$ by $\redC^0(\Gamma\homsep \U_1)$. It is easier to understand this reduction by first passing to the universal cover $\widetilde{q^{-1}(0)}^{\ed}$ of $q^{-1}(0)^{\ed}$. The universal cover of $q^{-1}(0)$ is an infinite chain of rational curves $\mathbb{P}^1_n$ which we index by the integers $n \in \Z$. Thus $\widetilde{q^{-1}(0)}^{\ed}$ is an infinite grid of irreducible components $\prod_{e \in \ed} \mathbb{P}^1_{n_e}$. 

To understand the reduction of  $\widetilde{q^{-1}(0)}^{\ed}$, we will use Delzant's dictionary between polytopes and toric varieties \cite{Delz}, according to which a toric variety is classified by its image under the moment map. The moment map
\[ \mu_\Dol^{E(\Gamma)} : \widetilde{q^{-1}(0)}^{\ed} \to C_1(\Gamma, \R) = \R^{E(\Gamma)} \] maps the component indexed by $\mathbf{n} = \{ n_e \}$ to the cube $\square_{\mathbf{n}} := \prod_{e \in E(\Gamma)} [n_e, n_e+1]$. These cubes are the chambers of the coordinate periodic hyperplane arrangement on $C_1(\Gamma, \R)$.

Identify $\U_1$ with $\R/\Z$, and suppose that $\eta \in \redC_0(\Gamma\homsep\Q/\Z)$. Let $\widetilde{\eta}$ be a lift to $\redC_0(\Gamma \homsep \Q)$ with all components in the range $0 \leq \widetilde{\eta}_v \leq 1$. The affine subspace $d_{\Gamma}^{-1}(\widetilde{\eta}) \subset C_1(\Gamma, \R)$ intersects the coordinate periodic arrangement in a $\H_1(\Gamma, \Z)$-periodic arrangement $A^{\operatorname{per}}$. It is given by the hyperplanes $\langle \gamma, e \rangle = n + \tilde{\eta}_e$ for $e \in E(\Gamma), n \in \Z$. 	By our genericity assumptions on $\eta$, $A^{\operatorname{per}}$ is a simple unimodular arrangement, i.e. any $k$ hyperplanes intersects in codimension $k$, and the integral normal vectors at such an intersection span the lattice $\H_1(\Gamma\homsep\Z)$. 

The chambers of $A^{\operatorname{per}}$ are given by $\Delta_{\mathbf{n}} := \square_{\mathbf{n}} \cap d_{\Gamma}^{-1}(\widetilde{\eta})$, for those $\mathbf{n}$ such that the right-hand side is nonempty. Each such chamber $\Delta_{\mathbf{n}}$ corresponds to a component $\mathfrak{X}_{\mathbf{n}} \colonequals \prod_{e \in \ed} \mathbb{P}^1_{n_e} \sslash_{\widetilde{\eta}} \redC^0(\Gamma\homsep \U_1)$ of the reduction. The reduction of a toric variety by a torus action is toric, with moment map obtained by restriction from the moment map of the prequotient, and in particular the moment image of $\mathfrak{X}_{\mathbf{n}}$ is $\Delta_{\mathbf{n}}$. The components $\mathfrak{X}_{\mathbf{n}}$ and $\mathfrak{X}_{\mathbf{m}}$ intersect along the sub-toric variety determined by the mutual face $\Delta_{\mathbf{n}} \cap \Delta_{\mathbf{m}}$ of their polytopes.	

We thus obtain a description of $\widetilde{q_{\res}^{-1}(0)}$ as a union of smooth toric varieties glued along toric subvarieties, whose moment map defines an infinite periodic hyperplane arrangement. The fiber $q_{\res}^{-1}(0)$ itself is obtained by quotienting this picture by $\H_1(\Gamma\homsep\Z)$.

\subsection{Class of the central fiber in the Grothendieck group of varieties} 

The map $q: \Dol \to \D$ has fiber $q^{-1}(0)$ a nodal rational curve with dual graph $\boing$.  Thus in
the Grothendieck group of varieties, 
$|q^{-1}(0)| = |\C^*| + |\mathrm{point}| = \L$.  

The preceeding description of $q_{\res}^{-1}(0)$ as a union of toric varieties gives
one way to compute $|q_{\res}^{-1}(0)|$.  Here we give a different argument: 

\begin{proposition} \label{prop:fixedpointdescription}
The $\H^1(\Gamma\homsep\C^*)$-fixed points of $\Dol(\Gamma)$ are indexed by the spanning trees $\Gamma' \subset\Gamma$. The fixed point $p_{\Gamma'}$ is the reduction by $\redC^0(\Gamma\homsep \U_1)$ of the subspace  $\prod_{e \notin \Gamma'} \mathbf{n} \times \prod_{e \in \Gamma'} \Dol \subset \prod_{e \in \Gamma} \Dol$.  
\end{proposition}
\begin{proof}
Let $p \in \Dol(\Gamma)$, and let $\tilde{p}$ be a lift to $\Dol^{\ed}$. $p$ is fixed by $\H^1(\Gamma\homsep\U_1)$ if and only if the action of $C^1(\Gamma\homsep\U_1)$ on $\tilde{p}$ preserves the $\redC^0(\Gamma\homsep\U_1)$-orbit of $\tilde{p}$. 

Let $\Gamma'$ be the unique subgraph of $\Gamma$ such that $\tilde{p} \in  \prod_{e \notin \Gamma'} \mathbf{n} \times \prod_{e \in \Gamma'}  (\Dol \setminus \mathbf{n}) \subset \prod_{e \in \Gamma} \Dol$. Since we know the $\redC^0(\Gamma\homsep \U_1)$ orbit of $\tilde{p}$ is free by assumption, this subgraph must contain all vertices of $\Gamma$. 

The action of $C^1(\Gamma\homsep \U_1)$ on this subspace factors through a free action of $C^1(\Gamma' \homsep \U_1)$, which descends to a free action of $\H^1(\Gamma'\homsep \U_1) = C^1(\Gamma'\homsep \U_1) / \redC^0(\Gamma' \homsep \U_1)$ on the quotient space. Hence $p$ is a fixed point if and only if $\H^1(\Gamma'\homsep \U_1)$ is trivial, i.e. $\Gamma'$ is a tree.

From the description of $p_{\Gamma'}$, it follows that it is also fixed by $\H^1(\Gamma\homsep\C^*)$. 
\end{proof} 
\begin{corollary}
All $\H^1(\Gamma\homsep\C^*)$-fixed points of $\Dol(\Gamma)$ are contained in the central fiber $q_{\res}^{-1}(0)$. 
\end{corollary}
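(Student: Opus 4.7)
The plan is to use the explicit description of the fixed points $p_{\Gamma'}$ from the previous proposition together with two basic facts: that $q(\mathbf{n}) = 0$ (since $\mathbf{n}$ is the unique preimage of $0 \times 1$ under $\dmom$, by Proposition \ref{pinchdeg}), and that the residual map $q_{\res}$ factors through the projection from $\Dol^{\ed}$ to $\C^{\ed}$ via $q^{\ed}$, landing in $\H_1(\Gamma, \C) \subset \C^{\ed}$ (as in the fiber square \ref{eq:qres}).

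First I would fix a lift $\tilde{p}_{\Gamma'} \in \prod_{e \notin \Gamma'} \mathbf{n} \times \prod_{e \in \Gamma'} \Dol$ of the fixed point $p_{\Gamma'}$. Applying $q^{\ed}$ to this lift yields a tuple $(q_e)_{e \in \ed} \in \C^{\ed}$ with $q_e = 0$ for all $e \notin \Gamma'$, simply because $q(\mathbf{n}) = 0$. By commutativity of the square \ref{eq:qres}, the resulting image $q_{\res}(p_{\Gamma'}) \in \H_1(\Gamma, \C)$ is this same tuple, viewed as an element of the cycle subspace.

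Next I would invoke the combinatorics of spanning trees: since $\Gamma'$ is a spanning subtree of $\Gamma$, the restriction map $\H_1(\Gamma, \C) \hookrightarrow \C^{\ed} \twoheadrightarrow \C^{\Gamma'}$ obtained by remembering only the coordinates on edges of $\Gamma'$ is \emph{injective} (a nontrivial $1$-cycle must use at least one edge outside every spanning tree). Equivalently, any $1$-chain supported entirely on the edges of $\Gamma'$ that happens to be a cycle must be zero. Applying this to the tuple $(q_e)$, whose support lies in $\Gamma'$ by the previous paragraph, forces $(q_e) = 0$, i.e.\ $q_{\res}(p_{\Gamma'}) = 0$.

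There is no serious obstacle here; the only thing to double-check is that the convention $q(\mathbf{n}) = 0$ and that the factorization of $q_{\res}$ through the cycle subspace $\H_1(\Gamma, \C)$ matches Diagram \ref{eq:qres}, which is immediate from the construction of $q_{\res}$ in Proposition \ref{prop:dolbintegdef}.
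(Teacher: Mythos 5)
Your argument is essentially the one the paper leaves implicit after the preceding proposition: a fixed point $p_{\Gamma'}$ has $q_e = 0$ for $e \notin \Gamma'$, the moment condition (with the $\C$-part of $\eta$ chosen to vanish) forces $(q_e)_e$ to lie in $\H_1(\Gamma, \C)$, and a cycle supported on a spanning tree must vanish. That is the right route and it closes the gap cleanly.

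One step is misstated, however. You claim that the composition $\H_1(\Gamma, \C) \hookrightarrow \C^{\ed} \twoheadrightarrow \C^{\Gamma'}$ (``remembering only the coordinates on edges of $\Gamma'$'') is injective, and call this \emph{equivalent} to the statement that a cycle supported on $\Gamma'$ is zero. These are different, and the first is false in general: the kernel of the projection onto the tree coordinates consists of cycles supported on $\ed \setminus \Gamma'$, and such cycles can be nontrivial (take $\Gamma$ the graph with two vertices and three parallel edges, $\Gamma'$ any one of them — the difference of the other two is a cycle killed by the projection to $\C^{\Gamma'}$). What is true, and what you actually use in the final sentence, is the complementary fact: the projection $\H_1(\Gamma, \C) \to \C^{\ed \setminus \Gamma'}$ is injective, equivalently a $1$-chain supported on the tree edges which is also a cycle must be zero, since $h_1(\Gamma') = 0$. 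Your tuple $(q_e)$ is supported on $\Gamma'$ (not on its complement), so it is this second statement that applies, and the conclusion $q_{\res}(p_{\Gamma'}) = 0$ is correct. Replace the injectivity claim with the projection onto $\C^{\ed \setminus \Gamma'}$, or simply drop it and keep only the chain-supported-on-a-tree formulation.
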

\begin{proof}
By Proposition \ref{prop:fixedpointdescription}, $q_{\res}(p_\Gamma) \subset \mu^{\ed} \big( \prod_{e \notin \Gamma'} \mathbf{n} \times \prod_{e \in \Gamma'} \Dol \big)$.
In turn, the right-hand side is contained in the image of $C_1(\Gamma', \C) \to C_1(\Gamma, \C)$. Since $\Gamma' \subset \Gamma$ is a tree, the latter intersects $\H_1(\Gamma, \C)$ at $0$. 
\end{proof}

\begin{theorem} \label{thm:dolcfclass}
In the Grothendieck group of varieties, the class of the central fiber is:
$$|q_{\res}^{-1}(0)| = (\text{\# of spanning trees of }
\Gamma) \times \A^{h^1(\Gamma)}$$
\end{theorem}
\begin{proof}
Pick a cocharacter $\sigma \colon \C^* \to \H^1(\Gamma\homsep \C^*)$ whose image is not contained in the kernel of any restriction map $\H^1(\Gamma\homsep \C^*) \to \H^1(\Gamma \setminus e\homsep \C^*)$. 

The resulting complex analytic action of $\C^*$ on $\Dol(\Gamma)$ preserves the fibers of $q_{\res}$ and has isolated fixed points $p_{\Gamma'} \in q_{\res}^{-1}(0)$, naturally indexed by the spanning trees $\Gamma' \subset \Gamma$.

The attracting cell of a fixed point is a smooth variety with a contracting $\C^*$-action, hence 
isomorphic to $\C^n$ for some $n$. 
Per Proposition \ref{prop: action is symplectic}, the action of $\H^1(\Gamma\homsep\C^*)$ preserves the holomorphic 
symplectic form on $\Dol(\Gamma)$.  It follows that the attracting cell is Lagrangian, and thus of complex dimension 
$1/2 \dim \Dol(\Gamma) = h^1(\Gamma)$. 
Since $q_{\res}^{-1}(0)$ is proper, every $\C^*$-orbit has a limit point, and so the fiber is the disjoint union of these attracting cells.
\end{proof}
\begin{example}
The central fiber of $\Dol = \Dol(\boing)$ is the disjoint union of the node $\mathbf{n}$ and a copy of $\C^*$. Their union is the attracting cell of the node, with respect to either the usual action of $\C^*$ or its inverse.
\end{example}

\begin{remarque} \label{rem:dolafclass}
The class of the general fiber has a similar description, which we will not need in this paper. Namely, let $\Gamma' \subset \Gamma$ be the (possibly disconnected) subgraph consisting of the edges $e \in \ed$ for which $b_e \neq 0$. We can view $b$ as a generic point in $\D^{E(\Gamma')}$. This defines an abelian variety $\versD(\Gamma')_b$; it is a product of smaller abelian varieties determined by the connected components of $\Gamma'$; the general form of the factors is described explicitly in Proposition \ref{prop:latticequotient} below. Then we have 
\[ |q_{\res}^{-1}(b)| = |\versD(\Gamma')_b| \times (\text{ \# of spanning trees of } \Gamma / \Gamma') \times \A^{h^1(\Gamma / \Gamma')} \]
Here $\Gamma / \Gamma'$ is the contraction of $\Gamma$ by $\Gamma'$, where exceptionally we allow the contraction of subgraphs of genus $>0$.
\end{remarque}

\subsection{Projectivity}  Here we show that $q_{\operatorname{res}}$ is projective, at least near the central fiber.  The argument
is independent of Proposition \ref{prop:abelianvariety}.

Recall the definition of $\widetilde{\Dol}$ from Section \ref{subsec:basicdolbdef}. We define a line bundle $\cL$ on $\widetilde{\Dol}$ with transition function $x_n$ on the overlaps $\C^2_n \cap \C^2_{n+1}$. $\cL$ is naturally equivariant with respect to the $\C^*$ and $\Z$ actions. It is not, however, jointly equivariant. Instead, if $s_1^* \cL$ denotes the shift of $\cL$ by $1 \in \Z$, we have an equality of $\C^*$-equivariant bundles $s_1^*\cL = \chi \cL$ where $\chi$ is the fundamental character of $\C^*$.

The following proposition is direct from the definition of $\cL$:
\begin{proposition} \label{prop:Lample}
$\cL$ restricts to an ample bundle on any finite chain of rational curves in the fiber $\tilde{q}^{-1}(0) \subset \widetilde{\Dol}$. 
\end{proposition}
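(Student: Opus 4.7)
The plan is to identify the irreducible components of $\tilde{q}^{-1}(0)$ explicitly, restrict $\cL$ to each, and then invoke a standard ampleness criterion for reduced projective curves. First I would work locally: in the chart $\C^2_n$, the zero fiber $\tilde{q}^{-1}(0) = \{x_n y_n = 0\}$ is the union of the two coordinate axes. Under the gluing $x_n = y_{n+1}^{-1}$ (imposed whenever $x_n \ne 0$, equivalently $y_{n+1} \ne 0$), the axis $\{y_n = 0\} \subset \C^2_n$ is identified (away from the origin) with the axis $\{x_{n+1} = 0\} \subset \C^2_{n+1}$. I would check that these two affine lines thereby form a single $\P^1$, which I call $\P^1_n$, with coordinate $x_n$ on one affine patch and $y_{n+1} = x_n^{-1}$ on the complementary patch. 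The components $\P^1_n$, $n \in \Z$, exhaust $\tilde{q}^{-1}(0)$ and meet in the chain pattern $\P^1_n \cap \P^1_{n+1} = \{(x_{n+1},y_{n+1}) = (0,0)\}$.

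Next I would compute $\cL|_{\P^1_n}$. By definition, $\cL$ has transition function $x_n$ on the overlap $\C^2_n \cap \C^2_{n+1}$. Restricted to $\P^1_n$, this is precisely the transition function between the two affine charts with coordinates $x_n$ and $y_{n+1} = x_n^{-1}$, so $\cL|_{\P^1_n} \cong \cO_{\P^1}(1)$. In particular, $\cL$ has degree one on every irreducible component of the central fiber.

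Finally, let $C$ be a finite chain of rational curves in $\tilde{q}^{-1}(0)$; by the previous step, $C$ is a connected reduced projective nodal curve whose irreducible components are consecutive $\P^1_n$'s, and $\cL|_C$ has degree one on each component. For a line bundle on a reduced projective curve, ampleness is equivalent to strict positivity of the degree on each irreducible component (a standard consequence of Nakai--Moishezon for curves), so $\cL|_C$ is ample. There is no real obstacle here; the only point to verify carefully is the identification of consecutive axes into a single $\P^1$ and the matching of the transition cocycle with the $\cO(1)$ cocycle.
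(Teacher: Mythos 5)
Your proof is correct and fills in exactly the details the paper elides — the paper simply asserts the proposition is ``direct from the definition of $\cL$'' without writing anything out, and what you have written is the natural way to make that precise. Your chart analysis correctly identifies the irreducible components of $\tilde{q}^{-1}(0)$: the axis $\{y_n=0\}\subset\C^2_n$ with coordinate $x_n$ glues to $\{x_{n+1}=0\}\subset\C^2_{n+1}$ with coordinate $y_{n+1}=x_n^{-1}$ to form a $\P^1$ meeting its neighbors at the chart origins, and the cocycle $x_n$ restricted to this component is the standard $\cO(1)$ cocycle, so $\deg(\cL|_{\P^1_n})=1$. Combined with the Nakai--Moishezon criterion for reduced projective curves (ample iff positive degree on every irreducible component), this gives ampleness on any finite chain. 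The one place deserving a second look — which you yourself flagged — is the sign convention on the transition cocycle: with $z=x_n$ on the $n$-th patch and $w=z^{-1}$ on the $(n+1)$-st, whether $x_n$ produces $\cO(1)$ or $\cO(-1)$ depends on whether sections satisfy $s_n = x_n\, s_{n+1}$ or $s_{n+1} = x_n\, s_n$; the intended reading (the one making $\cL$ ample, and hence the one the paper must mean) is the former, under which $\{s_n=1,\, s_{n+1}=w\}$ and $\{s_n=z,\, s_{n+1}=1\}$ give two independent global sections and degree $1$.
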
   

\begin{theorem} \label{thm:projective} 
There is a line bundle $\cL_{\Gamma}$ on $\versD(\Gamma, \eta)$ and an open neigborhood of $0 \in \H_1(\Gamma\homsep\C)$ such that for any $b$ in this neighborhood, the restriction of $\cL_{\Gamma}$ to $q_{\res}^{-1}(b)$ defines a projective embedding. 
\end{theorem}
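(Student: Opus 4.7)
My plan is to construct $\cL_\Gamma$ by descent from the line bundle $\cL$ on $\widetilde{\Dol}$, verify ampleness on the central fiber using Proposition \ref{prop:Lample}, and then conclude using openness of ampleness in proper families.

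For the construction, I would first form the external tensor product $\widetilde{\cL} \colonequals \boxtimes_{e \in \ed} p_e^* \cL$ on $\widetilde{\Dol}^{\ed}$. This bundle carries a tautological linearization for the action of $(\C^*)^{\ed} = C^1(\Gamma, \C^*)$. Pulling this back along the coboundary $d^*_\Gamma\colon C^0(\Gamma, \C^*) \to C^1(\Gamma, \C^*)$, restricting to the preimage of $\mu_\Gamma^{-1}(\eta)\subset \Dol^{\ed}$, and descending along the free action of $\redC^0(\Gamma,\U_1)$ and of the residual $\H_1(\Gamma,\Z)$-factor of the covering $\widetilde{\Dol}^{\ed}\to \Dol^{\ed}$ produces a line bundle $\cL_\Gamma$ on $\versD(\Gamma,\eta)$. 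The relation $s_1^* \cL = \chi \cL$ from the construction of $\cL$ controls the descent along the translation lattice, and is the same Appell--Humbert data making $\cL$ descend fiberwise to each elliptic fiber $\C^*/b^{\Z}$; the linearization of $\cL_\Gamma$ can be arranged so that its restriction to a smooth fiber $\versD(\Gamma)_b$ is precisely the polarization arising from the presentation in Proposition \ref{prop:latticequotient}.

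Next I would establish ampleness of $\cL_\Gamma$ on the central fiber $q_{\res}^{-1}(0)$. By the attracting-cell description used in the proof of Theorem \ref{thm:dolcfclass}, this fiber is a projective union of toric varieties, realized as a quotient by a residual torus of an $\H^1(\Gamma,\C^*)$-stable closed subvariety of $(\tilde{q}^{-1}(0))^{\ed}$. External tensor products of ample bundles are ample, so Proposition \ref{prop:Lample} gives ampleness of $\widetilde{\cL}$ on any finite chain of rational curves inside $(\tilde{q}^{-1}(0))^{\ed}$ meeting this preimage; ampleness is preserved by GIT descent with the induced linearization, so $\cL_\Gamma|_{q_{\res}^{-1}(0)}$ is ample. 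The map $q_{\res}$ is proper (Proposition \ref{prop:residisprop}) and $\versD(\Gamma,\eta)$ is a flat proper family of schemes over a neighborhood of $0$, so the standard openness theorem for ampleness in flat proper families yields an open neighborhood $U$ of $0\in\H_1(\Gamma,\C)$ on which $\cL_\Gamma$ remains fiberwise ample, hence very ample after a uniform tensor power, giving the desired projective embedding for every $b \in U$.

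The main obstacle will be the descent in the construction. The bundle $\cL$ is not genuinely $\C^*\times\Z$-equivariant, only equivariant up to the character $\chi$, so when pulling back the linearization along $d^*_\Gamma$ and then quotienting by both $\redC^0(\Gamma,\U_1)$ and the graph translation lattice, one must verify that the character obstructions cancel compatibly with the chosen moment-map level $\eta$. This is essentially the Appell--Humbert computation underlying Proposition \ref{prop:latticequotient}, performed in families rather than fiberwise, and the work lies in checking that the linearization globalizes consistently over the base; once this is done, the remaining ampleness and openness arguments are standard.
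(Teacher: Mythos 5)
Your overall strategy matches the paper's: build $\cL_\Gamma$ by descent of a boxed power of $\cL$ from $\widetilde{\Dol}^{\ed}$ along $\redC^0(\Gamma,\U_1)$ and $\H_1(\Gamma,\Z)$, prove ampleness on $q_{\res}^{-1}(0)$ using Proposition~\ref{prop:Lample} and the Kempf--Ness identification with GIT quotients, and then propagate to nearby fibers by openness of relative ampleness (which, since $\versD(\Gamma,\eta)$ is a complex manifold rather than a scheme, is Nakayama's Proposition~1.4, not the algebraic flat-families version you cite). However, there is a genuine gap in the ampleness-of-the-central-fiber step, which you compress to ``ampleness is preserved by GIT descent with the induced linearization.''

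The problem is that after the GIT/Kempf--Ness step you only know $\widetilde{\cL}_\Gamma$ is ample on \emph{finite} unions of irreducible components of the universal cover $\widetilde{q_{\res}^{-1}(0)}$. The cover is an infinite union of toric varieties, and $q_{\res}^{-1}(0)$ is the quotient of this non-compact space by the free discontinuous $\H_1(\Gamma,\Z)$-action. Passing from ``ample on every finite union of components upstairs'' to ``ample on the compact quotient'' is \emph{not} a GIT descent (the lattice is neither reductive nor acting with quasi-projective quotient in the usual sense), and it does not follow formally. This is exactly the content of Theorem~3.10 in Mumford's paper on degenerating abelian varieties, which the paper invokes for this step and which you omit entirely. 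Without it the chain of implications from Proposition~\ref{prop:Lample} to ampleness of $\cL_\Gamma|_{q_{\res}^{-1}(0)}$ is broken. Relatedly, the tuning of the linearization to $\eta$ (the paper's $\cL^{N\eta}$, needed so that the symplectic reduction at level $\eta$ coincides with the GIT quotient for which the descended bundle is $\mathcal{O}(1)$) is only gestured at in your ``can be arranged'' remark; it is the same Appell--Humbert issue you correctly identify, but it must actually be carried out for the Kempf--Ness step to produce an \emph{ample} bundle rather than just \emph{a} bundle.
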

\begin{proof}

We will construct such a bundle starting from the bundle $\cL$ in Proposition \ref{prop:Lample}. By Proposition 1.4 of \cite{Nak}, if $f\colon X \to S$ is a proper map of complex manifolds, and a line bundle $L$ on $X$ is ample on a given fiber, then it is relatively ample over a neighborhood of the image. Thus after constructing $\cL_{\Gamma}$, it will be enough to check its ampleness on the central fiber.

Let $\phi \in C_1(\Gamma, \Z)$ satisfy $d_\Gamma(\phi) = N \widetilde{\eta}$ for some integer $N$. Consider the $C^1(\Gamma, \C^*)$-equivariant bundle $$\cL^N_\phi \colonequals \phi \otimes \bigg( \boxtimes_{e \in E(\Gamma)} \cL_e^{N} \bigg)$$ on $\widetilde{\Dol}^{\ed}$. It also carries an action of $C_1(\Gamma, \Z)$ which does not commute with the torus action. The image of $\redC^0(\Gamma, \C^*) \to C^1(\Gamma, \C^*) = (\C^*)^n$, however, commutes with the action of $\H_1(\Gamma\homsep\Z)$. Thus $\cL^{N}_\phi$ descends to a $\H_1(\Gamma\homsep\Z)$-equivariant bundle $\widetilde{\cL}_{\Gamma}$ on $\widetilde{\Dol}^n \sslash_{\eta} \redC^0(\Gamma\homsep \U_1)$. 

A component by component application of the dictionary between polytopes and toric varieties shows that $\widetilde{q_{\res}^{-1}(0)}$ is the union of GIT quotients $\prod_{e \in \ed} \mathbb{P}^1_{n_e} \sslash_{\cL^{N}_\phi} \redC^0(\Gamma, \C^*)$, glued along GIT quotients of subvarieties, such that $\widetilde{\cL}_{\Gamma}$ restricts to the GIT bundle $\mathcal{O}(1)$ on any component. It follows that $\widetilde{\cL}_{\Gamma}$ is ample on any finite union of components. 

We can now conclude that the descent $\cL_{\Gamma}$ of $\widetilde{\cL}_{\Gamma}$ to $q_{\res}^{-1}(0)$ is also ample, by the same argument as in \cite{Mum}, Theorem 3.10. 
\end{proof}

\section{Perverse Leray filtrations}  \label{sec:perverse}

Given a map $f: X \to B$ of algebraic varieties, the middle perverse $t$-structure on $B$ induces a filtration 
-- the perverse Leray filtration -- on the cohomology of $X$.  We recall some facts about this filtration in 
Appendix \ref{app:perverse}.   

\begin{convention}  \label{def:pervleraydolb}
When we speak without further qualification of `the' perverse Leray filtration on $\H^{\bullet}(\Dol(\Gamma),\Q)$,
we mean the one associated to the map $q_{\res}  \colon \Dol(\Gamma) \to \H_1(\Gamma\homsep \C)$ (which was defined in Proposition \ref{prop:dolbintegdef}). 
Likewise, by `the' perverse Leray filtration on $\H^{\bullet}(\versD(\Gamma),\Q)$, we mean the one associated
to $q_{\res}  \colon \versD(\Gamma) \to \C^{E(\Gamma)}$. 
\end{convention} 

There are two key takeaway results from this section.  The first is 
Theorem \ref{thm:abstisom} establishing an
isomorphism $\H^{\bullet}(\Dol(\Gamma)\homsep \Q) \cong \H^{\bullet}(\CKS(\Gamma) \homsep \Q)$
intertwining the perverse filtration with the $\CKS$ filtration.  The argument adapts the 
methods of \cite{MSV} (where the complex $\CKS(\Gamma)$ plays a similar role for
the Jacobian of a nodal curve with dual graph $\Gamma$).  The second is 
Proposition \ref{prop: deletion perverse compatible},  where we show (non-strict) compatibility
of the deletion map $a_\Dol^e$ with the perverse Leray filtration.

\subsection{Compatibility of  \eqref{global dol key diagram} with perverse filtrations}
Let us begin with the following general discussion.  Suppose $\cK$ is a complex of sheaves on a space $Y$ 
equipped with a map $f \colon Y \to \C$. Then the nearby-vanishing triangle defines a long exact sequence 
\begin{equation} \label{nearbyvanishingperv}
\to \mathbb{H}^{\bullet}(f^{-1}(0); \Phi_f \cK) \to \mathbb{H}^{\bullet}(f^{-1}(0)\homsep \cK|_{f^{-1}(0)}) \to \mathbb{H}^{\bullet}(f^{-1}(0) \homsep \Psi_f \cK) \xrightarrow{[1]} 
\end{equation}
If we view the first and last terms in the sequence as carrying the perverse filtration induced from the perverse $t$-structure on $f^{-1}(0)$, 
and the middle term as carrying the perverse filtration induced from the perverse $t$-structure on $Y$, then 
perverse $t$-exactness implies that the maps in the sequence respect filtrations.   
(This would not generally
be true if we used the perverse $t$-structure on $f^{-1}(0)$ to define the filtration on the middle term.)

We return the setting of Section \ref{subsec:ageneralsequence}.  
Let $V$ be a vector space.
Suppose that $X$ comes with a  $\U_1$-invariant map $q_{X}  \colon X \to V$, proper over its image.

We have the following related maps:

\begin{enumerate}
\item Compose the projection $X \times \Dol \to X$ with $q_{X}$ to obtain a map $\tilde{q}_1 \colon X \times \Dol \to V$. Since this map is $\U_1$-invariant, it descends to the map 
$$q_1 \colon X \star \Dol \to V.$$ 
\item Restrict $q_1$ to the closed submanifold $X \star \mathbf{n} \subset X \star \Dol$ to obtain
$$q_{1}^{\mathbf{n}} \colon X \star \mathbf{n} \to V.$$
\item Compose the projection $X \times \Dol \to \Dol$ with $q \colon \Dol \to \C^1$ to obtain a map $q_2 \colon X \times \Dol \to \C$. Since this map is $\U_1$-invariant, it descends to a 
map 
$$q_2 \colon X \star \Dol \to \C.$$
\item Since $q_{X} \times q \colon X \times \Dol \to V \times \C$ is invariant for the diagonal $\U_1$-action, it descends to a map 
$$q_{12} \colon X \star \Dol \to V \times \C.$$ 
\end{enumerate} 
The maps $q_{1}^{\mathbf{n}} $ and $q_{12}$ are proper over their images.  The maps $q_1, q_2$ are not proper. 

This allows us to define three filtrations:

\begin{enumerate}
\item $\H^{\bullet}(X\homsep \Q)$ carries the perverse Leray filtration associated to $q_{X}$. 

\item The map $q_{12} \colon X \star \Dol \to B \times \D^1$ 
is proper, and endows  $\H^{\bullet}(X \star \Dol\homsep \Q)$  with a perverse Leray filtration. We can transport this filtration to $\H^{\bullet}(q_2^{-1}(0)\homsep \Q)$ via the pullback by $i  \colon q_2^{-1}(0) \to X\star \Dol$ (the top middle vertical map in Diagram \eqref{global dol key diagram}).

\item 
The map $q_{1}^{\mathbf{n}}$ endows $\H^{\bullet}(X \star \mathbf{n}, \Q)$ with a perverse Leray filtration. 
\end{enumerate}

\begin{proposition} \label{generalsequencepreservesperv}
In the above situation, and assuming in addition that Hypothesis \ref{hyp} holds, 
the map $\H^{\bullet-2}(X \star \mathbf{n}, \Q)\{-1\} \to \H^{\bullet}(X \star \Dol \homsep \Q)$
is compatible with the above perverse Leray filtrations. 
\end{proposition}
\begin{proof}
The shift in perverse Leray filtration $\{-1\}$ arises from our Convention \ref{perverse covention} that although the perverse t-structure is symmetric
around $0$, the perverse Leray filtration for a map between spaces `starts in degree zero'.  The cohomological shift by 2 is inherited from the shift
appearing in the isomorphism $\Phi_{q_2} \Q = \Q_\mathbf{X \star n}[-2]$.  

The map $\H^{\bullet-2}(X \star \mathbf{n}, \Q)\{-1\} \to \H^{\bullet}(X \star \Dol \homsep \Q)$, or rather its
counterpart in the bottom row of Diagram \eqref{global dol key diagram}, was previously described
as being obtained from taking hypercohomology of the map $\Phi_{q_2} \Q \to \Q|_{q_2^{-1}(0)}$.  
We may instead first push forward
by $q_{12}$, or rather the restriction of $q_{12}$ to $q_2^{-1}(0)$, and then take hypercohomology of 
$(q_{12})_* \Phi_{q_2} \Q \to (q_{12})_*(\Q_{q_2^{-1}(0)})$. 
Since $q_{12}$ is proper, we can switch the order in which we take vanishing cycles and pushforwards. 
(Recall that the commutativity of vanishing cycles and pushforward along proper maps is an immediate consequence 
of proper base change \cite[1.3.6.1]{D-vanishing}.) That is, we should study 
\begin{equation} \label{tri:yetanothertriangle} \Phi_{r_2} (q_{12})_*\Q \to ((q_{12})_* \Q)_{r_2^{-1}(0)}  \end{equation}
where the projection $r_2 \colon B \times \C \to \C$ satisfies $q_2 = r_2 \circ q_{12}$. 

It remains only to note that the perverse filtrations respected by the hypercohomology of \eqref{tri:yetanothertriangle} are precisely those 
we have used to define the perverse Leray filtrations on the cohomology groups of interest. 
\end{proof}

\begin{remark}
We will not directly check that $\H^\bullet(X \star \Dol, \Q) \to \H^\bullet(X, \Q)$ respects  the perverse Leray filtrations
we have set up above.  Doing so would involve studying how such filtrations interact with the vertical maps
in the left column of Diagram \eqref{global dol key diagram}.  Because certain non-proper maps would be involved, one would
have to show by hand (e.g. by methods similar to those of Lemma \ref{lem:Dolbassumpone}) some local constancy.  
\end{remark}

\subsection{Compatibility of \eqref{eq:restrictionsfordolb} with perverse Leray filtrations} 
\label{sec:dolbeaultcompat}

In this subsection we are ultimately interested in showing that 
the maps $\kappa_{\mathbf{n}}^*$ and $\kappa^*$ from  \eqref{eq:restrictionsfordolb} preserve the perverse Leray filtrations defined by the vertical maps in Diagrams \eqref{diag:nodeinclusion} 
and \eqref{diag:compinclusion}.  To do this we use the transversality criterion of Corollary \ref{transverse perverse}.  First we give the relevant estimates on 
various images of derivatives.

\begin{lemma} \label{lem:dqdmu} 
At any point $z \in \Dol$, we have $\ker dq + \ker d \dmomabrevU  = T_z \Dol$. 
\end{lemma}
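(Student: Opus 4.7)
The plan is a short dimension count that splits into two cases according to whether $z = \mathbf{n}$ or not.

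First, suppose $z \ne \mathbf{n}$. By Proposition~\ref{pinchdeg}(5), $\pi_\Dol = q \times \dmomabrevU$ is a submersion onto $\uu \times \C$ at $z$, so the map
\[ dq_z \oplus d\dmomabrevU_z \colon T_z\Dol \longrightarrow T_{q(z)}\C \oplus T_{\dmomabrevU(z)}\uu \cong \R^3 \]
is surjective. By rank–nullity its kernel, which is precisely $\ker dq_z \cap \ker d\dmomabrevU_z$, has real dimension $4 - 3 = 1$. Since $q$ is $\C$-valued holomorphic and submersive at $z$, $\dim_\R \ker dq_z = 2$; since $\dmomabrevU$ is $\uu$-valued and submersive at $z$, $\dim_\R \ker d\dmomabrevU_z = 3$. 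Inclusion–exclusion then gives
\[ \dim_\R (\ker dq_z + \ker d\dmomabrevU_z) = 2 + 3 - 1 = 4 = \dim_\R T_z\Dol, \]
so the sum exhausts $T_z\Dol$.

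Second, suppose $z = \mathbf{n}$. In the local coordinates of Proposition~\ref{pinchdeg}(6) we have $q = xy$ at $\mathbf{n}$, so $dq_{\mathbf{n}} = y\, dx + x\, dy$ vanishes at $(x,y) = (0,0)$. Thus $\ker dq_{\mathbf{n}} = T_{\mathbf{n}}\Dol$, and in particular the sum $\ker dq + \ker d\dmomabrevU$ is all of $T_{\mathbf{n}}\Dol$. (One can also observe that $\mathbf{n}$ is the unique $\uu$-fixed point, hence a critical point of the moment map, so $d\dmomabrevU_{\mathbf{n}} = 0$ as well.) This finishes the proof; the only mildly delicate input is the submersion statement of Proposition~\ref{pinchdeg}, which is already in hand.
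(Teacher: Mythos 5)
Your proof is correct and follows essentially the same argument as the paper's: split into the fixed point $\mathbf{n}$ and the rest, handle $\mathbf{n}$ by observing $\ker dq_{\mathbf{n}}$ is everything, and elsewhere use that $\pi_\Dol$ is a submersion together with the inclusion–exclusion count $2+3-1=4$. You are slightly more explicit than the paper at $\mathbf{n}$ (writing out $q = xy$ and $dq = y\,dx + x\,dy$ in the local coordinates), but the structure and the key dimension count are identical.
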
 
\begin{proof}
Recall that the basic map $q \colon \Dol \to \D$ is a submersion away from a single point, namely 
$(q \times \dmomabrevU)^{-1}(0 \times 1)$.
Over $(q \times \dmomabrevU)^{-1}(0 \times 1)$, this holds since $\ker dq$ is the entire tangent space. 
At any other point $q \times \dmomabrevU$ is a submersion, and
$\dim (\ker dq + \ker d \dmomabrevU ) =  \dim \ker d q +  \dim  \ker d \dmomabrevU - \dim \ker d(q \times \dmomabrevU) = 
2 + 3 - 1 = 4$. 
\end{proof} 

Consider now $\Dol^{E(\Gamma)}$.

\begin{definition}
For $z \in \Dol^{E(\Gamma)}$, we write
$R(z) \subset E(\Gamma)$ for the subset of edges $e$ with the property that 
$(q_e \times \mu_{\Dol, e})(z) = 0 \times 1$. 
\end{definition} 

\begin{lemma} \label{sums} 
The subset
$dq(T_z  \Dol^{E(\Gamma)}) \subset T^*_{q(z)} \D^{E(\Gamma)}$ 
is $\C^{E(\Gamma \setminus R(z))} \subset   \C^{E(\Gamma)}$. 
\end{lemma} 
\begin{proof}
We can calculate the images of the differential by taking
direct sums.
\end{proof}

\begin{definition}
A subset $R \subset E(\Gamma)$ is said to be independent if its elements are linearly independent in $\H^1(\Gamma\homsep \C)$. Equivalently, $h_1(\Gamma \setminus R) = h_1(\Gamma\homsep \C) - |R|$.
\end{definition}

Recall the subset $\mu^{-1}_{\Gamma}(\eta) \subset \Dol^{E(\Gamma)}$, which, for generic $\eta$, is the space whose free 
$\overline{\U_1^{V(\Gamma)}}$ quotient is $\versD(\Gamma)$. 

\begin{lemma} \label{lem:simpleindependent}
For $\eta$ generic (see Definition \ref{defsimplemomentvalue}), $R(z)$ is independent for all $z \in \mu^{-1}_\Gamma(\eta)$.
\end{lemma}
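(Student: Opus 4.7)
Here is my plan.

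The strategy is to unwind what $R(z)$ actually is in terms of the graph-level moment map, and then read off independence directly from the $\bZ$-genericity of $\eta$.

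First I would observe that for the $(\U_1, \U_1 \times \C)$-structure on $\Dol$, the moment map is precisely $\pi_{\Dol} = \mu_{\Dol}^{\U_1} \times q$, whose identity value in $\U_1 \times \C$ is $(1, 0)$. Under the product moment map $\mu_{\Dol}^{\ed} \colon \Dol^{\ed} \to C_1(\Gamma, \U_1 \times \C)$, the locus where $R(z)$ records $e$ is exactly the locus where the $e$-coordinate of $\mu_{\Dol}^{\ed}(z)$ equals $(1,0)$. Consequently, if we denote $a(z) \colonequals \mu_{\Dol}^{\ed}(z) \in C_1(\Gamma, \U_1 \times \C)$, then $R(z) = S(a(z))$ in the notation of Definition \ref{defsimplemomentvalue}.

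Next, I would use that $z \in \mu_\Gamma^{-1}(\eta)$ means $d_\Gamma(a(z)) = \eta$, i.e. $a(z) \in \H_1(\Gamma, \U_1 \times \C)_\eta$. Invoking the $\Dol$-genericity hypothesis on $\eta$ (Definition \ref{defsimplemomentvalue}) then tells us that $\Gamma \setminus S(a(z)) = \Gamma \setminus R(z)$ is a connected subgraph of $\Gamma$.

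The final step will be to translate connectivity of $\Gamma \setminus R(z)$ into independence of $R(z)$ in $\H^1(\Gamma, \C)$. Since $\Gamma$ and $\Gamma \setminus R(z)$ share the same vertex set and both are connected, the Euler characteristic formula gives
\[ h_1(\Gamma \setminus R(z)) = |E(\Gamma)| - |R(z)| - |V(\Gamma)| + 1 = h_1(\Gamma) - |R(z)|, \]
which is exactly the condition defining independence of $R(z)$. There is no real obstacle in any step; the content of the lemma is essentially the observation that the graph $\Gamma \setminus R(z)$ cut out by the vanishing coordinates of the moment map coincides with the graph appearing in the definition of genericity, and all that remains is the elementary counting argument.
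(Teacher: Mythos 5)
Your argument matches the paper's proof essentially step for step: you identify $R(z)$ with $S(a(z))$ in the notation of Definition \ref{defsimplemomentvalue}, invoke genericity of $\eta$ to get connectedness of $\Gamma \setminus R(z)$, and then use the Euler-characteristic count $h_1(\Gamma \setminus R(z)) = h_1(\Gamma) - |R(z)|$ (both graphs being connected on the same vertex set) to conclude independence. The paper phrases the last step via $\chi$ rather than directly via $h_1$, but the arithmetic is identical.
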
 
\begin{proof}
By definition, $\eta$ is generic if for all $z \in \mu^{-1}_{\Gamma}(\eta)$, the graph $\Gamma \setminus R(z)$ is connected. Thus $\chi(\Gamma \setminus R(z)) = 1 - h_1(\Gamma \setminus R(z))$. 
On the other hand, we have $$\chi(\Gamma \setminus R(z)) =  \chi(\Gamma) + |R(z)|  = 1 - h_1(\Gamma) + |R(z)|.$$ The lemma follows.
\end{proof}

This provides our transversality criterion: 

\begin{corollary} \label{transversality criterion}
If a submanifold $M \subset \C^{E(\Gamma)}$ is transverse to all $\C^{E(\Gamma \setminus R)}$ for independent subsets $R \subset \Gamma$, 
then $M$ is transverse to the map $q_{\operatorname{res}}: \versD(\Gamma) \to  \C^{E(\Gamma \setminus R)} $. 

In particular, this holds for $M = \H_1(\Gamma\homsep \C)$, or any submanifold containing it.  
\end{corollary} 
\begin{proof}
Regarding the general criterion, we 
may check instead for the map $\mu^{-1}_{\Gamma}(\eta) \to  \C^{E(\Gamma \setminus R)}$, for which the result
follows from Lemmas \ref{sums} and \ref{lem:simpleindependent}. 

Now for $M = \H_1(\Gamma\homsep \C)$, we calculate: 
$$\dim \H_1(\Gamma\homsep \C) + \dim \C^{E(\Gamma \setminus R)} - \dim \C^{E(\Gamma)}  = h_1(\Gamma\homsep \C) - |R|.$$ 
Thus it is enough to show that $ \dim ( H_1(\Gamma\homsep \C) \cap \C^{E(\Gamma \setminus R)}) \leq h_1(\Gamma\homsep \C) - |R|$. But this is immediate from the condition that $R$ be independent. 
\end{proof} 

\begin{theorem} \label{thm:versaltosymplectpreservesperverse}
The map $f^*: \H^*(\versD(\Gamma) \homsep \C) \to  \H^*(\Dol(\Gamma) \homsep \C)$
preserves the perverse Leray filtrations associated to the following Cartesian diagram:
\begin{equation} \label{dol vers gamma mod e} 
	\begin{tikzcd} 
		\Dol(\Gamma) \arrow[r, "f"] \arrow[d] & \versD(\Gamma) \arrow[d] \\
		\H_1(\Gamma \homsep \C) \arrow[r] & \C^{E(\Gamma )}
	\end{tikzcd}
\end{equation}
\end{theorem} 
\begin{proof}
Immediate from Corollary \ref{transversality criterion} and Corollary \ref{transverse perverse}. 
\end{proof} 

We now turn to the situation of interest.  
Recall the maps $\kappa$  and $\kappa_{\mathbf{n}}$  which are defined in \eqref{pinchingparttwo} and appear in
Equation  \eqref{pinchingparttwo}. 

\begin{theorem} \label{prop:pinchpervpres}
There is a Cartesian diagram 
\begin{equation} \label{diag:compinclusion}
	\begin{tikzcd}
		\Dol(\Gamma) \arrow[d] \arrow[r, "\kappa"] & \arrow[d, "q_{\operatorname{res}} \times q"]  \Dol(\Gamma / e) \star \Dol \\
		\H_1(\Gamma\homsep \C) \arrow[r] & \H_1(\Gamma / e\homsep \C) \times \C
	\end{tikzcd}
\end{equation}
Here the bottom row is the pushforward along $\Gamma \to \Gamma / e$ on the first factor, and the projection $\gamma \to \langle \gamma, e \rangle$ on the second factor. 

Moreover, the pullback on cohomology $\kappa^*$ preserves the perverse filtrations given by this diagram.  
\end{theorem}
\begin{proof}
We start by constructing the diagram.  By Lemma \ref{lem:uncontract}, we have a Cartesian diagram
\begin{equation} \label{diag:startingdiag}
	\begin{tikzcd}
		\Dol(\Gamma) \arrow[d] \arrow[r] & \arrow[d, "\mu_{\operatorname{res}} \times \mu"]  (\Dol(\Gamma / e) \times \Dol) / \U_1 \\
		\H_1(\Gamma\homsep \C \times \U_1) \arrow[r] & \H_1(\Gamma / e\homsep \C \times \U_1) \times \C \times \U_1.
	\end{tikzcd}
\end{equation}
We have $\H_1(\Gamma, \C \times \U_1) = \H_1(\Gamma, \C) \times \H_1(\Gamma, \U_1)$, and the bottom map is the product of the maps of the same description with $\C$ and $\U_1$ coefficients: 
\begin{equation} \label{eq:bottomarrow}	\H_1(\Gamma\homsep \C)  \to \H_1(\Gamma / e \homsep \C) \times \C \ \ \text{ and } \ \  \H_1(\Gamma\homsep \U_1) \to \H_1(\Gamma / e\homsep \U_1) \times \U_1.\end{equation}
Let $\im(\H_1(\Gamma\homsep \U_1)) \subset (\H_1(\Gamma / e\homsep \U_1) \times \U_1)$ be the image of the right-hand map. We can factor Diagram \eqref{diag:startingdiag} as a pair of Cartesian squares
\begin{equation} \label{diag:middlediag}
	\begin{tikzcd}
		\Dol(\Gamma) \arrow[d] \arrow[r, "\kappa"] & \Dol(\Gamma / e) \star \Dol \arrow[r] \arrow[d] & \arrow[d, "\mu_{\operatorname{res}} \times \mu"]  (\Dol(\Gamma / e) \times \Dol) / \U_1 \\
		\H_1(\Gamma\homsep \C) \times \H_1(\Gamma\homsep \U_1) \arrow[r] &(\H_1(\Gamma / e \homsep \C) \times \C) \times \im(\H_1(\Gamma\homsep \U_1)) \arrow[r] &  \H_1(\Gamma / e \homsep \C \times \U_1) \times \C \times \U_1) .
	\end{tikzcd}
\end{equation}

Now, Diagram \eqref{diag:compinclusion} is obtained by taking the left-hand square of Diagram \eqref{diag:middlediag} and projecting the bottom row of to its complex part.

We can enlarge Diagram \ref{diag:compinclusion} to 

\begin{equation} 
	\begin{tikzcd} \label{diag:enlarged}
		\Dol(\Gamma) \arrow[d] \arrow[r, "\kappa"] & 
		\Dol(\Gamma / e) \star \Dol \arrow[r] \arrow[d] & \versD(\Gamma / e) \star \Dol = \versD(\Gamma) \arrow[d] \\
		\H_1(\Gamma\homsep \C) \arrow[r] & \H_1(\Gamma / e\homsep \C) \times \C \arrow[r] & \C^{E(\Gamma / e)} \times \C = \C^{E(\Gamma)}
	\end{tikzcd}
\end{equation}
The left square is our original Diagram. The right-square is obtained by taking the $\star$-product of the Cartesian square \eqref{dol vers gamma mod e} (with $\Gamma$ replaced by $\Gamma / e$)
with $\Dol$. 

To show that the left square of Diagram \eqref{diag:enlarged} respects perverse filtrations, it suffices to show this for the right square and the total rectangle. Both follow by Corollary \ref{transversality criterion} and Corollary \ref{transverse perverse}. 
\end{proof} 

\begin{theorem} \label{deletepreservesperverse} 
There is a Cartesian diagram 
\begin{equation} \label{diag:nodeinclusion}
	\begin{tikzcd}
		\Dol(\Gamma \setminus e) \arrow[d] \arrow[r, "\kappa_{\mathbf{n}}"] & \arrow[d, "q_{\operatorname{res}}"]  \Dol(\Gamma / e) \star \mathbf{n} 
		\\
		\H_1(\Gamma \setminus e\homsep \C) \arrow[r] & \H_1(\Gamma / e\homsep \C)
	\end{tikzcd}
\end{equation}
Here the bottom arrow is the pushforward along the composition $\Gamma \setminus e \to \Gamma \to \Gamma / e$. 

Moreover, the pullback on cohomology $\kappa_{\mathbf{n}}^*$ preserves the perverse filtrations given by this diagram.  
\end{theorem}
\begin{proof}
We start by constructing the diagram.  By Lemma \ref{lem:deletionfromcontraction}, we have a Cartesian diagram
\begin{equation} \label{diag:startingdiag1}
	\begin{tikzcd}
		\Dol(\Gamma \setminus e) \arrow[d] \arrow[r] &
		\arrow[d, "\mu_{\operatorname{res}} \times \mu"]  (\Dol(\Gamma / e) \times \mathbf{n}) / \U_1 \\
		\H_1(\Gamma \setminus e \homsep \C \times \U_1) \arrow[r] &
		\H_1(\Gamma / e\homsep \C \times \U_1) \times \C \times \U_1.
	\end{tikzcd}
\end{equation}
As in the proof of Theorem \ref{prop:pinchpervpres}, this diagram factors as a pair of Cartesian squares
\begin{equation} \label{diag:middlediag2}
	\hspace{-1.5cm}
	\begin{tikzcd}
		\Dol(\Gamma \setminus e) \arrow[d] \arrow[r, "\kappa_{\mathbf{n}}"] & \Dol(\Gamma / e) \star \mathbf{n} \arrow[r] \arrow[d] & 
		\arrow[d, "\mu_{\operatorname{res}} \times \mu"]  (\Dol(\Gamma / e) \times \mathbf{n}) / \U_1 \\
		\H_1(\Gamma \setminus e \homsep \C) \times \H_1(\Gamma \setminus e \homsep \U_1) \arrow[r] &
		\H_1(\Gamma / e \homsep \C) \times \C \times \im(\H_1(\Gamma \setminus e \homsep \U_1) \arrow[r] &
		\H_1(\Gamma / e \homsep \C \times \U_1) \times \C \times \U_1).
	\end{tikzcd}
\end{equation}

Diagram \eqref{diag:nodeinclusion} is obtained by taking the left-hand square of Diagram \eqref{diag:middlediag2} and projecting the bottom row of to its complex part. 

We can enlarge Diagram \eqref{diag:nodeinclusion} as follows.
\begin{equation}
	\begin{tikzcd} \label{diag:enlarged2}
		\Dol(\Gamma \setminus e) \arrow[d] \arrow[r, "\kappa_{\mathbf{n}}"] & \Dol(\Gamma / e) \star \mathbf{n} \arrow[r] \arrow[d] & \versD(\Gamma / e) \star \mathbf{n} = \versD(\Gamma \setminus e) \arrow[d] \\
		\H_1(\Gamma \setminus e\homsep \C) \arrow[r] & \H_1(\Gamma / e\homsep \C) \arrow[r] & \C^{E(\Gamma / e)} = \C^{E(\Gamma \backslash e)}
	\end{tikzcd}
\end{equation}
The left hand square is our original \eqref{diag:nodeinclusion}. The right-hand square is obtained by taking $\star \mathbf{n}$ with \eqref{dol vers gamma mod e} (with $\Gamma$ replaced by $\Gamma / e$).

To show that the left square of Diagram \eqref{diag:enlarged2} respects perverse filtrations, it suffices to show this for the right square and the total rectangle.  
Both follow by Corollary \ref{transversality criterion} and Corollary \ref{transverse perverse}.
\end{proof}

\subsection{The perverse Leray filtration and the $\CKS$ filtration}

\begin{theorem} \label{thm:abstisom}
There  is an isomorphism $\H^{\bullet}(\Dol(\Gamma)\homsep \Q) \cong \H^{\bullet}(\CKS\homsep \Q)$ 
identifying the perverse filtration with the $\CKS$ filtration.
\end{theorem}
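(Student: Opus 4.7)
The plan is to reduce to the central fiber and then directly invoke the calculation of \cite{MSV}.

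First, by Corollary \ref{centralandversalrestrictionisoms} the inclusion $q_{\res}^{-1}(0) \hookrightarrow \Dol(\Gamma)$ induces an isomorphism in cohomology, so it suffices to compute $\H^{\bullet}(q_{\res}^{-1}(0)\homsep \Q)$ together with its filtration. The perverse filtration on $\H^{\bullet}(\Dol(\Gamma)\homsep \Q)$ associated to $q_{\res}$ can be computed on the central fiber: since $q_{\res}$ is proper on a neighborhood of $q_{\res}^{-1}(0)$ by Proposition \ref{prop:dolbintegdef}, and in fact projective there by Theorem \ref{thm:projective}, $Rq_{\res *}\Q$ is a direct sum of shifted perverse sheaves by the decomposition theorem, and the perverse filtration on $\H^{\bullet}(\Dol(\Gamma)\homsep\Q)$ is read off from the stalks at $0$ of its perverse cohomology sheaves.

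Next, by Section \ref{sec:system}, the central fiber $q_{\res}^{-1}(0)$ is a union of smooth toric varieties $\mathfrak{X}_{\mathbf{x}}$ indexed by the chambers $\Delta_{\mathbf{x}}$ of the toroidal unimodular hyperplane arrangement $A^{\mathrm{tor}}$, glued along toric subvarieties indexed by the lower-dimensional flats. This is precisely the combinatorial structure that appears in the description of the compactified Jacobian of a nodal curve with dual graph $\Gamma$ (cf. Remark \ref{rem:comparemsv}). I would verify that the stratification of $q_{\res}^{-1}(0)$ by these toric strata, together with its (relative) perverse filtration with respect to $q_{\res}$, satisfies the hypotheses of the corresponding setup in \cite{MSV}. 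The calculation in \cite{MSV} then produces a spectral sequence (Mayer--Vietoris for the cover by toric components, each component contributing its torus-equivariant cohomology $\bigwedge^{\bullet}\mathbb{H}(\Gamma \setminus J)$ for the appropriate subset $J$) whose $E_1$ page is precisely $\CKS^{\bullet}(\Gamma)$ with the differential $d_{\CKS}$, and whose associated filtration on the abutment is the perverse filtration.

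The bulk of the work, and the main obstacle, is to check that the combinatorics of $A^{\mathrm{tor}}$ match those of \cite{MSV} term by term, and that the spectral sequence there degenerates at $E_2$ so that $\H^{\bullet}(\CKS(\Gamma))$ computes $\H^{\bullet}(q_{\res}^{-1}(0)\homsep\Q)$ as a filtered vector space. Degeneration follows, as in \cite{MSV}, from a weight argument on each toric stratum (pure of Tate type) combined with the decomposition theorem. Once this is established, the filtered isomorphism $\H^{\bullet}(\CKS(\Gamma)) \cong \H^{\bullet}(q_{\res}^{-1}(0)\homsep \Q) \cong \H^{\bullet}(\Dol(\Gamma)\homsep \Q)$ is automatic, identifying the $\CKS$-grading with the perverse filtration.
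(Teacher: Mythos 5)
Your opening moves are the right ones and match the paper: pass to the versal family, restrict attention to a neighborhood of $q_{\res}^{-1}(0)$ where $q_{\res}$ is projective (Theorem \ref{thm:projective}), invoke the decomposition theorem, and use the retraction to the central fiber (Corollary \ref{centralandversalrestrictionisoms}). After that the proposal goes off the rails in two places.

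The main gap is the claimed Mayer--Vietoris spectral sequence. The toric components $\mathfrak{X}_{\mathbf{x}}$ of the central fiber are \emph{compact} toric varieties of dimension $h_1(\Gamma)$, and their intersections are compact toric subvarieties; none of these have cohomology equal to an exterior algebra $\bigwedge^{\bullet}\mathbb{H}(\Gamma\setminus J)$ (the cohomology of a product of two tori). So the Mayer--Vietoris $E_1$ page for the cover by the $\mathfrak{X}_{\mathbf{x}}$ is not $\CKS^\bullet(\Gamma)$, and the proposed identification fails at the first step. The $\CKS$ complex does not arise from a cover of the central fiber at all: in \cite{MSV} it arises as the Cattani--Kaplan--Schmid formula for the \emph{stalk at the origin of the intermediate extension} of a local system across the normal crossing divisor of coordinate hyperplanes in the base $\C^{\ed}$. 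To use that calculation, you need two ingredients your proposal omits entirely: you must first identify the local systems $R^j q_{\res*}^{\circ}\Q$ on the open stratum with the $\bigwedge^j R^1\pi_*\Q|_{B_{\mathrm{reg}}}$ of \cite{MSV} (this is exactly what Proposition \ref{prop:monodromycomp} is for), and you must then invoke \cite[Lem.~3.6]{MSV} for the stalk of $IC$ at $0$.

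The second gap is that even after all this, one has only exhibited a single summand $\bigoplus_j IC(R^j q_{\res *}^{\circ}\Q)$ of $q_{\res *}\Q$, giving an inclusion $\H^{\bullet}(\CKS(\Gamma),\Q)\hookrightarrow \H^{\bullet}(\Dol(\Gamma),\Q)$ compatible with filtrations. The decomposition theorem allows in principle further summands supported on the coordinate hyperplanes, which would contribute additional cohomology. Your proposal never rules these out. The paper does this by comparing weight polynomials: the class $[q_{\res}^{-1}(0)]$ was computed in Theorem \ref{thm:dolcfclass} (number of spanning trees times $\A^{h_1(\Gamma)}$), and this is shown to agree with the weight polynomial of $\H^{\bullet}(\CKS(\Gamma))$ from \cite[Cor.~3.8]{MSV}, forcing the inclusion to be an equality (via the argument of \cite[Prop.~15]{MS}). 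This support-theoretic step is the substantive half of the proof and is missing from your plan.
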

\begin{proof}
By Corollary \ref{centralandversalrestrictionisoms} and Theorem \ref{thm:versaltosymplectpreservesperverse}, it suffices to study 
the perverse filtration on the central fiber induced
by the map $q_{\res} : \versD(\Gamma, \eta) \to \C^{\ed}$.  

We have shown in Theorem \ref{thm:projective} that $q_{\res}$ is projective 
in a neighborhood of the central fiber; we henceforth restrict to this neighborhood.  Recalling that 
$\versD(\Gamma, \eta)$ is nonsingular, we may therefore
apply the decomposition theorem of \cite{BBD}  to conclude 
$q_{\res*} \Q$ is a direct sum of semisimple perverse sheaves.  

Let $q_{\res}^\circ$ be the restriction of $q_{\res}$ to the complement of
the coordinate hyperplanes.  One summand of $q_{\res *}\Q$ is therefore 
$\bigoplus_j IC (R^j q_{\res*}^\circ \Q)$.  Of these, the summands with $j \le k$ contribute
to the $k$'th step of the perverse Leray filtration. 

Comparing Proposition \ref{prop:monodromycomp} to \cite[Eq. 3.7]{MSV}, we see that the local systems
$R^j  q^\circ_{\res*} \Q$ considered here are isomorphic to those 
called $\bigwedge^j R^1 \pi_* \Q|_{B_{reg}}$ in \cite{MSV}.  (See Remark \ref{rem: microlocal}
for some further discussion about the similarities and differences between $q_{\res}$ and the relative compactified Jacobian
for the versal family of a nodal curve with dual graph $\Gamma$.) 	

In general, there is a formula \cite{CKS} for the stalks of the intermediate extension 
of a local system across a normal crossing divisor. 	
\cite{MSV} explicitly computed in the case of the local system at hand; the result \cite[Lem. 3.6]{MSV} 
was that 
$IC(R^j q^\circ_{\res*} \Q)_0$ is computed by the complex we have here called 
$\CKS^{\bullet}$.  

We have seen there is a summand
$\H^{\bullet}(\CKS(\Gamma)\homsep \Q) \subset \H^{\bullet}(\Dol(\Gamma)\homsep \Q)$, such that the perverse
filtration on the later restricts to the $\CKS$ filtration on the former.  It remains
to show this inclusion is an equality.  
By the argument of  \cite[Prop. 15]{MS}, 
it suffices to check the equality of
weight polynomials.  The calculation for $\CKS(\Gamma)$ is carried out in 
\cite[Cor. 3.8]{MSV}, and for the central fiber of $\Dol(\Gamma)$ in Theorem \ref{thm:dolcfclass} above. 
The results agree: each is $t^{2 h_1(\Gamma)}$ times the number of spanning trees of $\Gamma$.
\end{proof}

\begin{remark}
We will later show in Theorem \ref{thm:basicnaht} that $\Bet(\Gamma)$ retracts to $\Dol(\Gamma)$,
hence in particular has the same cohomology.  We also know that $\H^\bullet(\Bet(\Gamma),\Q) \cong \H^\bullet(\CKS(\Gamma),\Q)$ from Theorem \ref{thm:quasiisom}.  It follows that $ \H^\bullet(\CKS(\Gamma),\Q)$ and $\H^{\bullet}(\Dol(\Gamma)\homsep \Q)$ 
have the same total dimension, hence that the inclusion 
$\H^{\bullet}(\CKS(\Gamma)\homsep \Q) \subset \H^{\bullet}(\Dol(\Gamma)\homsep \Q)$ must be an isomorphism. 
This is an independent argument from the weight polynomial one given above. 
\end{remark}

We record the following special case. 
\begin{corollary} \label{cor:pervbasiccomp}
The perverse Leray filtration on $\H^{\bullet}(\Dol, \Q)$ with respect to the map $q: \Dol \to \D$ is the filtration by cohomological degree. In other words, $$P_0 \H^{\bullet}(\Dol,\Q) = \H^0(\Dol, \Q),$$ 
$$P_1 \H^{\bullet}(\Dol,\Q) = \H^{\leq 1}(\Dol, \Q),$$ 
$$P_2 \H^{\bullet}(\Dol,\Q) = \H^{\leq 2}(\Dol, \Q).$$ 
\end{corollary}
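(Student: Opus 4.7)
The plan is to reduce the statement to a transparent combinatorial calculation in the model $\CKS(\boing)$. By Theorem \ref{thm:abstisom} applied with $\Gamma = \boing$, there is an isomorphism $\H^{\bullet}(\Dol, \Q) = \H^{\bullet}(\Dol(\boing), \Q) \cong \H^{\bullet}(\CKS(\boing), \Q)$ identifying the perverse Leray filtration associated to $q \colon \Dol \to \D^1$ with the $\CKS$ filtration; by Corollary \ref{cor:cksfilfromcksgrad} the latter coincides with the $\CKS$-deletion filtration of Definition \ref{def:cksdeletion}. It therefore suffices to describe this deletion filtration on $\H^{\bullet}(\CKS(\boing), \Q)$.

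The combinatorics of $\boing$ render this immediate. The unique edge of $\boing$ is a loop, hence a self-edge, so in the category $\mathrm{Graph}^{\circ}$ of Definition \ref{def:delfiltration} the only morphism into $\boing$ is the identity: the only subgraph $\Gamma' \subset \boing$ whose complement contains no self-edge is $\Gamma' = \boing$ itself, contributing $|\boing \setminus \Gamma'| = 0$. Applying the definition, the defining span $D_{i-k}\H^i(\CKS(\boing))$ is trivial for $k > 0$ and equals $\H^i(\CKS(\boing))$ for $k = 0$. This matches the normalization remark immediately following Definition \ref{def:delfiltration}, applicable since $\boing$ consists solely of loops and bridges. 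Either way, $\delfilt_k \H^i(\CKS(\boing)) = 0$ for $k < i$ and $\delfilt_k \H^i(\CKS(\boing)) = \H^i(\CKS(\boing))$ for $k \geq i$.

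Transporting this description back via Theorem \ref{thm:abstisom} yields $P_k \H^i(\Dol, \Q) = 0$ for $k < i$ and $P_k \H^i(\Dol, \Q) = \H^i(\Dol, \Q)$ for $k \geq i$; summing across cohomological degrees gives $P_k \H^{\bullet}(\Dol, \Q) = \H^{\leq k}(\Dol, \Q)$, which at $k = 0, 1, 2$ is exactly the claim. The only substantive ingredient is Theorem \ref{thm:abstisom}, itself proved in the preceding subsection by applying the decomposition theorem to a suitable compactification of $q_{\res}$ together with the stalk computation imported from \cite{MSV}; once this is in hand the remainder of the argument is pure bookkeeping, and the potential subtlety (the correct direction of indexing of the deletion filtration) is pinned down by the normalization cited above. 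As an independent sanity check one may verify directly that $\H^i(\CKS(\boing)) \cong \Q$ is concentrated in $\CKS$-degree $2i$, consistent with $P_k$ first capturing $\H^i$ precisely at $k = i$.
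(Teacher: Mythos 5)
Your proof is correct and takes essentially the same route as the paper: both reduce the question, via Theorem \ref{thm:abstisom} applied to $\Gamma = \boing$, to a calculation on the combinatorial model $\CKS(\boing)$. The one small divergence is that the paper first observes that $\dim \H^i(\Dol, \Q) \leq 1$ for every $i$, so that determining the associated graded already pins down the full filtration, and then appeals to Theorem \ref{thm:abstisom} only at the level of associated gradeds; whereas you invoke Theorem \ref{thm:abstisom} as an identification of filtered vector spaces and read the full $\CKS$-deletion filtration off the normalization clause following Definition \ref{def:delfiltration} (that $D_{i-1} = 0 \subset D_i = A^i$ when $\Gamma$ has only loops and bridges). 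Your route is slightly more direct, and it also has the pleasant side effect of avoiding the explicit formula $\delfilt_k \H^i(\CKS) = \oplus_{m \leq 2i-k} \H^i(\CKS_m)$ recorded near Corollary \ref{cor:cksfilfromcksgrad}, whose indexing looks suspicious (applied to $\boing$ it would place $\H^i$, which sits in $\CKS$-degree $2i$, in $\delfilt_k$ only for $k \leq 0$, which cannot be right); the normalization remark you used, and your sanity check via $\CKS$-degree $2i$, are both consistent with the formula reading $m \leq 2k$ instead. The paper's extra step of invoking rank one in each degree is perhaps there as a safeguard in case one only trusts Theorem \ref{thm:abstisom} at the associated-graded level, but as you observe, the theorem as stated does claim to identify the filtrations themselves, so nothing is missing from your argument.
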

\begin{proof}
One can of course verify this by direct geometric arguments. Here we simply note that since the cohomology of $\Dol$ has rank at most one in any given degree, it is enough to determine the associated graded of the perverse Leray filtration. This in turn is computed by the case $\Gamma = \boing$ of Theorem \ref{thm:abstisom}. 
\end{proof}

\begin{remark}
Comparing the formula for weight polynomials in \cite[Cor. 3.8]{MSV} with the 
formula in Remark \ref{rem:dolafclass}, we see that in fact every summand of 
$(q_{\res} : \versD(\Gamma, \eta) \to \C^{\ed})_* \Q$
has full 
support; in particular,  
${}^p R^j q_{res*} \Q = IC(R^j q^{\circ}_{\res*} \Q)$.

By contrast, $(q: \Dol(\Gamma, \eta) \to H_1(\Gamma, \C))_* \Q$ can have summands supported in positive codimension. 
\end{remark}

\begin{remark} \label{rem:dolmeaning}
By combining Theorem \ref{thm:abstisom} with the results of \cite{MSV}, we
establish the isomorphism $H^*(\Dol(\Gamma_\Sigma)) \cong D(\Sigma)$ asserted in Remark \ref{rem: microlocal}: both sides are computed by
$\CKS^\bullet(\Gamma)$.
\end{remark}

\subsection{Compatibility of $a^{\Dol}_e$ with perverse filtration} 

Following convention \eqref{def:pervleraydolb}, each term of the Dolbeault deletion-contraction sequence (dashed sequence in Diagram \eqref{pinchingparttwo}) carries a perverse Leray filtration.
We will translate the filtration on the left-hand term by one step - this is analogous to the Tate twist occuring in the $\Bet$-DCS. We denote the resulting filtered vector space by $\H^{\bullet-2}(\YDol \star \bS_\Dol\homsep \Q)\{-1\}$, so that $P_k \H^{\bullet-2}(\YDol \star \bS_\Dol\homsep \Q)\{-1\}  = P_{k-1}\H^{\bullet-2}(\YDol \star \bS_\Dol\homsep \Q)$. That is, we consider: 
\begin{equation} \label{DolbeaultLES}
\begin{tikzcd}
	\arrow[r, dashed, "c^{\Dol}_e"] &  \H^{\bullet-2}(\Dol(\Gamma \setminus e)\homsep \Q) \{-1\} \arrow[r, dashed, "a^{\Dol}_e"]
	& \H^{\bullet}(\Dol(\Gamma)\homsep \Q) \arrow[r, dashed, "b^{\Dol}_e"] 
	& \H^{\bullet}(\Dol(\Gamma / e)\homsep \Q) \arrow[r, dashed, "c^{\Dol}_e"] & \ \\
\end{tikzcd}
\end{equation}

\begin{proposition} \label{prop: deletion perverse compatible}
The map $a^{\Dol}_e$ of \eqref{DolbeaultLES} is compatible with the perverse Leray filtration.  
\end{proposition} 
\begin{proof}
By combining Proposition \ref{generalsequencepreservesperv}, Theorem \ref{prop:pinchpervpres}, and Theorem \ref{deletepreservesperverse}, 
we learn that  $a^{\Dol}_e$ of Equation \eqref{DolbeaultLES} (not necessarily strictly)
preserves the perverse filtration. $\square$
\end{proof} 

\begin{remark}
We will eventually show strict compatibility of all maps in \eqref{DolbeaultLES} with perverse Leray filtrations, but 
only by {\em first} proving that P=W compatibly with an intertwining of the deletion-contraction sequences 
(which in turn will require having first proven Proposition \ref{prop: deletion perverse compatible}).  
Recall by contrast that strict compatibility of \eqref{eq:graphbettiLES} with weight filtrations
followed from from general considerations.
\end{remark}

\begin{corollary} \label{cor:delbounded}
The Dolbeault deletion filtration is bounded by the perverse Leray filtration.
\end{corollary}
\begin{proof}
Using Proposition \ref{prop: deletion perverse compatible}, we find
$\delfilt_k \H^{\bullet}(\Dol(\Gamma)\homsep \Q) \subset P_{k}\H^{\bullet}(\Dol(\Gamma)\homsep \Q)$,
by the same argument as we used in Theorem \ref{deletionisweight} to establish \eqref{eq: del in weight}.   
\end{proof}

\section{Comparisons} \label{sec:hodge}

\subsection{``Hodge'' correspondence} \label{subsec:basichodge} \label{subsec:hodgecorrespondence}

We will construct a homotopy equivalence $\Dol \to \Bet$, which we use to induce a homotopy equivalence  $\Dol(\Gamma) \to \Bet(\Gamma)$. 

\begin{lemma} \label{lem:tateNAHT}
Let $\momenthomomorphism \colon \uutimesC \cong \C^* \times \R$ be the group isomorphism $(z,  e^{2\pi i \theta}) \to ( e^{2\pi i \theta + Im(z)}, Re(z)   )$. 
There is a (non-unique!) $\uu$-equivariant $\mathcal{C}^{\infty}$ embedding $\basichodge \colon \Dol \to \Bet$ such that the following diagram commutes.
\[ \begin{CD}
	\Dol @> \basichodge >> \Bet \\
	@ V \mu^{\uu}_{\Dol} \times \dmomD  VV   @ VV \mu_{\Bet}^{\C^*} \times \mu_{\Bet}^{\R} V\\
	\uutimesD @> \momenthomomorphism >> \C^* \times \R \\
\end{CD} \]
\end{lemma}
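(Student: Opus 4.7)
The plan is to build $\basichodge$ by first fixing a local equivariant model near the fixed point, then using the fact that away from the fixed point both sides are principal $\U_1$-bundles of the same Chern class, and finally gluing by an equivariant partition of unity.

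First I would note that $\kappa$ is a diffeomorphism from $\U_1\times \C$ to $\C^*\times\R$ sending $(1,0)$ to $(1,0)$, so it restricts to a diffeomorphism of $\U_1\times\D$ onto an open subset $V \subset \C^*\times\R$ containing $(1,0)$. By Proposition \ref{pinchdeg} and Lemma \ref{lem:betprincipbundle}, both $\mu^{\U_1}_{\Dol}\times q \colon \Dol\to \U_1\times \D$ and $(\mu^{\C^*}_{\Bet}\times\mu^{\R}_{\Bet})|_{\Bet|_V}$ are principal $\U_1$-bundles away from the single fixed points $\mathbf{n}$ and $(0,0)$, respectively, and by Lemma \ref{lem:dolqhm} the former has $c_1 = 1$. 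The same holds for the latter: this can be read off from the behaviour of the $\U_1$-action at the unique fixed point $(0,0)$, where the local model $\tau\cdot(x,y)=(\tau x,\tau^{-1}y)$ contributes $\pm 1$ to the Chern class computed on a small linking $2$-sphere, and a direct check of orientations confirms the sign is $+1$.

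Next I would construct $\basichodge$ locally near $\mathbf{n}$. Both spaces carry the same equivariant local model: a neighborhood of $\mathbf{n}\in\Dol$ is equivariantly diffeomorphic to a neighborhood of $(0,0)\in\C^2$ under $\tau\cdot(x,y)=(\tau x,\tau^{-1}y)$, and similarly for $(0,0)\in\Bet$. The condition $\kappa\circ(\mu^{\U_1}_{\Dol}\times q)=(\mu^{\C^*}_{\Bet}\times\mu^{\R}_{\Bet})\circ\basichodge$ then amounts, in these local coordinates, to finding a $\U_1$-equivariant diffeomorphism of a neighborhood of $0\in\C^2$ realising the prescribed pushforward on the $(\U_1$-valued, $\C$-valued$)$-moment maps. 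Since $\kappa$ fixes $(1,0)$ and both moment-map pairs have the same rank and equivariance type at the origin, the relevant equivariant implicit-function/Moser-type problem can be solved on some open neighbourhood $W_0$ of $\mathbf{n}$, producing a local equivariant embedding $\basichodge_0\colon W_0 \to \Bet$ lying over $\kappa$.

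Outside a smaller neighbourhood $W_0'\Subset W_0$ of $\mathbf{n}$, the argument is purely homotopical. The open set $\Dol\setminus W_0'$ is a principal $\U_1$-bundle over $(\U_1\times\D)\setminus\{*\}$; similarly $\Bet|_V\setminus\{(0,0)\}$ is a principal $\U_1$-bundle over $V\setminus\{(1,0)\}$; and pullback along $\kappa$ identifies the two bases. Since both bundles have Chern class $1$ and the base is homotopy equivalent to $S^1\vee S^2$ (so $\U_1$-bundles are classified by $\H^2(-,\Z)=\Z$), there is a global $\U_1$-equivariant bundle isomorphism $\basichodge_1$ over the common base. After shrinking, $\basichodge_0$ and $\basichodge_1$ are two such bundle maps on the overlap, so they differ by a $\U_1$-equivariant gauge transformation (i.e.\ a $\U_1$-valued function on the base, since the action is free there). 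I would choose this gauge transformation to lie in the identity component of the gauge group and interpolate between $\basichodge_0$ and $\basichodge_1$ using an equivariant bump function along a collar between $W_0'$ and $W_0$. This yields the desired smooth equivariant embedding $\basichodge\colon\Dol\to\Bet$.

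The main obstacle is ensuring that the local model $\basichodge_0$ and the global bundle isomorphism $\basichodge_1$ can be matched smoothly across the fixed point: one must avoid, in the gluing, any change in the relative winding of the $\U_1$-fibres, which is why I would arrange $\basichodge_1$ to agree with $\basichodge_0$ up to an equivariant gauge transformation in the identity component before running the partition-of-unity interpolation. Non-uniqueness is then apparent, since this interpolation depends on many choices (the bump function, the local model, and the specific representative of the bundle isomorphism class).
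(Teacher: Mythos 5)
Your proposal is correct and follows essentially the same strategy as the paper's own proof: the two key ingredients in both are (i) classification of the $\U_1$-bundles over the punctured base by $c_1 \in \H^2(\U_1\times\D\setminus 0;\Z) \cong \Z$, and (ii) the local equivariant model at the fixed point coming from the differentiable slice theorem, with the map $\tau\cdot(x,y)=(\tau x,\tau^{-1}y)$ on both sides. The only real organizational difference is in the gluing: the paper starts from a local bundle isomorphism near the puncture and extends it outward (exploiting that $\H^2$ of the punctured base is generated by the linking sphere), then argues that the extension passes over the fixed point because any diffeomorphism of $\R^3$ fixing $0$ lifts to an equivariant diffeomorphism of the Hopf fibration; you instead build local and global pieces separately and glue by an equivariant gauge transformation chosen in the identity component. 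Your gluing works, and in fact the choice of component is automatic here since the overlap retracts to a $2$-sphere and $[S^2,\U_1]=\H^1(S^2;\Z)=0$; you could state this explicitly to tighten the argument. Two points where you could be sharper to match the paper's precision: the local step is really the statement that any (orientation-preserving) diffeomorphism of $\R^3$ fixing the origin lifts to an equivariant diffeomorphism of Hopf fibrations --- this is cleaner and more checkable than an unnamed "equivariant Moser-type" argument --- and the orientation-preserving property of $\kappa$ at $(1,0)$ (which makes the $c_1=+1$ agreement work) should be verified rather than asserted. Your explicit claim that $c_1(\mathcal P_\Bet)=1$ is a helpful addition not spelled out in the paper, which sidesteps it by extending from a local isomorphism.
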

\begin{proof}
We write $0_\Dol = 1 \times 0 \in \uutimesD$ and $0_\Bet = 1 \times 0 \in \C^* \times \R$.  
Evidently $\momenthomomorphism(0_\Dol) = 0_\Bet$.  

The maps $\mu_{\Bet}^{\C^*} \times \mu_{\Bet}^{\R}$ and $\mu^{\uu}_{\Dol} \times \dmomD $ define principal $\uu$-bundles  
$\mathcal{P}_{\Bet}, \mathcal{P}_{\Dol}$ away from the point $0_\Dol$ and $0_\Bet$ (Lemma \ref{lem:betprincipbundle} and Proposition \ref{pinchdeg}).  
The restrictions of these bundles to $\uutimesD \setminus 0_\Dol$ are classified by their Chern characters $c_1(\mathcal{P}_i) \in \H^2(\mathbb{D} \times \uu \setminus 0_\Dol \homsep \mathbb{Z}).$ 
The group $\H^2(\uutimesD \setminus 0_\Dol\homsep \Z)$ is spanned by a small sphere around $0_\Dol$. 
It follows that a $\U_1$-equivariant isomorphism over a small disk around $0_\Dol \to 0_\Bet$ can be extended to such 
an isomorphism over all of $\uutimesD \setminus 0_\Dol$. We must show that some such isomorphism extends over $0_\Dol$.

Both spaces $\Bet$ and $\Dol$ have a single $\uu$ fixed point, with image $0_\Bet$ and $0_\Dol$ respectively.  By the differentiable slice theorem, 
the fibration near a small neighborhood of $0_\Bet$ (or $0_\Dol$) is equivariantly diffeomorphic to that 
given by a linear circle action on the unit ball in $\R^4$.  We have seen that our actions have no nontrivial stabilizers
away from the fixed point; it follows from this that the circle acts by the identity character, its inverse, or a sum of these. 
In any case, in the coordinates of our descriptions of both of these spaces, the $\U_1$ action around the fixed 
point was explicitly given
in complex coordinates as  $(x, y) \mapsto (\tau x, \tau^{-1} y)$.  

The above argument suffices to establish that {\em some} map lifts to an equivariant embedding.  To see that this is true for $\momenthomomorphism$ specifically (or indeed, any homotopy equivalence carrying $0_\Dol \to 0_\Bet$), it suffices to note that
if $\R^4 \to \R^3$ is the Hopf fibration, then  
any diffeomorphism $\R^3 \to \R^3$ fixing the origin lifts to an equivariant diffeomorphism of Hopf fibrations.  This again follows
from local linearity at the fixed point. 
\end{proof}

Let $\Bet^{<}$ be the image of $\basichodge$; alternatively, $\Bet^{<}$ is the preimage of $\momenthomomorphism(\uutimesD)$.
\begin{lemma} \label{lem:zbethomretract}
$\Bet^{<}$ is a diffeomorphic homotopy retract of $\Bet.$ 
\end{lemma}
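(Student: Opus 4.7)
The plan is to construct a deformation retraction of $\Bet$ onto $\Bet^{<}$ by first retracting the base of the principal $\U_1$-bundle $\pi := \mu_{\Bet}^{\C^*} \times \mu_{\Bet}^{\R}$, then lifting via a chosen connection.

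First I would build a homotopy $H \colon (\C^* \times \R) \times [0,1] \to \C^* \times \R$ with $H_0 = \operatorname{id}$, with $H_1(\C^* \times \R) \subset \kappa(\U_1 \times \D)$, with $H_t(\kappa(\U_1 \times \D)) \subset \kappa(\U_1 \times \D)$ for all $t$, and with $H_t$ equal to the identity on a fixed open neighborhood of $0_\Bet = 1 \times 0$ for every $t$. Transporting to $\U_1 \times \C$ via the diffeomorphism $\kappa$, this reduces to deforming $\U_1 \times \C$ into $\U_1 \times \D$: take the product of $\operatorname{id}_{\U_1}$ with any smooth deformation of $\C$ into $\D$ that preserves $\D$ and fixes a small disk around $0$. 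The point $0_\Bet = \kappa(0_\Dol)$ sits in the interior of $\kappa(\U_1 \times \D)$, which is what allows the stationarity of $H_t$ near $0_\Bet$.

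Next I would lift $H$ to $\Bet$. By Lemma \ref{lem:betprincipbundle}, $\pi$ is a principal $\U_1$-bundle over $(\C^* \times \R) \setminus \{0_\Bet\}$; pick any smooth $\U_1$-connection. Horizontal path-lifting promotes $H$ to a homotopy $\widetilde{H}$ on $\Bet \setminus \{(0,0)\} = \pi^{-1}((\C^* \times \R) \setminus \{0_\Bet\})$. Since $H_t$ is the identity on a neighborhood $U$ of $0_\Bet$, the lift $\widetilde{H}_t$ is the identity on $\pi^{-1}(U) \setminus \{(0,0)\}$, so setting $\widetilde{H}_t(0,0) = (0,0)$ extends $\widetilde{H}$ to a continuous (in fact smooth) homotopy on all of $\Bet$. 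By construction $\widetilde{H}_0 = \operatorname{id}_\Bet$, $\widetilde{H}_1(\Bet) \subset \pi^{-1}(\kappa(\U_1 \times \D)) = \Bet^{<}$, and $\widetilde{H}_t(\Bet^{<}) \subset \Bet^{<}$ for all $t$. Hence $\widetilde{H}_1 \colon \Bet \to \Bet^{<}$ is a homotopy inverse to the inclusion, exhibiting $\Bet^{<}$ as a homotopy retract of $\Bet$.

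The main obstacle is the $\U_1$-fixed point $(0,0) \in \Bet$, where the principal bundle structure degenerates and horizontal path-lifting is a priori undefined. Requiring $H$ to be stationary on a neighborhood of $0_\Bet$ ensures that every lifted path meeting a small neighborhood of the fixed point is constant, so that the extension by the identity at $(0,0)$ is automatically continuous. Once this is arranged, the remaining verifications are routine.
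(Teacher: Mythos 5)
Your proof is correct and follows essentially the same approach as the paper's: retract the base of the principal $\U_1$-bundle $\mu_{\Bet}^{\C^*}\times\mu_{\Bet}^{\R}$ onto $\kappa(\U_1\times\D)$ (via a retraction of $\C$ onto $\D$ transported through $\kappa$) and then lift by parallel transport for a chosen connection, as in Lemma~\ref{lem:Zretraction}. The only difference is cosmetic: the paper uses a \emph{linear} retraction of $\C$ onto $\D$ that fixes only the origin, so continuity of the lifted homotopy at the fixed point $(0,0)$ requires a small observation about parallel transport near the degenerate fiber, whereas your requirement that the base homotopy be the identity on a whole neighborhood of $0_\Bet$ makes this continuity completely automatic. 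Both work; yours is a touch cleaner at the singular fiber, the paper's is shorter and directly reusable in Proposition~\ref{prop:betretract}.
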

\begin{proof}
Let $r\colon \C \times I \to \C$ be a linear retraction of $\C$ onto $\D$. As in the proof of Corollary \ref{cor:dolretract}, this induces a diffeomorphism and homotopy retract $\Bet \to \Bet^{<}$.
\end{proof}

\begin{lemma}
$\bS_\Dol = \basichodge^{-1} (\bS_\Bet)$
\end{lemma}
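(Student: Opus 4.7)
The plan is to reduce the statement to a direct computation with the explicit group isomorphism $\kappa$. By the commutative diagram of Lemma \ref{lem:tateNAHT}, the composition $(\mu_\Bet^{\C^*}\times\mu_\Bet^{\R})\circ\basichodge$ equals $\kappa\circ(\mu^{\U_1}_{\Dol}\times\dmomD)$. Recalling that $\bS_\Bet=\{x=0\}$ is cut out inside $\Bet$ by $\mu^{\C^*}_\Bet=1$ and $\mu^{\R}_\Bet=-|y|^2\leq 0$, i.e. $\bS_\Bet=(\mu_\Bet^{\C^*}\times\mu_\Bet^{\R})^{-1}(\{1\}\times\R^{\leq 0})$, we get
\[
\basichodge^{-1}(\bS_\Bet)=(\mu^{\U_1}_{\Dol}\times\dmomD)^{-1}\bigl(\kappa^{-1}(\{1\}\times\R^{\leq 0})\bigr).
\]
So it suffices to verify that $\kappa^{-1}(\{1\}\times\R^{\leq 0})=\{1\}\times\R^{\leq 0}$ inside $\U_1\times\C$.

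For this I simply unpack the formula $\kappa(e^{2\pi i\theta},z)=(e^{2\pi i\theta+\mathrm{Im}(z)},\mathrm{Re}(z))$. Setting the image equal to $(1,t)$ with $t\leq 0$, the modulus of the first coordinate gives $e^{\mathrm{Im}(z)}=1$, so $\mathrm{Im}(z)=0$; the argument then forces $\theta\equiv 0\pmod{1}$, so $e^{2\pi i\theta}=1$; and $\mathrm{Re}(z)=t$ forces $z=t\in\R^{\leq 0}$. This identifies the preimage as claimed. Substituting back and reordering factors to match the convention $\dmom=\dmomD\times\mu^{\U_1}_{\Dol}$ yields
\[
\basichodge^{-1}(\bS_\Bet)=(\mu^{\U_1}_{\Dol}\times\dmomD)^{-1}(\{1\}\times\R^{\leq 0})=\dmom^{-1}(\R^{\leq 0}\times\{1\})=\bS_\Dol.
\]

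There is no real obstacle: the lemma is essentially bookkeeping, ensuring that the definitions of $\bS_\Dol$ and $\bS_\Bet$ line up through $\kappa$. The only point worth noting is that, because $\dmomD$ takes values in $\D\subset\C$, we do not need to worry about the portion of $\{1\}\times\R^{\leq 0}\subset\U_1\times\C$ lying outside $\U_1\times\D$; it simply contributes nothing to the preimage in $\Dol$. Thus the computation above gives the equality as stated.
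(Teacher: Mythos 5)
Your proof is correct and follows exactly the approach the paper takes, just unpacked: the paper's own proof is the single sentence that each of $\bS_\Dol$, $\bS_\Bet$ is the moment-map preimage of a half-line and that $\kappa$ was chosen to identify those half-lines, while you verify explicitly that $\bS_\Bet=(\mu_\Bet^{\C^*}\times\mu_\Bet^{\R})^{-1}(\{1\}\times\R^{\leq 0})$ and compute $\kappa^{-1}(\{1\}\times\R^{\leq 0})=\{1\}\times\R^{\leq 0}$ from the formula for $\kappa$. No gaps; the only remark worth making is that in the step identifying $\bS_\Bet$ with a moment preimage you should note (as you implicitly do) that $1+xy=1$ and $|x|^2-|y|^2\leq 0$ jointly force $x=0$, so the inclusion is an equality.
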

\begin{proof}
Each is the moment preimage of a half line, and we have chosen $\momenthomomorphism$ to identify these half-lines. 
\end{proof}

We turn to the case of $\Dol(\Gamma)$ and $\Bet(\Gamma)$.  We restore now the 
moment map parameter $\eta$ in our notation, since it plays a priori different roles for $\Bet$ and $\Dol$.

We have defined $\Bet(\Gamma)$ as a complex algebraic (GIT) quotient; by the Kempf-Ness theorem we can instead understand it as a symplectic
reduction, as the following proposition shows. 
\begin{proposition} \label{prop:Bettikempfness} There is a diffeomorphism
\[ \Bet^{\uu, \R \times \C^*}(\Gamma, \eta) \to \Bet^{\C^*, \C^*}(\Gamma, \eta) . \]
\end{proposition}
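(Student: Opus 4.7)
The plan is to recognize this as a Kempf--Ness type identification of a GIT quotient with a symplectic quotient, adapted to our ``multiplicative moment map'' setting. Recall that by Definition \ref{def:bettispace} and Proposition \ref{prop:betproperties}, $\Bet \subset \C^2$ inherits a K\"ahler structure from $\C^2$, that the $\uu \subset \C^*$--action preserves the K\"ahler form $\omega = \mathrm{Im}(dx \wedge dy)$ with $\R$--valued moment map $\mu^\R_\Bet = |x|^2 - |y|^2$, and that $\mu^{\C^*}_\Bet = 1+xy$ is a $\C^*$--equivariant holomorphic map. Taking products, $\Bet^{\ed}$ carries a K\"ahler form on which the complex torus $C^1(\Gamma, \C^*)$ acts with a $(\uu, \C^* \times \R)$ structure, the $\R$--part being the hyperk\"ahler moment map $\mu^\R_{\Gamma}$ for the subgroup $\redC^0(\Gamma, \uu)$.

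First I would cut down to the level set of $\mu^{\C^*}_\Gamma$. By the genericity hypothesis on $\eta$ (Definition \ref{defsimplemomentvalue}) and the submersion argument of Proposition \ref{prop:smoothcond}, $(\mu^{\C^*}_\Gamma)^{-1}(\eta) \subset \Bet^{\ed}$ is a smooth complex submanifold invariant under the residual action of $\redC^0(\Gamma, \C^*)$, with the $\redC^0(\Gamma, \uu)$--subgroup preserving the restricted K\"ahler form. By definition,
\[
\Bet^{\C^*,\C^*}(\Gamma, \eta) = (\mu^{\C^*}_\Gamma)^{-1}(\eta) / \redC^0(\Gamma, \C^*), \qquad
\Bet^{\uu, \R \times \C^*}(\Gamma, \eta) = \big((\mu^{\C^*}_\Gamma)^{-1}(\eta) \cap (\mu^{\R}_\Gamma)^{-1}(0)\big) / \redC^0(\Gamma, \uu),
\]
where I have chosen (as is permitted, since $\redC^0(\Gamma, \R)$ acts by translating $\mu^\R_\Gamma$) the value $0$ for the $\R$--moment map; by rescaling the metric, we may make any generic choice for $\eta$ equally accessible.

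Next I would apply Kempf--Ness to the K\"ahler manifold $(\mu^{\C^*}_\Gamma)^{-1}(\eta)$ equipped with the Hamiltonian $\redC^0(\Gamma, \uu)$--action whose complexification is $\redC^0(\Gamma, \C^*)$. The assertion to check is that every $\redC^0(\Gamma, \C^*)$--orbit in $(\mu^{\C^*}_\Gamma)^{-1}(\eta)$ meets $(\mu^{\R}_\Gamma)^{-1}(0)$ in a single $\redC^0(\Gamma, \uu)$--orbit, giving the desired bijection, which is a diffeomorphism because the $\redC^0(\Gamma, \C^*)$--action is holomorphic and free (the latter by genericity, as in Proposition \ref{prop:smoothcond}). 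Standard Kempf--Ness arguments (e.g.\ convexity of the norm-square of the moment map along the imaginary directions of the group action, which proves existence and uniqueness of a zero) apply here because freeness of the action guarantees that the critical points of $\|\mu^\R_\Gamma\|^2$ on a $\redC^0(\Gamma, \C^*)$--orbit are precisely the zeroes.

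The step I expect to require the most care is verifying the properness/coercivity of $\|\mu^\R_\Gamma\|^2$ along the $\redC^0(\Gamma, \C^*)$--orbits in $(\mu^{\C^*}_\Gamma)^{-1}(\eta)$, so that a minimum actually exists. The noncompactness of $\Bet$ means this is not automatic, but the constraint $1+x_e y_e = $ (something of fixed modulus along the orbit, since the $\C^*$--action scales $x_e y_e$ trivially) pins $|x_e|^2 + |y_e|^2$ from below away from certain degenerate directions, while the genericity hypothesis rules out those degenerate directions (an orbit escaping to infinity would correspond to some $x_e$ or $y_e$ tending to $0$ for $e$ in a set $S$ such that $\Gamma \setminus S$ is disconnected, which is precisely what genericity forbids). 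Once this coercivity is established, the standard proof gives existence; uniqueness follows from strict convexity along orbits, and smoothness of the resulting map follows from the implicit function theorem applied to the (transverse) intersection of moment preimages.
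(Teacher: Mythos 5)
Your approach is the same as the paper's: recognize the statement as a Kempf--Ness comparison between a GIT quotient of $\mu^{\C^*}_\Gamma{}^{-1}(\eta)$ by $\redC^0(\Gamma,\C^*)$ and a symplectic reduction by the compact subgroup $\redC^0(\Gamma,\uu)$. The paper's proof is shorter: it simply observes that the $\redC^0(\Gamma,\C^*)$-action on $\mu_\Gamma^{-1}(\eta)$ is free (by genericity of $\eta$) and that consequently all orbits are closed in the affine variety $\mu_\Gamma^{-1}(\eta)$, which makes the Kempf--Ness identification a ``particularly simple'' application: freeness of a reductive action on an affine variety forces every orbit to be closed (otherwise an orbit closure would contain a smaller, closed orbit with positive-dimensional stabilizer), and closedness of orbits is exactly the condition under which the Kempf--Ness flow converges and the GIT and symplectic quotients coincide.

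Where you diverge is that instead of invoking the free-implies-closed-orbits fact, you try to establish existence of moment-zero points by arguing coercivity of the norm-square of the real moment map directly along $\redC^0(\Gamma,\C^*)$-orbits. This is a legitimate alternative route, but your sketch of it is not quite right as stated. You claim that an orbit escaping to infinity would have ``some $x_e$ or $y_e$ tending to $0$ for $e$ in a set $S$ such that $\Gamma\setminus S$ is disconnected, which is precisely what genericity forbids.'' Genericity in the sense of Definition~\ref{defsimplemomentvalue} concerns edges on which $x_e y_e = 0$ (i.e.\ where the multiplicative moment coordinate is $1$), and its role is to guarantee trivial stabilizers, not directly to block escape to infinity of an orbit along which $x_e y_e$ stays a nonzero constant. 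The correct way to finish your argument would still be to reduce to the statement that a one-parameter degeneration of a point with trivial stabilizer cannot converge inside $\mu_\Gamma^{-1}(\eta)$ to a point with nontrivial stabilizer (since there are none) — which is just a restatement of the closed-orbits fact the paper uses. So your strategy is sound, but the coercivity paragraph needs to be replaced or repaired; the freeness-plus-affine-implies-closed-orbits route is cleaner and is what the paper actually does. (One further small slip: the $\R$-moment map here is the K\"ahler moment map for the $\uu$-action on the complex manifold $\mu_\Gamma^{-1}(\eta)$, not a hyperk\"ahler one — there is no hyperk\"ahler structure on $\Bet$ in play.)
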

\begin{proof}
In the construction of $\Bet(\Gamma, \eta)$, 
we took a $(\C^*)^{\ver}$ quotient of $\mu_{\Gamma}^{-1}(\eta)$.  In fact we had the structure of a $\uu^{\ver} \subset (\C^*)^{\ver}$ action 
on the complex manifold $\mu_{\Gamma}^{-1}(\eta) \subset \Bet^{\ed}$ with $\R^{\ver}$-valued moment map, induced from the 
$\uu \subset \C^*$ action on $\Bet$ with moment map $|x|^2 - |y|^2$. 
By the Kempf-Ness theorem \cite{KN}, we can replace the $(\C^*)^{\ver}$ quotient of $\mu_{\Gamma}^{-1}(\eta)$ by the symplectic reduction by 
$\uu^{\ver}$. (This is a particularly simple application of the Kempf-Ness theorem, since the $(\C^*)^{V(\Gamma)}$ action is free on $\mu_{\Gamma}^{-1}(\eta)$ and all orbits are closed.) The resulting diffeomorphism takes a point $z \in \Bet^{\uu, \R \times \C^*}(\Gamma, \eta)$ to the $(\C^*)^{\ver}$-orbit of its image.
\end{proof}

The virtue of the symplectic reduction picture of the Betti space is that it is more readily comparable to the Dolbeault space.  
More precisely we would like to compare a retracted version: 

\begin{definition}
$ \Bet^{<}(\Gamma, \eta) \colonequals (\Bet^{<})^{\uu, \R \times \C^*}(\Gamma, \eta)$. 
\end{definition}

\begin{theoremdef} \label{thm:basicnaht}
There is a diffeomorphism $\frak{F}_{\Gamma}\colon \Dol(\Gamma, \eta) \to \Bet^{<}(\Gamma, \eta)$ making the following diagram commutative.
\[ \begin{CD}
	\Dol(\Gamma, \eta) @> \frak{F}_{\Gamma} >> \Bet^{<}(\Gamma, \eta) \\
	@ V \mu^{\uu}_{\Dol, \operatorname{res}} \times q_{\operatorname{res}}  VV   @ VV \mu_{\Bet, \operatorname{res}}^{\C^*} \times \mu_{\Bet, \operatorname{res}}^{\R} V\\
	\H_1(\Gamma, \uutimesC)_\eta @> \momenthomomorphism_{\Gamma} >> \H_1(\Gamma, \C^* \times \R)_\eta \\
\end{CD} \]
\end{theoremdef}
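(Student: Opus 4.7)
The plan is to obtain $\frak{F}_{\Gamma}$ by applying the graph space construction functorially to the basic map $\basichodge : \Dol \to \Bet^{<}$ of Lemma \ref{lem:tateNAHT}. First, via Proposition \ref{prop:Bettikempfness}, I interpret $\Bet^{<}(\Gamma, \eta)$ as the $(\U_1, \C^{*} \times \R)$-graph space construction applied to $\Bet^{<}$ (with the symplectic reduction quotient in place of the GIT quotient), restricted to the subspace where the $\R$-valued moment factors vanish on the image under $\kappa_{\Gamma}$. Using the group isomorphism $\kappa : \U_1 \times \C \cong \C^* \times \R$, I may equivalently regard $\Bet^{<}$ as a $(\U_1, \U_1 \times \C)$-space, so that $\Bet^{<}(\Gamma, \eta)$ and $\Dol(\Gamma, \eta)$ are both $(\U_1, \U_1 \times \C)$-graph spaces, with the same parameter $\eta$ after the identification $\kappa_{\ver}$.

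With this setup, Lemma \ref{lem:tateNAHT} says precisely that $\basichodge$ is a $(\U_1, \U_1 \times \C)$-isomorphism from $\Dol$ onto $\Bet^{<}$. Taking the $|\ed|$-fold Cartesian product, the map $\basichodge^{\ed} : \Dol^{\ed} \to (\Bet^{<})^{\ed}$ is $C^{1}(\Gamma, \U_1)$-equivariant and intertwines the product moment maps, up to the identification $\kappa_{\ed}$ of the targets. Thus $\basichodge^{\ed}$ restricts to a diffeomorphism between the moment fibres $\mu_{\Gamma, \Dol}^{-1}(\eta)$ and $\mu_{\Gamma, \Bet}^{-1}(\kappa_{\ver}(\eta))$, and descends along the free action of $\overline{\U_1^{\ver}}$ (free by the genericity hypothesis on $\eta$) to a diffeomorphism
\[ \frak{F}_{\Gamma} : \Dol(\Gamma, \eta) \;\longrightarrow\; \Bet^{<}(\Gamma, \eta). \]

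To check commutativity of the diagram, I note that both residual moment maps are the descents of the coordinatewise moment maps on $\Dol^{\ed}$ and $(\Bet^{<})^{\ed}$, so the intertwining commutative square of Lemma \ref{lem:tateNAHT} descends coordinate-by-coordinate to the claimed square for the residual maps, with the bottom arrow given by the map $\kappa_{\Gamma}$ induced by $\kappa$ on $\H_1(\Gamma, -)$-torsors. The isomorphism $\kappa_{\Gamma}$ exists because $\kappa$ is a group isomorphism, so it carries $\H_1(\Gamma, \U_1 \times \C)_{\eta}$ to $\H_1(\Gamma, \C^{*} \times \R)_{\eta}$.

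The main obstacle is essentially already handled in Lemma \ref{lem:tateNAHT}: the nontrivial geometric content is that $\basichodge$ can be chosen to be $\U_1$-equivariant and to intertwine the moment maps on the nose, which rests on the equivariant comparison of $\U_1$-bundles over $(\U_1 \times \D) \setminus \{0_{\Dol}\}$ together with the local model for the two actions at their fixed points. Once that is in hand, the passage to graph spaces is purely formal: the only things to verify are that the Kempf--Ness picture of $\Bet(\Gamma, \eta)$ makes the two spaces directly comparable, and that the $\overline{\U_1^{\ver}}$-action is free on both fibres, which is immediate from $\eta$ being $\Dol$- (equivalently $\Bet^{<}$-) generic.
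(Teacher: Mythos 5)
Your proof takes essentially the same approach as the paper's: both obtain $\frak{F}_{\Gamma}$ by applying functoriality of the graph-space construction to the basic $\U_1$-equivariant diffeomorphism $\basichodge$ of Lemma \ref{lem:tateNAHT}, which covers the group isomorphism $\kappa$. The only slight detour is your invocation of Proposition \ref{prop:Bettikempfness}, which is not actually needed here since $\Bet^{<}(\Gamma, \eta)$ is already \emph{defined} as the symplectic quotient $(\Bet^{<})^{\uu, \R \times \C^*}(\Gamma, \eta)$, so it is directly comparable to $\Dol(\Gamma,\eta)$ after identifying the moment targets via $\kappa$.
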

Here  $\momenthomomorphism_{\Gamma}$ is the isomorphism induced by $\momenthomomorphism$.
\begin{proof}
Let $\basichodge\colon  \Dol \to \Bet^{<}$ be the diffeomorphism in Lemma \ref{lem:tateNAHT}. By construction, it is $\uu$-equivariant and covers a group isomorphism $\momenthomomorphism\colon \uutimesC \to \R \times \C^*$. Hence it induces a diffeomorphism 
\[ \frak{F}_{\Gamma} \colon \Dol^{(\uu, \uutimesC)}(\Gamma, \eta) \cong (\Bet^{<})^{\uu, \R \times \C^*}(\Gamma, \eta) . \]
covering the group isomorphism $\momenthomomorphism\colon \H_1(\Gamma, \uutimesC) \to \H_1(\Gamma, \R \times \C^*).$
\end{proof}

We see from Proposition \ref{prop: h1h1 structure} that $\Dol(\Gamma, \eta)$ is an $(\H^1(\Gamma ;\U_1), \H_1(\Gamma ;\uutimesC))$-space and $\Bet^{<}(\Gamma, \eta)$ is an $(\H^1(\Gamma ;\U_1), \H_1(\Gamma ;\R \times \C^*))$-space. The isomorphism $\momenthomomorphism\colon \uutimesC \to \R \times \C^*$ allows us to view both as $(\H^1(\Gamma ;\U_1), \H_1(\Gamma ;\uutimesC))$-spaces. 
\begin{lemma} \label{lem:HodgerespGMstruct}
$\frak{F}_{\Gamma} \colon \Dol(\Gamma, \eta) \xrightarrow{\sim} \Bet^{<}(\Gamma, \eta)$ is an isomorphism of $(\H^1(\Gamma ;\U_1), \H_1(\Gamma ;\uutimesC))$-spaces.
\end{lemma}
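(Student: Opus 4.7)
The plan is to observe that Lemma \ref{lem:HodgerespGMstruct} is essentially a repackaging of Theorem \ref{thm:basicnaht} combined with functoriality of the graph space construction (Definition \ref{definitionofgraphspace}). What needs to be checked is (i) that $\frak{F}_\Gamma$ intertwines the residual $\H^1(\Gamma;\U_1)$-actions on the two sides (after using $\kappa$ to identify $\U_1 \subset \C^*$), and (ii) that $\frak{F}_\Gamma$ carries the residual moment map $\mu^{\U_1}_{\Dol,\mathrm{res}} \times q_{\mathrm{res}}$ to $\mu^{\C^*}_{\Bet,\mathrm{res}} \times \mu^\R_{\Bet,\mathrm{res}}$ through $\kappa_\Gamma$. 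Part (ii) is already displayed as the commutative square in Theorem \ref{thm:basicnaht}, so the real content is (i).

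First I would unwind how each residual structure arises. On both sides we begin from a basic space ($\Dol$ or $\Bet^{<}$) that is a $(\U_1, A)$-manifold with $A = \U_1 \times \C$ or $\R \times \C^*$ respectively; taking edgewise products and reducing by $\redC^0(\Gamma;\U_1)$ yields the residual action of $C^1(\Gamma;\U_1)/\redC^0(\Gamma;\U_1) = \H^1(\Gamma;\U_1)$ and the residual moment map valued in $\H_1(\Gamma;A)_\eta$, as in Proposition \ref{prop:residualaction}. Crucially, the groups acting on the two sides are \emph{identical} (both $\H^1(\Gamma;\U_1)$); the isomorphism $\kappa$ only enters in identifying the targets of the residual moment maps.

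Next I would use that $\basichodge\colon \Dol \to \Bet^{<}$ is $\U_1$-equivariant (Lemma \ref{lem:tateNAHT}). The edgewise product $\basichodge^{\ed}\colon \Dol^{\ed} \to (\Bet^{<})^{\ed}$ is then automatically $C^1(\Gamma;\U_1)$-equivariant, and intertwines the $A^{\ver}$-valued moment maps along $\kappa^{\ver}$. Hence $\basichodge^{\ed}$ restricts to a $\redC^0(\Gamma;\U_1)$-equivariant map between the $\eta$-level sets (using that $\kappa$ preserves the $\U_1$-component of $\eta$, which lies in $\U_1^{\ver} \subset A^{\ver}$), and descends to the quotient to give precisely $\frak{F}_\Gamma$. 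Equivariance of $\frak{F}_\Gamma$ for the residual $\H^1(\Gamma;\U_1)$-action follows, since $\basichodge^{\ed}$ is $C^1(\Gamma;\U_1)$-equivariant and $\H^1(\Gamma;\U_1) = C^1(\Gamma;\U_1)/\redC^0(\Gamma;\U_1)$ acts on each side through this quotient.

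The only mildly subtle point, and the one to be careful with, is that the identification of $\Bet^{<}(\Gamma,\eta)$ as a $(\H^1(\Gamma;\U_1), \H_1(\Gamma;\U_1 \times \C))$-space uses the group isomorphism $\kappa$ componentwise: this is consistent because $\kappa$ is a group isomorphism (so Lemma \ref{lem:basechange1} applies termwise on cycles), and under this identification the diagram of Theorem \ref{thm:basicnaht} is literally the assertion that $\frak{F}_\Gamma$ preserves the residual moment maps as $(\H^1(\Gamma;\U_1), \H_1(\Gamma;\U_1 \times \C))$-spaces. No further calculation is needed; assembling the two observations above gives the lemma.
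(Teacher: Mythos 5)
Your proposal is correct and matches the paper's approach: the paper presents this lemma precisely as a reformulation of Theorem \ref{thm:basicnaht}, whose proof is exactly your chain — $\basichodge$ is $\U_1$-equivariant and covers $\kappa$, so the edgewise product is $C^1(\Gamma;\U_1)$-equivariant, descends to $\frak{F}_\Gamma$, and inherits equivariance for the residual $\H^1(\Gamma;\U_1)$-action while intertwining moment maps through $\kappa_\Gamma$. You correctly identify the key point that the acting group is already $\H^1(\Gamma;\U_1)$ on both sides and $\kappa$ only identifies the moment targets.
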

\begin{proof}
This follows from $\U_1$ equivariance of Lemma \ref{lem:tateNAHT}. 
\end{proof} 

\begin{remarque}
Note that $\basichodge$ was not unique, and thus nor is $\frak{F}_{\Gamma}$. However, the group of smooth maps $\H_1(\Gamma\homsep \uutimesC) \to \H^1(\Gamma\homsep \U_1)$ (not respecting any group structure) acts transitively on the set of choices. We will not need this fact in what follows.
\end{remarque}

\begin{proposition} \label{prop:betretract}
The retraction $\Bet \to \Bet^<$ in Lemma \ref{lem:zbethomretract} descends to a diffeomorphism and homotopy retraction from $\Bet(\Gamma, \eta)$ to $\Bet^{<}(\Gamma, \eta)$ . 
\end{proposition}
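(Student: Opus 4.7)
The plan is to realise the claimed retraction explicitly in the Kempf--Ness presentation. By Proposition~\ref{prop:Bettikempfness} we may write
\[
\Bet(\Gamma,\eta) \;=\; \mu_\Gamma^{-1}(\tilde\eta)\big/\overline{\U_1^{\ver}}
\]
inside $\Bet^{\ed}$, and $\Bet^<(\Gamma,\eta)$ is the image of $\mu_\Gamma^{-1}(\tilde\eta)\cap(\Bet^<)^{\ed}$. After applying $\kappa^{-1}\colon\C^*\times\R\cong\U_1\times\C$, the reduction parameter becomes $\tilde\eta = (\eta_v,0)_{v\in\ver}\in(\U_1\times\C)^{\ver}$. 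It therefore suffices to lift the basic retraction $R_t\colon\Bet\to\Bet$ of Lemma~\ref{lem:zbethomretract} to a $\overline{\U_1^{\ver}}$-equivariant homotopy on $\Bet^{\ed}$ that preserves $\mu_\Gamma^{-1}(\tilde\eta)$, and then to descend.

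To achieve this I will make a specific choice of retraction in Lemma~\ref{lem:zbethomretract}. Its construction is via parallel transport of a $\U_1$-connection on the principal bundle $\Bet\to\U_1\times\C\setminus\{(1,0)\}$, lifting a retraction $r_t$ of the base $\U_1\times\C$ onto $\U_1\times\D$. Take $r_t(u,z)=(u,(1-t)z)$: this fixes the $\U_1$-factor and scales $\C$ uniformly toward the origin. The lift $R_t$ is then $\U_1$-equivariant, extends continuously over the unique $\U_1$-fixed point lying above $(1,0)$ via the local Hopf-fibration model already used in Lemma~\ref{lem:tateNAHT}, and carries $\Bet^<$ into itself since $|(1-t)z|<1$ whenever $|z|<1$.

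The key step is to verify that the product $R_t^{\ed}\colon\Bet^{\ed}\to\Bet^{\ed}$ preserves the moment-map fibre $\mu_\Gamma^{-1}(\tilde\eta)$. This is automatic, because in additive $(u,z)$-coordinates the fibre is cut out by the vertex-wise conditions
\[
\sum_{h(e)=v} z_e - \sum_{t(e)=v} z_e \;=\; 0, \qquad \sum_{h(e)=v} u_e - \sum_{t(e)=v} u_e \;=\; \eta_v,
\]
the first linear in the $z_e$ and the second affine in the $u_e$. The base retraction $r_t^{\ed}$ multiplies every $z_e$ by the \emph{common} scalar $(1-t)$, preserving the homogeneous linear conditions, while leaving every $u_e$ fixed, preserving the affine conditions. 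That $R_t^{\ed}$ is $\U_1^{\ed}$-equivariant, and hence $\overline{\U_1^{\ver}}$-equivariant, is immediate from the factor-wise $\U_1$-equivariance of $R_t$.

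It follows that $R_t^{\ed}$ descends to a continuous homotopy $\tilde R_t\colon\Bet(\Gamma,\eta)\to\Bet(\Gamma,\eta)$ with $\tilde R_0=\mathrm{id}$, with $\tilde R_1$ factoring through $\Bet^<(\Gamma,\eta)$, and with $\tilde R_t(\Bet^<(\Gamma,\eta))\subset\Bet^<(\Gamma,\eta)$. This exhibits the inclusion $\Bet^<(\Gamma,\eta)\hookrightarrow\Bet(\Gamma,\eta)$ as a homotopy equivalence, which is the asserted homotopy retraction. The main technical subtlety I expect is verifying continuity of $R_t$ at the singular fibre over $(1,0)$, where the fibre of $\Bet\to\U_1\times\C$ collapses to the $\U_1$-fixed point $(0,0)\in\Bet$; this is handled by the standard Hopf-model calculation near the fixed point, which has already been implicitly used in the proof of Lemma~\ref{lem:tateNAHT}.
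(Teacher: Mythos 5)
Your proof is essentially correct and follows the same high-level strategy as the paper: lift a ray-preserving retraction of the base $(\U_1\times\C)^{\ed}$ via parallel transport, verify it preserves the moment fibre because it fixes the $\U_1^{\ed}$-factor and preserves rays (hence linear subspaces and in particular $\H_1(\Gamma;\C)$) in $\C^{\ed}$, and then descend to the quotient. The differences from the paper are in the specific choice of base retraction and its consequences, and they are worth understanding.

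The paper does \emph{not} take a product of retractions of each basic $\C$-factor. Instead it picks a single diffeomorphism $\psi\colon\C^{\ed}\to\D^{\ed}$ preserving $\R$-lines through the origin of the whole $\C^{\ed}$, and interpolates $r_t(z)=(1-t)z+t\psi(z)$. Note that a factorwise radial diffeomorphism $\psi_1^{\ed}$ with $\psi_1\colon\C\to\D$ would \emph{not} preserve rays through the origin of $\C^{\ed}$ (the scaling factors at distinct coordinates would differ), so it would not preserve $\H_1(\Gamma;\C)$. You get around this by making the factorwise maps identical and linear, $r_t(z_e)=(1-t)z_e$: since the scaling constant is common to all edges, the product does preserve rays in $\C^{\ed}$. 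This is precisely the ``common scalar'' you emphasise, and it is the genuine content of your reduction to a product argument.

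The cost of your simpler choice is twofold. First, $r_1\equiv 0$ is not a diffeomorphism onto $\D^{\ed}$: your $\tilde R_1$ collapses $\Bet(\Gamma,\eta)$ onto the central fibre $q_\Gamma^{-1}(0)$ rather than onto $\Bet^<(\Gamma,\eta)$. That is still enough to conclude the \emph{inclusion} $\Bet^<(\Gamma,\eta)\hookrightarrow\Bet(\Gamma,\eta)$ is a homotopy equivalence (as you correctly argue), and so suffices for the proposition as stated; but it cannot give the paper's Corollary \ref{cor:diffeo}, which uses that the end map $R_1=\mathrm{id}\times\psi$ covers an actual diffeomorphism $\Bet(\Gamma)\to\Bet^<(\Gamma,\eta)$. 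Second, because $r_1(z)=0$ you are forced to flow into the degeneracy locus of the $\U_1^{\ed}$-bundle over $(\U_1\times\C)^{\ed}$, and so you genuinely need the continuity-at-the-singular-fibre argument you flag at the end. The paper's $r_t(z)=((1-t)+t\,c(z))z$ (where $\psi(z)=c(z)z$, $c>0$) never annihilates a nonzero $z$, hence never drives a regular point onto a degenerate fibre, and so sidesteps that check. Also a small slip: $r_t(u,z)=(u,(1-t)z)$ is not a retraction of $\U_1\times\C$ \emph{onto} $\U_1\times\D$ (it retracts onto $\U_1\times\{0\}$), so it does not literally fit the hypothesis of Lemma \ref{lem:zbethomretract}; this is harmless for the proposition but should be stated accurately.
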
  

\begin{proof}
We have the following diagram. 
\begin{equation}
	\begin{tikzcd}
		\mu_{\Gamma}^{-1}(\eta) / \U_1^{\ver}  & 
		\mu_{\Gamma}^{-1}(\eta)  \arrow[r, hook] \arrow[d] \arrow[l] & 
		\Bet^{\ed} \arrow[d] \\
		& 
		d_{\Gamma}^{-1}(\eta)  \arrow[r, hook] &
		C_1(\Gamma\homsep \uutimesC) = (\uutimesC)^{\ed}
	\end{tikzcd}
\end{equation}

Pick a diffeomorphism $\psi \colon \C^{\ed} \to \D^{\ed}$ which preserves $\R$-lines through the origin. Linear interpolation between $z$ and $\psi(z)$ defines a deformation retraction $r  \colon \C^{\ed} \times [0,1] \to \D^{\ed}$. Let $R \colon (\uutimesC)^{\ed} \times  [0,1] \to (\uutimesC)^{\ed}$ be the induced deformation retraction, which is constant in the $(\U_1)^{\ed}$ factor. Since $R$ preserves lines in $\C^{\ed}$, $R$ preserves $d_{\Gamma}^{-1}(\eta)$ and its stratification by coordinate hyperplanes.  As in Lemma \ref{lem:zbethomretract}, this induces a $\U_1^{\ed}$-equivariant retraction of $\mu_{\Gamma}^{-1}(\eta)$ onto $ \mu_{\Gamma}^{-1}(\eta) \cap (\Bet^{<})^{\ed}$. Passing to $\U_1$-quotients, we obtain the desired retraction. 
\end{proof}

Combining Proposition \ref{prop:betretract}, Theorem/Definition \ref{thm:basicnaht} and Lemma \ref{lem:dolbretract}, we see that $\Bet(\Gamma, \eta)$ also retracts onto $q_{\Gamma}^{-1}(0)$, viewed as a subset of $\Bet^<(\Gamma, \eta)$ via Theorem/Definition \ref{thm:basicnaht}.

\subsection{$P=W$} \label{sec:PW}

\begin{proposition} \label{prop: leftdoltobetti}
The following diagram commutes. 
\begin{equation}
	\label{leftdoltobetti}
	\begin{tikzcd} 
		\Bet(\Gamma \setminus e)  \arrow[r] &  \Bet(\Gamma) \\
		\Dol(\Gamma \setminus e)  \arrow[r] \arrow[u, "\frak{F}_{\Gamma \setminus e} "] &  
		\Dol(\Gamma) \arrow[u, "\frak{F}_{\Gamma}"]  
	\end{tikzcd}
\end{equation}
\end{proposition} 
\begin{proof}
The horizontal arrows can be defined in both cases by applying Lemma \ref{lem:deletioninclusionfromGMpoint} to the edge $e$. This expresses $\Bet(\Gamma \setminus e)$ and $\Dol(\Gamma \setminus e)$ as subquotients of  $\Dol^{\ed}$ and $\Bet^{\ed}$ respectively. The maps $\frak{F}_{\Gamma}$ and $\frak{F}_{\Gamma \setminus e}$ are induced from 
the product map $\frak{F}^{\ed} : \Dol^{\ed} \to \Bet^{\ed}$.
\end{proof} 

\begin{remark} \label{rem:smalldiagcommutessodolbcommutes2}
Using Proposition \ref{prop: leftdoltobetti} we could instead prove the commutativity 
of the $\Dol$ deletion maps $a^{\Dol}_e$
(Lemma \ref{lem:smalldiagcommutessodolbcommutes}) by importing 
the result from the corresponding result on the maps $a^{\Bet}_e$ 
(Lemma \ref{lem:gysincommutes}), or conversely. 
\end{remark}

\begin{corollary} \label{prop:delsareequal}
The map $\frak{F}_\Gamma^*$ identifies the Betti and Dolbeault deletion filtrations.
\end{corollary}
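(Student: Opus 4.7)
The plan is to bootstrap from the one-edge intertwining established in Corollary~\ref{lem:smalldiagcommutes} to the full filtration statement, using only the definition of the deletion filtration (Definition~\ref{def:delfiltration}) and the fact that each $\frak{F}_{\Gamma'}^*$ is an isomorphism. Recall that, by definition, $\delfilt_k\H^i(\Bet(\Gamma),\Q)$ is spanned by the images of the composite deletion maps $\H^{i-2k}(\Bet(\Gamma'),\Q)\otimes \Q(-k) \to \H^i(\Bet(\Gamma),\Q)$ as $\Gamma'$ ranges over connected subgraphs whose complement has size $\le k$ and contains no self-edges, and similarly for $\Dol$ (with the appropriate filtration shift in place of the Tate twist).

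First I would establish, by induction on $|\Gamma\setminus \Gamma'|$, the following intertwining for every admissible subgraph $\Gamma'\subset\Gamma$:
\[
\frak{F}_\Gamma^*\circ (a^{\Bet}_{e_r}\circ\cdots\circ a^{\Bet}_{e_1}) \;=\; (a^{\Dol}_{e_r}\circ\cdots\circ a^{\Dol}_{e_1})\circ \frak{F}_{\Gamma'}^*,
\]
where $\{e_1,\dots,e_r\}$ is any ordering of $\Gamma\setminus \Gamma'$. The base case $\Gamma'=\Gamma$ is trivial, and the inductive step is the content of the left-hand square of Corollary~\ref{lem:smalldiagcommutes} applied to the edge $e_r$: factor the composite through $\H^\bullet$ of the intermediate graph $\Gamma\setminus e_r$ and invoke the inductive hypothesis for the pair $(\Gamma\setminus e_r,\Gamma')$. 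The fact that the composite does not depend on the ordering of the $e_i$ is guaranteed on the Betti side by Corollary~\ref{cor:betdelcommutes} and on the Dolbeault side by Corollary~\ref{lem:smalldiagcommutessodolbcommutes}, so there is no ambiguity in the statement.

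Given this intertwining, identification of the filtrations is formal. Since $\frak{F}_{\Gamma'}^*$ is an isomorphism, it is in particular surjective, so the images of $\H^{\bullet}(\Bet(\Gamma'),\Q)$ and $\H^{\bullet}(\Dol(\Gamma'),\Q)$ in $\H^{\bullet}(\Bet(\Gamma),\Q)$ and $\H^{\bullet}(\Dol(\Gamma),\Q)$ respectively are matched term-for-term by $\frak{F}_\Gamma^*$. Taking spans over all admissible $\Gamma'$ with $|\Gamma\setminus\Gamma'|\le k$, we conclude that
\[
\frak{F}_\Gamma^*\bigl(\delfilt_k \H^{\bullet}(\Bet(\Gamma),\Q)\bigr)=\delfilt_k \H^{\bullet}(\Dol(\Gamma),\Q)
\]
for every $k$, as desired.

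There is no real obstacle here, since the substantive geometric input—that the restriction of $\frak{F}_\Gamma$ to the Dolbeault vanishing thimble factor is essentially $\frak{F}_{\Gamma\setminus e}$—was packaged into Corollary~\ref{lem:smalldiagcommutes}. The only minor bookkeeping point is matching the filtration shifts: the Betti Tate twist $(-1)$ and the Dolbeault filtration shift $\{-1\}$ both contribute a single unit of filtration when crossing an $a_e$, so the two inductions run in lockstep and the resulting filtrations are identified without any shift.
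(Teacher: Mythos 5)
Your proof is correct and follows essentially the same route as the paper's, which simply cites Corollary~\ref{lem:smalldiagcommutes} (commutativity of the left-hand square) together with the observation that the deletion filtrations are defined purely in terms of the deletion maps; you have just spelled out the implicit induction on $|\Gamma\setminus\Gamma'|$. One tiny imprecision in your closing remark: the deletion filtration of Definition~\ref{def:delfiltration} is normalized directly by the formula $D_{i-k}A^i(\Gamma)=\mathrm{Span}(\{\mathrm{image}(A(\Gamma'))\,|\,|\Gamma\setminus\Gamma'|=k\})$ and does not itself reference the Tate twist or the perverse shift, so there is nothing to "match" on that front — the shifts become relevant only later when comparing the deletion filtration to the weight and perverse Leray filtrations, not in establishing this corollary.
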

\begin{proof} 
The deletion filtrations are determined by Gysin maps associated to the horizontal
arrows of Diagram \eqref{leftdoltobetti}. 
\end{proof}

\begin{proposition} \label{prop:dolbdelequalspervLer}
$\delfilt_k \H^{\bullet}(\Dol(\Gamma)\homsep \Q) = P_{k}\H^{\bullet}(\Dol(\Gamma)\homsep \Q).$ In other words, the Dolbeault deletion filtration equals the perverse Leray filtration.
\end{proposition}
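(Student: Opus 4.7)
The plan is to deduce equality from the containment $\delfilt_k \subset P_k$ (Proposition \ref{prop:delbounded}) by showing that both filtrations have the same associated graded dimensions in each degree.

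First I would assemble the dimension data for the Dolbeault deletion filtration. By Corollary \ref{prop:delsareequal}, the Hodge map $\frak{F}_\Gamma^*$ identifies the Dolbeault deletion filtration on $\H^{\bullet}(\Dol(\Gamma),\Q)$ with the Betti deletion filtration on $\H^{\bullet}(\Bet(\Gamma),\Q)$. By Corollary \ref{cor:cksdelequalsbettidel}, the map $\CKSmap$ in turn identifies the Betti deletion filtration with the $\CKS$-deletion filtration on $\H^{\bullet}(\CKS(\Gamma),\C)$. Finally, Corollary \ref{cor:cksfilfromcksgrad} tells us that the $\CKS$-deletion filtration is induced by the $\CKS$-grading, so
\[ \dim \operatorname{gr}^\delfilt_k \H^i(\Dol(\Gamma),\Q) = \dim \H^i(\CKS_{2i - k}(\Gamma)). \]

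Next I would extract the same data for the perverse Leray filtration. By Theorem \ref{thm:abstisom}, there is an isomorphism $\H^{\bullet}(\Dol(\Gamma),\Q) \cong \H^{\bullet}(\CKS(\Gamma))$ that carries the perverse Leray filtration to the $\CKS$-filtration, which is again induced by the $\CKS$-grading. Consequently
\[ \dim \operatorname{gr}^P_k \H^i(\Dol(\Gamma),\Q) = \dim \H^i(\CKS_{2i - k}(\Gamma)), \]
matching the deletion-filtration dimensions term by term.

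To conclude, combine this with Proposition \ref{prop:delbounded}: the inclusion $\delfilt_k \H^i(\Dol(\Gamma),\Q) \subset P_k \H^i(\Dol(\Gamma),\Q)$ holds for every $i,k$, and the two sides have the same dimension in each bidegree (by the computations above and a straightforward induction on $k$, since $\dim \delfilt_k = \sum_{j \le k} \dim \operatorname{gr}^\delfilt_j$ and similarly for $P_k$). Hence the inclusion is an equality, proving $\delfilt_k \H^\bullet(\Dol(\Gamma),\Q) = P_k \H^\bullet(\Dol(\Gamma),\Q)$. There is essentially no obstacle here: all the substantive work has already been done in identifying both filtrations with the $\CKS$-filtration on the combinatorial model; the final step is purely formal.
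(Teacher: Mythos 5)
Your proof is correct and follows essentially the same chain of reasoning as the paper's: start from $\delfilt_k \subset P_k$ (Proposition \ref{prop:delbounded}), then transport the deletion filtration through $\frak{F}_\Gamma^*$ and $\CKSmap$ to the $\CKS$-filtration, and invoke Theorem \ref{thm:abstisom} to identify the perverse filtration with the $\CKS$-filtration as well. You spell out the final ``inclusion plus equal graded dimensions implies equality'' step, which the paper phrases more tersely as ``abstractly isomorphic,'' but the argument and the references are the same.
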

\begin{proof}
We know that the Dolbeault deletion filtration is bounded by the perverse Leray filtration by Corollary \ref{cor:delbounded}. Thus it is enough to show that the filtrations are abstractly isomorphic, i.e. that there exists an isomorphism of vector spaces taking the deletion filtration to the perverse Leray filtration. We produce this isomorphism via $\frak{F}$. 

Namely, by Corollary \ref{prop:delsareequal}, the Dolbeault deletion filtration is isomorphic to the Betti deletion filtration. In turn, the Betti deletion filtration is isomorphic to the $\CKS$ filtration by Proposition \ref{cksdelequalsbettidel}. Finally, Theorem \ref{thm:abstisom} shows that the $\CKS$ and perverse Leray filtrations are isomorphic. \end{proof}

\begin{remark}
To show Proposition \ref{prop:dolbdelequalspervLer} 
without appeal to the $\Dol \subset \Bet$ comparison, we could argue by induction if 
we fixed an isomorphism between the 
cohomology of the $\Dol$-space and the $\CKS$ complex which intertwines the deletion
maps.  This may be possible, since both the $\CKS$ complex and the deletion map are built from
nearby-vanishing cycle operations, but we have not done it here. 
\end{remark}

\begin{theorem}[$P=W$] \label{thm:wequals2p}
The map $\frak{F}_\Gamma^*$ identifies $W_{2k} \H^{\bullet}(\Bet(\Gamma)\homsep\Q)$ with $P_{k}\H^{\bullet}(\Dol(\Gamma)\homsep\Q)$. 
\end{theorem}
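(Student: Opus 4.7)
The proof is essentially a three-step chain assembled from results established earlier in the paper, so the plan is short and the main work has already been done. First I would recall that Proposition \ref{deletionisweight} identifies the weight filtration with (twice) the Betti deletion filtration: $W_{2k}\H^{\bullet}(\Bet(\Gamma),\Q) = \delfilt_k \H^{\bullet}(\Bet(\Gamma),\Q)$. This is the purely algebraic-geometric half of the statement, and relies on the identification $\CKS^\bullet(\Gamma,\C) \simeq \Omega^\bullet_{\Bet(\Gamma),D}$ from Theorem \ref{thm:quasiisom}.

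Next, I would invoke Corollary \ref{prop:delsareequal}, which says that the Hodge homeomorphism $\frak{F}_\Gamma$ identifies the Betti and Dolbeault deletion filtrations (this follows from the intertwining of the leftmost square of Diagram \ref{diag:monsterdiag}, using that both deletion filtrations are, by the universal property of Definition \ref{def:delfiltration}, determined purely by the deletion maps together with the normalization on graphs of loops and bridges; and that these normalizations manifestly agree via $\frak{F}_\Gamma^*$). Thus $\frak{F}_\Gamma^*$ carries $\delfilt_k\H^{\bullet}(\Bet(\Gamma),\Q)$ isomorphically onto $\delfilt_k\H^{\bullet}(\Dol(\Gamma),\Q)$.

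Finally, I would apply Proposition \ref{prop:dolbdelequalspervLer}, which says that the Dolbeault deletion filtration coincides with the perverse Leray filtration: $\delfilt_k \H^{\bullet}(\Dol(\Gamma),\Q) = P_k \H^{\bullet}(\Dol(\Gamma),\Q)$. Composing these three identifications yields
\[ \frak{F}_\Gamma^*\bigl(W_{2k}\H^{\bullet}(\Bet(\Gamma),\Q)\bigr) = \frak{F}_\Gamma^*\bigl(\delfilt_k\H^{\bullet}(\Bet(\Gamma),\Q)\bigr) = \delfilt_k\H^{\bullet}(\Dol(\Gamma),\Q) = P_k\H^{\bullet}(\Dol(\Gamma),\Q), \]
which is precisely the theorem.

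There is no genuine obstacle at this stage: all of the hard work sits in the three inputs being chained together. In particular, the genuine content is hidden in Proposition \ref{prop:dolbdelequalspervLer}, whose proof itself routes through the Betti side (it uses the weight-equals-deletion statement on the Betti side and the isomorphism $\H^\bullet(\Dol(\Gamma),\Q) \simeq \H^\bullet(\CKS(\Gamma),\C)$ respecting the perverse filtration, from Theorem \ref{thm:abstisom}). So the only real care needed in writing this proof is to cite the three ingredients in the correct order and to note that $\frak{F}_\Gamma^*$ is the isomorphism of underlying vector spaces along which all of these identifications are being made.
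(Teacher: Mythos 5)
Your proof is correct and matches the paper's own argument exactly: the paper also cites Proposition \ref{deletionisweight}, Corollary \ref{prop:delsareequal}, and Proposition \ref{prop:dolbdelequalspervLer} and chains them together in precisely this way.
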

\begin{proof}
We have identified both filtrations with the deletion filtrations on the respective spaces in Theorem \ref{deletionisweight} and Proposition \ref{prop:dolbdelequalspervLer}. The result thus follows from Corollary \ref{prop:delsareequal}.
\end{proof}

\subsection{Intertwining of deletion-contraction sequences} \label{sec:hodgedeletionintertwines}  

\begin{theorem} \label{Hodgeintertwinesdelcon}
The following diagram commutes:
\begin{equation} \label{bigdiag}
	\begin{tikzcd} 
		\ \arrow[r, "c_e^{\Bet}"] &  \ \H^{\bullet-2}(\Bet(\Gamma \setminus e) \homsep \Q)(-1) \arrow[d, "\frak{F}_{\Gamma \setminus e}^*"]
		\arrow[r, "a_e^{\Bet}"]  &  \H^{\bullet}(\Bet(\Gamma)\homsep \Q) \arrow[d, "\frak{F}_{\Gamma}^*"] \arrow[r, "b_e^{\Bet}"]   &  \H^{\bullet}( \Bet(\Gamma / e)\homsep \Q) \arrow[r, "c_e^{\Bet} "]  \arrow[d, "\frak{F}_{\Gamma / e}^*"]
		& \  \\
		\ \arrow[r, "c_e^{\Dol}"]  &  \H^{\bullet-2}(\Dol(\Gamma \setminus e) \homsep \Q) \{-1\} \arrow[r, "a_e^{\Dol}"] 
		& \H^{\bullet}(\Dol(\Gamma) \homsep \Q) \arrow[r, "b_e^{\Dol}"] 
		& \H^{\bullet}(\Dol(\Gamma / e) \homsep \Q) \arrow[r, "c_e^{\Dol} "]  & \ \\
	\end{tikzcd}
\end{equation}
\end{theorem}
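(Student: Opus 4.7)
By Lemma \ref{lem:smalldiagcommutes} (i.e.\ the upper half of the monster diagram \ref{diag:monsterdiag}), the leftmost and middle squares of (\ref{bigdiag}) already commute. The entire content of the theorem is therefore the commutativity of the rightmost square, i.e.\ the identity
\[
i_\epsilon^* \circ (\kappa_\circ^*)^{-1} \circ \frak{F}_{\Gamma, \relmap}^* \;=\; \frak{F}_{\Gamma/e}^*
\]
on $\H^\bullet(\Bet(\Gamma/e),\Q)$. The left-hand side is induced by a continuous map, well-defined up to homotopy since $\kappa_\circ$ is a homotopy equivalence:
\[
\Phi \colon \Dol(\Gamma/e) \xrightarrow{i_\epsilon} \Dol(\Gamma/e) \star_{\U_1} (\Dol \setminus \bS_\Dol) \xleftarrow{\kappa_\circ} \Dol(\Gamma/e) \UUCstar (\Dol \setminus \bS_\Dol) \xrightarrow{\frak{F}_{\Gamma, \relmap}} \Bet(\Gamma/e).
\]
Thus the plan reduces to producing a homotopy $\Phi \simeq \frak{F}_{\Gamma/e}$.

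First, I would verify that $\Phi$ is (up to equivariant homotopy) an $\H^1(\Gamma/e,\U_1)$-equivariant map covering the group isomorphism $\kappa_{\Gamma/e}\colon \H_1(\Gamma/e,\U_1\times \C)_{\eta/e}\to \H_1(\Gamma/e,\C^*\times \R)_{\eta/e}$ on residual moment maps. Each of the three building blocks of $\Phi$ is $\H^1(\Gamma/e,\U_1)$-equivariant by construction: $i_\epsilon$ by its construction as a section of the $\U_1$-equivariant retraction in Lemma \ref{lem:ressingfiber}; $\kappa_\circ$ by its description as a restriction of $\kappa$; and $\frak{F}_{\Gamma, \relmap}$ by Lemma \ref{lem:HodgerespGMstruct}. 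The compatibility with residual moment maps follows by unravelling the product decompositions $\H_1(\Gamma,\cdot)_\eta = \H_1(\Gamma/e,\cdot)_{\eta/e}\times(\cdot)$ of Diagram \ref{diag:momentrelationships}, the factorization $\Dol(\Gamma) = \Dol(\Gamma/e)\UUCstar \Dol$ of Lemma \ref{lem:uncontract}, and the identification $\Dol(\Gamma/e)\star_{\U_1} q^{-1}(\epsilon) \cong \Dol(\Gamma/e)$ coming from the trivialization $q^{-1}(\epsilon)\cong [\U_1\times\U_1]$ together with Lemma \ref{lem:bulletunit}.

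The core step is then the following uniqueness statement: any two $\H^1(\Gamma/e,\U_1)$-equivariant continuous maps $\Dol(\Gamma/e)\to \Bet(\Gamma/e)$ covering $\kappa_{\Gamma/e}$ on residual moment maps are equivariantly homotopic. For the basic spaces, this is essentially the content of the non-uniqueness statement in Lemma \ref{lem:tateNAHT}: away from the fixed point $\mathbf{n}$, any two such $\U_1$-equivariant maps $\Dol\to \Bet^<$ are isomorphisms of principal $\U_1$-bundles over $\U_1\times\D\setminus 0_\Dol$, and the space of such isomorphisms has connected components indexed by $\H^1(\U_1\times\D\setminus 0_\Dol,\Z)=\Z$, with the integer pinned down by the common local model at the fixed point (via the equivariant differentiable slice theorem). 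Applying the functoriality of the $\star$-construction (Lemma \ref{lem:functoriality}) promotes this basic equivariant homotopy to the graph-level statement. The main obstacle is to handle the multiple fixed points of $\H^1(\Gamma/e,\U_1)$ acting on $\Dol(\Gamma/e)$ (indexed by spanning trees of $\Gamma/e$, by Proposition \ref{pinchdeg} applied componentwise) simultaneously: one must work in small equivariant slices around each fixed locus using the slice theorem, verify that both $\Phi$ and $\frak{F}_{\Gamma/e}$ have matching local normal form there, and glue the resulting local equivariant homotopies into a single global one via an equivariant partition of unity.
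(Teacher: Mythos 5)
Your reduction of the theorem to the commutativity of the rightmost square, and further to the homotopy statement $\Phi \simeq \frak{F}_{\Gamma/e}$, is exactly the paper's reduction (this is Proposition \ref{diagramsurgery}, reformulated in Proposition \ref{prop:isotopicembeddings}). So the setup is right. The problem is in the central step: the proposed uniqueness statement is false.

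Concretely, you claim that any two $\H^1(\Gamma/e, \U_1)$-equivariant maps $\Dol(\Gamma/e) \to \Bet(\Gamma/e)$ covering $\kappa_{\Gamma/e}$ on residual moment maps are equivariantly homotopic, and you want this already in the basic case $\Gamma = \boing$. But two such maps $\Dol \to \Bet^<$ differ by a gauge transformation of the principal $\U_1$-bundle over $\U_1 \times \D$, i.e.\ by a map $g \colon \U_1 \times \D \to \U_1$, and $\pi_0\bigl(\mathrm{Map}(\U_1 \times \D, \U_1)\bigr) \cong \H^1(\U_1 \times \D \homsep \Z) \cong \Z$ is nonzero. You argue that the integer is ``pinned down by the common local model at the fixed point,'' but this cannot work: the generator of $\H^1(\U_1\times\D\homsep\Z)$ is carried by the global $\U_1$-factor of the base, not by a small sphere around $0_\Dol$. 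The restriction of $g$ to a small punctured neighborhood of the fixed point lands in $\mathrm{Map}(S^2, \U_1)$, which is connected, so the local model near the fixed point sees nothing of the degree of $g$. This is consistent with the paper's own remark after Lemma \ref{lem:HodgerespGMstruct}, which emphasises that $\frak{F}_{\Gamma}$ is not unique and that the full group of smooth maps $\H_1(\Gamma\homsep\U_1\times\C) \to \H^1(\Gamma\homsep\U_1)$ acts transitively on the choices; that group is disconnected, so different choices need not be homotopic. Thus no abstract uniqueness principle is available, and the further complications you flag for multiple fixed points never even get a chance to matter.

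The paper instead produces the homotopy $\Phi \simeq \frak{F}_{\Gamma/e}$ directly, by exhibiting specific features shared by the two maps rather than invoking any uniqueness. After the diagram manipulations of Subsection \ref{subsec:diaspace}, the two compositions $F_+$ and $F_-$ to be compared both factor through explicit sections $\tilde{f}_+, f_- \colon \bY \to \Bet \setminus \bS_\Bet$ whose images lie in the unit section of the trivial $\U_1$-bundle over $(\D^1 \times \U_1) \setminus (\R^{\leq 0} \times 1)$, and whose projections to the $\U_1$-factor both coincide with $\mu_{\U_1}$. Retracting the $\D^1$-coordinate to $\epsilon$, holding the $\U_1$-coordinate fixed, gives an explicit isotopy from $\tilde{f}_+$ to $f_-$ staying in the moment fibre (Proposition \ref{prop:homotopicatlast}), which descends to the required homotopy after taking the $\U_1$-quotient (Lemma \ref{smallcommutery}). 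So the fact that $\Phi$ and $\frak{F}_{\Gamma/e}$ lie in the same homotopy class is a genuine coincidence verified by computation, not a consequence of there being only one homotopy class. To repair your argument you would in effect have to compute the gauge transformation relating $\Phi$ to $\frak{F}_{\Gamma/e}$ and show it has degree zero, which is precisely what the paper's concrete isotopy accomplishes.
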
 
\begin{proof}
We write 
$t^{\Dol}_e$ for the map 
given by the composition $\Dol(\Gamma) \to \H_1(\Gamma, \uutimesC) \to \uutimesC$, 
where the second map extracts the coefficient of the edge $e$. 
Similarly have a map 
$t^{\Bet}_e$ from a composition 
$\Bet(\Gamma) \to \H_1(\Gamma, \C^* \times \R) \to \C^* \times \R$.  
Because  $\frak{F}_{\Gamma}$ covers the map $\momenthomomorphism_{\Gamma}: \H_1(\Gamma, \uutimesC) \to \H_1(\Gamma, \C^* \times \R)$, 
it follows that we may fill in the dashed arrow below: 

\begin{equation}
	\begin{tikzcd} \label{eq:minidiagdolbtobetti}
		\Bet(\Gamma / e) = \Bet(\Gamma / e) \star_{\Gm} (\Bet \setminus \bS_\Bet) \arrow[r, equals]  & 
		\{t^{\Bet}_e \notin 1 \times \R^{<0} \} \arrow[r, hook] &
		\Bet(\Gamma / e) \star_{\Gm} \Bet = \Bet(\Gamma) \\
		\Dol(\Gamma / e) \UUCstar (\Dol \setminus \bS_\Dol) \arrow[r, equals]  & 
		\{t^{\Dol}_e \neq 1 \times 0 \}  \arrow[r, hook] \arrow[u, hook, dashed]
		&
		\Dol(\Gamma / e) \UUCstar\Dol  = \Dol(\Gamma) \arrow[u, hook, "\frak{F}_{\Gamma}"]  
	\end{tikzcd}
\end{equation}

From \eqref{eq:minidiagdolbtobetti} we obtain a map of long exact sequences of pairs
(taking notations from Section \ref{sec:LESpair} and using in particular Lemma \ref{lem:bridgetothecoastlemma}),  
which we compose with Diagram \eqref{pairstodolb}, to obtain: 
\begin{equation} \label{diag:monsterdiag}
	\hspace{-10mm}
	\begin{tikzcd}
		\H^{\bullet-2}(\Bet(\Gamma \setminus e)\homsep \Q)  \arrow[d, "(\frak{F}_{\Gamma}|_{\Dol(\Gamma / e) \UUCstar \bS_\Dol})^*"]  \arrow[r, "a^{\Bet}_e"]  & \H^{\bullet}(\Bet(\Gamma)\homsep \Q)  \arrow[d, "\frak{F}_{\Gamma}^*"] \arrow[r, "b^{\Bet}_e"] & \H^{\bullet}(\Bet(\Gamma / e), \Q) \arrow[d, "(\frak{F}_{\Gamma, \operatorname{res}})^*"] \arrow[r, "c^{\Bet}_e"] & \
		\\	\H^{\bullet-2}(\Dol(\Gamma / e) \UUCstar \bS_\Dol \homsep \Q) \arrow[r, "A^{\Dol}_e"] \arrow[d, equals] 
		& \H^{\bullet}(\Dol(\Gamma)\homsep \Q) \arrow[r, "B^{\Dol}_e"] 
		& \H^{\bullet}(\Dol(\Gamma / e) \UUCstar (\Dol \setminus \bS_\Dol) \homsep \Q) \arrow[r, "C^{\Dol}_e"]  \arrow[d, " i_\epsilon^* (\kappa_\circ^*)^{-1}"] & \ \\
		\H^{\bullet-2}(\Dol(\Gamma \setminus e),\Q) \arrow[r, "a^{\Dol}_e"] 
		& \H^{\bullet}(\Dol(\Gamma),\Q) \arrow[u, equals] \arrow[r, "b^{\Dol}_e"] 
		&   \H^{\bullet}(\Dol(\Gamma / e), \Q) \arrow[r, "c^\Dol_e"]  & \ \\
	\end{tikzcd} 
\end{equation}
Proposition \ref{prop: leftdoltobetti} implies that the composition of the left vertical arrows
is simply $\frak{F}_{\Gamma \setminus e}^*$.  To complete the 
proof of the theorem, it remains only to show that 
$\frak{F}_{\Gamma / e}^* =  i_\epsilon^* (\kappa_\circ^*)^{-1} \frak{F}_{\Gamma, \operatorname{res}}^*$. 

We will consider the following diagram. 
\begin{equation} \label{diagextended}
	\begin{tikzcd}
		\Bet(\Gamma / e) \arrow[rr, equals]  
		& 
		& \Bet(\Gamma / e) \star_{\Gm} (\Bet \setminus \bS_\Bet) \arrow[r, "\kappa' "]
		&  \Bet(\Gamma / e) \star_{\U_1} (\Bet \setminus \bS_\Bet)
		\\
		\Dol(\Gamma / e) \arrow[u, "\frak{F}_{\Gamma /e }"] \arrow[r, "i_\epsilon"] 
		& \Dol(\Gamma / e)  \star_{\U_1} (\Dol \setminus \bS_\Dol) 
		& \arrow[l, "\kappa_\circ"] \Dol(\Gamma / e) \UUCstar (\Dol \setminus \bS_\Dol) \arrow[u, "\frak{F}_{\Gamma}"]  \arrow[r, "\kappa_\circ"]
		&    \Dol(\Gamma / e)  \star_{\U_1} (\Dol \setminus \bS_\Dol)  \arrow[u, "\frak{F}_{\times, \operatorname{res}}"]
	\end{tikzcd}
\end{equation}
We have not yet introduced the maps $\kappa'$ and $\frak{F}_{\times, \operatorname{res}}$
or explained how we can $\star_{\U_1}$ on the upper left corner; we do so now. 
Recall from Proposition \ref{prop:betproperties} \eqref{other structure} that 
$\Bet$ can also be regarded as a $(\U_1, \C^* \times \R =
\uutimesC)$ space; the $\Bet(\Gamma)$ etc. spaces have corresponding structures. 
Moreover, the $\star_{(\C^*, \C^*)}$ and $\star_{(\U_1, \uutimesC)}$ type convolutions
agree for these spaces, by Kempf-Ness.  In particular, 
$$\Bet(\Gamma / e) = \Bet(\Gamma / e) \star_{\Gm} (\Bet \setminus \bS_\Bet) \stackrel{\phi}{=} 
\Bet(\Gamma / e) \star_{\uutimesC} (\Bet \setminus \bS_\Bet)$$  
The map $\kappa'$ is defined to be
the natural map from a $\star_{\uutimesC}$ product to a $\star_{\U_1}$ product. 
The map $\frak{F}_{\times, \operatorname{res}}$ the restriction of the pullback map
obtained from the $\star_{\U_1}$ version of \eqref{eq:minidiagdolbtobetti}. 
It is clear from the definitions that the left hand square of \eqref{diagextended} 
commutes. 

The map $\frak{F}_{\times, \operatorname{res}}$ is an embedding; we 
will show it is a homotopy equivalence.  By construction, the retraction $\Bet \to \Bet^{<}$ given by Lemma \ref{lem:zbethomretract} is a map of $(\uu, \uu)$ spaces. The same is true for the induced retraction $\Bet \setminus \bS_\Bet \to \Bet^{<} \setminus (\bS_{\Bet} \cap \Bet^{<})$. Likewise, the retraction $\Bet(\Gamma / e) \to \Bet^{<}(\Gamma / e)$ is a map of $\H^1(\Gamma /e, \uu) \times \H_1(\Gamma / e, \uu)$-spaces. The product of these retracts induces a retraction of $\Bet(\Gamma / e) \star_{\uu} (\Bet \setminus \bS_\Bet)$ onto the image of $\frak{F}_{\times, \operatorname{res}}$.  
In particular, $\frak{F}_{\times, \operatorname{res}}^*$ is an isomorphism. 

We claim all the maps in Diagram \eqref{diagextended} induce isomorphisms on cohomology. 
Indeed, we have seen this now for all but $\kappa'$, for which it follows by commutativity
of the diagram. 

We would be finished if the maps $\kappa' \circ \frak{F}_{\Gamma / e}$ and 
$\frak{F}_{\times, \operatorname{res}} \circ i_\epsilon$ were equal; unfortunately, 
they are not.  To complete the proof of the theorem, we will show these
two maps are homotopic. 

Recall 
$\Bet(\Gamma / e) \star_{\U_1} (\Bet \setminus \bS_\Bet)$ is obtained from some moment fiber
$\mu_{\U_1}^{-1}(\zeta) \subset \Bet(\Gamma / e) \times (\Bet \setminus \bS_{\Bet})$ by a free $\U_1$ quotient. 
To distinguish the notation, 
we will write $\mu_{\U_1}^{-1}(\zeta)_\Dol \subset \Dol(\Gamma / e)  \times \Dol$ for the corresponding moment fiber in the product of Dolbeault spaces.

Define:
\begin{eqnarray*}
	\tilde{f}_+: \Bet(\Gamma / e) & \to &  [\Gm \times \Gm] = \Bet \setminus \bS_\Bet \\
	y & \mapsto & (1, \zeta / \mu_{\Gm}(y))
\end{eqnarray*} 
Consider $q^{-1}(\epsilon) \subset \Dol$ where $\epsilon \in \D$ is a small positive real number. We have an isomorphism $q^{-1}(\epsilon) \cong [\U_1 \times \U_1]$ as a $(\U_1, \U_1)$-manifold.

Any two such isomorphisms differ by multiplication by a smooth section of the moment map $[\U_1 \times \U_1] \to \U_1$. In particular, there is a unique such isomorphism which, when composed with $\frak{F} : \Dol \to \Bet$, intertwines the coordinates of $\U_1 \times \U_1$ with the angle coordinates of $\C^* \times \C^*: \frak{F}(e^{i\theta_1}, e^{i\theta_2}) = (r_1e^{i\theta_1}, r_2 e^{i \theta_2} ) = (x,xy+1)$. Note that this is automatic for the second coordinate by the construction of $\frak{F}$, but not for the first coordinate.

Define:
\begin{eqnarray*}
	\tilde{f}_-  \colon \Dol(\Gamma / e) & \to & [\U_1 \times \U_1] \xrightarrow{\sim} q^{-1}(\epsilon) \subset \Dol \setminus \bS_\Dol \\ 
	x & \mapsto & (1, \zeta / \mu_{\U_1}(x)_\Dol)
\end{eqnarray*} 
We define a similar map $f_-  \colon \Bet(\Gamma / e) \to q^{-1}(\epsilon)$ by the same formula. 

It follows from the definitions that the following diagram commutes, excluding the dashed ``$=$".  

\begin{equation} \label{diag:escher}
	\begin{tikzcd}
		\Dol(\Gamma / e) \arrow[d, "id \times \tilde{f}_-"] \arrow[r, "\frak{F}_{\Gamma / e}"] \arrow[bend right=80, dd, "i_{\epsilon}", swap] & \Bet(\Gamma / e)
		\arrow[r, dashed, equals] \arrow[d, "id \times f_-"] & \Bet(\Gamma / e)  \arrow[d, "id \times \tilde{f}_+", swap]  
		\arrow[dd, bend left = 60, equals] \\
		\arrow[d] \mu^{-1}_{\U_1}(\zeta)_{\Dol}  \arrow[r, "\frak{F}_{\Gamma / e} \times \frak{F}"]  & \arrow[d] \mu_{\U_1}^{-1}(\zeta) & \arrow[l] \arrow[d] \mu_{\C^*}^{-1}(\zeta) \\
		\Dol(\Gamma / e) \star_{\U_1} (\Dol \setminus \bS_{\Dol}) \arrow[r, "\frak{F}_{\times, \operatorname{res}}"]  & \Bet(\Gamma / e)  \star_{\U_1} (\Bet \setminus \bS_\Bet) &  \arrow[l, "\kappa' ", swap] \Bet(\Gamma / e)  \star_{\C^*} (\Bet \setminus \bS_\Bet) \\
	\end{tikzcd}
\end{equation} 

To construct a homotopy between the compositions of the maps around the outside of Diagram \ref{diag:escher}, 
it now suffices to  show that the maps $\tilde{f}_+,f_-$ are homotopic via a family of maps $ \Bet(\Gamma / e)  \to \Bet \setminus \bS_\Bet$ whose graph lies in $\mu_{\U_1}^{-1}(\zeta)$.

We pick a deformation retraction $r_t$ of $\C$ onto $\epsilon$, chosen so that it restricts to a deformation retraction of $\C \setminus \R^{\leq 0}$ onto $\epsilon$. For instance, we may suppose $\epsilon$ lies on the positive real axis, and define $r_t(z) = t\epsilon + (1-t)z$.

The space $\Bet \setminus \bS_\Bet$ is a $\U_1$-bundle over $\C \times \uu \setminus (\R^{\leq 0} \times 1)$, with a trivialisation given by $\arg(x)$.
Using this trivialisation, we can lift $r_t$ to a deformation retraction $\tilde{r}_t$ of $\Bet \setminus \bS_\Bet$ onto $q^{-1}(\epsilon)$ which keeps both $\uu$-coordinates unchanged. 

By definition, both $\tilde{f}_+$ and $f_-$ have image contained in the image of the unit section of the $\U_1$ bundle, and their projections to the $\U_1$-factor both coincide with $\zeta / \mu_{\U_1} = \zeta / e^{i\arg(\mu_{\C^*})}$.
Then $\tilde{r}_t \circ \tilde{f}_+$ is the desired homotopy from $\tilde{f}_+$ to $f_-$. 
\end{proof}

\begin{corollary} \label{cor:dolbdelconstricpreserves}
The Dolbeault deletion-contraction sequence strictly preserves the perverse Leray filtration.
\end{corollary}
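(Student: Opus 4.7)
The plan is to deduce strict compatibility on the Dolbeault side by transporting it from the Betti side via the intertwining we have just established. All the key ingredients are already in place, so the argument is essentially a bookkeeping exercise.

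First, recall that Theorem~\ref{thm:weightdcs} asserts the Betti deletion-contraction sequence is strictly compatible with the weight filtration on each term (with the appropriate Tate twist on the $\Bet(\Gamma\setminus e)$ term). Next, Theorem~\ref{thm:wequals2p} gives an identification $\frak{F}_\Gamma^*\colon W_{2k}\H^\bullet(\Bet(\Gamma),\Q) = P_k\H^\bullet(\Dol(\Gamma),\Q)$, and the analogous identifications for $\Gamma/e$ and $\Gamma\setminus e$ (where on the latter the Tate twist $(-1)$ on the Betti side corresponds precisely to the shift $\{-1\}$ of the perverse Leray filtration on the Dolbeault side). Finally, Theorem~\ref{Hodgeintertwinesdelcon} provides a commutative diagram of long exact sequences in which the vertical maps $\frak{F}_\Gamma^*$, $\frak{F}_{\Gamma/e}^*$, $\frak{F}_{\Gamma\setminus e}^*$ are isomorphisms intertwining the Betti DCS with the Dolbeault DCS.

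Given these, the proof is immediate: for each of the three maps $a_e^{\Dol}$, $b_e^{\Dol}$, $c_e^{\Dol}$, the commutative square
\begin{equation*}
\begin{tikzcd}
\H^{\bullet}(\Bet(-),\Q) \arrow[r] \arrow[d, "\frak{F}^*"'] & \H^{\bullet}(\Bet(-),\Q) \arrow[d, "\frak{F}^*"] \\
\H^{\bullet}(\Dol(-),\Q) \arrow[r] & \H^{\bullet}(\Dol(-),\Q)
\end{tikzcd}
\end{equation*}
has isomorphisms for vertical arrows which carry the weight filtration $W_{2\bullet}$ to the perverse Leray filtration $P_\bullet$ (with the appropriate shifts). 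Since the top horizontal map strictly preserves $W_{2\bullet}$ by Theorem~\ref{thm:weightdcs}, the bottom horizontal map strictly preserves $P_\bullet$. Explicitly, given $\alpha \in P_k\H^\bullet(\Dol(\Gamma),\Q) \cap \mathrm{image}(a_e^{\Dol})$, lift $\alpha$ to a class $\tilde\alpha \in W_{2k}\H^\bullet(\Bet(\Gamma),\Q)$ via $\frak{F}_\Gamma^*$; by strict compatibility on the Betti side, $\tilde\alpha = a_e^{\Bet}(\tilde\beta)$ for some $\tilde\beta \in W_{2(k-1)}\H^{\bullet-2}(\Bet(\Gamma\setminus e),\Q)(-1)$; then $\beta \colonequals (\frak{F}_{\Gamma\setminus e}^*)(\tilde\beta) \in P_{k-1}\H^{\bullet-2}(\Dol(\Gamma\setminus e),\Q) = P_k(\H^{\bullet-2}(\Dol(\Gamma\setminus e),\Q)\{-1\})$, and $a_e^{\Dol}(\beta)=\alpha$ by the intertwining. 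The argument for $b_e^{\Dol}$ and $c_e^{\Dol}$ is identical.

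There is no real obstacle here, since Theorems~\ref{thm:weightdcs}, \ref{thm:wequals2p}, and~\ref{Hodgeintertwinesdelcon} do all the heavy lifting. The one conceptual point worth highlighting, as the remark preceding the corollary already stresses, is that this strict compatibility on the Dolbeault side does \emph{not} follow from abstract nonsense about the nearby-vanishing triangle or the perverse $t$-structure; it can only be accessed after one has both the $\Bet$--$\Dol$ comparison and the identification of the weight/perverse filtrations with the deletion filtration.
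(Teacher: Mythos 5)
Your proof is correct and follows exactly the same route as the paper's own argument: transport the strict compatibility from the Betti deletion-contraction sequence (Theorem~\ref{thm:weightdcs}) via the intertwining isomorphisms of Theorem~\ref{Hodgeintertwinesdelcon} and the filtration identification of Theorem~\ref{thm:wequals2p}. The explicit diagram chase you spell out is just the unpacking of the paper's one-line ``follows immediately from...'' and is entirely accurate.
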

\begin{proof}
Follows immediately from Theorem \ref{Hodgeintertwinesdelcon}, Theorem \ref{thm:wequals2p} and the corresponding fact for the Betti deletion-contraction sequence. \end{proof}

\appendix

\section{Recollections on cohomology and filtrations} 

\subsection{The long exact sequence of a pair} \label{sec:LESpair} Let $A \xrightarrow{f} X$ be an embedding of topological spaces. The pullback $\H^{\bullet}(X, \Q) \xrightarrow{f^*} \H^{\bullet}(A, \Q)$ extends to a long exact sequence 
$$... \to \H^{\bullet}(X, A, \Q) \xrightarrow{\comap(f)^*} \H^{\bullet}(X, \Q) \xrightarrow{f^*} \H^{\bullet}(A, \Q) \to ...$$ where $\comap(f)$ is the map of pairs $X, \emptyset \to X, A$. Given a second embedding $B \xrightarrow{} Y$ and a map of pairs given by a commutative diagram
\begin{equation} \label{diag:generalmapofpairs}
\begin{tikzcd}
	A \rar \arrow[d, "F_{\operatorname{res}}"] & X \arrow[d, "F"] \\
	B \rar & Y \\
\end{tikzcd}
\end{equation}
we obtain a map of long exact sequences of pairs
\begin{equation}
\begin{tikzcd}
	\  \rar & \H^{\bullet}(Y,B,\Q) \arrow[d, "F_{\relmap}^*"] \rar & \H^{\bullet}(Y,\Q) \arrow[d, "F^*"] \rar & \H^{\bullet}(B,\Q) \arrow[d, "F_{\operatorname{res}}^*"] \rar & \ \\
	\ \rar & \H^{\bullet}(X,A,\Q) \rar & \H^{\bullet}(X,\Q) \rar & \H^{\bullet}(A,\Q) \rar & \ \\
\end{tikzcd}
\end{equation} 
\begin{lemma} \label{lem:bridgetothecoastlemma}
If $A,B$ are codimension $d$ submanifolds of $X,Y$, we can identify $F_{\relmap}^*$ with \linebreak $(F|_{X \setminus A})^* \colon \H^{\bullet - d}(Y \setminus B,\Q) \to \H^{\bullet - d}(X \setminus A,\Q)$.
\end{lemma}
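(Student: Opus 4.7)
My plan is to establish the identification via the Thom/Gysin isomorphism applied to the codimension $d$ closed submanifolds of $X$ and $Y$. The formula and intended application (see Diagram~\ref{diag:monsterdiag}) make clear that the hypothesis should be read as: $D := X \setminus A$ and $E := Y \setminus B$ are codimension $d$ closed (oriented) submanifolds, with $F$ restricting to a map $F|_D \colon D \to E$. I will proceed under this reading.

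First, I would recall the Thom isomorphism $\H^{\bullet}(X, A) \cong \H^{\bullet - d}(D)$. The cleanest route is sheaf-theoretic: let $i \colon D \hookrightarrow X$ and $j \colon A \hookrightarrow X$ denote the closed and open inclusions. The open--closed distinguished triangle
\[
i_! i^! \Q_X \to \Q_X \to j_* j^* \Q_X \xrightarrow{[1]}
\]
yields, upon taking hypercohomology, the long exact sequence of the pair, and in particular $\H^{\bullet}(X, A) \cong \H^{\bullet}(D; i^! \Q_X)$. Since $D$ is a smooth oriented closed submanifold of codimension $d$, we have $i^! \Q_X \cong \Q_D[-d]$, producing the asserted isomorphism. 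Alternatively, one could argue classically via excision onto a tubular neighborhood of $D$ followed by the Thom isomorphism for the (oriented) normal bundle.

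Next, I would apply naturality. The map $F \colon X \to Y$ extends to a commutative diagram of triples $(D, X, A) \to (E, Y, B)$, and the sheaf-theoretic construction above is functorial in such data: one gets a morphism of open--closed triangles, and the identification $i^! \Q_X \cong \Q_D[-d]$ is natural under smooth maps of codimension-$d$ pairs. The outcome is a commutative square
\begin{equation*}
\begin{tikzcd}
\H^{\bullet}(Y, B) \arrow[r, "F_{\relmap}^*"] \arrow[d, "\cong"'] & \H^{\bullet}(X, A) \arrow[d, "\cong"] \\
\H^{\bullet - d}(Y \setminus B) \arrow[r, "(F|_D)^*"] & \H^{\bullet - d}(X \setminus A)
\end{tikzcd}
\end{equation*}
whose bottom row is by definition $F_{X \setminus A}^*$, giving the claim.

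The main obstacle, if one proceeds classically rather than sheaf-theoretically, is the choice of compatible tubular neighborhoods $N_D \subset X$ and $N_E \subset Y$ with $F(N_D) \subset N_E$; this requires shrinking $N_D$ using continuity of $F$ (and a locally finite cover in the noncompact case), but causes no genuine trouble. The sheaf-theoretic formulation sidesteps this issue entirely, so I would prefer that route. Either way the content is standard algebraic topology, and no essential difficulty arises.
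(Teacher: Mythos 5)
You correctly caught the notational slip in the statement: consistent with the use in Diagram~\ref{diag:monsterdiag}, the codimension-$d$ closed submanifolds should be $X\setminus A$ and $Y\setminus B$, not $A,B$. The paper gives no proof of this lemma, so I can only assess your argument on its own terms.

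There is a genuine gap. You assert that ``the identification $i^!\Q_X\cong\Q_D[-d]$ is natural under smooth maps of codimension-$d$ pairs'' and conclude that the square commutes. That naturality is false without an extra hypothesis. The Thom/Gysin isomorphism $\H^\bullet(X,X\setminus D)\cong\H^{\bullet-d}(D)$ is cup product with the Thom class $u_D\in\H^d(X,X\setminus D)$, and commutativity of your square is precisely the statement $F^*u_E=u_D$; this holds only when $F$ is transverse to $E$, i.e.\ $D=F^{-1}(E)$ and $dF$ induces an orientation-preserving isomorphism from the normal bundle of $D$ onto the pullback of the normal bundle of $E$. Without that the lemma simply fails: take $X=Y=\C$, $D=E=\{0\}$, $F(z)=z^2$. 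Then $F(X\setminus D)\subset Y\setminus E$ and $D=F^{-1}(E)$, but under the identifications $\H^2(\C,\C\setminus 0)\cong\Q\cong\H^0(\{0\})$ the map $F^*_{\relmap}$ is multiplication by the local degree $2$, whereas $(F|_{\{0\}})^*$ is the identity. So the lemma needs a transversality (or local-diffeomorphism) hypothesis; in every application in the paper it holds, since $\frak{F}_\Gamma$ is a diffeomorphism onto its image, but your proof should state and use the hypothesis rather than asserting that ``no essential difficulty arises''---the tubular-neighborhood shrinking you flag is the benign part, while the Thom-class comparison you glide past is exactly where the content lies.
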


\subsection{The residue exact triangle} \label{app:residuetriang}
Recall that if $X$ is a topological space and $p \colon V \to X $ is a closed subset, and $q \colon X\setminus V \to X$
its open complement, then there is the Verdier dual exact triangle of a pair: 
$$p_* p^! \C_V \to \C_X \to q_* \C_{X \setminus V} \xrightarrow{[1]}$$

In case $X$ is a smooth complex manifold and $D$ is a smooth divisor, this becomes

$$ \C_D[-2] \to \C_X \to q_* \C_{X \setminus D} \xrightarrow{[1]}$$ 

We can take analytic 
de Rham resolutions of the constant sheaves to obtain  

\begin{equation} \label{residue exact triangle} 
p_* \Omega_{D}^\bullet [-2] \to \Omega_X^\bullet \to q_* \Omega^\bullet_{X \setminus D} \xrightarrow{[1]}
\end{equation} 

In this setting one can replace $q_* \Omega^\bullet_{X \setminus D}$ by the complex of differential forms with log poles along $D$, 
which we denote $\Omega^\bullet_{X \langle D \rangle}$.  Having made this replacement, 
the connecting map may be identified with the residue (see e.g. \cite[II.3]{D0}).  Thus 
$$\Omega_X^\bullet \xrightarrow{\sim} {}^D \Omega_{X}^\bullet := \mathrm{Cone}(\Omega^\bullet_{X\langle D \rangle} \xrightarrow{res}  p_* \Omega^\bullet_D[-1])[-1]$$
where the map is the inclusion of forms into log forms $\Omega_X^\bullet \hookrightarrow  \Omega^\bullet_{X\langle D \rangle}$.  
Note that ${}^D \Omega_{X}^\bullet$ (tautologically) 
allows us to replace the residue exact triangle \eqref{residue exact triangle} with the (quasi-isomorphic) triangle associated to the exact sequence of complexes
\begin{equation} \label{residue exact sequence}
0 \to p_* \Omega^\bullet_D[-2] \to  {}^D \Omega_{X}^\bullet \to \Omega^\bullet_{X\langle D \rangle} \to 0
\end{equation} 

\vspace{2mm}

More generally, if $X$ is a topological space with an increasing filtration by closed subsets 
$X_n \subset X_{n-1} \subset \cdots \subset X_0 = X$, then we may iterate this procedure to obtain
an expression for $\C_{X}$ as a twisted complex on the sum of shifts of the star pushforwards of 
the $\C_{X_i \setminus X_{i+1}}$.   

When $X$ is a complex manifold, $D = \bigcup D_k$ is a  normal crossings divisor, 
and $X_i$ above is the codimension $i$ intersections of the $D_k$, this complex can be 
explicitly described in terms of differential forms with log poles, as in the case of a single divisor 
above. This construction is presumably standard, 
but we did not find a convenient reference, so give some details here.  For convenience we assume $X$ affine 
and pass to termwise global sections of de Rham complexes. 

Again we write $\Omega^\bullet_{X\langle D \rangle}$ for the complex of holomorphic differential forms 
with log poles along $D$.  Recall this means that 
in coordinates where $D$ is cut out by $\prod_{i} z_i = 0$, the sheaf
$\Omega^1_{X\langle D \rangle}$ is locally free and generated over $\Omega^1_X$ by $d \log z_i$ and 
$\Omega^\bullet_{X \langle D \rangle}$ is the exterior algebra on $\Omega^1_{X\langle D \rangle}$, 
here equipped with the de Rham differential.

We fix  notation for indexing divisors. 
Given  $J \subset \{1,...,n\}$, let $\generalstratum_\generalsubedges$ be the intersection of components $D_j$ for $j \in J$, and $D^J$ be their union. Write $J^c$ for the complement of $J$.
Note that $\closedpart_J \colonequals \generalstratum_\generalsubedges \cap D^{J^c}$ is a normal crossings divisor in $\generalstratum_\generalsubedges$; let $\openpart_J \colonequals \generalstratum_\generalsubedges \setminus \closedpart_J$ be its complement.

Suppose $j \notin J$. We take the residue of a form in $ \Omega^{\bullet}_{\generalstratum_\generalsubedges \langle \closedpart_J \rangle}$ along the component $D_j \cap D^J$ of $\closedpart_J$:  
\[ \operatorname{res}_j  \colon \Omega^{l}_{\generalstratum_\generalsubedges \langle \closedpart_J \rangle} \to \Omega^{l-1}_{D_{J \cup j} \langle \closedpart_{J \cup j} \rangle}. \]

\begin{definition} \label{omega x d} 
Let
${}^D\Omega_{X}^{2k, l} \colonequals \bigoplus_{|J|=k} \Omega^{l}_{\generalstratum_\generalsubedges \langle \closedpart_J \rangle}$
and ${}^D\Omega^\bullet_{X} = \bigoplus {}^D \Omega_{X}^{2k, l}$.  The latter carries 
the de Rham differential $d_{dR}$ of bidegree $(0,1)$, 
and an endomorphism $d_{\operatorname{res}}$, given by the sum of all residue maps, 
of bidegree $(2, -1)$. 
\end{definition}

\begin{lemma}
We have $d_{dR}^2 = d_{\operatorname{res}}^2 = (d_{dR} + d_{\operatorname{res}})^2 = 0$.
\end{lemma}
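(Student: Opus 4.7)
The plan is to verify the three identities by reducing to local computations with the Poincaré residue along smooth divisors. All the assertions are local on $X$, so we may work in coordinates $z_1, \ldots, z_n$ on $X$ in which the components of $D$ passing through a given point are cut out by $z_{i_1} = 0, \ldots, z_{i_m} = 0$. The identity $d_{dR}^2 = 0$ is immediate: on each summand $\Omega^l_{D_J}\langle \closedpart_J \rangle$, $d_{dR}$ is the usual holomorphic de Rham differential on a smooth manifold.

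For $d_{\operatorname{res}}^2 = 0$, fix $J$ and distinct $j_1, j_2 \notin J$, and set $J' = J \cup \{j_1, j_2\}$. The vanishing of the $J'$-component of $d_{\operatorname{res}}^2$ acting on $\Omega^\bullet_{D_J}\langle \closedpart_J \rangle$ reduces to
\[ \operatorname{res}_{J \cup j_1 \to J'} \circ \operatorname{res}_{J \to J \cup j_1} + \operatorname{res}_{J \cup j_2 \to J'} \circ \operatorname{res}_{J \to J \cup j_2} = 0. \]
Locally write a form as $\alpha = d\log z_{j_1} \wedge d\log z_{j_2} \wedge \beta + (\text{terms killed by both iterated residues})$, with $\beta$ not involving $z_{j_1}, z_{j_2}, dz_{j_1}, dz_{j_2}$. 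The first iterated residue yields $\beta|_{z_{j_1}=z_{j_2}=0}$. The second, after the reordering $d\log z_{j_1} \wedge d\log z_{j_2} = - d\log z_{j_2} \wedge d\log z_{j_1}$, yields $-\beta|_{z_{j_1}=z_{j_2}=0}$. The two terms cancel.

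For the mixed term $d_{dR} d_{\operatorname{res}} + d_{\operatorname{res}} d_{dR}$, it suffices to check that each individual residue $\operatorname{res}_{J \to J'}$ anticommutes with $d$. Writing $\alpha = d\log z_j \wedge \beta + \gamma$ with $\beta, \gamma$ free of $z_j$ and $dz_j$, one computes $d\alpha = - d\log z_j \wedge d\beta + d\gamma$, so $\operatorname{res}(d\alpha) = -d\beta|_{z_j = 0}$, while $d(\operatorname{res}\alpha) = d(\beta|_{z_j = 0}) = (d\beta)|_{z_j = 0}$. Hence $d \circ \operatorname{res} = -\operatorname{res} \circ d$. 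Summing over all residue maps gives $d_{dR} d_{\operatorname{res}} + d_{\operatorname{res}} d_{dR} = 0$, and combining the three identities proves $(d_{dR} + d_{\operatorname{res}})^2 = 0$.

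The only real obstacle is sign-tracking: one must confirm that the natural sign from swapping $d\log z_{j_1}$ and $d\log z_{j_2}$ is precisely the $-1$ needed to make the two iterated residues cancel, and that the sign from moving $d\log z_j$ past $d$ produces anticommutation (not commutation) with the de Rham differential. Both checks are straightforward once carried out in the local coordinates above.
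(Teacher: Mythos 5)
Your proof is correct and follows essentially the same approach as the paper: $d_{dR}^2=0$ is standard, $d_{\operatorname{res}}^2=0$ reduces to anticommutation of residues along distinct components, and the mixed identity reduces to anticommutation of each residue with $d$. The only difference is that for $d_{\operatorname{res}}^2 = 0$ the paper invokes the topological observation that the two iterated residues are integrations over a $2$-torus with opposite orientations, whereas you carry out the equivalent local coordinate computation explicitly — a cosmetic rather than substantive difference.
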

\begin{proof}
$d_{dR}^2=0$ is of course standard. To show $d_{\operatorname{res}}^2 = 0$ , we must check that for any two distinct  $j,j'$, the corresponding residue maps in $d_{\operatorname{res}}$ anti-commute. The two different compositions correspond to integration over a $2$-torus with the two opposite orientations, which implies the result.

To verify $(d_{dR} + d_{\operatorname{res}})^2 = 0$, it remains to check that $d_{dR} d_{\operatorname{res}} = -  d_{\operatorname{res}} d_{dR}$. 
Let us focus on the term $\operatorname{res}_{J \to J'}$ of $d_{\operatorname{res}}$ taking the residue along $z_1$. We can locally write any form as a sum of terms $f(z) \frac{dz_1}{z_1} \omega$ or $f(z) \omega$, 
where $\omega$ is a $d_{dR}$-closed 
form nonsingular along $z_1$. In the first case, we have:

$$\operatorname{res} d_{dR}f(z) \frac{dz_1}{z_1} \omega = -df(z) \omega \qquad \qquad 
d_{dR} \operatorname{res} f(z) \frac{dz_1}{z_1} \omega = df(z) \omega$$ 
Here we have used the same notation for a form nonsingular along $z_1$ and its restriction to $z_1=0$. 
In the second case, both sides vanish. This concludes the proof.
\end{proof}

We henceforth regard ${}^D \Omega^\bullet_{X}$ as a singly-graded complex (the sum of the previous gradings) 
equipped with the differential $d_{dR} + d_{\operatorname{res}}$.  This complex retains a filtration
by the size of $J$ (the first degree of the bidegree).  

As  $(\generalstratum_\generalsubedges,  \closedpart_J)$ and $(X \setminus \generalstratum_\generalsubedges, D \setminus \generalstratum_\generalsubedges)$
are again pairs of a space and normal crossings divisor, we also have complexes 
${}^{\closedpart_J} \Omega^\bullet_{\generalstratum_\generalsubedges}$ and ${}^{D \setminus \generalstratum_\generalsubedges} \Omega^\bullet_{X \setminus \generalstratum_\generalsubedges} $.  
We additionally define 
${}^D \Omega^\bullet_{X \langle \generalstratum_\generalsubedges \rangle} \subset {}^{D \setminus \generalstratum_\generalsubedges} \Omega^\bullet_{X \setminus \generalstratum_\generalsubedges}$ as the subcomplex of forms whose extensions to the closure of the relevant stratum in $X$ have log poles
along $\generalstratum_\generalsubedges$, when this is a divisor.  The inclusion 
${}^D \Omega^\bullet_{X \langle \generalstratum_\generalsubedges \rangle} \subset {}^{ D \setminus \generalstratum_\generalsubedges} \Omega^\bullet_{X \setminus \generalstratum_\generalsubedges}$, being built from inclusions of log de Rham complexes in de Rham complexes, 
is a quasi-isomorphism.  

\begin{proposition} \label{LESdeletion}
For any $J$, there is an exact sequence of complexes
\begin{equation} \label{eq:exactsequencecousins}
	0 \to {}^{\closedpart_J}  \Omega^\bullet_{\generalstratum_\generalsubedges}[-2|J|] \to {}^D\Omega^\bullet_{X} \to 
	{}^D \Omega^\bullet_{X \langle  \generalstratum_\generalsubedges \rangle}  \to 0
\end{equation}
where the first map is the inclusion of summands from Def. \ref{omega x d}, and the second map is the projection onto the complementary summands. 
\end{proposition}
\begin{proof}
There is evidently such an exact sequence of underlying graded spaces; indeed a split one, which remains split upon
imposing the differential $d_{dR}$ which preserves the terms associated to any given stratum.  The differential $d_{res}$ 
carries terms associated to some stratum to terms associated to its closure, hence still respects the above maps.  
\end{proof} 
\begin{remark}
The corresponding projection map
${}^D \Omega^\bullet_{X} \to {}^{D \setminus \generalstratum_\generalsubedges} \Omega^\bullet_{X \setminus \generalstratum_\generalsubedges}$ 
is not surjective, since
forms in the codomain are not required to have simple poles along $\generalstratum_\generalsubedges$.  
\end{remark} 

We have a natural inclusion $\Omega_X^{\bullet} \to {}^D\Omega^{\bullet}_{X}$, defined by the inclusion $\Omega^{\bullet}_{X} \to \Omega^{\bullet}_{X \langle D \rangle}$, the latter being a summand of the underlying graded vector space of 
${}^D \Omega^{\bullet}_{X}$, and $d_{\operatorname{res}}$ restricted to the image of 
$\Omega_X^\bullet$ being trivial.

\begin{proposition} \label{prop:cousincomputescohomology}
The inclusion of complexes $\Omega^{\bullet}_{X} \to {}^D\Omega^{\bullet}_{X}$ is a quasi-isomorphism. 
\end{proposition}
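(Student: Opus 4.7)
I plan to establish the quasi-isomorphism by induction on the number $n$ of irreducible components of the SNC divisor $D$. First, the claim is at heart local on $X$ --- equivalently, a statement about complexes of sheaves --- so the affine setting of the main body, where one takes termwise global sections, follows automatically once the sheaf-level statement is proven. The map $\Omega_X^\bullet \to \Omega_{X,D}^\bullet$ is a well-defined morphism of complexes because $d_{\operatorname{res}}$ vanishes on the image of $\Omega_X^\bullet \hookrightarrow \Omega_X^\bullet\langle D\rangle$: holomorphic forms have no logarithmic poles along which to take residues.

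The base case $n=0$ is trivial. For the inductive step, select one component $D_k$, and write $i_k \colon D_k \hookrightarrow X$, $j_k \colon X \setminus D_k \hookrightarrow X$. Applying Proposition \ref{LESdeletion} with $J = \{k\}$ produces an exact triangle in the derived category of sheaves on $X$,
\[
(i_k)_* \Omega^{\bullet}_{D_k, \closedpart_k}[-2] \to \Omega^{\bullet}_{X,D} \to R(j_k)_*\Omega^{\bullet}_{X \setminus D_k, D \setminus D_k} \xrightarrow{[1]}.
\]
In parallel, the classical Gysin triangle for the codimension-one smooth closed embedding $i_k$, resolved by holomorphic de Rham complexes, reads
\[
(i_k)_*\Omega^\bullet_{D_k}[-2] \to \Omega_X^\bullet \to R(j_k)_*\Omega^\bullet_{X \setminus D_k} \xrightarrow{[1]}.
\]
The inductive hypothesis applied to $\closedpart_k \subset D_k$ (an SNC divisor with at most $n-1$ components) and to $D \setminus D_k \subset X \setminus D_k$ (SNC with $n-1$ components) produces quasi-isomorphisms on the two outer terms. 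Assembling these with the inclusion $\Omega_X^\bullet \to \Omega_{X,D}^\bullet$ on the middle term yields a morphism of exact triangles; the five lemma for exact triangles then forces the middle arrow to be a quasi-isomorphism.

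The principal obstacle is verifying commutativity of the two squares in this comparison of triangles, which amounts to pinning down how the inclusion interacts with both the restriction and the residue constituents of the differential on $\Omega^{\bullet}_{X,D}$. Commutativity of the square on the open side is essentially tautological, since both horizontal arrows are literal restriction of forms to $X \setminus D_k$. Commutativity of the square on the closed side reduces to identifying the connecting morphism of the Gysin triangle --- realized via the Poincaré residue short exact sequence $0 \to \Omega_X^\bullet \to \Omega_X^\bullet\langle D_k\rangle \xrightarrow{\operatorname{Res}_k} (i_k)_*\Omega_{D_k}^{\bullet-1} \to 0$ --- with the piece of $d_{\operatorname{res}}$ that relates the $J = \emptyset$ and $J = \{k\}$ summands of $\Omega^{\bullet}_{X,D}$. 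This identification is built into the construction of $\Omega^\bullet_{X,D}$ in Appendix \ref{app:residuetriang}; once it is spelled out at the level of local coordinates (where $D_k = \{z_k=0\}$ and the residue sends $\omega \wedge \tfrac{dz_k}{z_k}$ to $\omega|_{D_k}$), the induction closes.
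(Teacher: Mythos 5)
Your proof is correct and follows the same strategy as the paper: induction on the number of components, comparing the Gysin triangle for one chosen component with the triangle of Proposition \ref{LESdeletion}, and concluding via the five lemma on the morphism of triangles. You are somewhat more scrupulous than the paper's terse proof in verifying that the two squares commute (in particular identifying the Gysin connecting map with the Poincar\'e residue piece of $d_{\operatorname{res}}$), which the paper leaves implicit.
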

\begin{proof}
We proceed by induction on the number of components of $D$. The statement is 
tautologous when $D$ has no components. Choose a component $D_j$ of $D$. 	
Consider the diagram: 
\begin{equation*} \label{mapofcousintriangles}
	\begin{tikzcd}
		\Omega^{\bullet}_{D_j}[-2]  \arrow[d] \arrow[r, dashed] &  \Omega^{\bullet}_{X} \arrow[d] \arrow[r] & \Omega^{\bullet}_{X \langle D_j \rangle} \arrow[d] \arrow[r]  &   \  \\
		{}^{\closedpart_j} \Omega^{\bullet}_{D_j} [-2] \arrow[r]    &	{}^D \Omega^{\bullet}_{X} \arrow[r]  &  {}^D \Omega^{\bullet}_{X \langle D_j \rangle} \arrow[r]  & \
	\end{tikzcd}
\end{equation*}
Here, the upper triangle is \eqref{residue exact triangle}, the lower triangle is the triangle associated to \eqref{eq:exactsequencecousins},
the vertical arrows are the inclusions of forms into log forms on the appropriate stratum, the dashed arrow is dashed as a reminder that
it is defined only in the derived category (and the commutativity of the left square must also be understood in the derived category).  

By induction the left vertical map is a quasi-isomorphism.  The right vertical map maps to 
$\Omega^{\bullet}_{X \setminus D_j} \to  {}^{D \setminus D_j} \Omega^{\bullet}_{X \setminus D_j}$ by quasi-isomorphisms, and this latter
map is itself a quasi-isomorphism by induction.  Hence the center vertical map is a quasi-isomorphism as well. 
\end{proof}

\begin{corollary} \label{cor:lesofapairdesc}
The exact triangle $\C_{D_j}[-2] \to \C_X \to j_* \C_{X \setminus D_j}$ is quasi-isomorphic
to the triangle ${}^{\closedpart_j} \Omega^{\bullet}_{D_j}[-2] \to {}^D \Omega^{\bullet}_{X} \to {}^{ D} \Omega^{\bullet}_{X \langle D_j \rangle}$.  
\end{corollary}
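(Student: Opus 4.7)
The plan is to exhibit a morphism between the two triangles whose vertical components are all quasi-isomorphisms; since the second triangle is exact by Proposition \ref{LESdeletion}, this will identify them in the derived category.

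First I would set up the morphism. The diagram of triangles displayed in the proof of Proposition \ref{prop:cousincomputescohomology}, specialised to the single divisor $D_j$ instead of running through all components,
\begin{equation*}
\begin{tikzcd}
\Omega^{\bullet}_{D_j}[-2] \arrow[d] \arrow[r] & \Omega^{\bullet}_{X} \arrow[d] \arrow[r] & \Omega^{\bullet}_{X \setminus D_j} \arrow[d] \arrow[r] & \ \\
\Omega^{\bullet}_{D_j, \closedpart_j}[-2] \arrow[r] & \Omega^{\bullet}_{X, D} \arrow[r] & \Omega^{\bullet}_{X \setminus D_j, D \setminus D_j} \arrow[r] & \
\end{tikzcd}
\end{equation*}
is already the morphism of triangles we need. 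The vertical maps are the canonical inclusions $\Omega^{\bullet}_Y \hookrightarrow \Omega^{\bullet}_{Y, E}$ for the pairs $(Y, E) = (D_j, \closedpart_j), (X, D), (X \setminus D_j, D \setminus D_j)$. Each such inclusion is a quasi-isomorphism: for the outer two this is Proposition \ref{prop:cousincomputescohomology} applied to a divisor with one fewer component than $D$ (this is where the induction of Proposition \ref{prop:cousincomputescohomology} is hiding), and for the middle one it is Proposition \ref{prop:cousincomputescohomology} itself.

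Second, I would identify the top row with the exact triangle $\C_{D_j}[-2] \to \C_X \to j_*\C_{X \setminus D_j}$. The holomorphic Poincar\'e lemma gives canonical quasi-isomorphisms $\C_X \simeq \Omega^\bullet_X$, $\C_{D_j} \simeq \Omega^\bullet_{D_j}$, and $\C_{X \setminus D_j} \simeq \Omega^\bullet_{X \setminus D_j}$ (pushed forward from the open complement, the last of these computes $Rj_*\C_{X \setminus D_j}$ once the de Rham complex is interpreted with its natural sheaf structure). The horizontal maps of the top row are the obvious pushforward and restriction maps, and the connecting map is the one induced from the short exact sequence of complexes $0 \to \Omega^\bullet_X \to \Omega^\bullet_X\langle D_j\rangle \to \Omega^\bullet_{D_j}[-1] \to 0$, via the residue description of $\Omega^\bullet_X\langle D_j\rangle$ recalled at the start of Appendix \ref{app:residuetriang}.

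The remaining step, and the one that requires a little care, is checking commutativity of the two squares: that the inclusion $\Omega^\bullet_X \hookrightarrow \Omega^\bullet_{X, D}$ intertwines the Gysin/residue map on the top with the inclusion of $\Omega^\bullet_{D_j, \closedpart_j}[-2]$ as a summand on the bottom, and that restriction of forms along $X \setminus D_j \hookrightarrow X$ is compatible with the projection $\Omega^\bullet_{X, D} \to \Omega^\bullet_{X \setminus D_j, D \setminus D_j}$ of Proposition \ref{LESdeletion}. Both are essentially tautological after unwinding the definitions of $d_{\operatorname{res}}$ and of the filtration on $\Omega^\bullet_{X, D}$ by the size of $J$: the connecting homomorphism on the top row is the residue along $D_j$ by construction, and the summand of $\Omega^\bullet_{X, D}$ supported at $|J| = 1, J = \{j\}$ is by definition $\Omega^\bullet_{D_j, \closedpart_j}[-2]$. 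The main potential obstacle is sign bookkeeping inherent to the shift $[-2]$ and to the residue, but this is standard and can be handled by fixing conventions once.
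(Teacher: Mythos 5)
Your argument is correct and is exactly the one the paper implicitly intends: Corollary \ref{cor:lesofapairdesc} is read off from the morphism of triangles displayed in the proof of Proposition \ref{prop:cousincomputescohomology}, whose vertical arrows Proposition \ref{prop:cousincomputescohomology} itself shows are quasi-isomorphisms, combined with the identification of the top triangle with the adjunction/Gysin triangle for $\C_X$ via the holomorphic Poincar\'e lemma and the residue description recalled at the start of the appendix. One small remark: the phrase ``applied to a divisor with one fewer component'' is harmless but slightly misleading phrasing — all three vertical arrows are quasi-isomorphisms by invoking Proposition \ref{prop:cousincomputescohomology} for the respective pairs $(D_j, \closedpart_j)$, $(X, D)$, $(X \setminus D_j, D \setminus D_j)$, and the induction in that proposition's proof has already been discharged, so there is no need to keep track of where one is in the induction.
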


\subsection{Filtrations}

If $V$ is a vector space with an increasing filtration $F$, we write the steps of the filtration as 
$$ \cdots \subset F_{-1} V \subset F_0 V \subset F_1 V \subset \cdots$$

We recall 

\begin{definition}
Let $V, W$ be filtered vector spaces. A map $g: V \to W$ is said to be: 
\begin{enumerate} 
	\item compatible with the filtrations if $g(F_k V) \subset F_k W$
	\item strictly compatible with the filtrations if $F_k W \cap g(V) = g(F_k V)$
\end{enumerate} 
We will also synonymously say the map (strictly) preserves or (strictly) respects the filtration. 
\end{definition}

The significance of the strictness condition is: 

\begin{lemma} \label{lem:strictmapsexactassocg}
Let $... \to V_{-1} \xrightarrow{a} V_0 \xrightarrow{b} V_1 \to ...$ be a long exact sequence of filtered vector spaces, whose maps strictly preserve the filtrations. Then the sequence defined by the associated graded spaces $... \to \gr_k V_{-1} \xrightarrow{\gr(a)} \gr_k V_0 \xrightarrow{\gr(b)} \gr_k V_1 \to ...$ is also exact. \end{lemma}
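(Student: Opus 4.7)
The plan is to prove exactness at each term $\gr_k V_0$ of the associated graded sequence by lifting representatives back to $V_0$, using strictness to correct them inside the appropriate filtration step.

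First I would verify the trivial containment $\mathrm{im}\,\gr(a) \subset \ker\gr(b)$: this is immediate because $b \circ a = 0$, so the induced map $\gr(b) \circ \gr(a)$ on associated graded spaces also vanishes.

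For the harder containment $\ker\gr(b) \subset \mathrm{im}\,\gr(a)$, I would take a class $[v] \in \gr_k V_0 = F_k V_0 / F_{k-1} V_0$ represented by $v \in F_k V_0$ with $\gr(b)[v] = 0$. This means $b(v) \in F_{k-1} V_1$. Now apply strictness of $b$: since $b(v) \in F_{k-1} V_1 \cap \mathrm{im}(b) = b(F_{k-1} V_0)$, there exists $v' \in F_{k-1} V_0$ with $b(v') = b(v)$. Then $v - v' \in \ker b = \mathrm{im}\, a$ by exactness of the original sequence. Write $v - v' = a(w)$ for some $w \in V_{-1}$. Since $v' \in F_{k-1} V_0 \subset F_k V_0$, we have $v - v' \in F_k V_0$, so $v - v' \in F_k V_0 \cap \mathrm{im}(a)$. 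Applying strictness of $a$, there exists $w' \in F_k V_{-1}$ with $a(w') = v - v'$. Then in $\gr_k V_0$ we have $[v] = [v - v'] = [a(w')] = \gr(a)[w']$, proving the desired containment.

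The argument involves no real obstacle — the key point is simply to use strictness twice, once for $b$ to kill the error term in lower filtration, and once for $a$ to ensure the preimage lives in the correct filtration step. No step is a calculation to be ground through; the entire proof is a straightforward diagram chase of about half a page. The proof is entirely parallel at every index in the long exact sequence, so one need only write it once.
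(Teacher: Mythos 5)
Your proof is correct and follows essentially the same argument as the paper's: show the easy containment from $b\circ a=0$, then chase the other direction using strictness of $b$ to produce $v'\in F_{k-1}V_0$ and strictness of $a$ to lift $v-v'$ to $F_k V_{-1}$. The only difference is cosmetic — you spell out the intermediate step $v-v'\in F_k V_0\cap\mathrm{im}(a)$ before invoking strictness, which the paper leaves implicit.
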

\begin{proof}
We first show check that the kernel of $\gr(b)$ contains the image of $\gr(a)$. Given $u \in F_i V$, write $\gr_i(u)$ for the associated element of $\gr_i V$. Consider $w \in F_k V_{-1}$. By exactness of the original sequence, $b(a(w))=0$. It follows that $\gr(b)(\gr(a)(\gr_k(w)))=0$.

We now show that the image of $\gr(a)$ contains the kernel of $\gr(b)$. Suppose $v \in F_k V_0$ satisfies $\gr(b)(\gr_k(v)) = 0$. By definition, this means $b(v) \in F_{k-1}V_1$. By strictness of $b$, there exists $v' \in F_{k-1} V_0$ such that $b(v') = b(v)$. By exactness of the sequence, there exists $w \in V_{-1}$ such that $a(w) = v-v'$. By strictness, there exists $w' \in F_k V_{-1}$ with the same property. It follows that $\gr(a)(\gr_k(w')) = \gr_k(v - v') = \gr_k(v)$. 
\end{proof}

\begin{caution}
For a fixed short exact sequence $0 \to A \to B \to C \to 0$ of vector spaces, and fixed filtrations on $A, C$, there are many filtrations on $B$ such that the maps strictly preserve filtrations. Indeed let $A$ and $C$ be one-dimensional, with $\gr_1 A = A, \gr_0 C = C$. Then the filtration on $B$ is determined by the subspace $F_0 B \cong C$. The only condition on $F_0 B$ is that is must intersect the image of $A$ trivially.
\end{caution}

\subsection{Weight filtration}

According to \cite{D1, D2, D3}, cohomology of an algebraic varieties carry various filtrations, 
strictly preserved by pullback with respect to any morphism of algebraic varieties.  Of relevance to us here is 
the weight filtration, defined on the rational cohomology, and denoted $W_\bullet \H^\bullet(X, \Q)$.  It is an increasing filtration,
with $W_\bullet \H^n(X, \Q)$ supported in degrees $[0, 2n]$ in general, and in degrees $[0, n]$ and $[n, 2n]$ if $X$ is
projective and smooth respectively.  

The weight filtration of a smooth variety $X$ is defined by choosing a normal crossings compactification.  Then 
the complex cohomology of $X$ is computed by a complex of differential forms with log singularities 
along the boundary, and $W_{\bullet+k}\H^{\bullet}(X,\Q)$ is generated by the subsheaf of forms singular along at most $k$ different boundary components near any given point. 

In this sense, the size of $Gr^W_{\bullet+k}\H^{\bullet}(X,\Q)$ for $k>0$ is a measure of the non-compactness of $X$. 

\subsection{Perverse Leray filtration} \label{app:perverse}
Let $B$ be a topological space. For a complex of sheaves $K$ on $B$, one can define a filtration on the (hyper)cohomology of $K^\bullet$ by cutting off
the complex:  

$$P_k \H^{\bullet}(B; K) \colonequals Image(\H^{\bullet}(B; \tau^{\le k} K) \to \H^{\bullet}(B; K))$$

In case one has another $t$-structure available --- in our case the middle perverse $t$-structure for constructible sheaves on algebraic varieties ---
one gets a similar filtration by using the truncations of the $t$-structure.  We term the resulting filtration the 
{\em perverse filtration}.  

In the setting where one has a map $\pi \colon X \to B$ and $K = \pi_* F$, the perverse filtration on $K$ is the filtration
which arises on $\H^{\bullet}(X; F)$ from the perverse t-structure Leray spectral sequence.  
Thus it is called the
{\em perverse Leray filtration} on $\H^{\bullet}(X; F)$.  See \cite{dCHM} and the references therein for
discussion of this filtration.  

\begin{convention} \label{perverse covention}
Let $f: X \to Y$ be a map of algebraic or complex analytic spaces.  When we discuss 
the perverse Leray filtration $\H^\bullet(X, \Q)$ associated by $f$, i.e. the perverse filtration on $\H^\bullet(f_* \Q)$, we always shift the filtration so that $P_{-1} = 0$ and $1 \in P_0 \H^\bullet(X, \Q)$.  

In some circumstances, we may wish to further shift the filtration.  We will write 
$ \H^\bullet(X, \Q)\{n\}$ to indicate we have shifted the filtration by $n$ steps, i.e. 
$P_i  \H^\bullet(X, \Q)\{n\} =  P_{i+n} \H^\bullet(X, \Q)$. 
\end{convention}

Pullback and pushforward operations generally respect only `half' of the perverse t-structure.  As a consequence, the perverse Leray filtration is not generally preserved by base-change.  (In particular, 
the perverse Leray filtration of the base-change to a point is always trivial, in the sense of agreeing
with the filtration by cohomological degree.)  However, base-change which is ``transverse to the 
singularities of the sheaf'' does respect perversity.  This can be precisely formulated in the 
language of microsupport of Kashiwara-Schapira, as we now recall.

\begin{proposition}  \label{noncharacteristic perversity} \cite[Cor. 10.3.16]{KS} Let $N \subset M$ be an inclusion of smooth complex submanifolds and 
$\cF$ a perverse sheaf on $M$.  
If $N$ is noncharacteristic for $\cF$, i.e., $ss(\cF) \cap T_N^*M \subset T_M^*M$, then $\cF|_{N}[\dim N -\dim M]$ is perverse.  
\end{proposition}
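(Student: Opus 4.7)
The plan is to verify the two conditions defining the middle perverse t-structure — the support bound and its Verdier-dual (cosupport) bound — for the complex $i^*\cF$, understood with the appropriate perverse shift by the complex codimension $d$ of $N$ in $M$.

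First I would invoke the noncharacteristic pullback isomorphism. Under the hypothesis $ss(\cF) \cap T^*_N M \subset T^*_M M$, there is a canonical identification $i^!\cF \simeq i^*\cF[-2d]$. This is Proposition 5.4.13 of \cite{KS}, and it arises by combining the general microsupport estimate $ss(i^*\cF) \subset i^{\sharp}\, ss(\cF)$ with a microlocal deformation argument that identifies $i^!$ via cohomology-with-supports along a family of half-spaces transverse to $N$. The significance of this identity is that it allows us to freely interchange $i^*$ and $i^!$ when applied to $\cF$ and to its Verdier dual, at the cost of a known shift.

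Second I would verify the support condition $\dim \operatorname{supp} \mathcal{H}^j(i^*\cF) \le -j - d$ by a stratification argument. Choose a Whitney stratification of $M$ adapted to $\cF$; the noncharacteristic hypothesis is equivalent to $N$ meeting each stratum of such a stratification transversally (the equivalence between the microlocal and stratification-theoretic formulations of noncharacteristicity is Proposition 8.4.1 of \cite{KS}). Transverse intersection of $N$ with a stratum drops its dimension by exactly $d$, so perversity of $\cF$ delivers the desired bound after restriction.

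Third I would obtain the cosupport condition by applying step two to $\mathbb{D}_M\cF$ in place of $\cF$. Since the microsupport is stable under Verdier duality ($ss(\mathbb{D}\cF) = ss(\cF)$), $N$ is also noncharacteristic for $\mathbb{D}_M\cF$, which is perverse because $\cF$ is. The noncharacteristic isomorphism from step one yields $\mathbb{D}_N(i^*\cF) \simeq i^!(\mathbb{D}_M\cF)[\text{shift}] \simeq i^*(\mathbb{D}_M\cF)[\text{shift}']$, reducing the cosupport bound for $i^*\cF$ to the support bound for $i^*\mathbb{D}_M\cF$, which has already been established.

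The main obstacle is step one: identifying $i^!$ with $i^*$ up to shift for noncharacteristic embeddings is the substantive microlocal input and cannot be extracted from the definitions alone — it requires the propagation theorem for microsupport and a careful analysis of local cohomology in a family of deformations. Steps two and three are formal manipulations of constructible dimensions once transversality with an adapted stratification is in hand.
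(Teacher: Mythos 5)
The paper does not prove this statement; it is quoted verbatim from \cite{KS} as Corollary 10.3.16, so there is no argument to compare against. That said, your reconstruction is close to the one in \cite{KS}: they too use the noncharacteristic pullback isomorphism $i^!\cF \simeq i^*\cF \otimes \omega_{N/M}$ from \cite[Prop.~5.4.13]{KS}, and they too deduce the cosupport bound from the support bound by duality, exactly as in your step three. Steps one and three are essentially correct.

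The gap is in step two, and specifically in the sentence asserting that the noncharacteristic hypothesis \emph{is equivalent to} $N$ meeting each stratum of an adapted Whitney stratification transversally, with a citation to \cite[Prop.~8.4.1]{KS}. That proposition says only that weak $\mathcal{S}$-constructibility of $\cF$ is equivalent to the containment $ss(\cF) \subset \bigcup_\alpha \overline{T^*_{S_\alpha}M}$; it does not give the equivalence you claim, and indeed the equivalence is false as stated. For a counterexample, take $\cF = \Q_M[\dim_\C M]$ on $M = \C^2$: then $ss(\cF) = T^*_M M$, so every $N$ is noncharacteristic, yet $\cF$ is also constructible with respect to the (over-refined) stratification $\{\{0\},\, M \setminus \{0\}\}$, and any curve $N$ through $0$ fails to be transverse to the point stratum. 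So ``noncharacteristic $\Rightarrow$ transverse to all strata'' is simply not true for an arbitrary adapted stratification. What one actually needs is a stratification for which the union of conormals is not strictly larger than $ss(\cF)$ in a way visible to $T^*_N M$ --- equivalently, one must either work with the canonical (coarsest) stratification attached to $\cF$, or run the support-bound estimate directly from the microsupport (as \cite{KS} does via their microlocal characterization of perversity around Theorem 10.3.12) rather than detouring through a fixed Whitney stratification. As written, your step two proves the support bound only under a hypothesis stronger than noncharacteristicity; you should either construct the coarse stratification and justify that $ss(\cF)$ then controls all the strata appearing in $\operatorname{supp}\mathcal{H}^j(\cF)$, or replace the stratification argument by a direct microlocal one.
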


\begin{corollary} \label{transverse perverse} 
Consider a diagram of complex manifolds
\begin{equation} 
	\begin{tikzcd}
		A \arrow[d] \arrow[r, "f"] & \arrow[d]  C 
		\\
		B \arrow[r] & D
	\end{tikzcd}
\end{equation}
Then $\H^\bullet(C\homsep \C)$ and $\H^\bullet(A \homsep \C)$ have perverse
Leray filtrations coming from the maps to $B$ and $D$.  If the vertical maps are proper, and the diagram is Cartesian
$(A = B \times_D C)$ and transverse, then $f^*: \H^\bullet(C\homsep \C) \to \H^\bullet(A \homsep \C)$ 
strictly preserve the perverse Leray filtrations. 
\end{corollary} 
\begin{proof}
Follows from Proposition \ref{noncharacteristic perversity} and the standard microsupport estimate 
for proper pushforward (\cite[Prop. 5.4.4]{KS}).  
\end{proof}

\section{A holomorphic embedding $\Dol_J \subset \Bet$}

Here we recall a more canonical choice of the embedding $\frak{F}$ of Lemma \ref{lem:tateNAHT}. 
The construction is due to Yang Li. We do not 
logically depend on this result, but it provides a better analogy with the hyperk\"ahler 
aspects of the nonabelian Hodge diffeomorphism. 

The sphere of complex structures compatible with the hyperk\"ahler metric on $\Dol$ are parametrized by a choice of unit vector $s \in \R^3$, as described by \eqref{eq:complexstructures}. So far, we have considered the complex structure $I_{e_1}$ on $\Dol$, with respect to which the fibers of $\Dol \to \D$ are complex subvarieties. On the other hand, the complex structures $I_{a e_2 + b e_3}$ for different $(a,b) \in S^1$ define biholomorphic manifolds, because the function $V$ is symmetric under rotations of the $e_2, e_3$-plane. They were described in \cite{Li}. For concreteness, let $J$ be the complex structure associated to $e_3$, and let $\Dol_J$ denote the manifold $\Dol$ with this choice of complex structure. Let $\eta = u_1 + i u_2$ be the complex part of the moment map. The map $\exp(2 \pi i \eta) : \Dol_J \to \C^*$ is holomorphic with image the annulus $\exp(-2 \pi r) < |z| < \exp(2 \pi r)$. Its fibers are complex surfaces, which are topologically annuli away from $1 \in \C^*$. Holomorphic coordinates on these fibers can be constructed as follows. 
\begin{theorem} \cite[Section 1.3.2]{Li} \label{li embedding} 
There is a holomorphic embedding $\Dol_J \to \C^2 \setminus \{ xy = 1 \}$, identifying $\exp(2\pi i \eta)$ with $1 - xy$ and $u_3$ with a strictly monotonic function of $|x|^2 - |y|^2$ (for fixed $\eta$).
\end{theorem}
\begin{proof}
We summarize Li's argument here, adapting the notation slightly to match our own, and refer to \cite{Li} for details. The first task is to define holomorphic functions $x, y : \Dol_J \to \C$ satisfying $xy = 1 - \exp(2 \pi i \eta)$. Let $\theta_\infty \in \C$ and consider the following one-forms on $\tilde{\Dol}_J \setminus \{ \eta \in \Z \}$ : 
\[ \zeta_x = V d u_3 + i\theta + \left( \frac{\pi i}{2} - i\theta_\infty + \lim_{k \to \infty} \sum_{-k}^{k} \left(  \frac{1}{2(\eta + n)} - \frac{u_3}{2(\eta + n)\sqrt{u_3^2 + |\eta + n|^2}} \right) \right) d\eta \]
\[ \zeta_y = -V d u_3 - i\theta + \left( \frac{\pi i}{2} + i\theta_\infty +  \lim_{k \to \infty} \sum_{-k}^{k}  \left(  \frac{1}{2(\eta + n)} + \frac{u_3}{2(\eta + n)\sqrt{u_3^2 + |\eta + n|^2}} \right) \right) d \eta. \]
The apparently singularities of the second term at $\eta \in \Z$ can be eliminated if we assume $u_3 \neq 0$. On the other hand, the singularities at $\eta \in \Z, u_3 = 0$ cannot be removed. A direct calculation shows that away from this locus $\zeta_x, \zeta_y$ are closed $\Z$-invariant $(1,0)$-forms.

The line integrals $\int \zeta_x, \int \zeta_y$ therefore define local holomorphic functions on $\tilde{\Dol}_J$ away from $\eta \in \Z$. Moreover, the period around a $\U_1$-orbit is $2 \pi i$, and for a unique choice of $\theta_\infty$ the period spanning a $\Z$-translation also vanishes. One therefore has global holomorphic functions
\[ x = \exp(\int \zeta_x), \ \ y = \exp(\int \zeta_y). \]
on $\Dol_J$ defined away from $u_3 \leq 0, \eta =0$ and $u_3 \geq 0, \eta = 0$ respectively. Moreover, a direct calculation shows 
\begin{equation} \label{eq:complementsequation} \zeta_x + \zeta_y = d \log (1 - \exp(2 \pi i \eta)). \end{equation} 
and therefore
\[ xy = 1 - \exp(2 \pi i \eta) \]
wherever the left-hand side is defined. This in turn proves that $x,y$ extend to holomorphic functions on all of $\Dol_J$, vanishing along $u_3 \leq 0, \eta =0$ and $u_3 \geq 0, \eta = 0$ respectively.

Finally, we check that the map $(x,y) : \Dol_J \to \C^2$ is an embedding. It is enough to verify that it restricts to an embedding on any fiber of $\exp(2 \pi i \eta)$. Each such fiber is a $\U_1$-bundle over an interval $a < u_3 <  b$. The restriction of $\zeta_x$ to such a fiber equals $V du_3 + i\theta$. It follows that $x$ restricts to $\int (V du_3 + i\theta)$. The injectivity of $x$ follows from the fact that $V$ is everywhere positive and $\theta$ is a connection one-form. The claim regarding $u_3 = u_3(|x|^2 - |y|^2, \eta)$ likewise follows from the fact that $V$ is everywhere positive.
\end{proof}

\newpage


\begin{thebibliography}{10}

\bibitem[AMM]{AMM}
Anton  Alekseev, Anton Malkin, and Eckhard Meinrenken. {\em Lie group valued moment maps.} Journal of Differential Geometry 48.3 (1998), 445-495.




\bibitem[AKL]{AKL}
Michael  Anderson, Peter Kronheimer, and Claude LeBrun. {\em Complete Ricci-flat Kähler manifolds of infinite topological type.} 
Communications in mathematical physics {\bf 125}.4 (1989), 637--642.





\bibitem[BBD]{BBD}
Alexander 
Beilinson, Joseph Bernstein, Pierre Deligne,  {\em Faisceaux pervers},
Ast\`erisque {\bf 100}  (1982), 5--171.

\bibitem[BK]{BK} Roman Bezrukavnikov and Mikhail Kapranov, {\em
	Microlocal sheaves and quiver varieties}, arXiv:1506.07050. 



\bibitem[BB]{BB}
Olivier Biquard and Philip Boalch, {\em Wild non-abelian Hodge theory on curves}, 
Compositio Mathematica 140.1 (2004), 179--204.


\bibitem[Bol]{Bol} Bela Bollab\'as
{\em Modern graph theory}, Springer-Verlag, Berlin, New York (1998)



\bibitem[dCHM]{dCHM}
Mark de Cataldo, Tamas Hausel, and Luca Migliorini, {\em Topology of Hitchin systems and Hodge theory of character varieties: the case $A_1$}, Annals of Mathematics 175.3 (2012), 1329--1407.

\bibitem[dCHM2]{dCHM2} 
Mark de Cataldo, Tamas Hausel, and Luca Migliorini, {\em Exchange between perverse and weight filtration for the Hilbert schemes of points of two surfaces}, Journal of Singularities 7 (2013), 23--38.


\bibitem[dCM]{dCM} 
Mark de Cataldo and Davesh Maulik, {\em The perverse filtration for the Hitchin fibration is locally constant}, 
arXiv:1808.02235. 

\bibitem[dCMS]{dCMS} 
Mark de Cataldo, Davesh Maulik, and Junliang Shen.  {\em Hitchin fibrations, abelian surfaces, and the P=W conjecture},
arXiv:1909.11885. 

\bibitem[CKS]{CKS} Eduardo Cattani, Aroldo Kaplan, and Wilfried Schmid, {\em $L^2$ and intersection cohomology for a polarized variation of Hodge structure}, Invent. Math. {\bf 87} (1987), 217--252.





\bibitem[CDP1]{CDP1} Wu-yen Chuang, Duiliu-Emanuel Diaconescu, Guang Pan, {\em Wall-crossing and cohomology of the moduli space of Hitchin pairs}, Commun. Num. Theor. Phys. 5 (2011), 1--56. 

\bibitem[CDP2]{CDP2} Wu-yen Chuang, Duiliu-Emanuel Diaconescu, Guang Pan, {\em 
	BPS states and the P = W conjecture}, in: Moduli Spaces,
vol. 411 of London Math. Soc. Lecture Note Ser. (Cambridge University Press, 2014), 132--150.

\bibitem[CDDP]{CDDP} 
Wu-yen Chuang, Duiliu-Emanuel Diaconescu,  Ron Donagi, and Tony Pantev, {\em 
	Parabolic refined invariants and Macdonald polynomials} 
Commun. Math. Phys. 335.3 (2015), 1323--1379. 

\bibitem[CDDNP]{CDDNP} Wu-yen Chuang, Duiliu-Emanuel Diaconescu, Ron Donagi, Satoshi Nawata, Tony Pantev, {\em Twisted spectral correspondence and torus knots}, arxiv:1804.08364. 


\bibitem[CBS]{CBS} William Crawley-Boevey and Peter Shaw, {\em Multiplicative preprojective algebras, middle convolution and the Deligne–Simpson problem},  Advances in Mathematics 201.1 (2006), 180-208.


\bibitem[Del0]{D0} Pierre Deligne, \'Equations diff\'erentielles \`a points singuliers r\'eguliers.  LNM 163. Springer, 2006.



\bibitem[Del1]{D1} Pierre Deligne, {\em Theorie de Hodge, I}, Actes du Congr\`es international des math\'ematiciens, Nice (1970). 
\bibitem[Del2]{D2} Pierre Deligne, {\em Theorie de Hodge, II}, Publications Math\'ematiques de l'IHES {\bf 40}.1 (1971), 5-57. 
\bibitem[Del3]{D3} Pierre Deligne, {\em Theorie de Hodge, III}, Publications Math\'ematiques de l'IHES  {\bf 44}.1 (1974), 5-77.

\bibitem[Del4]{D-split}
Pierre Deligne, {\em Decompositions dans la categorie d\'eriv\'ee}, in \underline{Motives (Seattle, WA, 1991)}, 
Proc. Sympos. Pure Math. 55, Part 1, (Amer. Math. Soc., 1994)
115--128.

\bibitem[Del-SGA7]{D-vanishing} 
Pierre Deligne. {\em Le formalisme des cycles \'evanescents}. in: Groupes de Monodromie en G\'eom\'etrie Alg\'ebrique. 
(Springer 1973),  82--115.

\bibitem[DR]{DR} 
Pierre Deligne and Michael Rapoport, {\em Les Sch\'emas de Modules de Courbes Elliptiques}, in \underline{Modular Functions of One Variable II}, Springer LNM 349 (1973),  143--316.

\bibitem[Delz]{Delz} Thomas Delzant, {\em Hamiltoniens p\'eriodiques et images convexes de l'application moment.} Bulletin de la Soci\'et\'e math\'ematique de France 116.3 (1988), 315-339.

\bibitem[Deo]{Deo} Vinay Deodhar, {\em On some geometric aspects of Bruhat orderings. I. A finer decomposition of Bruhat cells}, Inventiones mathematicae 79.3 (1985): 499--511.

\bibitem[Dia]{Dia} Duiliu-Emanuel Diaconescu, 
{\em Local curves, wild character varieties, and degenerations}, arXiv:1705.05707. 

\bibitem[DDP]{DDP} Duiliu-Emanuel Diaconescu,  Ron Donagi, and Tony Pantev, {\em BPS states, torus links and wild character varieties}, Comm. Math. Phys. 357 (2018). 



\bibitem[ER]{ER}
Edna Jasso-Hernandez and Yongwu Rong, {\em A categorification for the Tutte polynomial}, 
Algebraic \& Geometric Topology 6.5 (2006), 2031-2049.


\bibitem[GMN]{GMN} 
Davide Gaiotto, Gregory Moore and Andrew Neitzke, {\em Four-dimensional wall-crossing via three-dimensional field theory}, arXiv:0807.4723. 

\bibitem[GMN2]{GMN2} 
Davide Gaiotto, Gregory Moore and Andrew Neitzke, {\em Wall-crossing, Hitchin Systems, and the WKB Approximation}, 
arXiv:0907.3987. 



\bibitem[GH]{GH} Gary W Gibbons and Stephen W Hawking.
{\em Gravitational multi-instantons},
Euclidean Quantum Gravity, (1993) 500--502.

\bibitem[GMcW]{GMcW} Benjamin Gammage, Michael McBreen, and Ben Webster.  
{\em Homological Mirror Symmetry for Hypertoric Varieties II},  arXiv:1903.07928.

\bibitem[GS]{GS} Benjamin Gammage and Vivek Shende, {\em Mirror symmetry for very affine hypersurfaces},
arXiv:1707.02959.  

\bibitem[GPS1]{GPS1} Sheel Ganatra, John Pardon, and Vivek Shende, {\em 
	Covariantly functorial wrapped Floer theory on Liouville sectors},  arXiv:1706.03152. 

\bibitem[GPS2]{GPS2} Sheel Ganatra, John Pardon, and Vivek Shende, {\em Structural results in wrapped Floer theory}, arXiv:1809.03427.

\bibitem[GPS3]{GPS3} Sheel Ganatra, John Pardon, and Vivek Shende, {\em Microlocal Morse theory of wrapped Fukaya categories}, arxiv:1809.08807. 




\bibitem[GORS]{GORS} Eugene Gorsky, Alexei Oblomkov, Jacob Rasmussen, Vivek Shende, {\em Torus knots and the Rational DAHA}, Duke Math. J. 163.14 (2014), 2709--2794.

\bibitem[Go]{Go} Ryushi Goto, {\em On hyper-K\"ahler manifolds of type $A_\infty$}, Geometric \& Functional Analysis GAFA 4.4 (1994), 424-454.

\bibitem[Gr]{Gr} 
Mark Gross, {\em Topological mirror symmetry}, Inventiones mathematicae, 144 (2001) 75--137.

\bibitem[GrWi]{GrWi}
Mark Gross and Pelham Wilson, {\em Large complex structure limits of K3 surfaces}, Journal of Differential Geometry 55.3 (2000): 475--546.


\bibitem[Har]{Harder} 
Andrew Harder, {\em Torus fibers and the weight filtration}, arXiv:1908.05110. 

\bibitem[HKP]{HKP} 
Andrew Harder, Ludmil Katzarkov, Victor Przyjalkowski, {\em P=W phenomena}, 
arxiv:1905.08706. 


\bibitem[HLSY]{HLSY} Andrew Harder, Zhiyuan Li, Junliang Shen, and Qizheng Yin, {\em P=W for
	Lagrangian fibrations and degenerations of hyperk\"ahler manifolds}, arxiv:1908.07510.  



\bibitem[HP]{HP} Tam\'as Hausel and Nicholas Proudfoot, untitled note, circa 2006. 

\bibitem[HRV]{HRV} Tam\'as Hausel and Fernando Rodrigues-Villegas, 
{\em Mixed Hodge polynomials of character varieties},  Inventiones mathematicae 174.3 (2008), 555-624.

\bibitem[HR]{HR} 
Laure Helme-Guizon and Yongwu Rong, {\em A categorification for the chromatic polynomial}, 
Algebraic \& Geometric Topology 5.4 (2005), 1365--1388.

\bibitem[HR2]{HR2}
Laure Helme-Guizon and Yongwu Rong, {\em Graph cohomologies from arbitrary algebras},
arXiv:math/0506023. 

\bibitem[H1]{Hit} 
Nigel Hitchin,  {\em The self-duality equations on a Riemann surface},  Proc. London Math. Soc (3), 55 (1987), 59--126.

\bibitem[H2]{Hit2} 
Nigel Hitchin, {\em Stable bundles and integrable systems}, Duke Mathematical Journal 54.1 (1987), 91--114.


\bibitem[HKLR] {HKLR} Nigel Hitchin, Anders Karlhede, Ulf Lindstr\"om, and Martin Ro\c cek, {\em Hyperk\"ahler metrics and supersymmetry}, Communications in Mathematical Physics, 108.4 (1987), 535--589.



\bibitem[K]{K} Maxim Kontsevich, {\em Symplectic geometry of homological algebra}, available at the author’s webpage (2009).




\bibitem[KN]{KN}
George Kempf and Linda Ness,
{\em The length of vectors in representation spaces.} 
Algebraic geometry. Springer, Berlin, Heidelberg, 1979. 233-243.

\bibitem[KS]{KS} 
Masaki Kashiwara and Pierre Schapira, \underline{Sheaves on Manifolds}, Springer (1990). 

\bibitem[Kho]{Kho}
Mikhail Khovanov,  {\em A categorification of the Jones polynomial}, 
Duke Mathematical Journal 101.3 (2000), 359--426.

\bibitem[Li]{Li}Yang Li, {\em SYZ geometry for Calabi-Yau 3-folds: Taub-NUT and Ooguri-Vafa type metrics}.  arxiv:1902.08770.




\bibitem[Mau]{Mau} Davesh Maulik, {\em Stable pairs and the HOMFLY polynomial}, Inventiones mathematicae 204.3 (2016): 787--831.

\bibitem[Mel]{Mel} Anton Mellit, {\em Cell decompositions of character varieties}, arXiv:1905.10685. 

\bibitem[MS]{MS} Luca Migliorini and Vivek Shende, {\em A support theorem for Hilbert schemes of planar curves}, 
J. EMS 15.6 (2013): 2353--2367.



\bibitem[MSV]{MSV} Luca Migliorini, Vivek Shende, and Filippo Viviani, {\em A support theorem for Hilbert schemes, II}, arxiv:1508.07602.

\bibitem[McW]{McW} Michael McBreen and Ben Webster,
{\em Homological Mirror Symmetry for Hypertoric Varieties I},  arXiv:1804.10646.


\bibitem[Mum1]{Mum-curves} David Mumford, {\em An analytic construction of degenerating curves over complete local rings.} 
Compositio Mathematica 24.2 (1972): 129-174

\bibitem[Mum2]{Mum} David Mumford, {\em An analytic construction of degenerating abelian varieties over complete rings.} Compositio Mathematica 24.3 (1972): 239-272.

\bibitem[N]{N} David Nadler, {\em Wrapped microlocal sheaves on pairs of pants}, arXiv:1604.00114. 

\bibitem[Nak]{Nak} Noboru Nakayama, {\em The lower semi-continuity of the plurigenera of complex varieties.} 
Algebraic geometry, Sendai, 1985. Mathematical Society of Japan, 1987.


\bibitem[OS]{OS} Tadao Oda and 
C. S. Seshadri, {\em Compactifications of the generalized Jacobian variety},
Trans. Amer. Math. Soc. {\bf 253} (1979), 1--90.

\bibitem[ObS]{ObS} Alexei Oblomkov and Vivek Shende, {\em 
	The Hilbert scheme of a plane curve singularity and the HOMFLY polynomial of its link}, Duke Mathematical Journal 161.7 (2012): 1277--1303.

\bibitem[ORS]{ORS} Alexei Oblomkov, Jacob Rasmussen, and Vivek Shende, {The Hilbert scheme of a plane curve singularity and the HOMFLY homology of its link}, Geometry \& Topology 22.2 (2018): 645--691.


\bibitem[OV]{OV} 
Hirosi Ooguri and Cumrun Vafa, {\em Summing up Dirichlet instantons}. Phys. Rev. Lett. 77.16 (1996): 3296--3298.

\bibitem[PS]{PS} 
Chris Peters and Joseph Steenbrink. \underline{Mixed Hodge structures}. Springer (2008).




\bibitem[SZ]{SZ} 
Junliang Shen and Zili Zhang, {\em Perverse filtrations, Hilbert schemes, and the P=W conjecture for parabolic Higgs bundles}, arXiv:1810.05330. 



\bibitem[S1]{Sim}
Carlos Simpson, {\em Higgs bundles and local systems},  Publ. Math. de l'IHES, 75 (1992), 5--95.

\bibitem[S2]{Sim-loc}
Carlos Simpson,  {\em Moduli of representations of the fundamental group of a smooth projective variety, I},
Publ. math. de l'IHES 79.1 (1994), 47--129.

\bibitem[S3]{Sim-loc2} 
Carlos Simpson, {\em Moduli of representations of the fundamental group of a smooth projective variety, II},
Publ. math. de l'IHES 80.1 (1994), 5--79.


\bibitem[S5]{Sim-geom} 
Carlos Simpson, {\em The dual boundary complex of the $SL_2$ character variety of a punctured sphere},
Ann. Fac. Sci. Toulouse, Math. (6) 25.2-3, Part A, (2016), 317--361.

\bibitem[Sto]{Sto}
Marko Sto\u{s}i\'c, {\em Categorification of the dichromatic polynomial for graphs}, 
Journal of Knot Theory and Its Ramifications 17.01 (2008), 31--45.

\bibitem[Sto2]{Sto2}
Marko Sto\u{s}i\'c, 
{\em New categorifications of the chromatic and the dichromatic polynomials for graphs}, 
Fund. Math. 190 (2006), 231--243.

\bibitem[STZ]{STZ} 
Vivek Shende, David Treumann, Eric Zaslow, {\em Legendrian knots and constructible sheaves}, 
Inventiones mathematicae 207.3 (2017): 1031--1133.

\bibitem[STWZ]{STWZ} 
Vivek Shende, David Treumann, Harold Williams, Eric Zaslow, 
{\em Cluster varieties from Legendrian knots}, arXiv:1512.08942. 

\bibitem[SW]{SW} 
Nathan Seiberg and Edward Witten, {\em Electric-magnetic duality, monopole condensation, and confinement in N= 2 supersymmetric Yang-Mills theory}, Nuclear Physics B 426.1 (1994), 19--52.


\bibitem[Szi]{Szi} 
Szil\'ard Szab\'o, {\em Perversity equals weight for Painlev\'e spaces}, arXiv:1802.03798. 



\bibitem[Th]{Th} Michael Thaddeus, two letters to Hausel and Proudfoot, circa 2008. 

\bibitem[Thi]{Thi}
M. B. Thistlewaite, {\em A spanning tree expansion of the Jones polynomial}, 
Topology 26
(1987), 297--309.

\bibitem[Y]{Y}
Daisuke Yamakawa,  {\em Geometry of multiplicative preprojective algebra}, International Mathematics Research Papers 2008 (2008).

\bibitem[Z]{Z} 
Zili Zhang, {\em The P=W identity for cluster varieties}, arxiv:1903.07014. 



\end{thebibliography}
\end{document}